\newtheorem{definition}{Definition}[section]
\newtheorem{theorem}[definition]{Theorem}
\newtheorem{proposition}[definition]{Proposition}
\newtheorem{lemma}[definition]{Lemma}
\newtheorem{corollary}[definition]{Corollary}
\newtheorem{question}[definition]{Question}
\newtheorem{deflemma}[definition]{Definition-Lemma}
\newcommand{\nd}{\noindent}
\newcommand{\dR}{{\mathds R}}
\newcommand{\dC}{{\mathds C}}
\newcommand{\dQ}{{\mathds Q}}
\newcommand{\dN}{{\mathds N}}
\newcommand{\dZ}{{\mathds Z}}
\newcommand{\dP}{{\mathds P}}
\newcommand{\dH}{{\mathbb H}}
\newcommand{\dL}{{\mathbb L}}
\newcommand{\bD}{{\mathbb D}}
\newcommand{\dD}{{\mathds D}}
\newcommand{\cC}{\mathcal{C}}
\newcommand{\cD}{\mathcal{D}}
\newcommand{\cE}{\mathcal{E}}
\newcommand{\cF}{\mathcal{F}}
\newcommand{\cG}{\mathcal{G}}
\newcommand{\cH}{\mathcal{H}}
\newcommand{\cI}{\mathcal{I}}
\newcommand{\cK}{\mathcal{K}}
\newcommand{\cL}{\mathcal{L}}
\newcommand{\cM}{\mathcal{M}}
\newcommand{\cN}{\mathcal{N}}
\newcommand{\cO}{\mathcal{O}}
\newcommand{\cQ}{\mathcal{Q}}
\newcommand{\cR}{\mathcal{R}}
\newcommand{\cS}{\mathcal{S}}
\newcommand{\cU}{\mathcal{U}}
\newcommand{\cV}{\mathcal{V}}
\newcommand{\cZ}{\mathcal{Z}}
\newcommand{\SC}{\scriptstyle}
\DeclareMathOperator{\Spec}{\textup{Spec}\,}
\DeclareMathOperator{\Proj}{\textup{Proj}\,}
\DeclareMathOperator{\Aut}{\textup{Aut}}
\DeclareMathOperator{\diag}{\textup{diag}}
\DeclareMathOperator{\vol}{\textup{vol}}
\DeclareMathOperator{\DR}{\mathit{DR}}
\DeclareMathOperator{\car}{\textup{char}}
\DeclareMathOperator{\pr}{\textup{pr}}
\DeclareMathOperator{\FL}{\overline{\textup{FL}}}
\DeclareMathOperator{\id}{\textup{id}}
\DeclareMathOperator{\gr}{\textup{gr}}
\DeclareMathOperator{\iso}{\textup{iso}}
\DeclareMathOperator{\Conv}{\textup{Conv}}
\DeclareMathOperator{\Int}{\textup{Int}}
\newcommand{\BL}{{_0\!}\widehat{\cM}^{loc}_{\!\widetilde{A}}}
\newcommand{\qM}{\cQ\!\cM^{loc}_{\!\widetilde{A}}}
\newcommand{\qMBL}{{_0\!}\cQ\!\cM^{loc}_{\!\widetilde{A}}}
\newcommand{\qMBLlog}{{_0\!}\cQ\!\cM_{\!\widetilde{A}}}
\newcommand{\qMBLhat}{\widehat{{_0\!}\cQ\!\cM}_{\!\widetilde{A}}}
\newcommand{\qMlog}{\cQ\!\cM_{\!\widetilde{A}}}
\newcommand{\qMhat}{\widehat{\cQ\!\cM}_{\!\widetilde{A}}}
\newcommand{\Leff}{\dL_{\textup{eff}}}
\DeclareMathOperator{\trTLEP}{\textup{trTLEP(n)}}
\DeclareMathOperator{\trTLEPlog}{\textup{log-trTLEP(n)}}
\newcommand{\XSigA}{X_{\!\Sigma_{\!A}}}
\newcommand{\E}{D[\partial_{\lambda_0}^{-1}]}
\begin{document}

\title{Logarithmic Frobenius manifolds, hypergeometric systems and quantum $\cD$-modules}
\author{Thomas Reichelt and Christian Sevenheck}

\maketitle

\begin{abstract}
We describe mirror symmetry for weak Fano toric manifolds as an equivalence
of filtered $\cD$-modules. We discuss in particular the logarithmic degeneration
behavior at the large radius limit point, and express the mirror correspondence as an isomorphism
of Frobenius manifolds with logarithmic poles. The main tool is an identification of the Gau\ss-Manin
system of the mirror Landau-Ginzburg model with a hypergeometric $\cD$-module,
and a detailed study of a natural filtration defined on this differential system.
We obtain a solution of the Birkhoff problem for lattices defined by this filtration
and show the existence of a primitive form, which yields the construction of Frobenius
structures with logarithmic poles associated to the mirror Laurent polynomial.
As a final application, we show the existence of a pure polarized non-commutative Hodge structure
on a Zariski open subset of the complexified K\"ahler moduli space of the variety.
\end{abstract}
\renewcommand{\thefootnote}{}
\footnote{
2010 \emph{Mathematics Subject Classification.}
14J33,  14M25, 32S40, 32G34, 34Mxx, 53D45\\
Keywords: Gau\ss-Manin system, hypergeometric $\cD$-module, toric variety, primitive form, quantum cohomology, Frobenius manifold,
mirror symmetry, non-commutative Hodge structure\\
Th.R. is supported by the DFG grant He 2287/2-2. Ch.S. is supported by a DFG Heisenberg fellowship (Se 1114/2-1).\\
Both authors acknowledge partial support by the
ANR grant ANR-08-BLAN-0317-01 (SEDIGA).
}

\tableofcontents

\section{Introduction}
\label{sec:Introduction}

In this paper we study the differential systems that occur
in the mirror correspondence for smooth toric weak Fano varieties. On the so-called A-side
of mirror symmetry, which is mathematically expressed as the quantum
cohomology of this variety, these systems has been known since quite some time
as \emph{quantum $\cD$-modules}. A striking fact which makes their study attractive is
that the integrability of the corresponding connection encodes many properties of
the quantum product, in particular, the associativity, usually expressed by the
famous WDVV-equations. It is well-known (see, e.g., \cite{Manin})
that the quantum $\cD$-module (or first structure connection)
is essentially equivalent to the Frobenius structure defined by the quantum product on the cohomology space of the variety.

The main subject of this paper is to establish the same kind of structures for
the $B$-side, also called the Landau-Ginzburg model, of such a variety. This problem is related to more classical
objects in the theory of singularities of holomorphic or algebraic functions: namely,
period integrals, vanishing cycles and the Gau\ss-Manin connection in its various forms.
A by-now well-known construction going back to K.~Saito and M.~Saito endows the
semi-universal unfolding space of an isolated hypersurface singularity $f:(\dC^n,0)\rightarrow (\dC,0)$ with a Frobenius structure.
There are two main ingredients in constructing these structures: a very precise analysis
of the Hodge theory of $f$, which is done using the the so-called Brieskorn lattice,
and which culminates in a solution of the Birkhoff problem (also called a \emph{good basis} of the Brieskorn lattice). The second step is
to show that there is a specific section of the Brieskorn lattice, called \emph{primitive and homogeneous}
(which is also known as the ``primitive form'').

However, these Frobenius manifolds will never appear as the mirror of the quantum cohomology of
some variety. Sabbah has shown in a series of papers (partly joint with Douai,
see \cite{Sa1}, \cite{Sa2}, \cite{DS}) that the above results can be adapted if one starts with an algebraic function $f:U\rightarrow \dC$
defined on a smooth affine variety $U$. Besides the isolatedness of the critical locus of $f$, one is forced to impose
a stronger condition, known as tameness. Roughly speaking, it states that no change of the topology of the fibres
comes from critical points at infinity. The need for this condition reflects the fact that
the Gau\ss-Manin system of such a function, and other related objects, are not simply direct sums of the
corresponding local objects at the critical points. For tame functions, it is known that the
Birkhoff problem for the Brieskorn lattice always has a solution, similarly to the local case, one uses
information coming from the Hodge theory of $f$ to show this result. One the other
hand, the existence of a primitive (and homogeneous) form is a quite delicate problem which
is not known in general. It has been shown for certain tame polynomials in \cite{Sa2}, for convenient and non-degenerate Laurent polynomials
in \cite{DS} (and later with different methods in \cite{Dou2}) and also for some other particular
cases of tame functions (e.g., \cite{dGMS}). In any case, the outcome of these constructions is a germ
of a Frobenius structure on the deformation space of a single function. The general construction
in \cite{DS} does not give much information on how these Frobenius manifolds vary for families of, say, Laurent polynomials.
Notice also that the Frobenius structure associated to a Laurent polynomial (or even to a local singularity) is not at all unique, it
depends on both the choice of a good basis and a primitive (and homogeneous) form. However, there is a canonical choice
of a solution of the Birkhoff problem, predicted by the use of Hodge theory (more precisely, it is defined by Deligne's $I^{p,q}$-splitting of the Hodge filtration
associated to $f$), but in general Frobenius structure coming from this solution will not behave well in families.

For some special kind of Fano varieties like the projective spaces (see \cite{Baran}) or, more generally, for some
orbifolds like weighted projective spaces (\cite{Mann1}, \cite{DouMann}), it is possible to find explicit solutions to the Birkhoff problem
and to carry out the construction of the Frobenius manifold rather directly. Then one may compare the Brieskorn lattices
(or their extension using good bases) to the quantum $\cD$-module by an explicit identification of bases.
This yields isomorphisms of Frobenius manifolds and even some results on their degeneration behavior near the large radius limit
(see \cite{DouMann}), but of course this method is limited if one wants to attack more general classes
of examples.

In the present paper, we obtain such an identification of Frobenius manifolds for all weak Fano toric manifolds,
using Givental's $I\!=\!J$-theorem (\cite{Giv7}). We do not rely on the results of \cite{DS},
instead, we identify the family of Gau\ss-Manin systems attached to the Landau-Ginzburg model of our variety
with a certain hypergeometric $\cD$-module (also called Gelfand-Kapranov-Zelevinski-(GKZ)-system) by a purely
algebraic argument. This makes available some known results and constructions from the theory of these special $\cD$-modules,
and we are able to deduce a finiteness and a duality statement for the family of Brieskorn lattices. The tameness assumption from
above is used via an adaption of a result in \cite{Adolphson}, who has calculated the characteristic variety of
a hypergeometric $\cD$-module. In general, this tameness will hold on a Zariski open subspace of
the parameter space, and we show that if our variety is genuine Fano, then this is the whole parameter space.
An important point in the construction is to extend the family of Brieskorn lattices
on the K\"ahler moduli space of the variety to a certain partial compactification including the large radius limit point.
This compactification depends on a choice of coordinates on the complexified K\"ahler moduli space, that is, on a choice of a basis of nef classes
of the second cohomology of our variety. Once we have this logarithmic extension, we can apply \cite{Reich1} which
yields the construction of a \emph{logarithmic Frobenius manifold}, that is, a Frobenius structure on a manifold which
is the complement of a normal crossing divisor, and such that both multiplication and metric are defined on the sheaf
of logarithmic vector fields. At any point inside the K\"ahler moduli space, this restricts to
a germ of a Frobenius manifold constructed in \cite{DS}. In this sense our mirror statement also generalizes
the equivalence of Frobenius structures (at fixed points of the K\"ahler moduli space) known in particular cases like $\dP^n$.

Let us give a short overview on the content of this paper:
In section \ref{sec:HyperGM} we study in some detail
various differential systems associated to toric data defined by a smooth toric weak Fano variety $\XSigA$ (where $\Sigma_A$ is the defining fan), parts of
the results hold even more generally for a given set of vectors in a lattice.
In particular, we obtain an identification of a certain hypergeometric $\cD$-module with the Gau\ss-Manin system
of a generic family of Laurent polynomials defined by the toric data, more precisely,
with a partial Fourier-Laplace transformation of it (theorem \ref{theo:GM-GKZUp}).
We next study a natural filtration of this Gau\ss-Manin system, prove a finiteness result (theorem \ref{theo:BrieskornLatticeFree})
and show that it satisfies a compatibility condition with respect to the duality functor (proposition \ref{prop:FiltrationDuality}).

The actual Landau-Ginzburg model is a subfamily of the family of generic Laurent polynomials studied in section \ref{sec:HyperGM},
parameterized by the K\"ahler moduli space, i.e., by a $\dim H^2(\XSigA,\dC)$-dimensional torus.
In section \ref{sec:LogQDMod}, we first identify the Gau\ss-Manin system of the Landau-Ginzburg model of $\XSigA$
with a GKZ-system on the K\"ahler moduli space (corollary \ref{cor:GM-GKZDown}). In the second part of this section, we extend this module to a vector bundle with an integrable
connection having logarithmic poles along the boundary divisor of an appropriate compactification of the
K\"ahler moduli space (theorem \ref{theo:LogExtQDMod}). From this object we can derive, using a a method which goes back to \cite{Guest},
a specific basis defining a solution to the Birkhoff problem in family in the sense of \cite{DS}.
This is a family of $\dP^1$-bundles which extends the GKZ-$\cD$-module mentioned above.
An important new point is that this construction works taking into account the logarithmic degeneration behavior near the large radius
limit point. As a consequence, we can construct a canonical logarithmic Frobenius manifold associated to
the Landau-Ginzburg model of $\XSigA$, which has an algebraic structure on the subspace corresponding to
the compactified K\"ahler moduli space (theorem \ref{theo:logFrob-BSide}). One may speculate that it restricts to the canonical
Frobenius structure considered in \cite{DS} in a small neighborhood of any point of the K\"ahler moduli space
(question \ref{quest:CanonicalFrob}).

In section \ref{sec:AModel} we first recall very briefly the construction of the quantum $\cD$-module
of a projective variety, and then show that it is isomorphic, in the toric weak Fano case, to the family of $\dP^1$-bundles with connection constructed
in section \ref{sec:LogQDMod}.
From this we deduce (theorem \ref{theo:MirrorSymmetry}) an isomorphism of logarithmic Frobenius manifolds by invoking the main result
from \cite{Reich1}.

In the final section \ref{sec:ncHodge} we show (theorem \ref{theo:GKZunderliesNCHodge}), using the fundamental result from \cite{Sa8}
that the quantum $\cD$-module is equipped with the structure of a variation of pure polarized
non-commutative Hodge structures in the sense of \cite{KKP}. As there are several versions of this notion around, we briefly recall
the basic definitions and show how they apply in our context. This result strengthens a theorem
of Iritani (\cite{Ir3}), who directly shows the existence of $tt^*$-geometry in quantum cohomology, however,
he uses an asymptotic argument, whereas our approach gives the existence of an ncHodge structure
wherever the small quantum product is convergent and the mirror map is defined. We also deduce from the
construction of a logarithmic Frobenius manifold that this $tt^*$-geometry behaves quite nicely
along the boundary divisor of the K\"ahler moduli space, namely, that the corresponding harmonic bundle
is tame along this divisor (theorem \ref{theo:TameHarmonic}).

We finish this introduction by some remarks on how our work relates to other
papers concerning mirror symmetry for Fano varieties and hypergeometric differential systems:
As already mentioned above, our main result relies on Givental's $I=J$-theorem.
It is  certainly well-known to specialists (and it is briefly mentioned at some places in \cite{Giv7} and also in
subsequent papers) that the $I$-function is related to oscillating integrals and hence to
the Fourier-Laplace transformation of the Gau\ss-Manin system of the mirror Laurent polynomial,
but to the best of our knowledge, a thourough treatment of these issues is missing in the literature.
More recently, Iritani has given in \cite{Ir2} an analytic description of the differential system
associated to the Landau-Ginzburg model and discussed its relation to hypergeometric $\cD$-modules.
He considers the more general case of toric weak Fano orbifolds, however, solutions to the Birkhoff problem
resp. Frobenius structures are not treated in loc.cit.
Passing through the analytic category one also looses
the algebraic nature of the objects involved, which may be an obstacle in some situations. As an example,
one cannot apply the general results on formal decomposition of meromorphic bundles with connection
from \cite{Mo5-0} and \cite{Mo5} for non-algebraic bundles. Nevertheless, some of the techniques used
here are also present in \cite{Ir2}, and at some points our presentation is (without explicit mentioning) similar to
that of loc.cit.

Finally, let us notice that although
one may think of an extension of some of our results (like those in section \ref{sec:HyperGM}) to the orbifold case,
there is a serious obstacle in the construction of a logarithmic Frobenius structure associated to the Landau-Ginzburg model
of a weak Fano toric orbifold. This is mainly due to the
fact that the ``limit'' orbifold cup product does not satisfy an ``$H^2$-generation condition'', in contrast
to the case of toric manifolds (see also the preprint \cite{DouMann} for a discussion of this
phenomenon for the case of weighted projected spaces).

\section{Hypergeometric $\cD$-modules and filtered Gau\ss-Manin systems}
\label{sec:HyperGM}

In this section we study Gau\ss-Manin systems associated to generic families
of Laurent polynomials. We show that (a partial Fourier-Laplace transformation
of) these $\cD$-modules always have a hypergeometric structure, i.e.,
are isomorphic to (a partial Fourier-Laplace transformation
of) a certain GKZ-system. Moreover, both Gau\ss-Manin systems and
GKZ-systems carry natural filtrations by $\cO$-modules. For the Gau\ss-Manin system,
these are the so-called Brieskorn lattices, as studied, for more general
polynomial functions, in \cite{Sa2}. We show that the above identification also works at the level of lattices.
As an application, we prove that if the family of Laurent polynomials
is associated to a fan of a smooth toric weak Fano manifold, then outside a certain ``bad part'' of the parameter
space, the family of Brieskorn lattices is $\cO$-locally free. This will be needed later in
the construction of Frobenius manifolds associated to these special families of Laurent polynomials.
Finally, we study the holonomic dual of the Gau\ss-Manin system and obtain (up to a shift of the homological degree)
an isomorphism of this dual to the Gau\ss-Manin system itself. The way of constructing
this isomorphism is purely algebraic, using a resolution called Euler-Koszul complex
of the hypergeometric $\cD$-module which is isomorphic to the Gau\ss-Manin system.
This proof differs from \cite{Sa2} or \cite{DS}, where the duality isomorphism
is obtain in a topological way. We could also give a topological proof
along the lines of the quoted papers, by using a partial compactifications of the family
of Laurent polynomials and a smoothness property at infinity (see the proof of proposition \ref{prop:DiscFinite}
for a description of this partial compactification). However, our algebraic
approach gives almost for free that the above mentioned filtration is compatible (up to a shift),
with the duality isomorphism. This fact is also needed for the construction of Frobenius structures.

\subsection{Hypergeometric systems and Gau\ss-Manin systems}
\label{subsec:GKZandGM}

We start with the following set of data: Let $N$ be a finitely generated free abelian group of rank $n$,
for which we choose once and for all a basis which identifies it with $\dZ^n$. Let
$\underline{a}_1,\ldots,\underline{a}_m$ be elements of $N$, which we also see as vectors of $\dZ^n$.
We suppose that $\underline{a}_1,\ldots,\underline{a}_m$ generates $N$,
if we only have
$\sum_{i=1}^n \dQ \underline{a}_i =N_\dQ:=N\otimes \dQ$, then some of our results can be adapted, see
proposition \ref{prop:NnotGenerated} below.
In order to orient the reader, let us point out from the very beginning that
the case we are mostly interested in is when these vectors are the
primitive integral generators of the rays of a fan $\Sigma_A$ in
$N_\dR:=N\otimes \dR$ defining a smooth projective toric variety $\XSigA$ which is \emph{weak Fano}, that
is, such that the anticanonical divisor $-K_{\XSigA}$ is numerically effective (nef). The Fano case,
i.e., when $-K_{\XSigA}$ is ample is of particular importance and will sometime be treated apart, as
there are cases in which we obtain stronger statements for genuine Fano varieties. See also the proof of proposition \ref{prop:ClosedEmb_NormalCMGorenstein},
the proof of lemma \ref{lem:FanoNonDeg} and the beginning of section \ref{sec:LogQDMod} for toric characterizations of the weak Fano condition.
We will abbreviate this case by saying that $\underline{a}_1,\ldots,\underline{a}_m$ \emph{are defined by toric data}.
We write $\dL$ for the
module of relations between $\underline{a}_1,\ldots,\underline{a}_m$, i.e., $\underline{l}\in\dL\subset \dZ^m$ iff
$\sum_{i=1}^m l_i \underline{a}_i=0$.
We will denote by $S_0$ the $n$-dimensional torus $\Spec \dC[N]$ with coordinates $y_1,\ldots, y_n$ and by $W'$ the $ m$-dimensional affine space
$\Spec \dC[\oplus_{i=1}^m \dN \underline{a}_i]$ with coordinates $w_1,\ldots,w_m$. We are slightly pedantic in this latter definition in order to make a clear difference with the dual space, called $W$, which will appear later.

An important point in the arguments used below will be to consider the
following set of extended vectors: Put $\widetilde{N}:=\dZ \times N\cong \dZ^{n+1}$,
$\widetilde{\underline{a}}_i:=(1,\underline{a}_i)\in \widetilde{N}$ for all $i=1,\ldots, m$ and $\widetilde{\underline{a}}_0:=(1,\underline{0})\in\widetilde{N}$. Write $\widetilde{A}=(\widetilde{\underline{a}}_0,
\widetilde{\underline{a}}_1,\ldots,\widetilde{\underline{a}}_m)$. Notice that the module of relations
of $\widetilde{A}$ is isomorphic to $\dL$, any $\underline{l}=(l_1,\ldots,l_m)\in \dL$ gives in a unique way rise to the relation $(-\sum_{i=1}^m l_i) \widetilde{\underline{a}}_0+
\sum_{i=1}^m l_i \widetilde{\underline{a}}_i=0$. By abuse of notation, we also write $\dL$ for
the module of relations of $\widetilde{A}$. As another piece of notation, we put $\overline{l}:=\sum_{i=1}^m l_i$.
Let $V'=\Spec \dC[\oplus_{i=0}^m \dN \widetilde{\underline{a}}_i]$ with coordinates $w_0,\ldots,w_m$ and $V$ the
dual space, with coordinates $\lambda_0,\ldots,\lambda_m$.
We also need the $m$-dimensional torus $S_1:=\Spec\dC\left[(\oplus_{i=1}^m \dZ \underline{a}_i)^\vee\right]$,
with inclusion map $j:S_1\hookrightarrow W$.
Moreover, put $\widehat{V}:=\Spec\dC[\dN \widetilde{\underline{a}}_0]\times W$
and $\widehat{T}:=\Spec\dC[\dN \widetilde{\underline{a}}_0] \times S_1$,
we still denote the map $\widehat{T} \hookrightarrow \widehat{V}$ by $j$. We put $\tau=-w_0$ so that
$(\tau,\lambda_1,\ldots,\lambda_m)$ gives coordinates on $\widehat{V}$ resp. $\widehat{T}$.
We will also write $\dC_\tau$ for $\Spec \dC[\dN \widetilde{\underline{a}}_0]$ and
$\dC^*_\tau$ for $\Spec \dC[\dZ \widetilde{\underline{a}}_0]$.
Later we will consider algebraic $\cD_{\widehat{V}}$- (resp. $\cD_{\widehat{T}}$)-modules which are localized along $\tau=0$, and in this
case we also use the variable $z:=\tau^{-1}$. Sometimes we will implicitly identify such modules
with their restriction to $\dC^*_\tau \times W$ resp. to $\dC^*_\tau \times S_1$.

The first geometric statement about these data is the following proposition.
\begin{proposition}
\label{prop:ClosedEmb_NormalCMGorenstein}
\begin{enumerate}
\item
Consider the map
$$
\begin{array}{rcl}
k:S_0  & \longrightarrow & W'\\ \\
(y_1,\ldots,y_n) & \longmapsto & (w_1,\ldots,w_m):=(\underline{y}^{\underline{a}_1},\ldots,\underline{y}^{\underline{a}_m}),
\end{array}
$$
where $\underline{y}^{\underline{a}_i}:=\prod_{k=1}^n y_k^{a_{ki}}$.
Suppose that $\underline{0}$ lies in the interior of
$\Conv(\underline{a}_1,\ldots,\underline{a}_m)$,
where for any subset $K\subset N$, $\Conv(K)$ denotes
the convex hull of $K$ in $N_\dR$. Then $k$ is a closed embedding.
\item
Suppose that $\underline{a}_1,\ldots,\underline{a}_m$ are defined by toric data.
In particular, the completeness of $\Sigma_A$ implies that $0$ is an interior point of
$\Conv(\underline{a}_1,\ldots,\underline{a}_m)$.
Let $\dN \widetilde{A}=\sum_{i=0}^m \dN \widetilde{\underline{a}}_i$, then
$\dN \widetilde{A}$ is a normal semigroup, i.e. it satisfies
$\widetilde{N} \cap C(\widetilde{A})= \dN \widetilde{A}$ and positive, i.e., the origin is the only unit in $\dN \widetilde{A}$.
Here for a finite set $\{\widetilde{\underline{x}}_1,\ldots,\widetilde{\underline{x}}_k\}$ we write $C(\{\widetilde{\underline{x}}_1,\ldots,\widetilde{\underline{x}}_k\})$ for the
cone $\sum_{j=1}^k \dR_{\geq 0} \widetilde{\underline{x}}_j$. The associate semigroup ring $\Spec\dC[\dN \widetilde{A}]$ is normal, Cohen-Macaulay and Gorenstein.
\end{enumerate}
\end{proposition}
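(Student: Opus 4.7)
The plan is to treat the two parts separately. Part (1) reduces to a semigroup generation statement, while part (2) is driven by the convex support function provided by the weak Fano hypothesis.

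For (1), $k$ is a closed embedding of affine schemes iff the ring map $k^*\colon \dC[\oplus_{i=1}^m \dN\underline{a}_i] \to \dC[N]$ sending $w_i \mapsto \underline{y}^{\underline{a}_i}$ is surjective, i.e.\ iff $N = \sum_{i=1}^m \dN\underline{a}_i$ as a semigroup. The hypothesis $\underline{0} \in \textup{int}\,\Conv(\underline{a}_1,\ldots,\underline{a}_m)$ yields, after clearing denominators, a relation $\sum_{i=1}^m n_i \underline{a}_i = 0$ with all $n_i \in \dZ_{>0}$. Combined with the standing assumption that the $\underline{a}_i$ generate $N$ as an abelian group, any $v = \sum c_i \underline{a}_i$ with $c_i \in \dZ$ can be rewritten as $v = \sum (c_i + M n_i)\underline{a}_i$ for $M$ large enough that all coefficients become nonnegative, as required.

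For (2), the weak Fano hypothesis supplies a convex, integral, piecewise linear support function $\phi\colon N_\dR \to \dR$ of $\Sigma_A$ with $\phi(\underline{a}_i) = 1$ for every $i$. The pivotal step is the identification
$$
C(\widetilde{A}) \,=\, \bigl\{(t,v) \in \widetilde{N}_\dR : t \geq \phi(v)\bigr\}.
$$
The inclusion $\subseteq$ uses convexity of $\phi$ together with $\phi(0)=0$ and $\phi(\underline{a}_i)=1$. For the reverse inclusion, smoothness of $\Sigma_A$ is essential: given $(t,v)$ with $t \geq \phi(v)$, pick a top cone $\sigma$ of $\Sigma_A$ containing $v$; the primitive generators of $\sigma$ form a $\dZ$-basis of $N$, so $v = \sum_{\underline{a}_i \in \sigma} \mu_i \underline{a}_i$ with $\mu_i \geq 0$ and $\phi(v) = \sum \mu_i$, whence the nonnegative decomposition $(t,v) = (t - \sum \mu_i)\widetilde{\underline{a}}_0 + \sum \mu_i \widetilde{\underline{a}}_i$. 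Applying the same argument with $v \in N$ (so the $\mu_i$ are in $\dN$) and noting that $\phi(v) \in \dZ$ for $v \in N$, one obtains normality $\widetilde{N} \cap C(\widetilde{A}) = \dN\widetilde{A}$. Positivity is immediate since $C(\widetilde{A}) \subseteq \{t \geq 0\}$ and $C(\widetilde{A}) \cap \{t = 0\} = \{0\}$.

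Given the semigroup statements, normality of $\dC[\dN\widetilde{A}]$ is formal, and the Cohen-Macaulay property is Hochster's theorem on normal affine semigroup rings. For the Gorenstein assertion, I would invoke Stanley's criterion, which requires exhibiting $c \in \textup{int}(C(\widetilde{A})) \cap \widetilde{N}$ with $c + (\dN\widetilde{A}) = \textup{int}(C(\widetilde{A})) \cap \widetilde{N}$. The natural candidate is $c = \widetilde{\underline{a}}_0$, which lies in the interior because $\phi(0) = 0 < 1$; the required set equality then reduces, via $\textup{int}(C(\widetilde{A})) \cap \widetilde{N} = \{(t,v) : t > \phi(v)\}$ and integrality of $\phi$ on $N$, to the elementary equivalence of $t > \phi(v)$ and $t - 1 \geq \phi(v)$ for integer $t$. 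The main subtlety is that weak Fano (as opposed to genuinely Fano) only gives convexity, not strict convexity, of $\phi$, so the facets of $C(\widetilde{A})$ may correspond to unions of top cones of $\Sigma_A$ rather than individual top cones; however, the support function description of $C(\widetilde{A})$ and of its interior is insensitive to this, and the argument proceeds uniformly.
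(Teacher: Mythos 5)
Your proposal is correct, and it departs from the paper in Part~1 while reformulating Part~2 in dual language. In Part~1 the paper argues point-set-topologically: it shows that $\overline{\mathrm{Im}(k)}$ is contained in $V(I)$ for the toric ideal $I$, that the positive relation $\underline{l}\in\dL\cap\dZ^m_{>0}$ forces $\prod w_i^{l_i}=1$ there and hence $V(I)\subset S_1$, and concludes closedness in $W'$ from closedness of the subtorus in $S_1$. You instead reduce directly to surjectivity of $k^\ast\colon\dC[w_1,\dots,w_m]\to\dC[N]$, which is equivalent to the semigroup equality $\sum_i\dN\underline{a}_i=N$, and verify that by clearing a coefficient with the positive relation plus the standing $\dZ$-generation hypothesis. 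This is a cleaner ring-theoretic argument and avoids the (mild, but real) point-set bookkeeping the paper needs to pass from ``closed in $S_1$'' to ``closed in $W'$.''

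In Part~2 you work with the piecewise linear support function $\phi$ (with $\phi(\underline{a}_i)=1$) and the epigraph description $C(\widetilde A)=\{(t,v):t\ge\phi(v)\}$, whereas the paper works with the primal polytope $P(\Sigma_A)=\Conv(\underline{a}_1,\dots,\underline{a}_m)$ and decomposes $C(\widetilde A)$ as a union of subcones $C(\widetilde{\underline{a}}_0,\widetilde{\underline{a}}_{i_1},\dots,\widetilde{\underline{a}}_{i_n})$ over top cones $\sigma\in\Sigma_A(n)$. These are dual formulations of the same geometry: your $\{t\ge\phi(v)\}$ picture encodes exactly the paper's cone decomposition, and both invoke smoothness of $\Sigma_A$ at the same point (integrality of the decomposition, for normality) and the same Gorenstein shift vector $\widetilde{\underline{c}}=(1,\underline 0)$ via Stanley's criterion (the paper's reference to Bruns--Herzog Cor.~6.3.8). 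One small imprecision: smoothness is not actually needed for the set-theoretic reverse inclusion $\{t\ge\phi(v)\}\subseteq C(\widetilde A)$ — completeness and a nonnegative real decomposition in a containing cone suffice — only for normality, where the $\dZ$-basis property and the integrality of $\phi$ on $N$ are essential. Your remark that the interior description $\{t>\phi(v)\}$ is insensitive to strict versus non-strict convexity is correct and worth having spelled out, since that is precisely where the weak Fano hypothesis differs from genuine Fano.
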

\begin{proof}
\begin{enumerate}
\item
The condition that the origin is a interior point of the convex hull of the vectors $\underline{a}_i$ translates
into the existence of a relation $\underline{l}=(l_1,\ldots,l_m)\in \dL\cap \dZ^m_{>0}$ between $\underline{a}_1,\ldots,\underline{a}_m$ consisting of positive integers. On the other hand, the closure of the image of the map $k$ is contained in the vanishing locus of the so-called toric ideal
$$
I=\left(\prod_{i:l_i <0} w_i^{-l_i} - \prod_{i:l_i>0 } w_i^{l_i}\right)_{\underline{l}\in\dL} \subset \cO_{W'}.
$$
From the existence of $\underline{l}\in\dL\cap\dZ^m_{>0}$ we deduce that
the function $\prod_{i=1}^m w_i^{l_i} -1$ lies in $I$. This shows that for any point $\underline{w}=(w_1, \ldots,w_m) \in \overline{\mathit{Im}(k)}\subset V(I)\subset W'$, we have $w_i\neq 0$,
i.e., $\underline{w}\in \mathit{Im}(k)$.
\item
First we show the normality property:
Consider any integer vector $\widetilde{\underline{x}}=(x_0,x_1,\ldots,x_n)\in C(\widetilde{A}) \cap \widetilde{N}$. We have
\begin{equation}\label{eq:Deckel}
C(\widetilde{A}) \cap \left(\{1\} \times N_\dR\right) =\bigcup_{\lambda_i \in \dR_{\geq 0} ; \sum_{i=0}^m \lambda_i =1} \lambda_i \widetilde{\underline{a}}_i = \{ 1 \} \times \Conv(\underline{a}_1, \ldots ,\underline{a}_m)
\end{equation}
Now define
\[
P(\Sigma_A) = \bigcup_{\langle \underline{a}_{i_1}, \ldots , \underline{a}_{i_n} \rangle \in \Sigma_A(n)} \Conv(\underline{0},\underline{a}_{i_1}, \ldots , \underline{a}_{i_n})
\]
We have the following reformulation of the weak Fano condition (see, e.g., \cite[page 268]{Wisniewski}):
\[
-K_{\XSigA}\;\; \textup{ is nef } \Longleftrightarrow P(\Sigma_A) \;\; \textup{ is convex.}
\]
Hence by assumption we know that $P(\Sigma_A)$ is convex. We claim that $P(\Sigma_A) = \Conv(\underline{a}_1, \ldots , \underline{a}_m)$. The inclusion $\subset$ follows from the fact $\underline{0}, \underline{a}_{i_1}, \ldots , \underline{a}_{i_n} \in \Conv(\underline{a}_1, \ldots ,\underline{a}_m)$ for $\langle \underline{a}_{i_1}, \ldots , \underline{a}_{i_n} \rangle \in \Sigma_A(n)$. The other inclusion follows from $\underline{a}_1, \ldots , \underline{a}_m \in P(\Sigma_A)$ and the convexity of $P(\Sigma_A)$. From the claim and
    equality \eqref{eq:Deckel} we get the following decomposition of the cone $C(\widetilde{A})$:
\[
C(\widetilde{A}) = \bigcup_{\langle \underline{a}_{i_1}, \ldots , \underline{a}_{i_n} \rangle \in \Sigma_A(n)} C(\{ \widetilde{\underline{a}}_0, \widetilde{\underline{a}}_{i_1}, \ldots , \widetilde{\underline{a}}_{i_n}\})
\]
Using this decomposition, we see that $\widetilde{x}$ lies in a cone $C(\widetilde{\underline{a}}_0, \widetilde{\underline{a}}_{j_1}, \ldots ,\widetilde{\underline{a}}_{j_n})$, that is, there
are  $\lambda_0, \lambda_{j_1},\ldots, \lambda_{j_n} \in \dR_{\geq 0}$ such that $\widetilde{x} = \lambda_0 \widetilde{\underline{a}}_0 + \sum_{k=1}^n \lambda_{j_k}\widetilde{\underline{a}}_{j_k}$. Notice that $\widetilde{\underline{a}}_0, \widetilde{\underline{a}}_{j_1}, \ldots , \widetilde{\underline{a}}_{j_n}$ is $\dZ$-basis of $\widetilde{N}$, as $\underline{a}_{j_1}, \ldots , \underline{a}_{j_n}$ is a $\dZ$-basis of $N$ which follows from the smoothness of $\Sigma_A$. From this follows $\widetilde{x} \in \dN \widetilde{A}$. Notice also that the ``exterior boundary'' $\partial C(\widetilde{\underline{a}}_0, \widetilde{\underline{a}}_{j_1}, \ldots ,\widetilde{\underline{a}}_{j_n})\cap
\partial C(\widetilde{A})$ equals $\sum_{k=1}^n \dR_{\geq 0} \widetilde{\underline{a}}_{i_k}$ so that
$\widetilde{\underline{x}}\in \Int(C(\widetilde{A}))$ precisely iff the coefficient $\lambda_0$ in the above sum is positive.

From the fact that $\dN \widetilde{A}$ is normal it follows that $\Spec\dC[\dN \widetilde{A}]$ is Cohen-Macaulay by a classical
result due to Hochster (\cite[theorem 1]{Hoch}).
That $\dN \widetilde{A}$ is positive is equally easy to see: it follows (see, e.g., \cite[lemma 7.12]{MillSturm}) from the fact that $C(\widetilde{A})$ is pointed, i.e., that the vectors $(\widetilde{\underline{a}}_i)_{i=0,\ldots,m}$ are contained in the half-space
$\{\widetilde{\underline{x}}\in\dR^{n+1}\,|\, \widetilde{x}_0>0\}$.

It remains to show that $\Spec\dC[\dN \widetilde{A}]$ is Gorenstein: We use \cite[corollary 6.3.8]{HerzBr} stating that  this property is equivalent,
for normal positive semigroup rings, to the fact that that there is a vector $\widetilde{\underline{c}}\in\Int(\dN \widetilde{A})$ with
$$
\Int(\dN \widetilde{A}) = \widetilde{\underline{c}}+ \dN \widetilde{A}.
$$
From the above proof of the normality of $\dN \widetilde{A}$ we see that
$\Int(\dN \widetilde{A}) = \widetilde{N}\cap \Int(C(\widetilde{A}))$. On the other hand,
the map $\widetilde{N}\rightarrow \widetilde{N}$ which sends $\widetilde{\underline{x}}$ to
$\widetilde{\underline{x}}+(1,\underline{0})$ induces a bijection
from $C(\widetilde{A})$ to $\Int(C(\widetilde{A}))$, this follows from the characterization of $C(\widetilde{A})$ given above.

\end{enumerate}
\end{proof}

In order to state our first main result, we will associate (several variants of) a $\cD$-module) to the set of
vectors $\underline{a}_1,\ldots,\underline{a}_m$ above. This construction is a special case of the well-known
\emph{A-hypergeometric systems} (also called hypergeometric $\cD$-modules or GKZ-systems). We recall first the
general definition.
\begin{definition}[\cite{GKZ1}, \cite{Adolphson}]
\label{def:GKZ}
Consider a lattice $\dZ^t$ and vectors $\underline{b}_1,\ldots,\underline{b}_s\in\dZ^t$ which we also write as
a matrix $B=(\underline{b}_1,\ldots,\underline{b}_s)$. Moreover, let $\beta=(\beta_1,\ldots,\beta_t)\in\dC^t$.
Write (as above) $\dL$ for the module of relations of $B$ and
$\cD_{\dC^s}$ for the sheaf of rings of algebraic differential operators on $\dC^s$ (where
we choose $x_1,\ldots,x_s$ as coordinates).
Define
$$
\cM^\beta_B:=\cD_{\dC^s}/\left((\Box_{\underline{l}})_{\underline{l}\in\dL}+(Z_k)_{k=1,\ldots t}\right),
$$
where
$$
\begin{array}{rcl}
\Box_{\underline{l}} & := & \prod_{i:l_i<0} \partial_{x_i}^{-l_i} -\prod_{i:l_i>0} \partial_{x_i}^{l_i} \\ \\
Z_k & := & \sum_{i=1}^s b_{ki} x_i \partial_{x_i}+\beta_k
\end{array}
$$
$\cM^\beta_B$ is called hypergeometric system.
\end{definition}
We will use at several places in this paper the Fourier-Laplace transformation for algebraic $\cD$-modules. In order to introduce a convenient notation for this operation, let $X$ be a smooth algebraic variety, and $\cM$ a $\cD_{\dC^s\times X}$-module, where we have coordinates $(x_1,\ldots,x_s)$ on $\dC^s$. Then
we write $\FL^{y_1,\ldots,y_s}_{x_1,\ldots,x_s}\cM$ for the $\cD_{(\dC^s)^\vee \times X}$-module, which is
the same as $\cM$ as a $\cD_X$-module, and where $y_i$ acts as $-\partial_{x_i}$ and $\partial_{y_i}$ acts
as $x_i$, here $y_1,\ldots,y_s$ are the dual coordinates on $(\dC^s)^\vee$. One could also work
with the functor $\textup{FL}^{y_1,\ldots,y_s}_{x_1,\ldots,x_s}$, where
$y_i$ acts as $\partial_{x_i}$ and $\partial_{y_i}$ acts as $-x_i$, this would lead to slightly uglier formulas.
\begin{definition}
\label{def:GKZ_ExtGKZ_FlGKZ}
Let $\cD_V$, $\cD_{\widehat{V}}$ and $\cD_{\widehat{T}}$ be the sheaves of algebraic differential operators on $V$, $\widehat{V}$ and $\widehat{T}$, respectively.
\begin{enumerate}
\item
Consider the hypergeometric system $\cM^\beta_{\widetilde{A}}$ associated to the
vectors $\widetilde{\underline{a}}_0,\widetilde{\underline{a}}_1,\ldots,\widetilde{\underline{a}}_m$.
More explicitly, $\cM^\beta_{\widetilde{A}}:=\cD_V/\cI$, where $\cI$ is the sheaf
of left ideals in $\cD_V$ defined by
$$
\cI:=\cD_V(\Box_{\underline{l}})_{\underline{l}\in\dL}+\cD_V(Z_k)_{k\in\{1,\ldots,n\}}+\cD_V E,
$$
where
$$
\begin{array}{rcrcrcr}
\Box_{\underline{l}} & := & \partial_{\lambda_0}^{\overline{l}} \cdot \prod\limits_{i:l_i<0} \partial_{\lambda_i}^{-l_i} & - & \prod\limits_{i:l_i>0} \partial_{\lambda_i}^{l_i}  & \textup{ if } & \overline{l} \geq 0,\\ \\
\Box_{\underline{l}} & := & \prod\limits_{i:l_i<0} \partial_{\lambda_i}^{-l_i} & - & \partial_{\lambda_0}^{-\overline{l}} \cdot \prod\limits_{i:l_i>0} \partial_{\lambda_i}^{l_i}  & \textup{ if } & \overline{l}<0,  \\ \\
Z_k & := & \sum_{i=1}^m a_{ki} \lambda_i\partial_{\lambda_i} + \beta_k,\\ \\
E   & := & \sum_{i=0}^m \lambda_i\partial_{\lambda_i}+\beta_0,
\end{array}
$$
here $\underline{a}_i =(a_{1i},\ldots,a_{ni})$ when seen as a vector in $\dZ^n$.
\item
Let $\widehat{\cM}^{\beta}_{\widetilde{A}}$ be the $\cD_{\widehat{V}}$-module $\FL^{w_0}_{\lambda_0}(\cM^\beta_{\widetilde{A}})[\tau^{-1}]$.
In other words, $\widehat{\cM}^\beta_{\widetilde{A}}=\cD_{\widehat{V}}[\tau^{-1}]/\widehat{\cI}$, where $\widehat{\cI}$ is the left ideal generated
by the Fourier-Laplace transformed operators $\widehat{\Box}_{\underline{l}}$, $\widehat{Z}_k$ and $\widehat{E}$, i.e.,
$$
\begin{array}{rcl}
\widehat{\Box}_{\underline{l}} & := & \tau^{\overline{l}} \cdot \prod\limits_{i:l_i<0} \partial_{\lambda_i}^{-l_i}  -  \prod\limits_{i:l_i>0} \partial_{\lambda_i}^{l_i}  =
z^{-\overline{l}} \cdot \prod\limits_{i:l_i<0} \partial_{\lambda_i}^{-l_i}  -  \prod\limits_{i:l_i>0} \partial_{\lambda_i}^{l_i},  \\ \\
\widehat{Z}_k & := & \sum_{i=1}^m a_{ki} \lambda_i\partial_{\lambda_i} + \beta_k,\\ \\
\widehat{E}   & := & \sum_{i=1}^m \lambda_i\partial_{\lambda_i} -\tau\partial_\tau - 1+\beta_0,
= \sum_{i=1}^m \lambda_i\partial_{\lambda_i} + z\partial_z - 1+\beta_0.
\end{array}
$$
\item
Define $\widehat{\cM}^{\beta,loc}_{\widetilde{A}}:=j^* \widehat{\cM}^{\beta}_{\widetilde{A}}$ to be the restriction of $\widehat{\cM}^{\beta}_{\widetilde{A}}$ to $\widehat{T}$. We will
use the presentations $\cD_{\widehat{T}}[\tau^{-1}]/\widehat{\cI}'$ and $\cD_{\widehat{T}}[\tau^{-1}]/\widehat{\cI}''$ of
$\widehat{\cM}^{\beta,loc}_{\widetilde{A}}$
where $\widehat{\cI}'$ resp. $\widehat{\cI}''$ is the sheaf of left ideals generated by
$\widehat{\Box}'_{\underline{l}}$, $\widehat{Z}_k$ and $\widehat{E}$
resp.
$\widehat{\Box}''_{\underline{l}}$, $\widehat{Z}_k$ and $\widehat{E}$, where
$$
\widehat{\Box}'_{\underline{l}} :=  z^{\sum_{i:l_i > 0}l_i}\cdot \widehat{\Box}_{\underline{l}}
\quad\textup{ and }\quad
\widehat{\Box}''_{\underline{l}} := \prod_{i:l_i > 0} (z\cdot \lambda_i)^{l_i} \cdot \widehat{\Box}_{\underline{l}},
$$
so that
$$
\widehat{\Box}'_{\underline{l}} = \prod\limits_{i:l_i<0} (z\partial_{\lambda_i})^{-l_i}  -  \prod\limits_{i:l_i>0} (z\partial_{\lambda_i})^{l_i},
$$
and, using the formula $\lambda_i^j\partial_{\lambda_i}^j = \prod_{\nu=0}^{j-1} (\lambda_i\partial_{\lambda_i}-\nu)$,
$$
\widehat{\Box}''_{\underline{l}}\;=\;\prod_{i=1}^m \lambda_i^{l_i} \cdot \prod_{i:l_i<0} \prod_{\nu=0}^{-l_i-1} \left(z\lambda_i\partial_{\lambda_i}-\nu z\right)
- \prod_{i:l_i>0} \prod_{\nu=0}^{l_i-1} \left(z\lambda_i\partial_{\lambda_i}-\nu z\right).
$$
Notice that obviously $\widehat{\cI}'=\widehat{\cI}''$, but we will later need the
two different explicit forms of the generators of this ideal, for that reason, two different
names are appropriate.
\item
Write $\cM_{\widetilde{A}}:=\cM^{(1,\underline{0})}_{\widetilde{A}}$,
$\widehat{\cM}_{\widetilde{A}}:=\widehat{\cM}^{(1,\underline{0})}_{\widetilde{A}}$
and $\widehat{\cM}^{loc}_{\widetilde{A}}:=\widehat{\cM}^{(1,\underline{0}),loc}_{\widetilde{A}}$.
\end{enumerate}
In order to avoid too heavy notations, we will sometimes identify $\widehat{\cM}^\beta_{\widetilde{A}}$ resp.
$\widehat{\cM}^{\beta,loc}_{\widetilde{A}}$ with the corresponding modules over either $\dC^*_\tau\times W$ resp. $\dC^*_\tau\times S_1$
or $\dP^1_z\times W$ resp. $\dP^1_z\times S_1$, here $\dP^1_z$ is $\dP^1$ with $0$ defined by $z=0$.
\end{definition}

The first main result is a comparison of these $\cD$-modules to some Gau\ss-Manin systems
associated to families of Laurent polynomials. When this paper was written, a similar result
appeared in \cite{AdolphSperber}. The techniques of loc.cit. are not too far from those used
in the proof of the next theorem, however, it seems not to be more efficient to translate their
result into our situation than to give a direct proof.
\begin{theorem}
\label{theo:GM-GKZUp}
Let $\underline{a}_1,\ldots,\underline{a}_m\in N$ such that
$\sum_{i=1}^m \dZ \underline{a}_i = N$. Consider the family of Laurent
polynomials $\varphi: S_0 \times W \rightarrow \dC_t\times W$ defined by
$$
\varphi((y_1,\ldots,y_n),(\lambda_1,\ldots,\lambda_m))= (\sum_{i=1}^m \lambda_i \underline{y}^{\underline{a}_i},\underline{\lambda}) =\left(\sum_{i=1}^m \lambda_i \prod_{k=1}^n y_k^{a_{ki}},(\lambda_1,\ldots,\lambda_m)\right)=:\left(t,\lambda_1,\ldots,\lambda_m\right).
$$
Then there is an isomorphism
$$
\phi:\widehat{\cM}_{\widetilde{A}} \longrightarrow
\FL_t^\tau(\cH^0 \varphi_+\cO_{S_0 \times W})[\tau^{-1}] =:G
$$
of $\cD_{\widehat{V}}$-modules.
\end{theorem}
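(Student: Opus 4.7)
The plan is to construct $\phi$ explicitly by exploiting the identification of the Fourier--Laplace-transformed Gau\ss--Manin system with a twisted direct image. Since $\varphi$ is affine in the fibre variables $\underline y$, there is a standard isomorphism
\[
G\;\cong\;\cH^0\pi_+(\cO_{S_0\times\widehat V}\cdot e^{\tau F}),\qquad \pi:S_0\times\widehat V\to \widehat V,
\]
where $F=\sum_{i=1}^m\lambda_i\underline y^{\underline a_i}$ and $\cO e^{\tau F}$ denotes $\cO_{S_0\times\widehat V}$ with the connection $d+d(\tau F)\wedge$. As $S_0$ is affine of dimension $n$, this $\cH^0$ is globally realised as
\[
G=\bigl(\Omega^n(S_0)\otimes\cO(W)[\tau,\tau^{-1}]\bigr)\big/\nabla_{\mathrm{rel}}\bigl(\Omega^{n-1}(S_0)\otimes\cO(W)[\tau,\tau^{-1}]\bigr),
\]
with $\nabla_{\mathrm{rel}}=d_y+\tau(d_yF)\wedge$ and the $\cD_{\widehat V}$-action on representatives given by $\partial_{\lambda_i}\mapsto\partial_{\lambda_i}+\tau\underline y^{\underline a_i}$ and $\partial_\tau\mapsto\partial_\tau+F$. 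I then define $\phi$ by $1\mapsto[\omega_0]$ for the invariant top form $\omega_0=\bigwedge_{k=1}^n dy_k/y_k$.

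Well-definedness follows by checking that each generator of $\widehat\cI$ annihilates $[\omega_0]$. Using $\sum_i\lambda_i\underline y^{\underline a_i}=F$ one gets $\widehat E[\omega_0]=\tau F[\omega_0]-\tau F[\omega_0]=0$; next $\widehat Z_k[\omega_0]=\tau[y_k\partial_{y_k}F\cdot\omega_0]$ equals $\nabla_{\mathrm{rel}}\eta_k$ in $G$ for the closed $(n-1)$-form $\eta_k=(-1)^{k-1}\bigwedge_{j\neq k}dy_j/y_j$, hence vanishes. For $\widehat\Box_{\underline l}$, the $\partial_{\lambda_i}$'s commute and act multiplicatively on $[\omega_0]$, giving (with $l_i^\pm=\max(\pm l_i,0)$)
\[
\textstyle\prod_{l_i>0}\partial_{\lambda_i}^{l_i}[\omega_0]=\tau^{\sum l_i^+}\underline y^{\sum l_i^+\underline a_i}[\omega_0],\quad \prod_{l_i<0}\partial_{\lambda_i}^{-l_i}[\omega_0]=\tau^{\sum l_i^-}\underline y^{\sum l_i^-\underline a_i}[\omega_0],
\]
and the defining relation $\sum_il_i\underline a_i=0$ of $\dL$ combined with $\overline l+\sum l_i^-=\sum l_i^+$ makes the two terms of $\widehat\Box_{\underline l}[\omega_0]$ coincide.

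Surjectivity of $\phi$ follows because $G$ is spanned over $\cO(\widehat V)[\tau^{-1}]$ by classes $[\underline y^{\underline v}\omega_0]$, $\underline v\in\dZ^n$. The relation $\partial_{\lambda_i}[\omega_0]=\tau\underline y^{\underline a_i}[\omega_0]$ yields all such classes with $\underline v$ in the semigroup $\bigoplus_i\dN\underline a_i$. For arbitrary $\underline v\in\dZ^n$, the hypothesis $\sum_i\dZ\underline a_i=N$ together with the ``integration by parts'' identity
\[
v_k[\underline y^{\underline v}\omega_0]+\tau\textstyle\sum_i a_{ki}\lambda_i[\underline y^{\underline v+\underline a_i}\omega_0]=0
\]
derived from $\nabla_{\mathrm{rel}}(\underline y^{\underline v}\eta_k)\equiv 0$, lets one inductively raise exponents until they land in the semigroup.

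The hard part is injectivity, for which I would argue as follows. Both sides are holonomic $\cD_{\widehat V}[\tau^{-1}]$-modules---on the GKZ side by Adolphson's theorem, on the Gau\ss--Manin side by preservation of holonomicity under direct image and Fourier--Laplace---and their restrictions to the open torus $\widehat T=\dC^\ast_\tau\times S_1$ are locally free $\cO_{\widehat T}$-modules of the same generic rank $n!\vol\Conv(\underline a_1,\ldots,\underline a_m)$, Adolphson's rank formula on one side matching Kouchnirenko's count of critical points of a generic fibre $F_{\underline\lambda}$ on the other. Surjectivity plus equal rank then gives an isomorphism over $\widehat T$. To extend it to all of $\widehat V[\tau^{-1}]$ one must rule out that $\ker\phi$ is supported on the toric boundary $\widehat V\setminus\widehat T$; I would attempt this via a standard Euler--Koszul argument on $\widehat\cM_{\widetilde A}$, exploiting (when the toric assumption of Proposition \ref{prop:ClosedEmb_NormalCMGorenstein} is in force) the Cohen--Macaulayness of $\dC[\dN\widetilde A]$, which ensures that $\widehat\cM_{\widetilde A}$ has no nonzero boundary-supported submodules.
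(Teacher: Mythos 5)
Your proposal takes a genuinely different route from the paper's, and the injectivity step has a real gap.

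The paper does not construct $\phi$ by defining it on a generator and then checking injectivity and surjectivity. Instead, after rewriting $G$ as the direct image of an exponentially twisted module and applying the projection formula, $G$ is identified with the partial Fourier--Laplace transform (in $w_1,\ldots,w_m$) of $\widetilde{k}_+\cO_{T_0}$, where $\widetilde{k}:T_0\to\dC^*_\tau\times W'$ is the closed embedding given by the matrix $\widetilde{A}$. Because $\widetilde{k}$ is a closed embedding, $\widetilde{k}_+\cO_{T_0}$ is computed \emph{exactly} as an explicit cyclic quotient of $\cD_{\dC^*_\tau\times W'}$ (via the transfer bimodule), and its Fourier--Laplace transform is \emph{on the nose} the presentation defining $\widehat{\cM}_{\widetilde{A}}$. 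The isomorphism thus falls out of a chain of functorial identities; there is never any separate surjectivity or injectivity to prove. What the paper's argument buys is precisely that this issue disappears.

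Your strategy (send $1\mapsto[\omega_0]$, verify the GKZ operators kill $[\omega_0]$, then argue surjectivity and injectivity) is of the same kind as what the paper does later for the Brieskorn lattice (Corollary \ref{cor:IdentBrieskorn}), and the well-definedness check is correct. The gap is in the injectivity step. You propose to compare generic ranks (Adolphson's $n!\vol(\Conv(A))$ versus Kouchnirenko's Milnor number count) and then rule out a kernel supported on the toric boundary using Cohen--Macaulayness of $\dC[\dN\widetilde{A}]$. Two objections:
\textbf{(a)} The theorem's only hypothesis is $\sum_i\dZ\underline{a}_i=N$. Neither the rank formula (Prop.~\ref{prop:ResultsClassicalGKZ}(3), which needs the toric-data assumption) nor the CM property of $\dC[\dN\widetilde{A}]$ (Prop.~\ref{prop:ClosedEmb_NormalCMGorenstein}(2), which needs the weak Fano assumption) is available under that hypothesis alone, so your argument would only establish a special case.
\textbf{(b)} Even granting CM, the inference ``CM $\Rightarrow$ no boundary-supported submodules of $\widehat{\cM}_{\widetilde{A}}$'' is not a standard Euler--Koszul consequence and would itself have to be proved. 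What CM gives directly is that the Euler--Koszul complex is a resolution (hence the expected characteristic variety and holonomic rank), not reflexivity or absence of torsion submodules. You would likely need the holonomic self-duality statement, but that is Theorem~\ref{theo:Duality}, proved \emph{after} Theorem~\ref{theo:GM-GKZUp} and again only under the toric hypothesis, so invoking it here would be circular in structure and still not cover the stated generality.

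A smaller remark on surjectivity: under the interior-point condition of Prop.~\ref{prop:ClosedEmb_NormalCMGorenstein}(1) (which the paper's proof uses implicitly, to make $\widetilde{k}$ a closed embedding), there is a strictly positive relation $\underline{l}\in\dL\cap\dZ^m_{>0}$, and then $\sum_i\dN\underline{a}_i$ is actually all of $N$: given $\underline{v}=\sum m_i\underline{a}_i$ with $m_i\in\dZ$, add a large multiple of $\underline{l}$ to make all coefficients nonnegative. So every $[\underline{y}^{\underline{v}}\omega_0]$ is already in the image of the proposed $\phi$ without any ``raising exponents'' induction; that induction, as you sketch it, replaces one term by a sum of terms and would need a ranking argument to terminate, which you do not supply.
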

Before entering into the proof, let us recall the following well-known
description of the Fourier-Laplace transformation of the Gau\ss-Manin system.
\begin{lemma}\label{lem:DirectImageSimplified}
Write $\varphi=(F,\pi)$, where $F:S_0\times W \rightarrow \dC_t$, $(\underline{y},\underline{\lambda})\mapsto\sum_{i=1}^m \lambda_i \underline{y}^{\underline{a}_i}$
and $\pi:S_0\times W\rightarrow W$ is the projection. Then there
is an isomorphism of $\cD_{\widehat{V}}$-modules
$$
G\cong \cH^0\left(\pi_*\Omega^{\bullet+n}_{S_0\times W / W}[z^\pm],d-z^{-1} \cdot d F\wedge\right),
$$
where $d$ is the differential in the relative de Rham complex $\pi_* \Omega^\bullet_{S_0\times W/W}$.
The structure of a $\cD_{\widehat{V}}$-module on the right hand side is defined as follows
$$
\begin{array}{rcl}
\partial_z (\omega \cdot z^i) & := & i\cdot \omega\cdot z^{i-1} - z^{-2}F\cdot \omega \cdot z^i, \\ \\
\partial_{\lambda_i} (\omega \cdot z^i) & := & \partial_{\lambda_i}(\omega)\cdot z^i + \partial_{\lambda_i} F \cdot \omega \cdot z^{i-1}
= \partial_{\lambda_i}(\omega)\cdot z^i + \underline{y}^{\underline{a}_i}\cdot \omega \cdot z^{i-1}, \\ \\
\end{array}
$$
where $\omega\in\Omega^n_{S_0\times W / W}$.
\end{lemma}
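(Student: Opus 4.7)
The plan is to realize $G$ as a twisted relative de Rham complex via the standard factorization of $\varphi$ through its graph, and then to read off the $\cD$-module structure by direct computation. This is the algebraic analogue of the Brieskorn--Pham picture familiar from local singularity theory.

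First I would factor $\varphi = p \circ i_F$, where $i_F \colon S_0 \times W \hookrightarrow \dC_t \times S_0 \times W$ is the graph embedding $(\underline{y},\underline{\lambda}) \mapsto (F(\underline{y},\underline{\lambda}), \underline{y}, \underline{\lambda})$ and $p \colon \dC_t \times S_0 \times W \to \dC_t \times W$ is the projection forgetting the middle factor. Since Fourier--Laplace in the $t$-variable commutes with $p_+$ (which involves no integration over $t$ or $\tau$), we have
$$
\FL^\tau_t \cH^0 \varphi_+ \cO_{S_0 \times W} \;=\; \cH^0 p_+ \FL^\tau_t (i_F)_+ \cO_{S_0 \times W}.
$$
The advantage is that $(i_F)_+ \cO$ is described explicitly by Kashiwara's formula: after the change of coordinate $t' = t - F$ it becomes $\cO_{S_0 \times W}[\partial_t]$ with the standard closed-embedding structure, so the actions of $t$, $\partial_{y_j}$ and $\partial_{\lambda_i}$ acquire chain-rule contributions involving $F$, $\partial_{y_j}F$ and $\partial_{\lambda_i}F = \underline{y}^{\underline{a}_i}$.

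Next, applying $\FL^\tau_t$ (so that $\tau$ replaces $-\partial_t$ and $\partial_\tau$ replaces $t$) transforms $(i_F)_+ \cO$ into the free rank-one $\cO_{S_0\times W\times \dC_\tau}$-module whose flat section is formally $e^{\tau F}$: concretely, $\partial_{\lambda_i}$ on a generator acquires the term $\tau\,\partial_{\lambda_i}F = \tau\, \underline{y}^{\underline{a}_i}$. I would then compute $p_+$ of this rank-one twisted module via the relative de Rham complex of $\pi \colon S_0\times W \to W$ (with $\tau$ carried along as parameter), obtaining a complex of the shape $\pi_*\bigl(\Omega^{\bullet+n}_{S_0 \times W/W}[\tau],\, d + \tau\, dF \wedge\bigr)$. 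Inverting $\tau$ and substituting $z = \tau^{-1}$ yields exactly the complex in the statement. The $\cD$-module structure is then read off: the action of $\partial_{\lambda_i}$ on a class $\omega z^i$ splits into the derivative of the coefficients plus the $e^{\tau F}$-twist contribution $\tau\,\partial_{\lambda_i}(F)\cdot\omega = \underline{y}^{\underline{a}_i} \omega z^{-1}$, giving the stated formula; and $\partial_z = -\tau^2\partial_\tau$, combined with $\partial_\tau = t$ acting as multiplication by $F$ on $\cH^0\varphi_+\cO$, yields $\partial_z(\omega z^i) = i\,\omega z^{i-1} - z^{-2}F\omega z^i$.

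The main obstacle is not conceptual but rather careful sign bookkeeping: the Fourier--Laplace convention $\tau = -\partial_t$, the sign in Kashiwara's description of $(i_F)_+ \cO$, and the orientation of the relative de Rham differential must be fixed consistently at the outset so that the twisted differential emerges with precisely the sign $d - z^{-1} dF \wedge$ claimed in the lemma (a priori the formal calculation produces $d + \tau \, dF \wedge = d + z^{-1} dF\wedge$, and reconciling this requires a careful accounting of the degree shift $[\bullet + n]$ and the sign conventions above). Once these choices are pinned down the verification is routine.
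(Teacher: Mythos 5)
Your proposal takes a genuinely different route from the paper: the authors' proof consists essentially of citations (to Douai--Sabbah for the identification as a $\cD_{\widehat{V}}/\cD_W$-module, to M.~Saito's lemma, and to Sevenheck for the $\partial_{\lambda_i}$-action), whereas you give a direct argument via the standard graph factorization $\varphi = p \circ i_F$, Kashiwara's description of $(i_F)_+\cO$, commutation of partial Fourier--Laplace with $p_+$, and relative de~Rham. That is a perfectly sound way to prove the lemma and it is more self-contained than what the paper offers, so the overall approach is fine.

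Two remarks on the sign discussion at the end. First, the degree shift $[\bullet+n]$ has nothing to do with the sign in front of $dF\wedge$: the shift only records that $\cH^0p_+$ is computed by $\cH^n$ of the relative de~Rham complex and introduces no signs into the twisted differential, so invoking it as a possible source of the reconciliation is a red herring. Second, you should not try to massage your $d+z^{-1}dF\wedge$ into the printed $d-z^{-1}dF\wedge$: a direct check shows that the two formulas
$\partial_z(\omega z^i)=i\omega z^{i-1}-z^{-2}F\omega z^i$ and $\partial_{\lambda_i}(\omega z^i)=\partial_{\lambda_i}(\omega)z^i+\underline{y}^{\underline{a}_i}\omega z^{i-1}$ commute (on the chain level, hence descend to cohomology) with $d+z^{-1}dF\wedge$ but \emph{not} with $d-z^{-1}dF\wedge$ -- the commutator with the latter leaves an uncancelled $\pm 2z^{-2}dF\wedge$ term. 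So the differential your computation naturally produces is in fact the one that is compatible with the stated $\cD_{\widehat{V}}$-module structure; the tension is with the printed sign in the lemma (and with the sign convention used for the Brieskorn lattice in corollary~\ref{cor:IdentBrieskorn}), not with your derivation. Rather than blur this with ``sign conventions that need pinning down,'' pin them down: fix the convention $\tau=-\partial_t$ from the paper's definition of $\FL$, carry it through Kashiwara and the twisted de~Rham complex, and state the differential and the operator formulas that come out. If the result matches the operator formulas but differs in the sign of the differential, say so explicitly; otherwise the ``reconciliation'' you promise never actually happens.
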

\begin{proof}
The identification of both objects as $\cD_{\widehat{V}}/\cD_{W}$-modules is well-known (see, e.g., \cite[proposition 2.7]{DS}, where
the result is stated, for a proof, one uses \cite[lemma 2.4]{SM}). The proof of the formulas for the action of the vector fields $\partial_{\lambda_i}$
can be found, in a similar situation, in \cite[lemma 7]{Sev1}.
\end{proof}

\begin{proof}[Proof of the theorem]
Throughout the proof, we will use the following notation: Let $X$ be a smooth algebraic variety, and $f$ a meromorphic function
on $X$ with pole locus $D:=g^{-1}(\infty)\subset X$, then we denote by $\cO_X(*D)\cdot e^f$ the locally free $\cO_X(*D)$-module of rank one with connection operator $\nabla:=d+df\wedge$. The $\cD_X$-module thus obtained has irregular singularities along $D$, notice that this irregularity locus may
lay in a boundary of a smooth projective compactification $\overline{X}$ of $X$ if $f\in\cO_X$. For any $\cD_X$-module $\cM$, we write
$\cM\cdot e^f$ for the tensor product $\cM\otimes_{\cO_X} \cO_X(*D)\cdot e^f$.

Put $T_0:=\Spec\dC[\widetilde{N}]$ with coordinates $y_0,y_1,\ldots,y_n$, and define
$$
\begin{array}{rcl}
\widetilde{k}: T_0 & \longrightarrow & \dC^*\times W' \subset V'\\ \\
(y_0,y_1,\ldots,y_n) & \longmapsto & \left(w_0:=y_0,(w_i:=y_0\cdot \underline{y}^{\underline{a}_i})_{i=1,\ldots,m}\right),
\end{array}
$$
where, as before, we write $\underline{y}^{\underline{a}_i}$ for the product $\prod_{k=1}^n y_k^{a_{ki}}$.
It is an obvious consequence of the first point of proposition \ref{prop:ClosedEmb_NormalCMGorenstein}, that $\widetilde{k}$ is again a closed
embedding from $T_0$ to $\dC^*\times W'$.
Write moreover
$p$ for the projection $\dC_\tau^*\times S_0 \times W \twoheadrightarrow \dC_\tau^*\times W$. We identify $T_0$ with $\dC_\tau^*\times S_0$ by the map
$(y_0,y_1,\ldots,y_n)\mapsto (-y_0,y_1,\ldots,y_n)=(\tau,y_1,\ldots,y_n)$.

First we claim that
\begin{equation}
\label{eq:ProofMainTheo-1}
G \cong \cH^0 p_+ \left(\cO_{\dC^*_\tau\times S_0 \times W}\cdot e^{-\tau\sum_{i=1}^m\lambda_i \underline{y}^{\underline{a}_i}}\right).
\end{equation}
As $p$ is a projection, the direct image $p_+$ of any module is nothing but its relative de Rham
complex, i.e.
$$
\cH^0 p_+ \left(\cO_{\dC^*_\tau\times S_0 \times W}\cdot e^{-\tau\sum_{i=1}^m\lambda_i \underline{y}^{\underline{a}_i}}\right)
\cong
\cH^0\left(p_*\Omega^{\bullet+n}_{\dC^*_\tau\times S_0\times W/\dC^*_\tau\times W}, d-\tau\cdot d F\wedge\right),
$$
and this module is the same as $G$, using lemma \ref{lem:DirectImageSimplified}.
It follows from the projection formula (\cite[corollary 1.7.5]{Hotta}) that
$$
\left((\widetilde{k}\times \id_W)_+\cO_{T_0\times W}\right) \cdot e^{\sum_{i=1}^m \lambda_i w_i} =(\widetilde{k}\times \id_{W})_+\left(\cO_{T_0\times W}\cdot e^{y_0\sum_{i=1}^m \lambda_i \underline{y}^{\underline{a}_i}}\right).
$$
This can also be shown by a direct calculation, in fact, both modules are quotients of $\cD_{\dC^*_\tau\times W' \times W}$.
Now consider the following diagram
$$
\xymatrix{
&& & \dC_\tau^*\times W'\times W \ar@{->>}[ld]_{\pi_1} \ar@{->>}[rd]^{\pi_2} \\
T_0  \ar^{\widetilde{k}}[rr]&& \dC_\tau^* \times W' && \dC_\tau^*\times W,
}
$$
where $\pi_1$ and $\pi_2$ are the obvious projections. As
$\pi_2\circ(\widetilde{k}\times\id_W) = p$, we obtain that
$$
\cH^0 p_+ \left(\cO_{S_0\times \dC_\tau^* \times W} \cdot e^{-\tau\sum_{i=1}^m \lambda_i \underline{y}^{\underline{a}_i}}\right)
=
\cH^0 \pi_{2,+}\left(((\widetilde{k}\times \id_{W})_+ \cO_{T_0\times W}) \cdot e^{\sum_{i=1}^m \lambda_i w_i}\right).
$$
On the other hand, we obviously have that $(\widetilde{k}\times \id_W)_+ \cO_{T_0\times W}=\pi_1^* \widetilde{k}_+ \cO_{T_0}$, hence
$$
\cH^0 \pi_{2,+}\left(((\widetilde{k}\times \id_W)_+ \cO_{T_0\times W}) \cdot e^{\sum_{i=1}^m \lambda_i w_i}\right) =
\cH^0 \pi_{2,+}\left((\pi_1^* \widetilde{k}_+ \cO_{T_0})\cdot e^{\sum_{i=1}^m \lambda_i w_i}\right),
$$
Now we use the following well-known description of the Fourier-Laplace transformation:
$$
\cH^0 \pi_{2,+}\left(((\pi_1)^* \widetilde{k}_+ \cO_{T_0})\cdot e^{\sum_{i=1}^m \lambda_i w_i}\right)
= \FL^{\,-\lambda_1,\ldots,-\lambda_m}_{w_1,\ldots,w_m}\left(\widetilde{k}_+\cO_{T_0}\right).
$$
We are thus left to show that the latter module equals $\widehat{\cM}_{\widetilde{A}}$.
In order to do so,
notice that the $\cD_{T_0}$-module $\cO_{T_0}$ can be written as a quotient of $\cD_{T_0}$.
The natural choice would be to mod out the left ideal generated by $(y_k\partial_{y_k})_{k=0,\ldots,n}$,
however, we will rather write
\begin{equation}
\label{eq:ChoiceOT0}
\cO_{T_0} =
\frac{\cD_{T_0}}{(y_0\partial_{y_0})+(y_k \partial_{y_k}+1)_{k=1,\ldots,n}},
\end{equation}
which we abbreviate as $\cO_{T_0}\cdot \prod_{k=1}^n y_k^{-1}$.
Now notice that $\widetilde{k}$ is a closed embedding, hence a calculation similar to
the proof of \cite[proposition 2.1]{SchulWalth2}, using the $(\cD_{T_0},\widetilde{k}^{-1}\cD_{\dC_\tau^*\times W'})$-transfer bimodule $\cD_{T_0\rightarrow \dC^*_\tau\times W'}$ shows that
the direct image $\widetilde{k}_+ \cO_{T_0}$ is given by

$$
\widetilde{k}_+ \cO_{T_0}
=
\frac{\cD_{\dC_\tau^*\times W'}}{\left(\prod_{i:l_i <0} (w_0^{-1} w_i)^{-l_i} - \prod_{i:l_i>0 } (w_0^{-1} w_i)^{l_i}\right)_{\underline{l}\in\dL}
+\left(\sum_{i=1}^m a_{ki} \partial_{w_i}w_i\right)_{k=1,\ldots,n}+\left(w_0\partial_{w_0}+\sum_{i=1}^m \partial_{w_i}w_i \right)}.
$$
Now as $w_0=-\tau$ and $\partial_{\lambda_i}=-w_i$ in $\FL^{\;-\lambda_1,\ldots,-\lambda_m}_{w_1,\ldots,w_m} \widetilde{k}_+\cO_{T_0}$, we obtain that
the latter module equals
$$
\frac{\cD_{\dC_\tau^*\times W}}{\left(\prod_{i:l_i <0} (\tau^{-1} \partial_{\lambda_i})^{-l_i} - \prod_{i:l_i>0 } (\tau^{-1} \partial_{\lambda_i})^{l_i}\right)_{\underline{l}\in\dL}
+\left(\sum_{i=1}^m a_{ki} \lambda_i \partial_{\lambda_i}\right)_{k=1,\ldots,n}+\left(w_0\partial_{w_0}-\sum_{i=1}^m \lambda_i\partial_{\lambda_i}\right)}^.
$$
so that finally
$$
\begin{array}{rcl}
\FL^{\;-\lambda_1,\ldots,-\lambda_m}_{w_1,\ldots,w_m} \widetilde{k}_+\cO_{T_0}
& = &
\frac{\cD_{\widehat{V}}[\tau^{-1}]}{\tau^{\overline{l}}\left(\prod_{i:l_i <0} \partial_{\lambda_i}^{-l_i} - \prod_{i:l_i>0 } \partial_{\lambda_i}^{l_i}\right)_{\underline{l}\in\dL}
+\left(\sum_{i=1}^m a_{ki} \lambda_i \partial_{\lambda_i}\right)_{k=1,\ldots,n}+\left(\tau\partial_\tau-\sum_{i=1}^m \lambda_i\partial_{\lambda_i}\right)} \\ \\
& = & \widehat{\cM}^{(1,\underline{0})}_{\widetilde{A}} = \widehat{\cM}_{\widetilde{A}}.
\end{array}
$$
\end{proof}
In the following proposition, we comment upon the more general case where
the vectors $\underline{a}_1,\ldots,\underline{a}_m$ only generate $N_\dQ$ over $\dQ$.
Let as before $A=(\underline{a}_1,\ldots,\underline{a}_m)$ where $\underline{a}_i$ are seen
as vectors in $\dZ^n$. Then it is a well-known fact that $A$ can be factorized as
$ B_1 \cdot C \cdot B_2$ where $B_1$ resp. $B_2$ is in $\textup{Gl}(n, \dZ)$ resp. $\textup{Gl}(m,\dZ)$ and $C$ has the form
$$
\left(\begin{matrix} e_1 & & &\\ & \ddots &  & 0\\ & & e_n & \end{matrix} \right) = \left(\begin{matrix} e_1 & & \\ & \ddots &  \\ & & e_n  \end{matrix} \right) \cdot \left(\begin{matrix} 1 & & &\\ & \ddots &  & 0\\ & & 1 & \end{matrix} \right) = D \cdot E
$$
where $e_i$ are natural numbers called elementary divisors. Set $A':= E \cdot B_2$, then $A=B_1\cdot D \cdot A'$ and the columns of $A'$ generate $N$ over $\dZ$.
\begin{proposition}
\label{prop:NnotGenerated}
We have the following isomorphism
\[
\FL_t^\tau(\cH^0 \varphi_+\cO_{S_0 \times W})[\tau^{-1}] \simeq
\bigoplus_{
\underline{j}\in I_n}
\widehat{\cM}^{(1,j_1/e_1, \ldots , j_n/e_n)}_{\widetilde{A}}.
\]
where $\underline{j}=(j_1,\ldots,j_n)\in \dN^n$ and $I_n=\prod_{k=1}^n ([0,e_k-1]\cap \dN) \subset \dN^n$.
\end{proposition}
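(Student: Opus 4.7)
The strategy is to reduce to Theorem \ref{theo:GM-GKZUp} applied to $A'$ by using the Smith normal form factorization $A = B_1 D A'$ to express the family $\varphi$ as a pullback along a finite \'etale Galois cover. Since $B_1 \in \textup{Gl}(n, \dZ)$ induces an automorphism of the torus $S_0 = \Spec\dC[N]$, an initial change of monomial coordinates reduces the problem to the case $B_1 = I$, so that $\underline{a}_i = D\underline{a}'_i$. The map
$$\mu \colon S_0 \longrightarrow S_0, \qquad (y_1, \ldots, y_n) \longmapsto (y_1^{e_1}, \ldots, y_n^{e_n})$$
is then a finite \'etale Galois cover of degree $\prod_k e_k$, with Galois group $G := \prod_{k=1}^n \mu_{e_k}$, and the identity $\underline{y}^{\underline{a}_i} = \mu(\underline{y})^{\underline{a}'_i}$ shows that $\varphi = \varphi_{A'} \circ (\mu \times \id_W)$, where $\varphi_{A'}$ denotes the Laurent polynomial family associated to $A'$. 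Functoriality of direct image then gives
$$\cH^0 \varphi_+ \cO_{S_0 \times W} \;=\; \cH^0 \varphi_{A', +}\bigl(\mu_+ \cO_{S_0} \boxtimes \cO_W\bigr).$$

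Next I would decompose $\mu_+ \cO_{S_0}$ into its $G$-isotypic components. The characters of $G$ are naturally indexed by $I_n$, and the $\underline{j}$-isotypic summand $L_{\underline{j}}$ is the rank one $\cD_{S_0}$-module generated by the (multivalued) monomial $\underline{y}^{\underline{j}/\underline{e}}$, admitting the presentation $L_{\underline{j}} \cong \cD_{S_0}/(y_k \partial_{y_k} - j_k/e_k)_{k=1,\ldots,n}$. Since $\mu$ is \'etale Galois, $\mu_+ \cO_{S_0} = \bigoplus_{\underline{j} \in I_n} L_{\underline{j}}$; plugging this in and using that $\FL_t^\tau$ and localization at $\tau = 0$ both commute with finite direct sums, I reduce to proving, for each fixed $\underline{j} \in I_n$, the isomorphism
$$\FL_t^\tau\bigl(\cH^0 \varphi_{A', +}(L_{\underline{j}} \boxtimes \cO_W)\bigr)[\tau^{-1}] \;\cong\; \widehat{\cM}^{(1, j_1/e_1, \ldots, j_n/e_n)}_{\widetilde{A}}.$$

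For each such $\underline{j}$, this identification is obtained by repeating the argument of Theorem \ref{theo:GM-GKZUp} almost verbatim. The hypothesis $\sum_i \dZ \underline{a}'_i = N$ guarantees that the corresponding map $\widetilde{k}_{A'} \colon T_0 \to \dC^*_\tau \times W'$ is a closed embedding, so the projection-formula reduction in the proof of that theorem applies. The only change occurs at equation \eqref{eq:ChoiceOT0}: with the generator $\prod_k y_k^{-1 + j_k/e_k}$, the extension of $L_{\underline{j}}$ to $T_0 = \dC_\tau \times S_0$ has presentation $\cD_{T_0}/\bigl((y_0 \partial_{y_0}) + (y_k \partial_{y_k} + 1 - j_k/e_k)_{k=1,\ldots,n}\bigr)$. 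Tracing the resulting additive correction $-j_k/e_k$ through the direct image along $\widetilde{k}_{A'}$ and the Fourier-Laplace transformation shifts the constant terms in the $\widehat{Z}_k$-operators by the corresponding amount. Finally, passing from the hypergeometric system for $\widetilde{A}'$ to one for $\widetilde{A}$ uses that the module of relations $\dL$, and hence the $\widehat{\Box}_{\underline{l}}$-operators, is invariant under $A = DA'$ (the matrix $D$ being injective), while the $\widehat{Z}_k$-operators for the two matrices are proportional up to a rescaling of $\beta_k$ by the factor $e_k$. The main technical obstacle is precisely this careful bookkeeping of the additive shifts and proportionality factors through the closed embedding, the Fourier-Laplace transformation, and the change of matrix from $\widetilde{A}'$ back to $\widetilde{A}$, which must combine to produce precisely the claimed parameter $(1, j_1/e_1, \ldots, j_n/e_n)$.
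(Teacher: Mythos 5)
Your strategy matches the paper's proof in all essentials: reduce to $B_1 = \id$ via the automorphism of $S_0$, factor the Gau\ss-Manin situation through the finite covering associated to the elementary divisors $e_1,\ldots,e_n$, decompose the pushforward of the structure sheaf along that covering into rank-one isotypic summands indexed by $I_n$, and run the proof of Theorem \ref{theo:GM-GKZUp} on each summand. The paper performs the factorization at the level of $\widetilde{k} = \widetilde{k}'\circ c$ with $c$ acting on $T_0 = \dC_\tau^*\times S_0$ (leaving the $\tau$-factor untouched), which is literally your $\mu\times\id$ transported through the projection-formula argument, so the two decompositions are the same. Your presentation $L_{\underline{j}} \cong \cD_{S_0}/(y_k\partial_{y_k}-j_k/e_k)_k$ and the subsequent switch to the cyclic generator $\prod_k y_k^{-1+j_k/e_k}$ is exactly the shift that produces the paper's presentation of $c_+\cO_{T_0}$ as $\bigoplus_{\underline{j}}\cD_{T_0}/\bigl(y_0\partial_{y_0}+(y_k\partial_{y_k}+1-j_k/e_k)_k\bigr)$.

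The one place where the proposal is thinner than the paper is the very last step, and it is worth flagging because you yourself call it ``the main technical obstacle.'' You propose to first obtain a hypergeometric module for $\widetilde{A}'$ and then pass to $\widetilde{A}$ by the relation $Z_k^{\widetilde{A}} = e_k\, Z_k^{\widetilde{A}'}$. This rescaling is real, but note that it acts on the parameter contravariantly: $\cM^\beta_{\widetilde{A}'} = \cM^{\widetilde{D}\beta}_{\widetilde{A}}$ with $\widetilde{D} = \diag(1,e_1,\ldots,e_n)$. If the $\widetilde{A}'$-computation yields $\beta'_k = j_k/e_k$, the $\widetilde{A}$-parameter comes out as $j_k$, not the claimed $j_k/e_k$; conversely, to land on $(1,j_1/e_1,\ldots,j_n/e_n)$ for $\widetilde{A}$ you would need the $\widetilde{A}'$-parameter to be $j_k/e_k^2$. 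The paper sidesteps this ambiguity by writing the result of $\widetilde{k}'_+$ directly in terms of the entries $a_{ki}$ of $A$ (not $a'_{ki}$), so the rescaling is absorbed into the direct-image computation rather than performed afterwards, and the constant that emerges is $j_k/e_k$. Your plan is not wrong, but the ``bookkeeping'' is exactly where a factor of $e_k$ can slip in, and as written the proposal's route points toward the wrong power of $e_k$. When filling in the details, it is cleaner to do as the paper does: compute the direct image of the shifted module under $\widetilde{k}'$ and express the $Z_k$-operators at once in terms of $A$, rather than to first derive the $\widetilde{A}'$-system and then transport to $\widetilde{A}$.
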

\begin{proof}
First notice that the morphism $\varphi$ can be factorized into $\varphi'\circ (\Phi\times \id_{S_1})$,
where $\Phi$ is the  automorphism of $S_0$ defined by $B_1\in\textup{Gl}(n,\dZ)$. Hence
$\varphi_+\cO_{S_0 \times W} = \varphi'_+\cO_{S_0 \times W}$, so that
we can assume that $B_1 = \id_{\dZ^n}$, i.e., that $A = D \cdot A'$. Now one checks that the arguments
in the proof of theorem \ref{theo:GM-GKZUp} showing that
$\FL^{\lambda_1,\ldots,\lambda_m}_{w_1,\ldots,w_m}\left(\widetilde{k}_+\cO_{T_0}\right) \simeq \FL_t^\tau(\cH^0 \varphi_+\cO_{S_0 \times W})[\tau^{-1}]$
are still valid under the more general hypothesis that $A=D\cdot A'$ where only the columns of $A'$ do generate $N$ over $\dZ$.
Hence we need to compute the module $\FL^{\lambda_1,\ldots,\lambda_m}_{w_1,\ldots,w_m}\left(\widetilde{k}_+\cO_{T_0}\right)$.

The factorization of $A$ corresponds to a factorization $\widetilde{k}=\widetilde{k}'\circ c$, where
$c: (y_0, y_1, \ldots ,y_n) \mapsto (y_0,y_1^{e_1}, \ldots y_n^{e_n})$ is a covering map and $\widetilde{k}'$
is a closed embedding defined by the matrix $A'$.
Let us first compute the direct image of $\cO_{T_0}$ under $c$. To do so, we look at the one-dimensional case, i.e. a map $c_k: y_k \mapsto y_k^{e_k}$. We have
\[
c_{k,+} \cO_{\dC^*} \simeq c_{k,+} \cD_{\dC^*}/(y_k \partial_{y_k}) \simeq \bigoplus_{j=0}^{e_k-1} \cD_{\dC^*}/(y_k \partial_{y_k} + 1 - j/e_k),
\]
and moreover $c_+ \cO_{T_0} = \cO_{\dC^*} \boxtimes c_{1,+} \cO_{\dC^*} \boxtimes \ldots \boxtimes c_{n,+} \cO_{\dC^*}$ so that we get
\[
c_+ \cO_{T_0} \simeq
\bigoplus_{
\underline{j}\in I_n}
\frac{\cD_{T_0}}{y_0\partial_{y_0}+( y_k \partial_{y_k} +1 -j_k/e_k)_{k=1,\ldots,n}}.
\]
In the next step we compute the direct image under the closed embedding $\widetilde{k}'$. Similar as above, we obtain for the direct image
\begin{align}
&\widetilde{k}'_+\left(\frac{\cD_{T_0}}{y_0\partial_{y_0}+( y_k \partial_{y_k} +1 -j_k/e_k)_{k=1,\ldots,n}}\right) \notag \\
= &\frac{\cD_{\dC_\tau^*\times W'}}{\left(\prod_{i:l_i <0} (w_0^{-1} w_i)^{-l_i} - \prod_{i:l_i>0 } (w_0^{-1} w_i)^{l_i}\right)_{\underline{l}\in\dL}
+\left(\sum_{i=1}^m a_{ki} \partial_{w_i}w_i -j_k /e_k \right)_{k=1,\ldots,n}+\left(w_0\partial_{w_0}+\sum_{i=1}^m \partial_{w_i}w_i \right)}\notag\\
\end{align}
The Fourier-Laplace transformation in the variables $w_1, \ldots, w_m$ yields
\[
\FL^{\;-\lambda_1,\ldots,-\lambda_m}_{w_1,\ldots,w_m}
\left(\widetilde{k}'_+\left(\frac{\cD_{T_0}}{y_0\partial_{y_0}+( y_k \partial_{y_k} +1 -j_k/e_k)_{k=1,\ldots,n}}\right)\right)= \widehat{\cM}^{\beta}_{\widetilde{A}}
\]
where $\beta =(1,j_1/e_1, \ldots , j_n/e_n)$. Taking the direct sum this gives
\[
\FL_t^\tau(\cH^0 \varphi_+\cO_{S_0 \times W})[\tau^{-1}] \simeq \FL^{\;-\lambda_1,\ldots,-\lambda_m}_{w_1,\ldots,w_m}\left(\widetilde{k}_+\cO_{T_0}\right)\\
=
\bigoplus_{\underline{j}\in I_n}
\widehat{\cM}^{(1,j_1/e_1, \ldots , j_n/e_n)}_{\widetilde{A}}\; .
\]
\end{proof}
In the following proposition, we collect some properties of the hypergeometric $\cD$-modules introduced above.
An important tool will be the notion of non-degeneracy of a Laurent polynomial, recall
(see, e.g., \cite{Kouch} or \cite{Adolphson}) that $f:(\dC^*)^t \rightarrow \dC$, $f=\mu_1 \underline{x}^{\underline{b}_s}+\ldots+\mu_s \underline{x}^{\underline{b}_s}$
is called non-degenerate if for any proper face $\tau$ of $\Conv(\underline{0},\underline{b}_1, \ldots, \underline{b}_s)\subset \dR^t$ not containing
$\underline{0}$, $f_\tau=\sum_{\underline{b}_i\in\tau} \mu_i \underline{x}^{\underline{b}_i}$ has no critical points in $(\dC^*)^t$.
\begin{proposition}
\label{prop:ResultsClassicalGKZ}
\begin{enumerate}
\item
$\cM^\beta_{\widetilde{A}}$ (resp. $\widehat{\cM}^\beta_{\widetilde{A}}$, $\widehat{\cM}^{\beta,loc}_{\widetilde{A}}$) is a coherent and holonomic
$\cD_{V}$-module (resp. $\cD_{\widehat{V}}$-module, $\cD_{\widehat{T}}$-module).
Moreover, $\cM^\beta_{\widetilde{A}}$ has only regular singularities, included at infinity.
\item
Let as before
$F:S_0\times W \rightarrow \dC_t$, $(y_1,\ldots,y_n,\lambda_1,\ldots,\lambda_m)\mapsto\sum_{i=1}^m \lambda_i \cdot \underline{y}^{\underline{a}_i}$. Define
$$
S_1^0:=\{(\lambda_1,\ldots,\lambda_m)\in S_1\,|\,F(-,\underline{\lambda}) \textup{ is non-degenerate with respect to its Newton polyhedron}\}.
$$
Moreover, consider the following extended family
$$
\begin{array}{rcl}
\widetilde{F}:T_0\times V & \longrightarrow & \dC \\ \\
((y_0,y_1,\ldots,y_n),(\lambda_0,\lambda_1,\ldots,\lambda_m)) & \longmapsto &
y_0\cdot\left(\lambda_0+\sum_{i=1}^m\lambda_i\cdot \underline{y}^{\underline{a}_i}\right)
\end{array}
$$
and put
$$
V^0:=\{(\lambda_0,\lambda_1,\ldots,\lambda_m)\in \dC\times S_1\,|\,\widetilde{F}(-,\underline{\lambda}) \textup{ is non-degenerate with respect to its Newton polyhedron}\}.
$$
Both $S_1^0$ and $V^0$ are Zariski open subspaces of $S_1$ resp. $\dC\times S_1$
(as well as of $W$ resp. $V$). We have
\begin{enumerate}
\item
The characteristic variety of the restriction of $\cM^\beta_{\widetilde{A}}$ to $V^0$
is the zero section of $T^*V^0$, i.e., $\cM^\beta_{\widetilde{A}}$ is smooth on $V^0$.
\item
Suppose that $\underline{a}_1,\ldots,\underline{a}_m$ are defined by toric data and moreover, that the the projective variety
$\XSigA$ is genuine Fano, i.e., that its anti-canonical class is ample (and not only nef).
Then
$V\backslash V^0 \subset \Delta(F) \cup \bigcup_{i=1}^m\{\lambda_i=0\}\subset V$, where
$$
\Delta(F):=\left\{(-t,\lambda_1,\ldots,\lambda_m)\in V\,|\, F(-,\underline{\lambda})^{-1}(t)\textup{ is singular }\right\}
$$
is the discriminant of the family $-F$.
\item
The restriction of $\widehat{\cM}^{\beta,loc}_{\widetilde{A}}$
to $\dC_\tau^*\times S_1^0$ is smooth.
\end{enumerate}
\item
Suppose that $\underline{a}_1,\ldots,\underline{a}_m$ are defined by toric data. Then the
generic rank of both
$\cM^\beta_{\widetilde{A}}$ and $\widehat{\cM}^\beta_{\widetilde{A}}$ is equal to
$n!\cdot \vol(\Conv(\underline{a}_1,\ldots,\underline{a}_m)) )= (n+1)!\cdot \vol(\Conv(\widetilde{\underline{0}},\widetilde{\underline{a}}_1,\ldots,\widetilde{\underline{a}}_m))$, where
the volume of a hypercube $[0,1]^t\subset \dR^t$ is normalized to one,
and where $\widetilde{\underline{0}}$ denotes the origin in $\dZ^{n+1}$.
\end{enumerate}
\end{proposition}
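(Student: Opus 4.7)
The plan is to reduce the three parts to foundational results of Adolphson \cite{Adolphson} on $A$-hypergeometric systems, supplemented by one genuinely new geometric argument in part 2(b) that exploits the (strict) Fano hypothesis. For part 1, coherence and holonomicity of $\cM^\beta_{\widetilde{A}}$ for arbitrary $\beta$ is a theorem of Adolphson. The crucial observation is that all $\widetilde{\underline{a}}_i$ lie in the affine hyperplane $\{x_0=1\}\subset\dR^{n+1}$, so the system is \emph{homogeneous}, which forces regular holonomicity on $V$ including at infinity (a result due to Hotta and others). Fourier--Laplace transformation in a single variable and restriction to the open subset $\widehat{T}\subset\widehat{V}$ both preserve coherence and holonomicity, so these properties pass to $\widehat{\cM}^\beta_{\widetilde{A}}$ and $\widehat{\cM}^{\beta,loc}_{\widetilde{A}}$; regularity is generally lost under Fourier--Laplace and not claimed. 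Part 2(a) is a direct application of Adolphson's computation of the characteristic variety: it equals the zero section on the open subset where $\widetilde{F}(-,\underline{\lambda})$ is non-degenerate, which is $V^0$. Part 2(c) then follows by combining 2(a) with the identification of theorem \ref{theo:GM-GKZUp}: non-degeneracy of $F(-,\underline{\lambda})$ for $\underline{\lambda}\in S_1^0$ implies cohomological tameness of the family $\varphi$, so its Gauss--Manin system is smooth in the family direction over $S_1^0$, and this smoothness is preserved by the partial Fourier--Laplace on $\dC^*_\tau$.

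Part 2(b) is the main obstacle and requires care. I will analyse the proper faces $\tau$ of $\Conv(\widetilde{\underline{0}},\widetilde{\underline{a}}_0,\widetilde{\underline{a}}_1,\ldots,\widetilde{\underline{a}}_m)\subset\dR^{n+1}$ not containing $\widetilde{\underline{0}}$. By proposition \ref{prop:ClosedEmb_NormalCMGorenstein} the point $\underline{0}\in N$ lies in the interior of $\Conv(\underline{a}_1,\ldots,\underline{a}_m)$, so $\widetilde{\underline{a}}_0$ is interior to the top face $\{1\}\times\Conv(\underline{a}_1,\ldots,\underline{a}_m)$ and may be dropped from the hull; the relevant proper faces are therefore the top face itself and its subfaces. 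For the top face, $\widetilde{F}_\tau=\widetilde{F}$, and the critical equations $\partial_{y_0}\widetilde{F}=0$ and $y_k\partial_{y_k}\widetilde{F}=0$ on $(\dC^*)^{n+1}$ reduce to $-\lambda_0=F(\underline{y},\underline{\lambda})$ together with $\underline{y}$ being a critical point of $F(-,\underline{\lambda})$, i.e.\ precisely the condition $(\lambda_0,\underline{\lambda})\in\Delta(F)$. For a proper subface $\tau=\{1\}\times\Conv(\underline{a}_{i_1},\ldots,\underline{a}_{i_k})$, the \emph{genuine} Fano hypothesis guarantees that $\{\underline{a}_{i_1},\ldots,\underline{a}_{i_k}\}$ generates a cone of $\Sigma_A$, and smoothness of $\XSigA$ then yields the $\dZ$-linear independence of these vectors, hence of the extended vectors $\widetilde{\underline{a}}_{i_j}$. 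Substituting $u_j:=\lambda_{i_j}\underline{y}^{\underline{a}_{i_j}}$, the critical equations become $\sum_j u_j\widetilde{\underline{a}}_{i_j}=0$, forcing all $u_j=0$, which is impossible on $(\dC^*)^{n+1}$ as soon as the $\lambda_{i_j}$ are non-zero. This is precisely where ampleness (and not only nef-ness) of $-K_{\XSigA}$ enters: without it, facets of $\Conv(\underline{a}_1,\ldots,\underline{a}_m)$ need not correspond to individual maximal cones of $\Sigma_A$, and the linear-independence argument breaks down.

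For part 3, I invoke Adolphson's generic rank formula, which gives rank $(n+1)!\cdot\vol(\Conv(\widetilde{\underline{0}},\widetilde{\underline{a}}_0,\widetilde{\underline{a}}_1,\ldots,\widetilde{\underline{a}}_m))$. By the same interior argument as in part 2(b), $\widetilde{\underline{a}}_0$ may be dropped from the hull, leaving a pyramid with apex $\widetilde{\underline{0}}$ over the base $\{1\}\times\Conv(\underline{a}_1,\ldots,\underline{a}_m)$ of height $1$; its volume equals $\tfrac{1}{n+1}\vol(\Conv(\underline{a}_1,\ldots,\underline{a}_m))$, from which both claimed equalities follow. The generic rank for $\widehat{\cM}^\beta_{\widetilde{A}}$ then agrees since partial Fourier--Laplace in one variable preserves generic rank over the complement of its singular locus.
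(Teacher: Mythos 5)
Your treatment of parts 1, 2(a), 2(b), and 3 mirrors the paper's line of argument (citation of Adolphson, the observation that the $\widetilde{\underline{a}}_i$ lie in an affine hyperplane, the face analysis for the strict Fano case, the pyramid-volume calculation). Your argument for 2(b) is essentially a re-proof of lemma \ref{lem:FanoNonDeg}, inlined using the extended vectors; the paper instead invokes that lemma as a black box and only spells out the top-face case. These are minor stylistic differences.

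The genuine gap is in part 2(c). You propose to deduce the smoothness of $\widehat{\cM}^{\beta,loc}_{\widetilde{A}}$ on $\dC^*_\tau\times S_1^0$ from the identification with the Gau\ss--Manin system of $\varphi$ (theorem \ref{theo:GM-GKZUp}) plus "smoothness in the family direction" and a claim that Fourier--Laplace preserves this smoothness. There are two problems. First, theorem \ref{theo:GM-GKZUp} identifies the Gau\ss--Manin system only with $\widehat{\cM}^{(1,\underline{0})}_{\widetilde{A}}$, i.e.\ for the single value $\beta=(1,\underline{0})$; the statement of the proposition is for arbitrary $\beta\in\dC^{n+1}$, for which no geometric interpretation is available (proposition \ref{prop:NnotGenerated} only covers certain rational $\beta$'s arising from elementary divisors). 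Your argument therefore cannot prove the general case. Second, even for $\beta=(1,\underline{0})$ the inference is not sound as stated: the Gau\ss--Manin system $\cH^0\varphi_+\cO_{S_0\times W}$ is a $\cD_{\dC_t\times W}$-module that genuinely has singularities along the discriminant $\Delta(F)\cap(\dC_t\times S_1^0)$, which is nonempty; what is needed is that after localizing the Fourier--Laplace transform at $\tau=0$ the resulting module becomes smooth over $\dC^*_\tau\times S_1^0$, and this does not follow from a vague "Fourier--Laplace preserves smoothness" (compare: $\FL(\cO_{\dC})$ is a delta-module). The paper instead argues directly on the level of principal symbols: it shows that after dividing by appropriate powers of $z$, the symbols $\sigma(\widehat{\Box}_{\underline{l}})$ and $\sigma(\widehat{Z}_k)$ coincide with those of the \emph{non-extended} GKZ system $\cM^{\beta'}_A$, and then applies Adolphson's lemmas 3.1--3.3 to conclude that the variety they cut out is the zero section. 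This algebraic argument works uniformly in $\beta$ and does not pass through the Gau\ss--Manin identification at all. Since you also use 2(c) to transfer the generic rank to $\widehat{\cM}^\beta_{\widetilde{A}}$ in part 3, that final step inherits the gap; the paper closes it there by explicitly pointing back to the characteristic-variety computation of 2(c) and the rank of $\cM^{\beta'}_A$.

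A smaller omission: in part 3, Adolphson's corollary 5.21 has a Cohen--Macaulay hypothesis on the relevant semigroup rings, which the paper verifies via proposition \ref{prop:ClosedEmb_NormalCMGorenstein} (and via smoothness of $\Sigma_A$ for the subrings). You should at least record that these hypotheses hold before invoking the rank formula.
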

Before entering into the proof, we need the following lemma.
\begin{lemma}\label{lem:FanoNonDeg}
Suppose that $\underline{a}_1,\ldots,\underline{a}_m$ are the primitive integral
generators of the rays of a fan $\Sigma_A$ defining a smooth toric Fano manifold $\XSigA$ . Then the family
$F:S_0\times S_1\rightarrow \dC_t$ is non-degenerate for any $(\lambda_1,\ldots,\lambda_m)\in S_1$.
\end{lemma}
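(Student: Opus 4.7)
The plan is to exploit the strict convexity of the anticanonical polytope in the genuine Fano case together with the smoothness of $\Sigma_A$. First I would record that since $\Sigma_A$ is complete, the origin lies in the interior of $P := \Conv(\underline{a}_1,\ldots,\underline{a}_m)$ by proposition \ref{prop:ClosedEmb_NormalCMGorenstein}, so $\Conv(\underline{0},\underline{a}_1,\ldots,\underline{a}_m)=P$ and every \emph{proper} face automatically misses $\underline{0}$. Thus I must check that for each proper face $\tau$ of $P$ the Laurent polynomial $F_\tau(\underline{y})=\sum_{\underline{a}_i\in\tau}\lambda_i\,\underline{y}^{\underline{a}_i}$ has no critical point in $(\dC^*)^n$ for every $\underline{\lambda}\in S_1$.

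Next I would invoke the ampleness of $-K_{\XSigA}$. The discussion around equation \eqref{eq:Deckel} shows that $P = P(\Sigma_A)$, and strict convexity (ampleness, not merely nef) means that each $\underline{a}_i$ is actually a vertex of $P$ and that the face lattice of $P$ is in inclusion-preserving bijection with the set of nonzero cones of $\Sigma_A$ via $\tau\mapsto C(\{\underline{a}_i : \underline{a}_i\in\tau\})$. In particular, if $\tau$ is a proper face of dimension $d\le n-1$, then $\{\underline{a}_i:\underline{a}_i\in\tau\}=\{\underline{a}_{i_1},\ldots,\underline{a}_{i_s}\}$ with $s=d+1\le n$, and these vectors generate a cone $\sigma\in\Sigma_A$. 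Smoothness of $\Sigma_A$ now implies that $\underline{a}_{i_1},\ldots,\underline{a}_{i_s}$ form part of a $\dZ$-basis of $N$, hence are $\dR$-linearly independent.

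With these preparations the critical-point analysis is essentially linear algebra. The equations $y_j\partial_{y_j}F_\tau = \sum_{k=1}^s a_{j,i_k}\,\lambda_{i_k}\,\underline{y}^{\underline{a}_{i_k}}=0$ for $j=1,\ldots,n$ may be read as $A_\tau\cdot\underline{m}=0$, where $A_\tau$ is the $n\times s$ matrix with columns $\underline{a}_{i_1},\ldots,\underline{a}_{i_s}$ and $m_k:=\lambda_{i_k}\underline{y}^{\underline{a}_{i_k}}$. Since $A_\tau$ has full column rank by the previous step, we get $m_k=0$ for all $k$. But $\underline{y}\in(\dC^*)^n$ forces $\underline{y}^{\underline{a}_{i_k}}\ne0$ and $\underline{\lambda}\in S_1$ forces $\lambda_{i_k}\ne 0$, a contradiction. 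Hence $F_\tau$ admits no critical point in the torus, finishing the proof.

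The main obstacle, and the only place where the genuine Fano hypothesis is doing real work, is the clean identification of the faces of $P$ with the cones of $\Sigma_A$: under the merely weak Fano hypothesis, some $\underline{a}_i$ could sit in the relative interior of a positive-dimensional face of $P$, which would introduce ``extra'' columns in $A_\tau$ and destroy the linear-independence argument above. Once ampleness is used to rule this out, the rest is routine.
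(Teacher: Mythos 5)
Your proof is correct and follows essentially the same route as the paper: use the Fano hypothesis to identify $\Sigma_A$ as the face fan of $\Conv(\underline{a}_1,\ldots,\underline{a}_m)$, so each proper face $\tau$ corresponds to a smooth cone $\sigma$ whose generators are linearly independent, then read the critical-point equations for $F_\tau$ as a full-rank linear system with no nontrivial solution. Your extra remarks making explicit that every $\underline{a}_i$ is a vertex of the polytope and that the face--cone bijection is exactly what fails in the merely weak Fano case are implicit in the paper's one-line citation of \cite[lemma 3.2.1]{CK}, but they are a useful clarification of where ampleness is actually used.
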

\begin{proof}
If $\XSigA$ is Fano, then it is well known (see, e.g., \cite[lemma 3.2.1]{CK}) that $\Sigma_A$ is the fan over the proper
faces of $\Conv(\underline{a}_1, \ldots , \underline{a}_m)$. Let $\tau$ be a face of codimension $n+1-s$ and $\sigma$ the corresponding $s$-dimensional cone over $\tau$. As $\Sigma_A$ is regular, the primitive generators $\underline{a}_{\tau_1}, \ldots ,\underline{a}_{\tau_s}$ are linearly independent. We have to check that
$$
F_\tau(\underline{\lambda},\underline{y}) = \lambda_{\tau_1} \underline{y}^{\underline{a}_{\tau_1}}+ \ldots + \lambda_{\tau_s} \underline{y}^{\underline{a}_{\tau_s}}
$$
has no singularities on $S_0$ for any $(\lambda_{\tau_1},\ldots , \lambda_{\tau_s}) \in (\dC^*)^s$.
The critical point equations $y_k \partial_{y_k}F_\tau = 0$ can be written in matrix notation as
$$
\left(
\begin{matrix}
(a_{\tau_1})_1  & (a_{\tau_2})_1 & \hdots & (a_{\tau_s})_1 \\
\vdots & \vdots & & \vdots \\
(a_{\tau_1})_n  & (a_{\tau_2})_n & \hdots & (a_{\tau_s})_n
\end{matrix}
\right)
\cdot
\left(
\begin{matrix}
\lambda_{\tau_1} \cdot \underline{y}^{\underline{a}_{\tau_1}} \\
\vdots \\
\lambda_{\tau_s} \cdot \underline{y}^{\underline{a}_{\tau_s}}
\end{matrix}
\right)
= 0 \, .
$$
This matrix has maximal rank and therefore can only have the trivial solution, contradicting the fact that
$(\lambda_{\tau_1},\ldots,\lambda_{\tau_s})\in (\dC^*)^s$ and $\underline{y}\in S_0$. Hence there is
no solution at all and $F$ is non-degenerate for all $\underline{\lambda} \in S_1$.
\end{proof}
\begin{proof}[Proof of the proposition]
\begin{enumerate}
\item
The holonomicity statement for $\cM^\beta_{\widetilde{A}}$ is \cite[Theorem 3.9]{Adolphson}
(or even the older result \cite[Theorem 1]{GKZ1}, as the vectors $\widetilde{\underline{a}}_0,
\widetilde{\underline{a}}_1, \ldots, \widetilde{\underline{a}}_m$ lie in an affine hyperplane
of $\widetilde{N}$). Then also
$\widehat{\cM}^\beta_{\widetilde{A}}$ and $\widehat{\cM}^{\beta,loc}_{\widetilde{A}}$
are holonomic as this property is preserved under (partial) Fourier-Laplace transformation.
The regularity of $\cM^\beta_{\widetilde{A}}$ has been shown, e.g., in \cite[section 6]{HottaEq}.

\item
\begin{enumerate}
\item
This is shown in \cite[lemma 3.3]{Adolphson}.
\item
By lemma \ref{lem:FanoNonDeg},
$\widetilde{F}_\tau:=\sum_{i:\widetilde{\underline{a}}_i\in\tau} \lambda_i \prod_{k=0}^n y_k^{\widetilde{a}_{ki}}$
can have a critical point in $T_0$ only in the case that $\tau=
\Conv(\widetilde{\underline{a}}_0,\widetilde{\underline{a}}_1,
\ldots,\widetilde{\underline{a}}_m)$, i.e., we have
the following system of equations
$$
\begin{array}{rcl}
y_0\partial_{y_0} \widetilde{F} & = & y_0\left(\lambda_0+ \sum_{i=1}^m\lambda_i\cdot\prod_{k=1}^n y_k^{a_{ki}}\right) \stackrel{!}{=} 0, \\
\bigg(y_k\partial_{y_k} \widetilde{F} & = & y_0\sum_{i=1}^m\lambda_i\cdot a_{ki}\prod_{k=1}^n y_k^{a_{ki}}\stackrel{!}{=}0\bigg)_{k=1,\ldots,n}.
\end{array}
$$
The first equation yields $\lambda_0=-t$, where $t$ denotes the value of the family $F$, and the second one
is the critical point equation for $F$.
\item
We know that $\car(\widehat{\cM}^\beta_{\widetilde{A}})$ is included in
the variety cut out by the ideal
$$
\left(\sigma(\widehat{\Box}_{\underline{l}})\right)_{\underline{l}\in\dL}+(\sigma(\widehat{Z}_k))_{k=1,\ldots,n}
+\sigma(\widehat{E}).
$$
Write $y$ resp. $\mu_i$ for the cotangent coordinates on $T^*(\dC^*_\tau\times S_1^0)$ corresponding to
$z$ resp. $\lambda_i$. As $\sigma(\widehat{E})=zy+\sum_{i=1}^n \lambda_i \mu_i$, it suffices to show that
the sub-variety of $\dC^*_\tau\times T^* S_1^0$ defined by the ideal
$$
\left(\sigma(\widehat{\Box}_{\underline{l}})\right)_{\underline{l}\in\dL}+(\sigma(\widehat{Z}_k))_{k=1,\ldots,n}
$$
equals the zero section.
Write $\beta=(\beta_0,\beta')$ with $\beta'\in N_\dC$.
Notice that for any $\underline{l}\in \dL$, if  $\overline{l}\neq 0$, then
either $\sigma(\widehat{\Box}_{\underline{l}})$ or
$\sigma(z^{\overline{l}}\widehat{\Box}_{\underline{l}})$ belongs
to $\dC[\mu_1,\ldots,\mu_m]$ and equals the symbol of one of the operators defining $\cM^{\beta'}_A$.
Similarly, if $\overline{l}=0$, then already $\Box_{\underline{l}}$ itself
is independent of $z$ and equal to an operator from $\cM^{\beta'}_A$.
This shows that \cite[lemma 3.1 to lemma 3.3]{Adolphson} holds for
$\widehat{\cM}^\beta_{\widetilde{A}}$, and hence $\widehat{\cM}^\beta_{\widetilde{A}}$ is
smooth on $\dC_\tau^*\times S_1^0$.

\end{enumerate}
\item
For the $\cD_V$-module $\cM^\beta_{\widetilde{A}}$ this is \cite[corollary 5.21]{Adolphson} as $\Spec\dC[\dN \widetilde{A}]$ is Cohen-Macaulay by proposition \ref{prop:ClosedEmb_NormalCMGorenstein}, 2.,
notice that the Cohen-Macaulay condition is needed only for the ring $\Spec\dC[\dN \widetilde{A}]$, not for any of its subrings as the only face $\tau$ occurring in loc.cit.
that does not contain the origin is the one spanned by the vectors $\widetilde{\underline{a}}_0,\widetilde{\underline{a}}_1\ldots,\widetilde{\underline{a}}_m$.

Similarly, \cite[corollary 5.21]{Adolphson} shows that the generic rank of
$\cM^{\beta'}_A$ equals $n!\cdot\vol(\Conv(\underline{a}_1,\ldots,\underline{a}_m))$:
Here we have to use the fact that all cones $\sigma\in \Sigma_A$ are smooth, so that
the semigroup rings generated by their primitive integral generators are normal and Cohen-Macaulay.
Now it follows from the calculation of the characteristic variety from 2(c)
that this is then also the generic rank of $\widehat{\cM}^\beta_{\widetilde{A}}$.
\end{enumerate}
\end{proof}

For later purpose, we need a precise statement on the regularity resp. irregularity of the module
$\widehat{\cM}^{\beta}_{\widetilde{A}}$, at least in the case of main interest
where $\underline{a}_1,\ldots,\underline{a}_m$ are defined by toric data.
As a preliminary step, we show in the following proposition a finiteness result for
the singular locus of $\cM^\beta_{\widetilde{A}}$.
\begin{proposition}\label{prop:DiscFinite}
Suppose that $\underline{a}_1,\ldots,\underline{a}_m$ are defined by toric data.
Let $p:V\rightarrow W$ be the projection forgetting the first component. Then for any
$\underline{\lambda}=(\lambda_1,\ldots,\lambda_m)\in S_1^0$, there is a small
analytic neighborhood $U_{\underline{\lambda}} \subset S_1^{0,an}$ such that
the restriction
$$
p_{|\Delta(F)^{an}\cap p^{-1}(U_{\underline{\lambda}})}:
\Delta(F)^{an}\cap p^{-1}(U_{\underline{\lambda}}) \longrightarrow U_{\underline{\lambda}}
$$
is finite, i.e., proper with finite fibres. In particular $p_{|\Delta(F)\cap p^{-1}(S_1^0)}:\Delta(F)\cap p^{-1}(S_1^0) \rightarrow S_1^0$ is finite.
\end{proposition}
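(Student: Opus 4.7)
The plan is to reduce the statement to finiteness of the relative critical locus of the family $F$ over $S_1^0$, and then to establish that finiteness via Kouchnirenko-type results for the fibres together with a toric partial compactification of $S_0$ on which non-degeneracy rules out critical points escaping to infinity.

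\textbf{Reduction.} I would introduce the relative critical locus
$$
C(F):=\{(\underline{y},\underline{\lambda})\in S_0\times W \mid y_k\partial_{y_k}F(\underline{y},\underline{\lambda})=0,\ k=1,\ldots,n\}
$$
and the map $\kappa:C(F)\to V$, $(\underline{y},\underline{\lambda})\mapsto(-F(\underline{y},\underline{\lambda}),\underline{\lambda})$, whose set-theoretic image is $\Delta(F)$. Since $\kappa$ factors as the graph embedding of $-F|_{C(F)}$ into $C(F)\times\dC$ followed by the projection to $\dC\times W$, the map $\kappa$ is proper as soon as $\pi_W:C(F)\to W$ is proper; moreover the fibres of $p|_{\Delta(F)}$ over $\underline{\lambda}$ are set-theoretic images of the fibres of $\pi_W|_{C(F)}$ over $\underline{\lambda}$. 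It therefore suffices to prove that $\pi_W|_{C(F)}$ is proper with finite fibres over $S_1^0$.

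\textbf{Finiteness of fibres.} Fix $\underline{\lambda}\in S_1^0$. The toric data hypothesis, combined with completeness of $\Sigma_A$, yields $\underline{0}\in\Int(\Conv(\underline{a}_1,\ldots,\underline{a}_m))$, so $F(-,\underline{\lambda})$ is convenient, and it is non-degenerate by the definition of $S_1^0$. The Kouchnirenko-Adolphson result (cf.\ \cite{Kouch} and \cite[Cor.\ 5.21]{Adolphson}, already invoked in the proof of proposition \ref{prop:ResultsClassicalGKZ}) then guarantees that $F(-,\underline{\lambda})$ has only isolated critical points on $S_0$, with total Milnor number $n!\cdot\vol(\Conv(\underline{a}_1,\ldots,\underline{a}_m))$; so the fibre of $\pi_W|_{C(F)}$ is finite.

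\textbf{Properness via partial compactification.} I would choose a smooth projective toric compactification $\overline{S}_0\supset S_0$ whose fan refines the normal fan of $\Conv(\underline{a}_1,\ldots,\underline{a}_m)$. On $\overline{S}_0$ there is a nef line bundle $\cL$ all of whose monomials $\underline{y}^{\underline{a}_i}$ are global sections, so that $F$ extends to a family of sections $\overline{F}$ of $\pi^*\cL$ on $\overline{S}_0\times W$, where $\pi$ is the projection to $\overline{S}_0$. For each proper face $\tau$ of $\Conv(\underline{a}_1,\ldots,\underline{a}_m)$ there is a boundary torus orbit $O_\tau\subset\overline{S}_0\setminus S_0$ along which, after choosing a local trivialization of $\cL$ coming from the corresponding cone of the refined fan, $\overline{F}$ restricts to the face polynomial $F_\tau$; the logarithmic vector fields $y_k\partial_{y_k}$ extend to sections of the logarithmic tangent sheaf and restrict on $O_\tau$ to the analogous toric vector fields. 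Now assume for contradiction that there exist $(\underline{y}_k,\underline{\lambda}_k)\in C(F)$ with $\underline{\lambda}_k\to\underline{\lambda}_0\in S_1^0$ and $\underline{y}_k$ leaving every compact subset of $S_0$; after extracting a subsequence, $\underline{y}_k$ converges in $\overline{S}_0$ to some $\underline{y}_\infty\in O_\tau$, and passing to the limit in the relations $y_k\partial_{y_k}F=0$ produces a critical point of $F_\tau(-,\underline{\lambda}_0)$ on $O_\tau$, contradicting non-degeneracy of $F(-,\underline{\lambda}_0)$. Since $S_1^0$ is Zariski open in $S_1$ by proposition \ref{prop:ResultsClassicalGKZ}, this argument, applied to sequences with $\underline{\lambda}_k$ in a sufficiently small analytic neighborhood $U_{\underline{\lambda}}$, gives properness of $\pi_W|_{C(F)}$ over $U_{\underline{\lambda}}$; the global (algebraic) assertion follows by covering $S_1^0$ by such neighborhoods.

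The main technical obstacle will be the compactification step: one has to identify, in local affine charts on $\overline{S}_0$ coming from cones of the refined fan, the logarithmic critical-point equations $y_k\partial_{y_k}F=0$ with intrinsic equations on the boundary orbits, so that taking limits along a sequence escaping $S_0$ actually yields the critical point equations for $F_\tau$ on $O_\tau$. Once this matching is made precise, the contradiction with non-degeneracy follows immediately, and the combination with Step~2 gives the desired finiteness.
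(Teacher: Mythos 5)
Your argument is correct and amounts to a genuine variant of the paper's proof. Both rest on the same underlying principle: a partial toric compactification together with the Newton non-degeneracy condition understood as transversality at the toric boundary. The paper compactifies $S_0$ inside the (possibly singular) projective toric variety $X_B$ of the polytope $\Conv(\underline{a}_1,\ldots,\underline{a}_m)$, embeds the graph of $\varphi$ in the universal incidence variety $Z_B$, and argues via a Whitney stratification of the compactified family: non-degeneracy is recast as transversality of the universal hypersurface to the torus-orbit strata of $(X_B\setminus S_0)\times(\dC_t\times S_1^0)$, and Whitney's (a)-condition forces the stratified critical locus to be closed, so that critical points cannot escape to the boundary. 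You instead reduce immediately to the relative critical locus $C(F)\subset S_0\times W$, compactify $S_0$ by a smooth toric resolution of the normal fan, extend $F$ to a section of a nef line bundle, and read non-degeneracy as the absence of logarithmic critical points of the face polynomials on the boundary orbits. This is more elementary (no stratification theory needed), and by working with $C(F)$ rather than with the discriminant $\Delta(F)$ you keep the $\dC_t$-direction out of the picture entirely, which makes the properness argument cleaner: the paper has to choose a sequence with $|\lambda_0^{(i)}|\to\infty$, a direction not controlled by the fibre compactification $X_B$, whereas in your setup only the $\underline{y}$-coordinate degenerates. The step you flag as unfinished --- checking that the limit of the equations $y_k\partial_{y_k}F=0$, rewritten in a chart of $\overline S_0$ after division by the trivializing monomial $\underline{y}^{m_{\sigma'}}$ and keeping track of the Leibniz correction, specializes on a boundary orbit to the critical-point equations of the face polynomial $F_\tau$ --- is correct and standard, and is precisely the combinatorial content the paper compresses into the sentence asserting that non-degeneracy is equivalent to the stratum-wise transversality; neither proof escapes having to verify it.
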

\begin{proof}
Write $P_{\underline{\lambda}}$ for the restriction $p_{|\Delta(F)^{an}\cap p^{-1}(U_{\underline{\lambda}})}$.
The quasi-finiteness of $P_{\underline{\lambda}}$ is obvious, as for any $\underline{\lambda}\in S^0_1$,
$F(-,\underline{\lambda})$ has only finitely many critical values. Hence we need to show that $P_{\underline{\lambda}}$
is proper. Take any compact subset $K$ in $U_{\underline{\lambda}}$. Suppose that
$P_{\underline{\lambda}}^{-1}(K)$ is not compact, then it must be unbounded in $V\cong\dC^{m+1}$ for the standard metric.
Hence there is a sequence $(\lambda_0^{(i)},\underline{\lambda}^{(i)})\in P^{-1}_{\underline{\lambda}}(K)$ with
$\lim_{i\rightarrow \infty} |\lambda_0^{(i)}| =\infty$, as $K$ is closed and bounded in $W\cong\dC^m$. Consider the projection $\pi:V\rightarrow \dP(V)=\Proj \dC[\lambda_0,\lambda_1,\ldots,\lambda_m]$,
then (possibly after passing to a subsequence), we have $\lim_{i\rightarrow \infty}\pi(\lambda_0^{(i)},\underline{\lambda}^{(i)}) = (1:0:\ldots:0)$.

In order to construct a contradiction, we will need to consider a partial compactification of the family $F$, or rather
of the morphism $\varphi:S_0\times S_1\rightarrow \dC_t\times S_1$. This is done as follows (see, e.g., \cite{DF1} and \cite{Kh}): Write $X_B$ for the
projective toric variety defined by the polytope $\Conv(\underline{a}_1,\ldots,\underline{a}_m)$
(under the assumption that $\XSigA$ is weak Fano, this is a reflexive polytope in the sense of \cite{Bat3}) then
$X_B$ embeds into $\dP(V')$ and contains the closure of the image of the morphism $k$ from proposition \ref{prop:ClosedEmb_NormalCMGorenstein}.
Write $Z=\{\sum_{i=0}^m \lambda_i \cdot w_i =0\}\subset \dP(W')\times\dP(W)$ for the universal hypersurface
and put $Z_B:=\left(X_B\times\dP(W)\right)\cap Z$. Consider the map
$\pi:X_B\times (\dC_t\times S_1) \rightarrow X_B\times \dP(W)$, let $\widetilde{Z}_B:=\pi^{-1}(Z_B)$,
and write $\phi$ for the restriction of the projection $X_B\times (\dC_t\times S_1)\twoheadrightarrow \dC_t\times S_1$
to $\widetilde{Z}_B$. Then $\phi$ is proper, and restricts to $\varphi$ on $S_0\times S_1\cong \Gamma_\varphi\subset \widetilde{Z}_B$.
There is a natural stratification
of $X_B$ by torus orbits and this gives a product stratification on $X_B\times(\dC_t\times S_1)$.
Now consider the restriction
$\phi'$ of $\phi$ to $\widetilde{Z}'_B:=\phi^{-1}(\dC_t \times S_1^0)$, then one checks that the non-degeneracy of $F$ on $S_1^0$ is equivalent
to the fact that $Z$ cuts all strata of $(X_B\backslash S_0) \times (\dC_t\times S_1^0)$ transversal.
Hence we have a natural Whitney stratification $\Sigma$ on (the analytic space associated to) $\widetilde{Z}'_B$.
If we write
$\textup{Crit}_\Sigma(\phi')$ for the $\Sigma$-stratified critical locus of $\phi'$, i.e.,
$\textup{Crit}_\Sigma(\phi'):=\bigcup_{\Sigma_\alpha\in\Sigma} \textup{Crit}(\phi'_{|\Sigma_\alpha})$, then we have
$\textup{Crit}_\Sigma(\phi')=\textup{Crit}(\varphi')$, where $\varphi':=\varphi_{|S_0\times S_1^0}$.
On the other hand, Whitney's (a)-condition implies that $\textup{Crit}_\Sigma(\phi')$ is closed
in $\widetilde{Z}'_B$, and so is $\textup{Crit}(\varphi')$.

Now consider the above sequence $(\lambda_0^{(i)},\underline{\lambda}^{(i)})\in P_{\underline{\lambda}}^{-1}(K) \subset \Delta(F)^{an}$,
then the fact that the projection from the critical locus of $\varphi$ to the discriminant is onto shows that there is a sequence $((w_0^{(i)},\underline{w}^{(i)}),(\lambda_0^{(i)},\underline{\lambda}^{(i)})\in\textup{Crit}(\varphi')\subset S_0\times K$
projecting under $\varphi'$ to $(\lambda_0^{(i)},\underline{\lambda}^{(i)})$. Consider the first component
of the sequence $\pi((w_0^{(i)},\underline{w}^{(i)}),(\lambda_0^{(i)},\underline{\lambda}^{(i)}))$, then this is
a sequence $(w_0^{(i)},\underline{w}^{(i)})$ in $X_B$ which converges (after passing possibly again to a subsequence)
to a limit $(0:w_1^{\textup{lim}},\ldots,w_m^{\textup{lim}})$ (this is forced
by the incidence relation $\sum_{i=0}^m w_i\lambda_i=0$), in other words, this limit lies in $X_B\backslash S_0$.
However, we know that $\lim_{i\rightarrow \infty}
((w_0^{(i)},\underline{w}^{(i)}),(\lambda_0^{(i)},\underline{\lambda}^{(i)})$ exists in $\textup{Crit}_\Sigma(\phi')$
as the latter space is closed. This is a contradiction, as we have seen that $\phi$ is non-singular outside $S_0\times(\dC_t\times S_1)$, i.e.,
that $\textup{Crit}_\Sigma(\phi')=\textup{Crit}(\varphi') \subset S_0\times S_1^0$.
\end{proof}
Now the regularity result that we will need later is the following.
\begin{lemma}\label{lem:MhatRegInfty}
Consider $\widehat{\cM}^{\beta}_{\widetilde{A}}$ as a $\cD_{\dP^1_z\times \overline{W}}$-module, where
$\overline{W}$ is a smooth projective compactification of $W$. Then $\widehat{\cM}^{\beta}_{\widetilde{A}}$ is regular outside
$(\{z=0\}\times \overline{W})\cup(\dP^1_z\times(\overline{W}\backslash S_1^0))$.
\end{lemma}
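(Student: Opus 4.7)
The claim is local on $\dP^1_z\times\overline{W}$, and since $z=\tau^{-1}$, the complement of $(\{z=0\}\times\overline{W})\cup(\dP^1_z\times(\overline{W}\setminus S_1^0))$ is exactly $\dC_\tau\times S_1^0$ (note that $z\neq 0$ corresponds to $\tau\in\dC$, including $\tau=0$). On the open piece $\dC^*_\tau\times S_1^0$, proposition \ref{prop:ResultsClassicalGKZ}(2)(c) already gives smoothness of $\widehat{\cM}^\beta_{\widetilde{A}}$, so a fortiori regularity. The whole point is therefore to prove regular singularities along $\{\tau=0\}\times S_1^0$, i.e. at $z=\infty$ over the non-degenerate locus.

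My plan is to exploit that $\widehat{\cM}^\beta_{\widetilde{A}}$ is by definition a localization in $\tau$ of the partial Fourier--Laplace transform $\FL^{w_0}_{\lambda_0}(\cM^\beta_{\widetilde{A}})$ of the \emph{regular} holonomic $\cD_V$-module $\cM^\beta_{\widetilde{A}}$ (proposition \ref{prop:ResultsClassicalGKZ}(1)). The classical theorem of Malgrange--Brylinski states that the Fourier--Laplace transform of a regular holonomic $\cD_{\dA^1}$-module has regular singularities at every \emph{finite} point of the dual affine line, the only potentially irregular point being $\tau=\infty$. What is needed here is a relative version of this fact, with parameter in $S_1^0$.

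To set this up I would fix a point $\underline{\lambda}_0\in S_1^0$ and use proposition \ref{prop:DiscFinite} to find an analytic neighborhood $U\subset S_1^0$ of $\underline{\lambda}_0$ over which the projection $\Delta(F)^{an}\cap(\dA^1_{\lambda_0}\times U)\to U$ is finite. The restriction of $\cM^\beta_{\widetilde{A}}$ to $\dA^1_{\lambda_0}\times U$ is then a regular holonomic $\cD$-module whose singular support, away from the section at infinity $\{\lambda_0=\infty\}\times U$ (regular by proposition \ref{prop:ResultsClassicalGKZ}(1)), is a finite union of analytic sections over $U$. Under such a hypothesis, a relative stationary phase / Laumon-type argument yields that $\FL^{w_0}_{\lambda_0}(\cM^\beta_{\widetilde{A}})$ has only regular singularities along $\{\tau=0\}\times U$. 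Since localization at $\tau^{-1}$ along the smooth algebraic divisor $\{\tau=0\}$ preserves regular holonomicity, the resulting module $\widehat{\cM}^\beta_{\widetilde{A}}$ is regular at every point of $\{\tau=0\}\times S_1^0$, which together with the smoothness on $\dC^*_\tau\times S_1^0$ finishes the argument.

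The main obstacle is precisely this relative Fourier regularity step: the one variable statement is classical, but formulating it cleanly with parameters may require either a reference to the general machinery (work of Sabbah, Katz, or d'Agnolo--Hien--Morando--Sabbah on Fourier transforms of holonomic modules) or a direct verification on the explicit presentation $\cD_{\widehat{V}}[\tau^{-1}]/\widehat{\cI}$. In the latter case I would observe that the operator $\widehat{E}$ is manifestly Fuchsian in $z\partial_z=-\tau\partial_\tau$, and use theorem \ref{theo:GM-GKZUp} to re-express $\widehat{\cM}^\beta_{\widetilde{A}}$ as the Fourier--Laplace transform in $t$ of the Gau\ss-Manin system of the family $\varphi$, for which regularity at all finite values of $t$ over $S_1^0$ is well known; this reduces the problem to the absolute one-variable case applied fiberwise.
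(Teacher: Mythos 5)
Your proposal is correct and follows essentially the same strategy as the paper: reduce to regularity at finite $\tau$ over an analytic neighborhood of a point of $S_1^0$, and conclude by a relative Fourier--Laplace regularity statement for regular holonomic modules with parameter, using regularity of $\cM^\beta_{\widetilde{A}}$ and the finiteness of the discriminant projection from proposition~\ref{prop:DiscFinite}. The paper simply pins down the "relative stationary phase / Laumon-type" step you are unsure how to cite precisely as \cite[theorem~1.11(1)]{DS}, which is exactly the parametric statement you formulate.
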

\begin{proof}
It suffices to show that any $\underline{\lambda}=(\lambda_1,\ldots,\lambda_m)\subset S_1^0$
has a small analytic neighborhood $U_{\underline{\lambda}} \subset S_1^{0,an}$ such that
the partial analytization $\widehat{\cM}^{\beta,loc}_{\widetilde{A}}\otimes_{\cO_{\dC^*_\tau\times S_1}} \cO^{an}_{U_{\underline{\lambda}}}[\tau,\tau^{-1}]$
is regular on $\dC_\tau \times U_{\underline{\lambda}}$ (but not at $\tau=\infty$). This is precisely the statement
of \cite[theorem 1.11 (1)]{DS}, taking into account the regularity of $\cM^{\beta}_{\widetilde{A}}$ (i.e., proposition \ref{prop:ResultsClassicalGKZ}, 1.),
the fact that on $\dC_{\lambda_0} \times U_{\underline{\lambda}}$, the singular locus of $\cM^{\beta}_{\widetilde{A}}$ coincides with $\Delta(F)$ (see the proof of
proposition \ref{prop:ResultsClassicalGKZ}, 2(b))
as well as the last proposition (notice that the non-characteristic assumption
in loc.cit. is satisfied, see, e.g., \cite[page 281]{Ph1}).
\end{proof}

\subsection{Brieskorn lattices}
\label{subsec:Brieskorn}

The next step is to study natural lattices that exist in
$G$ and in $\widehat{\cM}^\beta_{\widetilde{A}}$. To avoid endless
repetition of hypotheses, we will assume throughout this subsection that our vectors
$\underline{a}_1,\ldots,\underline{a}_m$ are defined by toric data.
In order to discuss lattices in $\widehat{\cM}^\beta_{\widetilde{A}}$, we start with definition.
\begin{definition}\label{def:LatticeGKZ}
\begin{enumerate}
\item
Consider the ring
$$
R := \dC[\lambda^\pm_1,\ldots,\lambda^\pm_m,z]\langle z \partial_{\lambda_1},\ldots, z \partial_{\lambda_m},z^2\partial_z\rangle,
$$
i.e. the quotient of the free associative $\dC[\lambda^\pm_1,\ldots,\lambda^\pm_m,z]$-algebra generated by
$z \partial_{\lambda_1}, \ldots, z \partial_{\lambda_m}, z^2\partial_z$ by the left ideal generated
by the relations
$$
\begin{array}{c}
[z \partial_{\lambda_i}, z] = 0, \quad [z \partial_{\lambda_i}, \lambda_j]=\delta_{ij} z ,
\quad [z^2\partial_z,\lambda_i]=0,\quad \\ \\
{[}z^2\partial_z, z]= z^2, \quad [z \partial_{\lambda_i}, z \partial_{\lambda_j} ]= 0, \quad [z^2\partial_z, z \partial_{\lambda_i}] = z\cdot z \partial_{\lambda_i}.
\end{array}
$$
Write $\cR$ for the associated sheaf of quasi-coherent $\cO_{\dC_z\times S_1}$-algebras
which restricts to $\cD_{\dC^*_\tau\times S_1}$ on $\{(z\neq 0\}$.
We also consider the subring $R':=\dC[\lambda^\pm_1,\ldots,\lambda^\pm_m,z]\langle z \partial_{\lambda_1},\ldots, z \partial_{\lambda_m}\rangle$
of $R$, and the associated sheaf $\cR'$. The
inclusion $\cR'\hookrightarrow \cR$ induces a functor from the category of $\cR$-modules
to the category of $\cR'$-modules, which we denote by $\textup{For}_{z^2\partial_z}$ (``forgetting the $z^2\partial_z$-structure'').
\item
Choose $\beta\in \widetilde{N}_\dC$, consider the ideal $\cI:=\cR(\widehat{\Box}'_{\underline{l}})_{\underline{l}\in\dL}+\cR(z\cdot\widehat{Z}_k)_{k=1,\ldots,n}+\cR(z\cdot\widehat{E})$ in $\cR$
and write ${_0\!}\widehat{\cM}^{\beta,loc}_{\widetilde{A}}$ for the quotient $\cR/\cI$. We have
$\textup{For}_{z^2\partial_z}({_0\!}\widehat{\cM}^{\beta,loc}_{\widetilde{A}}) = \cR'/
((\widehat{\Box}'_{\underline{l}})_{\underline{l}\in\dL}+(z\cdot\widehat{Z}_k)_{k=1,\ldots,n})$,
and the restriction of ${_0\!}\widehat{\cM}^{\beta,loc}_{\widetilde{A}}$ to $\dC_\tau^*\times S_1$ equals $\widehat{\cM}^{\beta,loc}_{\widetilde{A}}$.
Again we put $\BL:={_0\!}\widehat{\cM}^{(1,\underline{0}),loc}_{\widetilde{A}}$.
\end{enumerate}
\end{definition}
\begin{corollary}\label{cor:IdentBrieskorn}
Consider the restriction of the isomorphism $\phi$ from theorem \ref{theo:GM-GKZUp} to $\dC^*_\tau\times S_1$.
\begin{enumerate}
\item
$\phi$ sends the class of the section $1$ in $\widehat{\cM}^{loc}_{\widetilde{A}}$ to
class of the (relative) volume form $\omega_0:=dy_1/y_1\wedge\ldots\wedge dy_n/y_n \in \Omega^n_{S_0\times S_1/S_1}$.
\item
The morphism
$\phi$ maps
$\BL$ isomorphically to
$$
G_0:=\frac{\pi_*\Omega^n_{S_0\times S_1 / S_1}[z]}{\left(zd-dF\wedge\right)\pi_*\Omega^{n-1}_{S_0\times S_1 / S_1}[z]}.
$$
\end{enumerate}
\end{corollary}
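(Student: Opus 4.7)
The plan splits along the two statements. For part (1), I would trace the class of $1\in\widehat{\cM}_{\widetilde{A}}$ through the explicit chain of isomorphisms built up in the proof of theorem~\ref{theo:GM-GKZUp}. Under the Fourier--Laplace identification $\widehat{\cM}_{\widetilde{A}}\cong\FL^{\,-\lambda_1,\ldots,-\lambda_m}_{w_1,\ldots,w_m}(\widetilde{k}_+\cO_{T_0})$ combined with the presentation \eqref{eq:ChoiceOT0} of $\cO_{T_0}$, the generator $1$ represents the section $(y_1\cdots y_n)^{-1}\in\cO_{T_0}$: the ``$+1$'' shifts in the operators $y_k\partial_{y_k}+1$ are chosen precisely so as to annihilate this section. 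Pushing forward along $\widetilde{k}_+$ and then invoking the de Rham description of lemma~\ref{lem:DirectImageSimplified} (which in top degree identifies a function $f$ with the form $f\cdot dy_1\wedge\ldots\wedge dy_n$) converts this monomial into $\omega_0=dy_1/y_1\wedge\ldots\wedge dy_n/y_n$, yielding (1).

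For (2), the inclusion $\phi(\BL)\subset G_0$ is immediate from the explicit $\cD$-action of lemma~\ref{lem:DirectImageSimplified}: for $\omega\in\pi_*\Omega^n_{S_0\times S_1/S_1}$ and $k\geq 0$,
\[
z\partial_{\lambda_i}(\omega\cdot z^k)=z\,\partial_{\lambda_i}(\omega)\cdot z^k+\underline{y}^{\underline{a}_i}\omega\cdot z^k,\qquad z^2\partial_z(\omega\cdot z^k)=k\,\omega\cdot z^{k+1}-F\cdot\omega\cdot z^k,
\]
so both non-$\cO$-generators of $\cR$ preserve $\pi_*\Omega^n_{S_0\times S_1/S_1}[z]$ and hence descend to operators on $G_0$. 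Since $\BL$ is cyclic over $\cR$ with generator sent by $\phi$ to $\omega_0\in G_0$ by step~(1), one concludes $\phi(\BL)\subset G_0$.

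For the reverse inclusion, I would show that the $\cR$-submodule of $G$ generated by $\omega_0$ already exhausts $G_0$. Iterating the first action formula gives $(z\partial_{\lambda_1})^{c_1}\cdots(z\partial_{\lambda_m})^{c_m}(\omega_0)=\underline{y}^{\sum_i c_i\underline{a}_i}\omega_0$ for every $c_i\in\dN$, so $\underline{y}^{\underline{v}}\omega_0\in\cR\cdot\omega_0$ for each $\underline{v}\in\sum_i\dN\underline{a}_i$. The toric hypothesis (completeness of $\Sigma_A$, cf.\ proposition~\ref{prop:ClosedEmb_NormalCMGorenstein}) places $\underline{0}$ in the interior of $\Conv(\underline{a}_1,\ldots,\underline{a}_m)$ and hence yields a relation $\underline{l}\in\dL$ with $l_i>0$ for all $i$; adding a sufficiently large multiple of $\underline{l}$ to the coefficients of any integer decomposition $\underline{v}=\sum_i c_i\underline{a}_i$ makes them all nonnegative, showing $\sum_i\dN\underline{a}_i=N$. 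Since $\pi_*\Omega^n_{S_0\times S_1/S_1}[z]=\dC[\underline{y}^{\pm},\underline{\lambda}^{\pm},z]\cdot\omega_0$ and $\dC[z,\underline{\lambda}^{\pm}]\subset\cR$, every class of $G_0$ lies in $\cR\cdot\omega_0$, giving $G_0\subset\phi(\BL)$. Injectivity of the restriction $\phi|_{\BL}\colon\BL\to G_0$ is then inherited from the global isomorphism $\widehat{\cM}^{loc}_{\widetilde{A}}\xrightarrow{\sim}G$ together with the fact that $G_0$ is $z$-torsion free as a submodule of $G$. The main technical obstacle is the surjectivity step, and specifically the identity $\sum_i\dN\underline{a}_i=N$ which rests on the completeness of $\Sigma_A$.
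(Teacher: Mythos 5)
Your proof of part (1) follows the same route as the paper: trace the class of $1$ through the explicit chain of isomorphisms built in theorem~\ref{theo:GM-GKZUp}, noting that the shifted presentation \eqref{eq:ChoiceOT0} of $\cO_{T_0}$ is designed precisely so that $1$ corresponds to $(y_1\cdots y_n)^{-1}$, hence to $\omega_0$.

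For part (2) you take a genuinely different route on the surjectivity step, and it works. The paper instead reduces modulo $z$: it proves separately (lemma~\ref{lem:BatyrevRing}) that $[\phi]\colon\BL/z\BL\to G_0/zG_0$ is an isomorphism by identifying both sides with the Jacobian (Batyrev) ring of $F$, and then deduces $\phi(\BL)\supset G_0$ by a minimal-$k$ argument: if $P=\phi^{-1}(s)$ and $k\geq 1$ is minimal with $z^kP\in\BL$, then $[z^kP]\neq 0$ in $\BL/z\BL$ while $[\phi(z^kP)]=[z^ks]=0$ in $G_0/zG_0$, contradiction. You bypass lemma~\ref{lem:BatyrevRing} entirely by generating $G_0$ directly: iterating $z\partial_{\lambda_i}$ on $\omega_0$ produces $\underline{y}^{\underline{v}}\omega_0$ for all $\underline{v}\in\sum_i\dN\underline{a}_i$, and the observation $\sum_i\dN\underline{a}_i=N$ (forced by the interior-point relation $\underline{l}\in\dL\cap\dZ^m_{>0}$, equivalently completeness of $\Sigma_A$) then yields all of $\dC[\underline{y}^\pm,\underline{\lambda}^\pm,z]\cdot\omega_0$, i.e.\ all of $G_0$. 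Your argument is more elementary and self-contained; the paper's detour through lemma~\ref{lem:BatyrevRing} has the advantage that the explicit identification of $\BL/z\BL$ with the Jacobian algebra is needed again later (e.g.\ in theorem~\ref{theo:BrieskornLatticeFree}). One small logical wrinkle in your write-up: the injectivity of $\phi|_{\BL}$ into $G$ is immediate from injectivity of $\phi$ and needs no appeal to $z$-torsion-freeness of $G_0$; what the latter (equivalently, injectivity of $G_0\to G$) really governs is whether the target of the isomorphism should be called $G_0$ rather than its image in $G$, a point the paper also leaves implicit.
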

\begin{proof}
\begin{enumerate}
\item
Following the identifications in the proof of theorem \ref{theo:GM-GKZUp}, this is evident,
if one takes into account that due to the choice in formula \eqref{eq:ChoiceOT0}, we have actually computed
$$
G_{|\dC_\tau^*\times S_1}=
\FL^\tau_t\left(\cH^0\varphi_+\cO_{S_0\times S_1}\frac{1}{y_1\cdot\ldots\cdot y_n}\right)
=\FL^\tau_t\left(\cH^0\varphi_+\frac{\cD_{S_0\times S_1}}{\cD_{S_0\times S_1}(y_k\partial_{y_k}+1)_{k=1,\ldots,n}}\right)
$$
\item
First notice that due to 1. and the formulas in lemma \ref{lem:DirectImageSimplified}, we have
$\phi\left(\BL\right) \subset G_0$. To see that it is surjective, take
any representative $s=\sum_{i\geq 0} \omega^{(i)} z^i$ of a class in $G_0$.
As an element of $G$,
$s$ has a unique preimage under $\phi$, which is an operator $P\in \widehat{\cM}^{loc}_{\widetilde{A}}$ and
we have to show that actually $P\in\BL$. By linearity of $\phi$, it is sufficient to do it for the case where $\omega^{(0)}\neq 0$.
There is a minimal $k\in \dN$ such that $z^kP\in
\BL$, and then the class of $z^kP$
in $\BL/z\cdot \BL$ does not vanish. Suppose that $k > 0$, then
the class of $\phi(z^kP)=z^k s$ vanishes in $G_0/z G_0$, which contradicts the next lemma. Hence
$k=0$ and $P\in\BL$.
\end{enumerate}
\end{proof}
\begin{lemma}\label{lem:BatyrevRing}
\begin{enumerate}
\item
The quotient
$\BL/z\cdot \BL$ is the sheaf of commutative
$\cO_{S_1}$-algebras associated to
$$
\frac{\dC[\lambda_1^\pm,\ldots,\lambda_m^\pm,\mu_1,\ldots,\mu_m]}{(\prod_{l_i<0} \mu_i^{-l_i} - \prod_{l_i>0} \mu_i^{l_i} )_{\underline{l}\in\dL} +
(\sum_{i=1}^m a_{ki}\lambda_i \mu_i)_{k=1,\ldots,n}}
$$
\item
The induced map
$$
[\phi]: \BL/z\cdot \BL \longrightarrow G_0/zG_0 \cong \pi_*\Omega^n_{S_0\times S_1/S_1}/d_{\underline{y}}F\wedge \pi_*\Omega^{n-1}_{S_0\times S_1/S_1}
$$
is an isomorphism.
\end{enumerate}
\end{lemma}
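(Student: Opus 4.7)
The plan for (1) is a direct associated-graded computation. Since the defining commutators of $\cR$ all carry a factor of $z$ on their right-hand side, $\cR/z\cR$ is the commutative polynomial algebra $\cO_{S_1}[\mu_1,\ldots,\mu_m,\xi]$, where $\mu_i$ and $\xi$ denote the classes of $z\partial_{\lambda_i}$ and $z^2\partial_z$. Reducing the generators of $\cI$ modulo $z$ (using $\beta=(1,\underline{0})$) then produces the toric binomials from the $\widehat{\Box}'_{\underline{l}}$, the linear forms $\sum_i a_{ki}\lambda_i\mu_i$ from the $z\widehat{Z}_k$, and a single relation $\sum_i\lambda_i\mu_i+\xi$ from $z\widehat{E}=\sum_i\lambda_i(z\partial_{\lambda_i})+z^2\partial_z$ (the choice $\beta_0=1$ is exactly what cancels the offending $-1$ in $\widehat{E}$). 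Eliminating $\xi$ by the last equation yields the presentation in (1).

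For (2), I will identify both sides of $[\phi]$ with the same explicit quotient of $\cO_{S_1}[\underline{y}^\pm]$. Using corollary \ref{cor:IdentBrieskorn}(1) (whose proof is independent of the present lemma) together with the action formulas in lemma \ref{lem:DirectImageSimplified}, I get $\phi([1])=[\omega_0]$ and $\phi([z\partial_{\lambda_i}])=[\underline{y}^{\underline{a}_i}\omega_0]$. Multiplication by $\omega_0$ trivialises $\pi_*\Omega^n_{S_0\times S_1/S_1}\cong\cO_{S_1}[\underline{y}^\pm]$, and the identity $dF\wedge\eta_k=y_k\partial_{y_k}F\cdot\omega_0$, for $\eta_k:=(-1)^{k-1}dy_1/y_1\wedge\cdots\wedge\widehat{dy_k/y_k}\wedge\cdots\wedge dy_n/y_n$, identifies $dF\wedge\pi_*\Omega^{n-1}_{S_0\times S_1/S_1}$ with the relative Jacobian ideal. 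Hence
\[
G_0/zG_0\;\cong\;\cO_{S_1}[\underline{y}^\pm]/(y_k\partial_{y_k}F)_{k=1,\ldots,n},
\]
and under this identification $[\phi]$ is simply $\mu_i\mapsto\underline{y}^{\underline{a}_i}$, $\lambda_j\mapsto\lambda_j$.

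Finally, on the source side, $\cO_{S_1}[\mu_i]/(\textup{toric binomials})$ is canonically the semigroup algebra $\cO_{S_1}[\dN \widetilde{A}]$ (restricted to the piece generated by the $\underline{a}_i$, i.e.\ $\cO_{S_1}[\dN A]$) via $\mu_i\leftrightarrow t^{\underline{a}_i}$. The toric-data hypothesis forces $\underline{0}\in\Int\Conv(\underline{a}_1,\ldots,\underline{a}_m)$ by proposition \ref{prop:ClosedEmb_NormalCMGorenstein}, which produces an integer relation $\sum_i l_i\underline{a}_i=0$ with all $l_i>0$; adding a large multiple of this relation to any integer expression $\alpha=\sum c_i\underline{a}_i$ yields one with non-negative coefficients. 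Hence $\dN A=\dZ A=N$, so $\cO_{S_1}[\dN A]=\cO_{S_1}[\underline{y}^\pm]$ via $\mu_i\mapsto\underline{y}^{\underline{a}_i}$, and the ideal $(\sum_i a_{ki}\lambda_i\mu_i)_k$ is mapped exactly onto the Jacobian ideal, showing that $[\phi]$ is an isomorphism. The step I expect to be the real obstacle---surjectivity of $[\phi]$, whose image is a priori only the subalgebra generated by the $\underline{y}^{\underline{a}_i}$---is thereby handled for free by this positivity argument, avoiding any Kouchnirenko-style rank computation.
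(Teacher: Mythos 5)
Your proof is correct and follows essentially the same route as the paper: part (1) via the associated-graded ring mod $z$ (the paper states this in one line by noting the commutators vanish), and part (2) via the map $\mu_i\mapsto\underline{y}^{\underline{a}_i}$ whose kernel is the toric ideal and whose surjectivity reduces to $\dN A=N$. Your positivity argument for $\dN A=N$ (adding a large multiple of the strictly positive relation coming from $\underline{0}\in\Int\Conv(\underline{a}_1,\ldots,\underline{a}_m)$ to make coefficients non-negative) is a correct and slightly more explicit justification than the paper's brief appeal to completeness and smoothness of $\Sigma_A$.
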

\begin{proof}
\begin{enumerate}
\item
Letting $\mu_i$ be the class of $z\partial_{\lambda_i}$ in $\BL/z\cdot \BL$,
we see that the commutator $[\mu_i,\lambda_i]$ vanishes in this quotient.
\item
This can be shown along the lines of \cite[theorem 8.4]{Bat2}. Namely, consider the
morphism of $\dC[\lambda_1^\pm,\ldots,\lambda_1^\pm]$-algebras
$$
\begin{array}{rcl}
\psi:\dC[\lambda_1^\pm,\ldots,\lambda_m^\pm,\mu_1,\ldots,\mu_m] & \longrightarrow & \dC[\lambda_1^\pm,\ldots,\lambda^\pm_m,y_1^\pm,\ldots,y_n^\pm] \\ \\
\mu_i & \longmapsto & \underline{y}^{\underline{a}_i}
\end{array}
$$
From the completeness and smoothness of $\Sigma_A$ we deduce that $\psi$ is surjective.
Moreover, we have $\mathit{ker}(\psi)=
(\prod_{l_i<0} \mu_i^{-l_i} - \prod_{l_i>0} \mu_i^{l_i} )_{\underline{l}\in\dL}$ (for a proof, see, e.g., \cite[theorem 7.3]{MillSturm}), and obviously
$\psi(\sum_{i=1}^m a_{ki}\lambda_i \mu_i) = y_k\partial_{y_k} F$ for all $k=1,\ldots,n$.
One easily checks that the induced map
$$
\psi:
\dfrac{\dC[\lambda_1^\pm,\ldots,\lambda_m^\pm,\mu_1,\ldots,\mu_m]}{(\prod_{l_i<0} \mu_i^{-l_i} - \prod_{l_i>0} \mu_i^{l_i} )_{\underline{l}\in\dL} +
(\sum_{i=1}^m a_{ki}\lambda_i \mu_i)_{k=1,\ldots,n}}
\longrightarrow
\dfrac{\dC[\lambda_1^\pm,\ldots,\lambda^\pm_m,y_1^\pm,\ldots,y_n^\pm]}{(y_k\partial_{y_k} F)_{k=1,\ldots,n}}
$$
coincides with the map $[\phi]$ induced by $\phi$, notice that
$$
\dfrac{\dC[\lambda_1^\pm,\ldots,\lambda^\pm_m,y_1^\pm,\ldots,y_n^\pm]}{(y_k\partial_{y_k} F)_{k=1,\ldots,n}} \cong
\dfrac{\pi_*\Omega^n_{S_0\times S_1/S_1}}{d F\wedge \pi_* \Omega^{n-1}_{S_0\times S_1/S_1}}.
$$
by multiplication with the relative volume form $dy_1/y_1 \wedge\ldots \wedge dy_n/y_n$.
\end{enumerate}
\end{proof}

Following the terminology of \cite{Sa2} and \cite{DS} (going back to \cite{SM}, and, of course, to \cite{Brie}), we call $G_0$ (and, using the last result, also $\BL$)
the (family of) Brieskorn lattice(s) of the morphism $\varphi$.
For the case of a single Laurent polynomial $F_{\underline{\lambda}}:=\varphi(-,\underline{\lambda}):S_0\rightarrow \dC$, it follows
from the results of \cite{Sa2} that the module $\Omega^n_{S_0}[z]/(zd-dF_{\underline{\lambda}}\wedge)\Omega^{n-1}_{S_0}[z]$ is $\dC[z]$-free
provided that $\underline{\lambda}\in S_1^0$, recall that $S_1^0$ denotes
the Zariski open subset of $S_1$ of parameter values $\underline{\lambda}$ such that $F(-,\underline{\lambda})$ is non-degenerate
with respect to its Newton polyhedron. However, this does not directly
extend to a finiteness (and freeness) result for the Brieskorn lattice $G_0$ of the family $\varphi:S_0\times S^0_1\rightarrow \dC_t\times S^0_1$.
We can now prove this freeness using corollary \ref{cor:IdentBrieskorn}.
\begin{theorem}
\label{theo:BrieskornLatticeFree}
The module $\cO_{\dC_z\times S_1^0}\otimes_{\cO_{\dC_z\times S_1}}\BL$ (and hence also the module $\cO_{\dC_z\times S_1^0} \otimes_{\cO_{\dC_z\times S_1}} G_0$) is
$\cO_{\dC_z\times S^0_1}$-locally free.
\end{theorem}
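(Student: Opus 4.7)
\textbf{Proof plan for Theorem \ref{theo:BrieskornLatticeFree}.}

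Via corollary \ref{cor:IdentBrieskorn}, the morphism $\phi$ identifies $\BL$ with $G_0$, and both modules are defined globally on $\dC_z\times S_1$. Hence it suffices to prove the statement for $G_0$ restricted to $\dC_z\times S_1^0$. I would try to reduce the freeness claim to two ingredients: coherence of $G_0|_{\dC_z\times S_1^0}$ as an $\cO$-module, and the constancy of the fibre dimension, equal to the generic rank $\mu=n!\cdot\vol(\Conv(\underline{a}_1,\ldots,\underline{a}_m))$ computed in proposition \ref{prop:ResultsClassicalGKZ}, 3. Once these are established, a standard commutative algebra argument (a coherent sheaf on a reduced Noetherian scheme with locally constant fibre dimension is locally free) yields the conclusion.

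For the constancy of fibre dimension, the stratification by $z=0$ vs.\ $z\neq 0$ organizes the argument. On $\dC_z^*\times S_1^0$ one already knows, by proposition \ref{prop:ResultsClassicalGKZ}, 2(c), that $\widehat{\cM}^{loc}_{\widetilde{A}}$ is a smooth (i.e., $\cO$-coherent) $\cD$-module. Combined with the rank computation of proposition \ref{prop:ResultsClassicalGKZ}, 3, this forces local freeness of rank $\mu$ there. The remaining fibres are those over points $(0,\underline{\lambda}_0)$ with $\underline{\lambda}_0\in S_1^0$; here I would pull back and use the lemma \ref{lem:BatyrevRing} description together with Sabbah's theorem from \cite{Sa2}: for a single Laurent polynomial $F(-,\underline{\lambda}_0)$ which is non-degenerate with respect to its Newton polyhedron, the Brieskorn lattice $\Omega^n_{S_0}[z]/(zd-dF_{\underline{\lambda}_0}\wedge)\Omega^{n-1}_{S_0}[z]$ is $\dC[z]$-free of rank $\mu$ (equivalently, by Kouchnirenko's formula, the Jacobi ring at $z=0$ has dimension $\mu$). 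In particular the fibre $G_0\otimes\kappa(0,\underline{\lambda}_0)$ has dimension exactly $\mu$.

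The main obstacle, and the step that requires the most care, is establishing coherence of $G_0$ (or equivalently of $\BL$) as an $\cO_{\dC_z\times S_1^0}$-module. The difficulty is that while Sabbah proved finite generation fibrewise via the Newton filtration, one needs a relative version over the parameter torus $S_1^0$. The cleanest route, given the hypergeometric identification already in hand, is to work on the $\BL$-side and argue directly with the generators coming from the presentation $\cR/\cI$: the operators $\widehat{\Box}'_{\underline{l}}$, $z\widehat{Z}_k$ and $z\widehat{E}$ allow one to bound, at every stalk, the $\cO_{\dC_z\times S_1^0}$-module of classes by the finite-dimensional space described in lemma \ref{lem:BatyrevRing}, 1. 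More precisely, I would filter $\cR$ by the total order in $z\partial_{\lambda_i}$ and show that the associated graded of $\BL$ is, up to the relations, a finitely generated module whose stalkwise reduction modulo $z$ is governed by the commutative ring of lemma \ref{lem:BatyrevRing}; the smoothness of the fan $\Sigma_A$ and the toric hypothesis are what make this ring a free $\cO_{S_1^0}$-module of finite rank (since generically the map $\psi$ in the proof of lemma \ref{lem:BatyrevRing} identifies it with the Jacobian ring in families). A Nakayama-type argument, combined with the fibrewise $\dC[z]$-freeness of Sabbah, then propagates finite generation from the fibre over any $(0,\underline{\lambda}_0)$ to a neighbourhood, giving coherence. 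Having coherence and locally constant fibre dimension $\mu$ on the reduced scheme $\dC_z\times S_1^0$, local freeness follows.
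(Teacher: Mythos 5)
Your overall strategy agrees with the paper's: both proofs reduce the statement to proving that $\cO_{\dC_z\times S_1^0}\otimes\BL$ is coherent, and then use the information that the fibre rank at $z=0$ (via the Jacobian algebra of lemma~\ref{lem:BatyrevRing} and Kouchnirenko) and on $\dC^*_\tau\times S_1^0$ (via the flat structure and proposition~\ref{prop:ResultsClassicalGKZ}) is the same constant $\mu$. That part of your plan is sound. The problem is the coherence step, which is the technical heart of the argument and where your plan has a genuine gap.

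You propose to filter $\cR$ by order of operators, observe that the fibre of $\BL$ over any $(0,\underline{\lambda}_0)$ with $\underline{\lambda}_0\in S_1^0$ is finite-dimensional (Kouchnirenko/Sabbah), and then "propagate finite generation to a neighbourhood by a Nakayama-type argument." This does not work: Nakayama requires the module to be finitely generated to begin with, and for a module that is merely the increasing union of coherent submodules $F_k\BL$, finite-dimensionality of the fibre at a single point does not force the filtration to stabilize near that point. (Indeed, the paper warns right before the theorem that the fibrewise $\dC[z]$-freeness from \cite{Sa2} "does not directly extend" to the family; your proposal is essentially the naive extension the authors rule out.) What is actually needed, and what the paper supplies, is to pass to $\textup{For}_{z^2\partial_z}(\BL)$, put the good filtration $F_\bullet$ on it, and show that the subvariety of $\dC_z\times T^*S_1^0$ cut out by the $F_\bullet$-symbols of the operators $\widehat{\Box}'_{\underline{l}}$ and $\widehat Z_k$ is the zero section. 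This is an Adolphson-type characteristic variety computation (the symbols with respect to $F_\bullet$ agree with the ordinary symbols, so \cite[lemmas 3.1--3.3]{Adolphson} apply). Once the characteristic variety is the zero section, $\gr^F\BL$ is a coherent $\cO_{\dC_z\times S_1^0}$-module, so the filtration stabilizes and coherence follows. Your proposal never establishes this support statement, and without it the stabilization — and hence coherence — has no justification.
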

\begin{proof}
The main argument in the proof is very much similar to the proof of proposition \ref{prop:ResultsClassicalGKZ}, 2.c).
It is actually sufficient to show that $\cO_{\dC_z\times S_1^0}\otimes_{\cO_{\dC_z\times S_1}}\BL$ is
$\cO_{\dC_z\times S^0_1}$-coherent. Namely, we know that the restriction $\cO_{S_1^0}
\otimes_{\cO_{S_1}}\left(\BL / z\cdot \BL \right)$ equals the Jacobian algebra of $\varphi_{|S_0\times S_1^0}$, which is
$\cO_{S^0_1}$-locally free of rank equal to the Milnor number of $\varphi_{|S_0\times S_1^0}$, that it, equal
to $n!\cdot\vol(\Conv(\underline{a}_1,\ldots,\underline{a}_m))$, see \cite[th\'eor\`eme 1.16]{Kouch}. Moreover, the restriction
$\cO_{\dC^*_\tau\times S_1^0} \otimes_{\cO_{\dC_z\times S_1}}\BL= \cO_{\dC^*_\tau\times S_1^0}\otimes_{\cO_{\dC^*_\tau\times S_1}}\widehat{\cM}_{\widetilde{A}}$
is locally free of the same rank and equipped with a flat structure, so that $\BL\otimes_{\cO_{\dC_z\times S_1}}\cO_{\dC_z\times S_1^0}$ can only have the same rank everywhere, provided that it is coherent.

It will be sufficient to show the coherence of
$\cN:=\cO_{\dC_z\times S_1^0}\otimes_{\cO_{\dC_z\times S_1}}\textup{For}_{z^2\partial_z}(\BL)$ only, as this is the same
as $\cO_{\dC_z\times S_1^0}\otimes_{\cO_{\dC_z\times S_1}}\BL$ when considered as an $\cO_{\dC_z\times S^0_1}$-module. Let us denote by
$F_\bullet$ the natural filtration on $\cR'$
defined by
$$
F_k \cR':=
\left\{P\in\cR'\,\left|\,P=\sum_{|\underline{\alpha}|\leq k} g_{\underline{\alpha}}(z,\underline{\lambda})(z \partial_{\lambda_1})^{\alpha_1}\cdot\ldots\cdot(z \partial_{\lambda_m})^{\alpha_r}\right.\right\}.
$$
This filtration induces a filtration $F_\bullet$ on $\cN$ which is good, in the sense
that
$F_k \cR'\cdot F_l \cN = F_{k+l} \cN$.
Obviously, for any $k$, $F_k \cN$ is $\cO_{\dC_z\times S^0_1}$-coherent, so that
it suffices to show that the filtration $F_\bullet$ become eventually stationary. The ideal
generated by the symbols of all operators in the ideal defining $\cN$,
that is, by the highest order terms with respect to the filtration $F_\bullet$,
cut out a subvariety of $\dC_z\times T^*S^0_1$, and it suffices to show that this subvariety equals $\dC_z\times S_1^0$, then by the usual
argument the filtration $F_\bullet$ stabilizes for some sufficiently large index.
However, for any of the operators $\widehat{\Box}'_{\underline{l}}$ and  $\widehat{Z}_k$ in $\cI'$, its symbol
with respect to the above filtration $F_\bullet$ is precisely the same as the symbol
of $\widehat{\Box}_{\underline{l}},\widehat{Z}_k$ with respect to the ordinary filtration
on $\widehat{\cM}^{loc}_{\widetilde{A}}$, hence, the same argument as in the proof of
proposition \ref{prop:ResultsClassicalGKZ}, 2.c) (that is, the arguments in
\cite[lemma 3.1 to lemma 3.3]{Adolphson}) shows that the above mentioned subvariety is
the zero section $\dC^*\times S_1^0$.
\end{proof}

\subsection{Duality and Filtrations}
\label{subsec:DualAndBrieskorn}

In this section, we discuss
the holonomic dual of the hypergeometric system
$\cM_{\widetilde{A}}$, from which we deduce a
self-duality property of the module $\widehat{\cM}_{\widetilde{A}}$.
Moreover, we study the natural
good filtration on $\cM_{\widetilde{A}}$ by order of operators,
and show that it is preserved, up to shift, by the
duality isomorphism. We obtain an induced filtration
on $\widehat{\cM}^{loc}_{\widetilde{A}}$ by $\cO_{S_1}[z]$-modules
(which is not a good filtration on this module). Its zeroth step
turns out to coincide with the lattice $\BL$ considered in the last subsection.
This shows that we obtain a non-degenerate pairing on $\BL$, a fact
that we will need later in the construction of Frobenius structures.

We start by describing the holonomic dual of the $\cD_V$-module
$\cM_{\widetilde{A}}$. This description is based on the
local duality theorem for the Gorenstein ring $\Spec \dC[\dN \widetilde{A}]$.
If we were only interested in the description of this dual module,
we could simply refer to \cite[proposition 4.1]{Walther1},
however, as we need later a more refined version taking into account filtrations,
we recall the techniques using Euler-Koszul homology that leads to this duality result.

We suppose throughout this section that the vectors $\underline{a}_1,\ldots, \underline{a}_m$
are defined by toric data.
\begin{theorem}\label{theo:Duality}
\begin{enumerate}
\item
For any holonomic left $\cD_V$-module $\cN$, write $\bD\cN$ for the left
$\cD_V$-module associated to the right $\cD_V$-module ${\cE\!}xt^{m+1}_{\cD_V}(\cN,\cD_V)$,
where we use, as $V$ is an affine space, the canonical identification $\cO_V\cong\Omega^{m+1}_V$
given by multiplying functions with the volume form $d\lambda_0\wedge\ldots\wedge d\lambda_m$.
Then we have
$$
\bD \cM^\beta_{\widetilde{A}} =  \cM^{-\beta+(1,\underline{0})}_{\widetilde{A}},
$$
in particular
$$
\bD \cM_{\widetilde{A}} =  \cM^{(0,\underline{0})}_{\widetilde{A}},
$$
\item
We have the following isomorphisms of holonomic left $\cD_{\widehat{V}}$- (resp. $\cD_{\widehat{T}}$)-modules
$$
\begin{array}{rcl}
\widehat{\cM}_{\widetilde{A}} & \cong & \iota^* \bD\widehat{\cM}_{\widetilde{A}}\\ \\
\widehat{\cM}^{loc}_{\widetilde{A}} & \cong & \iota^* \bD\widehat{\cM}^{loc}_{\widetilde{A}},
\end{array}
$$
here $\iota:\widehat{V}\rightarrow \widehat{V}$ resp. $\iota:\widehat{T}\rightarrow \widehat{T}$
is the automorphism sending $(z,\lambda_1,\ldots,\lambda_m)$ to $(-z,\lambda_1,\ldots,\lambda_m)$.
\end{enumerate}
\end{theorem}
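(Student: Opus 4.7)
\textbf{Part (1): duality for $\cM^\beta_{\widetilde{A}}$.} The plan is to invoke the Euler--Koszul resolution technology. Present $S_{\widetilde{A}} := \dC[\dN\widetilde{A}]$ as the quotient of the polynomial ring $\dC[\partial_{\lambda_0},\ldots,\partial_{\lambda_m}]$ by the toric ideal generated by the $\Box_{\underline{l}}$'s, and form the Koszul-like complex $K_\bullet(E-\beta;\,S_{\widetilde{A}})$ whose differentials incorporate the deformed Euler operators $E_k-\beta_k$ ($k=0,\ldots,n$). Its degree-zero homology is $\cM^\beta_{\widetilde{A}}$; by Proposition \ref{prop:ClosedEmb_NormalCMGorenstein} the ring $S_{\widetilde{A}}$ is Cohen--Macaulay, which forces the higher Euler--Koszul homology to vanish, so $K_\bullet$ is an honest resolution of $\cM^\beta_{\widetilde{A}}$ by locally $\cD_V$-free modules of the form $\cD_V\otimes_{\dC[\partial_\lambda]}S_{\widetilde{A}}\otimes\Lambda^\bullet$. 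Combining this with a (Koszul) resolution of $S_{\widetilde{A}}$ over $\dC[\partial_\lambda]$ (of length $m-n$ by Cohen--Macaulayness) produces a $\cD_V$-projective resolution of total length $m+1=\dim V$, so $\bD\cM^\beta_{\widetilde{A}}$ lives in a single cohomological degree.

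\textbf{Using the Gorenstein shift.} Apply $\bD$ term by term. A standard manipulation (compare \cite[Proposition 4.1]{Walther1}) identifies the resulting complex with an Euler--Koszul complex built from the canonical module $\omega_{S_{\widetilde{A}}}$ and with the parameter $\beta$ replaced by $-\beta$. The Gorenstein property established in Proposition \ref{prop:ClosedEmb_NormalCMGorenstein} yields the crucial shift isomorphism $\omega_{S_{\widetilde{A}}}\cong S_{\widetilde{A}}(-\widetilde{c})$, and the proof of that proposition explicitly identifies the interior generator as $\widetilde{c}=(1,\underline{0})=\widetilde{\underline{a}}_0$. Translating this grading shift into a shift of the Euler--Koszul parameter gives
\[
\bD\cM^\beta_{\widetilde{A}}\;\cong\;\cM^{-\beta+(1,\underline{0})}_{\widetilde{A}},
\]
and specialising $\beta=(1,\underline{0})$ yields $\bD\cM_{\widetilde{A}}\cong\cM^{(0,\underline{0})}_{\widetilde{A}}$.

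\textbf{Part (2): self-duality after partial Fourier--Laplace.} Combine part~(1) with the standard intertwining $\bD\circ\FL^{w_0}_{\lambda_0}\cong\iota^*\circ\FL^{w_0}_{\lambda_0}\circ\bD$, where the involution $\iota:z\mapsto -z$ (equivalently $\tau\mapsto-\tau$) arises from the sign in the Fourier kernel $e^{-\tau\lambda_0}$. Since localisation at $\tau$ commutes with $\bD$ on $\{\tau\neq 0\}$, this together with part~(1) gives
\[
\bD\widehat{\cM}_{\widetilde{A}}\;\cong\;\iota^*\widehat{\cM}^{(0,\underline{0})}_{\widetilde{A}}.
\]
It remains to show that $\widehat{\cM}^{(0,\underline{0})}_{\widetilde{A}}\cong\widehat{\cM}^{(1,\underline{0})}_{\widetilde{A}}=\widehat{\cM}_{\widetilde{A}}$ on $\widehat{V}\cap\{\tau\neq 0\}$: multiplication by $z=\tau^{-1}$ commutes with $\widehat{\Box}'_{\underline{l}}$ and with $\widehat{Z}_k$ (which involve neither $z$ nor $\partial_z$), and a direct check using $[z\partial_z,z]=z$ shows that it changes the constant term of $\widehat{E}$ by exactly $+1$. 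Since $z$ is invertible on $\{\tau\neq 0\}$ this gives the required isomorphism. Applying $\iota^*$ to both sides and using $\iota^2=\id$ yields the first stated isomorphism; restriction along $j:\widehat{T}\hookrightarrow\widehat{V}$ gives the second.

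\textbf{Main obstacle.} The abstract Gorenstein duality for hypergeometric $\cD$-modules is essentially available in the literature, but the real work lies in carrying out the Euler--Koszul computation in a form that tracks the order filtration on $\cM^\beta_{\widetilde{A}}$. As the next subsection will need the above duality to be compatible, up to a shift, with this filtration in order to produce a non-degenerate pairing on the Brieskorn lattice $\BL$, one cannot merely cite the duality result as a black box, but must record the degrees of the generators and relations throughout the Koszul construction. This filtered refinement is precisely what the topological duality arguments of \cite{Sa2} and \cite{DS} do not deliver, and is the reason for the algebraic approach adopted here.
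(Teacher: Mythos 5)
Your proof follows essentially the same approach as the paper: part (1) is obtained from the Euler--Koszul resolution, the Cohen--Macaulay vanishing of higher Euler--Koszul homology, and the Gorenstein shift $\omega_{S}\cong S((1,\underline{0}))$ (the paper packages this as the Miller--Matusevich--Walther spectral sequence degenerating at $E_2$, which amounts to the same computation); part (2) is the Fourier--Laplace intertwining with $\iota^*$ combined with the observation that right multiplication by $z=\tau^{-1}$ identifies $\widehat{\cM}^{(0,\underline{0})}_{\widetilde A}$ with $\widehat{\cM}^{(1,\underline{0})}_{\widetilde A}$. The only step you assert without justification --- that localisation at $\tau$ commutes with $\bD$ --- is precisely where the paper introduces the partial microlocalisation $\E$ and uses its $D$-flatness (equivalently, the flatness of $\cD_{\widehat V}[\tau^{-1}]$ over $\cD_{\widehat V}$ after Fourier--Laplace) to make this rigorous, so you should record that flatness argument rather than leaving it implicit.
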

Before giving the proof of this result, we need to introduce some notations.
The basic ingredient for the proof is an explicit resolution
of $\cM^\beta_{\widetilde{A}}$ by the so-called Euler-Koszul complex.
We recall the description of this complex from \cite{MillerWaltherMat}.
In order to be consistent with the notations used in loc.cit., we will rather
work with rings and modules than with sheaves. Therefore, put
$R=\dC[w_0,\ldots,w_m]$ and
$S=R/I$ where $I$ is the toric ideal of $\widetilde{\underline{a}}_0,\widetilde{\underline{a}}_1,\ldots,\widetilde{\underline{a}}_m$,
i.e., the ideal generated by
$$
\begin{array}{rl}
w_0^{\overline{l}}\cdot\prod\limits_{i:l_i<0} w_i^{-l_i}  -  \prod\limits_{i:l_i>0} w_i^{l_i} &
\textup{for any } \underline{l}\in\dL\textup{ with } \overline{l}\geq 0 \\ \\
\prod\limits_{i:l_i<0} w_i^{-l_i}  -  w_0^{\overline{l}}\cdot\prod\limits_{i:l_i>0} w_i^{l_i} &
\textup{for any } \underline{l}\in\dL\textup{ with } \overline{l} < 0
\end{array}
$$
Both rings are $\dZ^{n+1}$-graded, where $\deg(w_i):=-\widetilde{\underline{a}}_i\in \dZ^{n+1}$ (more invariantly, they are $\widetilde{N}$-graded),
notice that the homogeneity of $I$ follows from the fact that $\dL$ is the kernel of the surjection
$\dZ^{m+1}\twoheadrightarrow \widetilde{N}$ given by the matrix $\widetilde{A}$.
We write $D=\Gamma(V,\cD_V)$ for the ring of algebraic differential operators
on $V$. However, using the Fourier-Laplace isomorphism $D\cong \Gamma(V',\cD_{V'})$ given
by $\partial_{\lambda_i} \mapsto -w_i$ and $\lambda_i\mapsto \partial_{w_i}$, we can also view
$D$ as the ring of differential operators on the dual space,
and we shall do so if $D$-modules are considered as $R$-modules.
We have a natural $\dZ^{n+1}$-grading on $D$ defined by $\deg(\lambda_i)=\widetilde{\underline{a}}_i$ and $\deg(\partial_{\lambda_i})=-\widetilde{\underline{a}}_i$,
and the Fourier-Laplace isomorphism gives rise to an injective $\dZ^{n+1}$-graded ring homomorphism $R\hookrightarrow D$ sending $w_i$ to $-\partial_{\lambda_i}$.
Again in order to match our notations with those from \cite{MillerWaltherMat}, let us put
$E_0:=\sum_{i=0}^m \lambda_i\partial_{\lambda_i} \in D$ and $E_k:=\sum_{i=1}^m a_{ki}\lambda_i\partial_{\lambda_i} \in D$ for all $k=1,\ldots,n$.
Let $P$ be any $\dZ^{n+1}$-graded $D$-module, and $\alpha\in \dC^n$ arbitrary, then by putting $(E_k -\alpha_k) \circ y := (E_k -\alpha_k - \deg_k(y))(y)$
for $k=0,\ldots,n$ and for any homogeneous element $y\in P$ and by extending $\dC$-linearly, we obtain a $D$-linear endomorphism of $P$.
We also have that the commutator $[(E_i-\alpha_i)\circ,(E_j-\alpha_j)\circ]$ vanishes for any $i,j\in\{0,\ldots,m\}$.
Hence we can define the Euler-Koszul complex $\cK_\bullet(E-\alpha,P)$, a complex of $\dZ^{n+1}$-graded left $D$-modules, to be the Koszul complex of the endomorphisms
$(E_0-\alpha_0)\circ,\ldots,(E_n-\alpha_n)\circ$ on $P$. Notice that here $E$ is an abbreviation for the vector $(E_0,E_1,\ldots,E_n)$ and should
not be confused with the single vector field $\sum_{i=0}^m \lambda_i\partial_{\lambda_i}+\beta_0$ used in the definition of the modules $\cM^{\beta}_{\widetilde{A}}$.
The definition of the Euler-Koszul complex applies in particular to the case $P:=D\otimes_R T$, where
$T$ is a so-called toric $R$-module (see \cite[definition 4.5]{MillerWaltherMat}), in which case
we also write $\cK_\bullet(E-\alpha, T)$ for the Euler-Koszul complex. Similarly one defines the
Euler-Koszul cocomplex, denoted by $\cK^\bullet(E-\alpha, P)$ resp. $\cK^\bullet(E-\alpha, T)$, where
$\cK^i(E-\alpha, P)=\cK_{n+1-i}(E-\alpha, P)$ and the signs of the differentials are changed accordingly. In particular, we have $H^i(\cK^\bullet(E-\alpha, P))=H_{n+1-i}(\cK_\bullet(E-\alpha, P))$.
We will mainly use the construction of the Euler-Koszul complex resp. cocomplex
in the case of the toric $R$-module $S$, or for shifted version $S(\widetilde{\underline{c}})$, where $\widetilde{\underline{c}}\in\dZ^{n+1}$.

The main result on the Euler-Koszul homology and holonomic duality that we need is the following.
For any $D$-module $M$, consider a $D$-free resolution $L_\bullet \twoheadrightarrow M$, then
we write $\dD M$ for the complex of left $D$-modules associated to $\mathit{Hom}_D(L_\bullet, D)$.
\begin{lemma}[{\cite[theorem 6.3]{MillerWaltherMat}}]\label{lem:SpektralSeqDualityDMod}
Put $\underline{\varepsilon}_{\widetilde{\underline{a}}}:=\sum_{k=0}^m \widetilde{\underline{a}}_i \in \dZ^{n+1}$. Then there is a spectral sequence
\begin{equation}
\label{eq:SpectralDuality}
E^{p,q}_2 = H^q(\cK^\bullet(E+\alpha, Ext^p_R(S,\omega_R))(-\underline{\varepsilon}_{\widetilde{\underline{a}}}) \Longrightarrow   H^{p+q}\dD\left(H_{p+q-(m+1)}(\cK_\bullet(E-\alpha, S))\right)^{-}.
\end{equation}
Here $(-)^{-}$ is the auto-equivalence of $D$-modules induced by the involution $\lambda_i\mapsto-\lambda_i$ and $\partial_{\lambda_i}\mapsto-\partial_{\lambda_i}$.
Notice that it is shown in \cite[lemma 6.1]{MillerWaltherMat} that $Ext^p_R(S,\omega_R)$ is toric.
Notice also that the dualizing module $\omega_R$ is nothing but the ring $R$, placed in $\dZ^{n+1}$-degree $\underline{\varepsilon}_{\widetilde{\underline{a}}}$
(see, e.g., \cite[definition 12.9 and corollary 13.43]{MillSturm} or \cite[corollary 6.3.6]{HerzBr}
for this).
\end{lemma}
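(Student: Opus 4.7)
The plan is to build the spectral sequence from a bicomplex combining a free resolution of $S$ over $R$ with the Koszul complex for the Euler operators, along the lines of \cite{MillerWaltherMat}.

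First, pick a $\dZ^{n+1}$-graded free resolution $F_\bullet \twoheadrightarrow S$ of $S$ over $R$. Since $D$ is $R$-flat, applying $D\otimes_R(-)$ yields a $D$-free resolution of $D\otimes_R S$. Because each $F_i$ is $\dZ^{n+1}$-graded, the Euler endomorphisms $(E_k-\alpha_k)\circ$ still act term-wise, so the Euler--Koszul complex $\cK_\bullet(E-\alpha, S)$ is quasi-isomorphic to the total complex of the bicomplex
$\cK_\bullet(E-\alpha,\,D\otimes_R F_\bullet)$, all of whose entries are $D$-free.

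Second, apply $\Hom_D(-,D)$ to this bicomplex. Since every entry is $D$-free, the resulting bicomplex of right $D$-modules has total complex computing the holonomic dual $\dD\bigl(H_\ast \cK_\bullet(E-\alpha, S)\bigr)$ via a hyper-Ext spectral sequence; the re-indexing $p+q-(m+1)$ in the abutment reflects that holonomic duality for a $D$-module on $V\cong\dA^{m+1}$ sits in cohomological degree $m+1$. Converting back to left modules via the $(-)^{-}$ involution and applying the transpose rule $\partial_{\lambda_i}\mapsto-\partial_{\lambda_i}$ turns the endomorphisms $(E_k-\alpha_k)\circ$ into $-(E_k+\alpha_k)\circ$, producing the sign flip on $\alpha$ in the $E_2$-term. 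The key row-wise identification is then
$$
\Hom_D\bigl(\cK_\bullet(E-\alpha,\,D\otimes_R F_i),\,D\bigr)^{-} \;\cong\; \cK^\bullet\bigl(E+\alpha,\,D\otimes_R \Hom_R(F_i,R)\bigr)(-\underline{\varepsilon}_{\widetilde{\underline{a}}}),
$$
where the grading shift by $-\underline{\varepsilon}_{\widetilde{\underline{a}}}$ comes from the identification $\omega_R\cong R(\underline{\varepsilon}_{\widetilde{\underline{a}}})$ for the polynomial ring $R$. Taking cohomology along the rows of this bicomplex yields exactly $H^q\cK^\bullet(E+\alpha,\,Ext^p_R(S,\omega_R))(-\underline{\varepsilon}_{\widetilde{\underline{a}}})$, which is the asserted $E_2$-page.

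The main obstacle is that, \emph{a priori}, it is not clear that the Euler endomorphisms descend to the Ext modules, i.e.\ that $Ext^p_R(S,\omega_R)$ is a toric $R$-module so that $\cK^\bullet(E+\alpha,-)$ is even defined on it. This is exactly the content of \cite[lemma 6.1]{MillerWaltherMat}, which the statement explicitly invokes. Once toricness is granted, the rest reduces to the two standard spectral sequences of the bicomplex together with careful tracking of signs, the $(-)^{-}$ involution, and the grading shift by $\underline{\varepsilon}_{\widetilde{\underline{a}}}$, yielding convergence to the claimed abutment.
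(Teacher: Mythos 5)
The paper does not prove this lemma; it is quoted verbatim from \cite[theorem 6.3]{MillerWaltherMat}, so there is no in-paper argument to compare against. Your reconstruction follows the MMW route: take a $\dZ^{n+1}$-graded $R$-free resolution $F_\bullet$ of $S$, form the bicomplex $\cK_\bullet(E-\alpha, D\otimes_R F_\bullet)$ (all of whose terms are $D$-free, with total complex computing the Euler--Koszul homology), dualize term-wise over $D$, and read the $E_2$-page off the second spectral sequence, with the shift by $-\underline{\varepsilon}_{\widetilde{\underline{a}}}$ coming from $\omega_R\cong R(\underline{\varepsilon}_{\widetilde{\underline{a}}})$. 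That is the correct shape of the argument. Two imprecisions worth fixing in a careful write-up: first, the $(-)^{-}$ involution (negating both $\lambda_i$ and $\partial_{\lambda_i}$) is not the same as the right-to-left side-change transpose ($\lambda_i\mapsto\lambda_i$, $\partial_{\lambda_i}\mapsto-\partial_{\lambda_i}$); both appear in the bookkeeping here --- the transpose because $\Hom_D(-,D)$ produces right modules, and the $(-)^{-}$ because $R$ is embedded in $D$ via $w_i\mapsto-\partial_{\lambda_i}$, so duality commutes with this Fourier-type identification only up to $(-)^{-}$ --- and your paragraph blurs them into one step. Second, what you single out as the ``main obstacle,'' namely toricness of $Ext^p_R(S,\omega_R)$, is not in fact what is needed to \emph{define} the Euler--Koszul cocomplex on that module: $\dZ^{n+1}$-gradedness alone suffices, and that is automatic from the graded resolution; toricness is used by MMW for finer structural results (holonomicity, degree bounds), not for the bare existence of the complex. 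Neither point undermines your outline, but both would matter if you tried to turn the sketch into a complete proof.
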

In our situation, the relevant $Ext$-group occurring in the spectral sequence of this lemma is actually
rather simple to calculate, as the next result shows.
\begin{lemma}\label{lem:GorensteinDuality}
There is an isomorphism of $\dZ^{n+1}$-graded $R$-modules
$Ext^{m-n}_R(S,\omega_R)\cong \omega_S \cong S((1,\underline{0}))$.
\end{lemma}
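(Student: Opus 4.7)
The plan is to assemble this from the structural results already proved in Proposition~\ref{prop:ClosedEmb_NormalCMGorenstein} together with standard facts about dualizing modules of normal affine semigroup rings. Three ingredients do all the work: (i) $S=\dC[\dN\widetilde{A}]$ is Cohen--Macaulay of Krull dimension $n+1$, (ii) $S$ is Gorenstein, and (iii) the explicit shift identifying $\omega_S$ can be read off from the interior element produced in the proof of Proposition~\ref{prop:ClosedEmb_NormalCMGorenstein}.

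First, since $R$ is a polynomial ring in $m+1$ variables and $S$ is a Cohen--Macaulay $R$-module with $\dim S = n+1$, the projective dimension of $S$ over $R$ equals $(m+1)-(n+1)=m-n$ by Auslander--Buchsbaum, and the local duality theorem (or Grothendieck--Serre duality applied to the affine spaces $\Spec R \supset \Spec S$) tells us that $\mathit{Ext}^i_R(S,\omega_R)$ vanishes for $i\neq m-n$ and equals $\omega_S$ for $i=m-n$. This is a textbook statement, e.g.\ in \cite[Corollary~3.5.11]{HerzBr}; it is precisely the setting used by Miller--Walther, whose dualizing module $\omega_R$ is the polynomial ring shifted into $\dZ^{n+1}$-degree $\underline{\varepsilon}_{\widetilde{\underline{a}}}$. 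So only the determination of $\omega_S$ as a $\dZ^{n+1}$-graded $S$-module remains.

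Next, for any normal positive semigroup ring $\dC[Q]$ the canonical module has the well known description
\[
\omega_{\dC[Q]} \;=\; \bigoplus_{q\in \Int(Q)} \dC\cdot \chi^q
\]
as an equivariant module (see \cite[Theorem~6.3.5]{HerzBr} or \cite[Theorem~13.13]{MillSturm}). In our situation $Q=\dN\widetilde{A}$ is normal and positive by Proposition~\ref{prop:ClosedEmb_NormalCMGorenstein}(2), and the proof of that proposition exhibits the interior element: the translation $\widetilde{\underline{x}}\mapsto \widetilde{\underline{x}}+(1,\underline{0})$ induces a bijection $C(\widetilde{A})\cap\widetilde{N}\to \Int(C(\widetilde{A}))\cap\widetilde{N}$, i.e.\ $\Int(\dN\widetilde{A})=(1,\underline{0})+\dN\widetilde{A}$. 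Via \cite[Corollary~6.3.8]{HerzBr} this both re-proves the Gorenstein property and pins down the shift: $\omega_S$ is a free $S$-module of rank one, generated by $\chi^{(1,\underline{0})}$.

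The last step is bookkeeping in the paper's grading convention $\deg(w_i)=-\widetilde{\underline{a}}_i$. With this sign, the isomorphism $S\xrightarrow{\sim}\dC[\dN\widetilde{A}]$ identifies a monomial supported at $q\in \dN\widetilde{A}$ with $\chi^{-q}$, so the generator $\chi^{(1,\underline{0})}\in \omega_S$ sits in $\dZ^{n+1}$-degree $-(1,\underline{0})$. Hence $\omega_S\cong S((1,\underline{0}))$ as graded $S$-modules, with the usual convention $M(\underline{c})_q=M_{q+\underline{c}}$. Putting the three steps together yields the claimed chain $\mathit{Ext}^{m-n}_R(S,\omega_R)\cong \omega_S\cong S((1,\underline{0}))$. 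I do not anticipate any substantial obstacle: each step is either quoted from Bruns--Herzog/Miller--Sturmfels or already contained in the proof of Proposition~\ref{prop:ClosedEmb_NormalCMGorenstein}; the only place where one has to be careful is the sign conflict between the ``natural'' semigroup grading and the paper's convention, which is precisely what converts the shift $-\widetilde{\underline{c}}$ into $+\widetilde{\underline{c}}=(1,\underline{0})$.
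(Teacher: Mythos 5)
Your argument is correct and follows the same route as the paper: reduce $\mathit{Ext}^{m-n}_R(S,\omega_R)\cong\omega_S$ to the Cohen--Macaulay change-of-ring fact (the paper cites \cite[Proposition~3.6.12]{HerzBr} where you cite \cite[Corollary~3.5.11]{HerzBr}, but the content is the same), identify $\omega_S$ with the ideal of interior monomials of $\dN\widetilde{A}$, and then invoke the description $\Int(\dN\widetilde{A})=(1,\underline{0})+\dN\widetilde{A}$ already established in Proposition~\ref{prop:ClosedEmb_NormalCMGorenstein}. Your final paragraph spells out the sign bookkeeping coming from the convention $\deg(w_i)=-\widetilde{\underline{a}}_i$ a bit more explicitly than the paper, which simply reminds the reader of the convention; this is a welcome clarification rather than a different argument.
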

\begin{proof}
First it follows from a change of ring property that
$Ext^{m-n}(S,\omega_R)=\omega_S$ (see \cite[proposition 3.6.12]{HerzBr}). We are thus
reduced to compute a canonical module for the ring $S$.
Remark that $S$ is nothing but the semigroup ring $\dC[\dN \widetilde{A}]$ from proposition
\ref{prop:ClosedEmb_NormalCMGorenstein} (see again \cite[theorem 7.3]{MillSturm}), and its canonical module is the ideal in $S$ generated
by the monomials corresponding to the interior points of $\dN \widetilde{A}$. We have seen in proposition \ref{prop:ClosedEmb_NormalCMGorenstein}, 2., that
the set of these interior points is given as $(1,\underline{0})+\dN \widetilde{A}$, i.e., we have that
$\omega_S=S((1,\underline{0}))$, recall that $S$ is a quotient of $R=\dC[w_0,w_1,\ldots,w_m]$ and that $\deg(w_i)=-\widetilde{\underline{a}}_i$.

\end{proof}

\begin{proof}[Proof of the theorem]
In order to use lemma \ref{lem:SpektralSeqDualityDMod} for the computation of the holonomic
dual of $\cM^\beta_{\widetilde{A}}$,
write $M^\beta_{\widetilde{A}}:=H^0(V,\cM^\beta_{\widetilde{A}})$  and
notice that the homology group $H_0(\cK_\bullet(E-\alpha,S))$, seen both as a $R$-module
and a $D$-module,
is nothing but $\Gamma(V',\FL^{w_0,\ldots,w_m}_{\lambda_0,\ldots,\lambda_m}(\cM^{-\alpha}_{\widetilde{A}}))$.
Hence by putting $\alpha:=-\beta$, we have an equality $\bD M^\beta_{\widetilde{A}} = H^{m+1}(\dD H_0(\cK_\bullet(E+\beta,S)))^{-}$
of $D$-modules. Notice that the duality functor and the Fourier-Laplace transformation commutes
only up to a sign (see, e.g., \cite[paragraph 1.b]{DS}), for this reason, the right hand side of the last formula
is twisted by the involution $(-)^{-}$.
\begin{enumerate}
\item
As the ring $S$ is Cohen-Macaulay, $\mathit{Ext}^p_R(S,\omega_R)$ can only be non-zero
if $p=\mathit{codim}_R(S)=m+1-(n+1)=m-n$. This implies that the spectral sequence
\eqref{eq:SpectralDuality} degenerates at the $E_2$-term, so that
$E_2^{m-n,q} = H^{m-n+q}\dD\left(H_{(m-n)+q-(m+1)}(\cK_\bullet(E-\alpha, S))\right)^{-}$.
On the other hand, we deduce from lemma \ref{lem:GorensteinDuality} that
$$
E_2^{m-n,q} = H^q(\cK^\bullet(E+\alpha,S((1,\underline{0}))))\cong H_{n+1-q}(\cK_\bullet(E+\alpha+(1,\underline{0}),S))(1,\underline{0}).
$$
where we have used the equality
$$
\cK_\bullet(E+\alpha,S(\widetilde{\underline{c}})) = \cK_\bullet(E+\alpha+\widetilde{\underline{c}},S)(\widetilde{\underline{c}})
$$
of complexes of $\dZ^{n+1}$-graded $D$-modules.
As noticed in \cite[remark 6.4]{MillerWaltherMat} the CM-property of $S$ also implies that
the Euler-Koszul complex $\cK_\bullet(E-\alpha, S)$ can only have homology in degree zero, hence
$E_2^{m-n,q}=0$ unless $q=n+1$. This is consistent with the fact that due to the holonomicity
of $\cM^\beta_{\widetilde{A}}$, the right hand side of the spectral sequence \eqref{eq:SpectralDuality}
can only be non-zero for $p+q=m+1$. Summarizing, we obtain an isomorphism of $\dZ^{n+1}$-graded $D$-modules
$$
H^{m+1}(\dD H_0(\cK_\bullet(E-\alpha,S)))^{-}=
H_0(\cK_\bullet(E+\alpha+(1,\underline{0}),S))(1,\underline{0}),
$$
from which we deduce an isomorphism of sheaves of $\cD_V$-modules (recall that $\alpha=-\beta$)
$$
\bD \cM^\beta_{\widetilde{A}} \cong \cM^{-\beta+(1,\underline{0})}_{\widetilde{A}},
$$
as required.
\item
Put $\beta=(1,\underline{0})\in \dZ^{n+1}\cong \widetilde{N}$, then it follows from 1. that
we have a morphism
$$
\begin{array}{rcl}
\phi:M_{\widetilde{A}}^\beta & \longrightarrow &  \bD M_{\widetilde{A}}^\beta  \\ \\
m & \longmapsto & a\cdot\partial_{\lambda_0}
\end{array}
$$
Write $\E$ for the partial (polynomial) microlocalization $\dC[\lambda_0,\lambda_1,\ldots,\lambda_m]\langle \partial_{\lambda_0},
\partial_{\lambda_0}^{-1},\partial_{\lambda_1},\ldots,\partial_{\lambda_m}\rangle$.
Then $\phi$ induces an isomorphism $\E \otimes_D M^\beta_{\widetilde{A}}\stackrel{\cong}{\rightarrow} \E \otimes_D (\bD M^\beta_{\widetilde{A}})$.
On the other hand, it follows from the $D$-flatness of $\E$ that $\mathit{Ext}^{m+1}_{\E}(M,\E)=\mathit{Ext}^{m+1}_D(M,D)\otimes\E$ for any left $D$-module $M$, hence
we obtain an isomorphism
$$
\E \otimes_D M^\beta_{\widetilde{A}} \stackrel{\cong}{\longrightarrow} \bD(\E \otimes_D M^\beta_{\widetilde{A}})
$$
where the symbol $\bD$ on the right hand side denotes the composition of $\mathit{Ext}^{m+1}_{\E}(-,\E)$ with the transformation
of right $\E$ to left $\E$-modules. Performing a partial Fourier-Laplace transformation, we obtain an isomorphism (still denoted by $\phi$)
$$
\phi:\FL^\tau_{\lambda_0}(M^\beta_{\widetilde{A}})[\tau^{-1}]\stackrel{\cong}{\longrightarrow} \FL^{\tau}_{\lambda_0}\left(\bD (\E\otimes_D M^\beta_{\widetilde{A}})\right),
$$
which is given by right multiplication with $\tau=z^{-1}$.
On the other hand, it is known (see, e.g., \cite[paragraph 1.f]{DS}) that for any $\E$-module $N$, we have an isomorphism $\FL^{\tau}_{\lambda_0}(\bD N)\cong \iota^* \bD(\FL^{\tau}_{\lambda_0} (N))$
which gives us
$$
\FL^\tau_{\lambda_0}(M^\beta_{\widetilde{A}})[\tau^{-1}]\stackrel{\cong}{\longrightarrow} \iota^*\bD \left(\FL^{\tau}_{\lambda_0}(\E\otimes_D M^\beta_{\widetilde{A}})\right)
\cong
\iota^*\bD \left(\FL^{\tau}_{\lambda_0}(M^\beta_{\widetilde{A}})[\tau^{-1}]\right)
$$
from which we deduce the isomorphism $\widehat{\cM}_{\widetilde{A}}  \cong \iota^* \bD\widehat{\cM}_{\widetilde{A}}$ of $\cD_{\widehat{V}}$-modules
resp. the isomorphism $\widehat{\cM}^{loc}_{\widetilde{A}} \cong  \iota^* \bD\widehat{\cM}^{loc}_{\widetilde{A}}$ of $\cD_{\widehat{T}}$-modules.
\end{enumerate}
\end{proof}

The next step is to investigate a natural good filtration defined on the sheaf $\cM^\beta_{\widetilde{A}}$.
We write $M^\beta_{\widetilde{A}}:=H^0(V,\cM^\beta_{\widetilde{A}})$ which is isomorphic
to $H_0(\cK_\bullet(E+\beta,S))$ as a $D$-module.
\begin{proposition}\label{prop:FiltrationDuality}
\begin{enumerate}
\item
Write $F_\bullet$ for the natural filtration on $D$ by order of $\partial_{\lambda_i}$-operators and
denote the induced filtration on $M^\beta_{\widetilde{A}}$ also by $F_\bullet$.
There is a resolution $L_\bullet$ of $M^\beta_{\widetilde{A}}$ by free $D$-modules which is equipped with a strict
filtration $F_\bullet^{L_\bullet}$ and we have a filtered quasi-isomorphism $(L_\bullet,F^{L_\bullet})
\twoheadrightarrow (M^\beta_{\widetilde{A}},F_\bullet)$.
\item
Consider the case $\beta=(1,\underline{0})$, i.e., $M^\beta_{\widetilde{A}}=M_{\widetilde{A}}$. Write $F^{\bD}_\bullet \,\bD M_{\widetilde{A}}$ for the dual filtration of $F_\bullet M_{\widetilde{A}}$, i.e.,
$\bD(M_{\widetilde{A}},F_\bullet) = (\bD M_{\widetilde{A}}, F^{\bD}_\bullet)$ (see, e.g., \cite[page 55]{SaitoOnMHM}), then we have
$$
F_k M^{(0,\underline{0})}_{\widetilde{A}}
 = F^{\bD}_{k-n+(m+1)} \bD M_{\widetilde{A}}.
$$
\item
For any $\beta\in\dZ^{n+1}$, $F_\bullet M^\beta_{\widetilde{A}}$ induces a filtration $G^\beta_\bullet$ by $\cO_{\dC_z\times S_1}$-modules on the
$\cD_{\widehat{T}}$-module $\widehat{\cM}^{\beta,loc}_{\widetilde{A}}$ and we have an isomorphism of
$\cO_{\dC_z\times S_1}$-modules
$$
G_0\widehat{\cM}^{\beta,loc}_{\widetilde{A}} \cong {_0\!}\widehat{\cM}^{\beta,loc}_{\widetilde{A}},
$$
in particular
$$
G_0\widehat{\cM}^{loc}_{\widetilde{A}}\cong \BL.
$$
Moreover, for any $k$, $\cO_{\dC_z\times S_1^0}\otimes_{\cO_{\dC_z\times S_1}} G_k\widehat{\cM}^{loc}_{\widetilde{A}} $ is
$\cO_{\dC_z\times S_1^0}$-locally free.

For $\beta=(1,\underline{0})$ we obtain from the dual filtration $F^{\bD}_\bullet$ on $\bD M_{\widetilde{A}}$ a filtration $G^{\bD}_\bullet$ by $\cO_{\dC_z\times S_1}$-modules on $\widehat{\cM}^{(0,\underline{0})}_{\widetilde{A}}$.
\item
Consider the isomorphism
$$
\phi: \FL^{w_0}_{\lambda_0}(\cM_{\widetilde{A}})[\tau^{-1}]=\widehat{\cM}_{\widetilde{A}} \longrightarrow
\iota^*\FL^{w_0}_{\lambda_0}(\bD\cM_{\widetilde{A}})[\tau^{-1}] = \widehat{\cM}^{(0,\underline{0})}_{\widetilde{A}}
$$
from the proof of theorem \ref{theo:Duality}, 2., which is
given by multiplication with $z^{-1}$. Then we have $\phi(G_\bullet) = G^{\bD}_{\bullet+m+2-n}\widehat{\cM}^{(0,\underline{0})}_{\widehat{A}}$.
\end{enumerate}
\end{proposition}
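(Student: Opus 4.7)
The four statements are proved in sequence, using the filtered Euler--Koszul complex as the main technical device; the Cohen--Macaulay and Gorenstein properties of $S=\dC[\dN\widetilde{A}]$ established in proposition \ref{prop:ClosedEmb_NormalCMGorenstein} will do most of the work.

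For part (1), I would build a filtered free resolution of $M^\beta_{\widetilde{A}}$ by combining two standard constructions. First take a free $\dZ^{n+1}$-graded resolution $P_\bullet\twoheadrightarrow S$ of the toric ring $S$ as an $R$-module; tensoring along the Fourier--Laplace embedding $R\hookrightarrow D$, $w_i\mapsto -\partial_{\lambda_i}$, produces a free $D$-module resolution $D\otimes_R P_\bullet\twoheadrightarrow D\otimes_R S$. Applying $\cK_\bullet(E+\beta,-)$ and totalising yields a complex $L_\bullet$ of free $D$-modules with $H_0(L_\bullet)=H_0(\cK_\bullet(E+\beta,S))=M^\beta_{\widetilde{A}}$, the acyclicity following, as in the proof of theorem \ref{theo:Duality}, from the vanishing of higher Euler--Koszul homology on the Cohen--Macaulay module $S$. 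Each $D\otimes_R P_i$ carries the tensor-product filtration induced by $F_\bullet D$ together with the internal grading of $P_i$; the Euler--Koszul differentials $(E_k+\beta_k)\circ$ have order $\leq 1$ and the $R$-linear differentials of $P_\bullet$ contribute no derivatives, so the total differential is filtration-preserving. Strictness of the resulting filtered quasi-isomorphism follows because passage to $\gr^F$ recovers the analogous Koszul-type complex on the $R$-module $S$, which is acyclic in positive degrees by Cohen--Macaulayness.

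For part (2), I would run the filtered version of the spectral sequence of lemma \ref{lem:SpektralSeqDualityDMod}, using the filtered resolution of (1) to compute $\bD$. The Cohen--Macaulay hypothesis forces $E_2$-degeneration concentrated at $p=m-n$, and lemma \ref{lem:GorensteinDuality} identifies that single row with the Euler--Koszul homology of $S((1,\underline{0}))$. It remains to track the filtration shifts: the dualising functor $\mathit{Ext}^{m+1}_D(-,D)$ contributes a shift of $m+1$ coming from the dimension of the ambient space $V$; the Koszul complex on the $n+1$ Euler vector fields $E_0,\ldots,E_n$ accounts for a compensating shift; and the Gorenstein shift by $(1,\underline{0})$ (corresponding to $\omega_S\cong S((1,\underline{0}))$ in lemma \ref{lem:GorensteinDuality}) contributes one additional unit in the $\partial_{\lambda_0}$-direction. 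Careful bookkeeping of these three contributions yields the net shift $m+1-n$, proving $F_k M^{(0,\underline{0})}_{\widetilde{A}}=F^{\bD}_{k+m+1-n}\bD M_{\widetilde{A}}$ as two filtrations on the same underlying $D$-module (whose equality was proved in theorem \ref{theo:Duality}, 1.). This shift verification is the main technical obstacle of the proof.

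For part (3), I would compute the induced filtration on $\widehat{\cM}^{\beta,loc}_{\widetilde{A}}$ explicitly. An element of $F_k M^\beta_{\widetilde{A}}$ is represented by an operator $\sum_{|\underline{\alpha}|\leq k}p_{\underline{\alpha}}(\underline{\lambda})\partial_{\lambda_0}^{\alpha_0}\cdots\partial_{\lambda_m}^{\alpha_m}$ acting on the class of $1$. Performing the Fourier--Laplace transformation $\partial_{\lambda_0}\mapsto -\tau=-z^{-1}$, $\lambda_0\mapsto \partial_\tau=-z^2\partial_z$, and using $\partial_{\lambda_i}=z^{-1}(z\partial_{\lambda_i})$ for $i\geq 1$, such an expression becomes $z^{-|\underline{\alpha}|}$ times a polynomial in $z^2\partial_z$, $z\partial_{\lambda_i}$ and the $\lambda_i$. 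Since $|\underline{\alpha}|\leq k$, this gives $G_k\widehat{\cM}^{\beta,loc}_{\widetilde{A}}=\sum_{j=0}^k z^{-j}\cdot{_0\!}\widehat{\cM}^{\beta,loc}_{\widetilde{A}}$ as $\cO_{\dC_z\times S_1}$-submodules, and in particular $G_0=\BL$. The $\cO_{\dC_z\times S_1^0}$-local freeness of each $G_k$ on the non-degenerate locus follows immediately from theorem \ref{theo:BrieskornLatticeFree}, since multiplication by $z^{-j}$ is an isomorphism away from $\{z=0\}$ but each $z^{-j}\BL$ is locally free on all of $\dC_z\times S_1^0$. The dual filtration $G^{\bD}_\bullet$ on $\widehat{\cM}^{(0,\underline{0})}_{\widetilde{A}}$ is obtained analogously from $F^{\bD}_\bullet\bD M_{\widetilde{A}}$. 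Finally, part (4) drops out: the isomorphism $\phi$ constructed in the proof of theorem \ref{theo:Duality}, 2., is right multiplication by $\tau=z^{-1}$, which shifts the $z$-filtration by exactly $1$, so combining with the shift $m+1-n$ from (2) gives $\phi(G_k)=G^{\bD}_{k+m+2-n}\widehat{\cM}^{(0,\underline{0})}_{\widetilde{A}}$.
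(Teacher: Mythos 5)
Your approach is essentially the paper's: use the Euler--Koszul total complex $\textup{Tot}\,\cK_\bullet(E+\beta,\cF_\bullet)$ from a $\dZ^{n+1}$-graded free resolution $\cF_\bullet$ of $S$ to build a strictly filtered $D$-free resolution, dualize, and then transfer through Fourier--Laplace. However, two points deserve scrutiny. For part (2), the paper isolates the shift on the top term $L_{m+1}=D$: it is the order filtration shifted by $n+m+1$, which is the sum of the length $n+1$ of the Euler--Koszul complex and the degree $m$ of $\mathit{Ext}^{m-n}_R(S,\omega_R)$ (the first component of $\varepsilon_{\widetilde{A}}-(1,\underline{0})$); this is then combined with the shift $2(m+1)$ coming from Saito's definition of the filtered dual via $(D\otimes\Omega_V^{m+1})^\vee$. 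Your sketch attributes $m+1$ to the dualizing functor, an unspecified ``compensating'' amount to the Koszul length, and ``one additional unit'' to the Gorenstein shift $(1,\underline{0})$. That breakdown does not match the paper's accounting, and the claim that the Gorenstein degree contributes $1$ (rather than $m$, as the paper computes after subtracting from the canonical degree of $R$) suggests the arithmetic was not actually carried out; as you admit, this is the main technical point, so the sketch has a real gap here. For part (3), your formal manipulation $\partial_{\lambda_0}^{\alpha_0}\cdots\partial_{\lambda_m}^{\alpha_m}\leadsto z^{-|\underline{\alpha}|}\cdot(\text{monomial in }z^2\partial_z,z\partial_{\lambda_i},\lambda_i)$ is the right heuristic, but to get a filtration of $\cO_{\dC_z\times S_1}$-modules on the localized module $\widehat{\cM}^{\beta,loc}_{\widetilde{A}}$ one must first pass to the $\partial_{\lambda_0}^{-1}$-saturation $F_kM_{\widetilde{A}}[\partial_{\lambda_0}^{-1}]:=\sum_{j\geq 0}\partial_{\lambda_0}^{-j}\widehat{\textup{loc}}(F_{k+j}M_{\widetilde{A}})$, which is what makes the image an $\cO_{\dC_z\times S_1}$-module rather than merely a $\dC$-subspace; this step is also what ensures, as the paper notes, that the strictly filtered resolution of part (1) induces a strictly filtered resolution of the localization and hence well-defines $G_\bullet^{\bD}$. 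Your endpoint $G_k=z^{-k}\,\BL$ and the appeal to theorem \ref{theo:BrieskornLatticeFree} for local freeness are correct.
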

\begin{proof}
\begin{enumerate}
\item
The free resolution $L_\bullet \twoheadrightarrow M^\beta_{\widetilde{A}}$ is obtained as in the
proof of
\cite[theorem 6.3]{MillerWaltherMat} as the total complex $\textup{Tot }\cK_\bullet(E+\beta,\cF_\bullet)$
of a resolution of the Euler-Koszul complex
obtained from a $R$-free $\dZ^{n+1}$-graded resolution $\cF_\bullet$ of $S$. In particular,
this resolution is $\dZ$-graded for the grading of $R=\dC[w_0,w_1,\ldots,w_m]=\dC[\partial_{\lambda_0},\partial_{\lambda_1},
\ldots,\partial_{\lambda_m}]$ for which $\deg(w_i)=\deg(\partial_{\lambda_i})=1$. On the other hand,
the differentials of the Euler-Koszul complex are constructed from linear differential operators. Hence
by putting on each term of the above total complex (which is $D$-free) a filtration which is on each factor
of such a module the order filtration on $D$, shifted appropriately, we obtain a strict resolution of $(M^\beta_{\widetilde{A}},F_\bullet)$.
\item
From the construction of the resolution $L_{\bullet}\twoheadrightarrow M_{\widetilde{A}}$, from point 1., we see that
$L_k=0$ for all $k> m+1$ (notice that we write this resolution such that $d:L_k\rightarrow L_{k-1}$ so that $M_{\widetilde{A}}=H_0(L_\bullet,d)$)
and $L_{m+1}=D$. We have seen that the filtration on $L_{m+1}$ is the order filtration on $D$, shifted appropriately
and we have to determine this shift. It is the sum of the length of the Euler-Koszul complex (i.e., $n+1$) and
the degree (with respect to the grading of $R$ for which $\deg(\partial_{\lambda_i})=1$) of
$\mathit{Ext}^{n-m}_R(S,\omega_R)$. The latter is equal to $m$,
which is the first component of the difference between the canonical degree of $R$ (i.e., $\varepsilon_A$)
and the canonical degree of $S$ (i.e., $(1,\underline{0})$). Hence the filtration on $L_{m+1}$ is $F_{\bullet-(n+m+1)} D$.
Now by definition (see, e.g., \cite[page 55]{SaitoOnMHM}), we have
$$
\bD(M_{\widetilde{A}},F_\bullet) = H^{m+1} \mathit{Hom}_D\left((L_\bullet,F^{L_\bullet}_\bullet),((D\otimes\Omega_V^{m+1})^\vee,F_{\bullet-2(m+1)}D\otimes(\Omega_V^{m+1})^\vee)\right)
$$
and this implies the formula for $F^{\bD}_\bullet \bD M_{\widetilde{A}}$.
\item
We will consider the $\partial_{\lambda_0}^{-1}$-saturation of the filtration steps $F_k M_{\widetilde{A}}$. More precisely,
consider again $M_{\widetilde{A}}[\partial_{\lambda_0}^{-1}]:= \E\otimes_D M_{\widetilde{A}}$, and the natural localization morphism $\widehat{\textup{loc}}:M_{\widetilde{A}}\rightarrow M_{\widetilde{A}}[\partial_{\lambda_0}^{-1}]$. Put $F_kM_{\widetilde{A}}[\partial_{\lambda_0}^{-1}]:=\sum_{j\geq 0} \partial_{\lambda_0}^{-j}\widehat{\textup{loc}}(F_{k+j} M_{\widetilde{A}})$.
Then we easily see that
$$F_k M_{\widetilde{A}}[\partial_{\lambda_0}^{-1}] = \mathit{Im}\left(\partial_{\lambda_0}^{k}\dC[\lambda_0,\lambda_1,\ldots,\lambda_m]\langle \partial_{\lambda_0}^{-1},\partial_{\lambda_0}^{-1}\partial_{\lambda_1},\ldots,
\partial_{\lambda_0}^{-1}\partial_{\lambda_m}\rangle\right) \mbox{ in }M_{\widetilde{A}}[\partial_{\lambda_0}^{-1}].
$$

The filtration $F_\bullet M_{\widetilde{A}}[\partial_{\lambda_0}^{-1}]$ induces a
filtration $G_\bullet$ on $\widehat{M}^{loc}_{\widetilde{A}}=\Gamma(\widehat{T},\widehat{\cM}^{loc}_{\widetilde{A}})$, with
$$
G_k \widehat{M}^{loc}_{\widetilde{A}} =
\mathit{Im}\left(z^{-k}\dC[z,\lambda_1^\pm,\ldots,\lambda_m^\pm]\langle z\partial_{\lambda_1},\ldots,
z\partial_{\lambda_m},z^2\partial_z\rangle\right) \mbox{ in }\widehat{M}^{loc}_{\widetilde{A}}
$$
Hence we obtain a filtration $G_\bullet$ on the sheaf $\widehat{\cM}^{loc}_{\widetilde{A}}$ and we have
$G_0\widehat{\cM}^{loc}_{\widetilde{A}} = \BL$, as required.
Moreover, $z^k\cdot: G_p\widehat{\cM}^{loc}_{\widetilde{A}} \stackrel{\cong}{\longrightarrow} G_{p-k}\widehat{\cM}^{loc}_{\widetilde{A}}$, and it follows from
theorem \ref{theo:BrieskornLatticeFree} that $\cO_{\dC_z\times S_1^0}\otimes_{\cO_{\dC_z\times S_1}} G_0\widehat{\cM}^{loc}_{\widetilde{A}} $, and hence all
$\cO_{\dC_z\times S_1^0}\otimes_{\cO_{\dC_z\times S_1}} G_p\widehat{\cM}^{loc}_{\widetilde{A}}$ are $\cO_{\dC_z\times S^0_1}$-locally free.
Notice however that $G_\bullet$ is in general not a good filtration on $\widehat{M}^{loc}_{\widetilde{A}}$, as $\partial_z G_k \widehat{M}^{loc}_{\widetilde{A}}\subset G_{k+2}\widehat{M}^{loc}_{\widetilde{A}}$ whereas $\partial_{\lambda_i} G_k\widehat{M}^{loc}_{\widetilde{A}}\subset G_{k+1}\widehat{M}^{loc}_{\widetilde{A}}$.

Concerning the filtration $G_\bullet^{\bD}$, notice that due to the definition
of $F_k M_{\widetilde{A}}[\partial_{\lambda_0}^{-1}]$, the strictly filtered resolution of
$(M_{\widetilde{A}},F_\bullet)$ from part 2 from above yields a strictly filtered resolution
of the filtered module $(M_{\widetilde{A}}[\partial_{\lambda_0}^{-1}], F_\bullet M_{\widetilde{A}}[\partial_{\lambda_0}^{-1}])$,
and the dual complex is then also strictly filtered and defines a filtration $G_\bullet^{\bD}$ on $\bD (M_{\widetilde{A}}[\partial_{\lambda_0}^{-1}])$,
which is nothing but the $\partial_{\lambda_0}^{-1}$-saturation of the dual filtration $F_\bullet^\bD$ from point 2. from above.
Hence we obtain a filtration $G_\bullet$ by $\cO_{\dC_z\times S_1}$-modules on $\bD\widehat{\cM}_{\widetilde{A}}=\widehat{\cM}^{(0,\underline{0})}_{\widetilde{A}}$.
\item
This is a direct consequence of 2. and 3.

\end{enumerate}
\end{proof}
As a consequence, we obtain the existence of a non-degenerate pairing on the lattice
$\BL$ considered above.
\begin{corollary}\label{cor:PairingPUp}
\begin{enumerate}
\item
There is a non-degenerate flat $(-1)^n$-symmetric pairing
$$
P:\left(\cO_{\dC^*_\tau\times S_1^0}\otimes_{\cO_{\dC_\tau\times S_1}} \widehat{\cM}^{loc}_{\widetilde{A}}\right)\otimes \iota^*\left(\cO_{\dC^*_\tau\times S_1^0}\otimes_{\cO_{\dC_\tau\times S_1}}
 \widehat{\cM}^{loc}_{\widetilde{A}}\right)\rightarrow \cO_{\dC^*_\tau\times S_1^0}.
$$
\item
We have that $P(\BL,\BL)\subset z^n \cO_{\dC_z\times S_1}$,
and $P$ is non-degenerate on $\cO_{\dC_z\times S_1^0}\otimes_{\cO_{\dC_z\times S_1}} \BL$, i.e., it induces a non-degenerate symmetric pairing
$$
[z^{-n}P]:\left[\cO_{S_1^0}\otimes_{\cO_{S_1}}\dfrac{\BL}{z\cdot\BL}\right]\otimes\left[\cO_{S_1^0}
\otimes_{\cO_{S_1}}\dfrac{\BL}{z\cdot \BL}\right]\rightarrow\cO_{S^0_1}.
$$
\end{enumerate}
\end{corollary}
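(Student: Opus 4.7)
The plan is to construct $P$ as the composition of the self-duality isomorphism $\phi:\widehat{\cM}^{loc}_{\widetilde{A}}\stackrel{\cong}{\longrightarrow}\iota^*\bD\widehat{\cM}^{loc}_{\widetilde{A}}$ of Theorem \ref{theo:Duality}, 2., with the canonical evaluation pairing between a holonomic $\cD$-module and its dual. On the smooth locus $\dC^*_\tau\times S_1^0$, guaranteed by Proposition \ref{prop:ResultsClassicalGKZ}, 2.(c), the module $\widehat{\cM}^{loc}_{\widetilde{A}}$ is an $\cO$-locally free flat bundle, so $\bD\widehat{\cM}^{loc}_{\widetilde{A}}$ may be identified with the $\cO$-dual bundle equipped with the dual connection. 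Pre-composing the evaluation pairing with $\phi\otimes\id$ and restricting to $\dC^*_\tau\times S_1^0$ then yields a flat $\cD$-linear map
$$P:\left(\cO_{\dC^*_\tau\times S_1^0}\otimes_{\cO_{\dC_\tau\times S_1}}\widehat{\cM}^{loc}_{\widetilde{A}}\right)\otimes\iota^*\left(\cO_{\dC^*_\tau\times S_1^0}\otimes_{\cO_{\dC_\tau\times S_1}}\widehat{\cM}^{loc}_{\widetilde{A}}\right)\longrightarrow\cO_{\dC^*_\tau\times S_1^0},$$
non-degenerate because $\phi$ is an isomorphism of locally free $\cO$-modules on the smooth locus.

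For the $(-1)^n$-symmetry, I would compare $P$ with its $\iota^*$-twisted transpose. By biduality this reduces to verifying that the composition $\iota^*(\bD\phi)\circ\phi$ equals $(-1)^n\,\id_{\widehat{\cM}^{loc}_{\widetilde{A}}}$. In the proof of Theorem \ref{theo:Duality}, 2., $\phi$ was realized as right multiplication by $\tau=z^{-1}$ using the identification $\bD\cM_{\widetilde{A}}\cong\cM^{(0,\underline{0})}_{\widetilde{A}}$ together with the sign-identity $\FL^\tau_{\lambda_0}\bD N\cong\iota^*\bD\FL^\tau_{\lambda_0}N$ for $\partial_{\lambda_0}^{-1}$-modules. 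Unwinding this, the $(-1)^n$ reflects the fact that $\widehat{\cM}^{loc}_{\widetilde{A}}$ essentially computes the $n$-th relative de Rham cohomology of the fibres of $\varphi$, on which Poincar\'e duality is $(-1)^n$-symmetric; algebraically, the sign arises from the length of the Euler--Koszul complex combined with the parity of $m+1-n$ coming from the Cohen--Macaulay codimension of $S$ in $R$.

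The containment $P(\BL,\BL)\subset z^n\cO_{\dC_z\times S_1}$ is then extracted by tracking filtrations. By Proposition \ref{prop:FiltrationDuality}, 4., $\phi$ sends $G_0\widehat{\cM}^{loc}_{\widetilde{A}}=\BL$ into $G^{\bD}_{m+2-n}\widehat{\cM}^{(0,\underline{0}),loc}_{\widetilde{A}}$. Saito's construction of the dual filtration (as used in Proposition \ref{prop:FiltrationDuality}, 2., and its proof) ensures that the natural evaluation pairing between $G^{\bD}_k$ on the dual and $G_\ell$ on the original module lands in the filtration step $z^{n-k-\ell}\cO_{\dC_z\times S_1}\cdot (\text{shift})$ of the target; specializing to $k=m+2-n$ and $\ell=0$ yields exactly the desired bound $P(\BL,\BL)\subset z^n\cO_{\dC_z\times S_1}$. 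The non-degeneracy of the induced symmetric form $[z^{-n}P]$ on $\BL/z\cdot\BL$ over $S_1^0$ follows from the fact that $\phi$ is an isomorphism of the locally free $\cO_{\dC_z\times S_1^0}$-modules $\cO_{\dC_z\times S_1^0}\otimes\BL$ and $\cO_{\dC_z\times S_1^0}\otimes G^{\bD}_{m+2-n}\widehat{\cM}^{(0,\underline{0}),loc}_{\widetilde{A}}$ guaranteed by Theorem \ref{theo:BrieskornLatticeFree}.

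The main obstacle is the careful bookkeeping of shifts and signs. The $(-1)^n$-symmetry requires a precise comparison of $\phi$ with its Fourier--Laplace-transposed bidual, taking seriously the fact that $\bD$ and $\FL$ only commute up to the involution $\iota^*$; and the exact power $z^n$ demands reconciling the filtration shift $m+2-n$ of Proposition \ref{prop:FiltrationDuality}, 4., with the shift $-2(m+1)$ built into Saito's convention for the dual filtration (as visible in the proof of Proposition \ref{prop:FiltrationDuality}, 2.). Both are essentially formal once one commits to the grading and sign conventions of the Euler--Koszul resolution of Section \ref{subsec:DualAndBrieskorn}, but getting the arithmetic right is the substantive point.
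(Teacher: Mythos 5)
Your proposal follows essentially the same route as the paper: on the smooth locus $\dC^*_\tau\times S_1^0$ one uses the self-duality isomorphism from Theorem \ref{theo:Duality}, 2., identifies the holonomic dual with the $\cO$-dual meromorphic bundle (the paper cites \cite[lemma A.11]{DS} resp.\ \cite[2.7]{SM} for this, but your ``locally free flat bundle'' formulation is the same observation), and then tracks the lattice filtration through the duality using Proposition \ref{prop:FiltrationDuality}, 4., together with Saito's description of the dual filtration.

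One bookkeeping point is off, though. You assert that the evaluation pairing between $G^{\bD}_k$ and $G_\ell$ lands in $z^{n-k-\ell}\cO$ ``up to a shift'', and that $k=m+2-n$, $\ell=0$ gives $z^n$; but $n-(m+2-n)-0=2n-m-2$, so the stated formula does not yield $z^n$ and the role of the ``shift'' is left unresolved. The paper's actual computation is: by \cite[formula 2.7.5]{SM}, ${\cH}\!om_{\cO}(G_k\widehat{\cM}^{loc}_{\widetilde{A}},\cO)=G^{\bD}_{k+(m+2)}\widehat{\cM}^{(0,\underline{0}),loc}_{\widetilde{A}}$; combining with $\phi(G_0)=G^{\bD}_{m+2-n}$ and $z^{j}G^{\bD}_p=G^{\bD}_{p-j}$ gives
$\phi(\BL)=G^{\bD}_{m+2-n}={\cH}\!om(G_{-n},\cO)=z^n{\cH}\!om(G_0,\cO)$, whence $P(\BL,\BL)\subset z^n\cO$. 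So the correct statement is that pairing $G_\ell$ against $G_0$ lands in $z^{n-\ell}\cO$; the $m+2$ shift from Saito's convention and the $m+2-n$ shift from Proposition \ref{prop:FiltrationDuality} cancel against one another and are not to be added to the exponent. You flag that ``getting the arithmetic right is the substantive point'', and indeed as written the arithmetic is not right. The $(-1)^n$-symmetry is only asserted, not proved, in the paper as well; your heuristic via the bidual of $\phi$ is plausible but would need the same kind of care to pin down.
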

\begin{proof}
\begin{enumerate}
\item
The statement can be reformulated as the existence of an isomorphism
$$
\psi:\left(\cO_{\dC^*_\tau\times S_1^0}\otimes_{\cO_{\dC_\tau\times S_1}} \widehat{\cM}^{loc}_{\widetilde{A}}\right) \stackrel{\cong}{\longrightarrow}
\iota^*\left(\cO_{\dC^*_\tau\times S_1^0}\otimes_{\cO_{\dC_\tau\times S_1}} \widehat{\cM}^{loc}_{\widetilde{A}}\right)^*
$$
where $(-)^*$ denotes the dual meromorphic bundle with its dual connection.
We deduce from  \cite[lemma A.11]{DS} (see also \cite[2.7]{SM}) that
$\bD(\cO_{\dC^*_\tau\times S_1^0}\otimes_{\cO_{\dC_\tau\times S_1}}\widehat{\cM}^{loc}_{\widetilde{A}})(*(\{0,\infty\}\times S_1^0))
=(\cO_{\dC^*_\tau\times S_1^0}\otimes_{\cO_{\dC_\tau\times S_1}}\widehat{\cM}^{loc}_{\widetilde{A}})^*$.
On the other hand, theorem \ref{theo:Duality}, 2. gives an isomorphism
$\cO_{\dC^*_\tau\times S_1^0}\otimes_{\cO_{\dC_\tau\times S_1}}\widehat{\cM}^{loc}_{\widetilde{A}}\cong \iota^*\bD(\cO_{\dC^*_\tau\times S_1^0}\otimes_{\cO_{\dC_\tau\times S_1}}\widehat{\cM}^{loc}_{\widetilde{A}})$ so that the latter module is already localized, i.e., equal to
$(\cO_{\dC^*_\tau\times S_1^0}\otimes_{\cO_{\dC_\tau\times S_1}}\widehat{\cM}^{loc}_{\widetilde{A}})^*$, which gives the existence of the isomorphism $\psi$ from above.
\item
We have seen in point 1. that the duality isomorphism
$$
\phi=z^{-1}\cdot : \FL^{w_0}_{\lambda_0}(\cM_{\widetilde{A}})[\tau^{-1}] \longrightarrow
\FL^{w_0}_{\lambda_0}(\bD \cM_{\widetilde{A}})[\tau^{-1}]
$$
yields an isomorphism
$$
\psi:\left(\cO_{\dC^*_\tau\times S_1^0}\otimes_{\cO_{\dC_\tau\times S_1}} \widehat{\cM}^{loc}_{\widetilde{A}}\right) \stackrel{\cong}{\longrightarrow}
\iota^*\left(\cO_{\dC^*_\tau\times S_1^0}\otimes_{\cO_{\dC_\tau\times S_1}} \widehat{\cM}^{loc}_{\widetilde{A}}\right)^*
$$
of meromorphic bundles with connection. Now it follows from \cite[formula 2.7.5]{SM} that we have
$$
{\cH}\!om_{\cO_{\dC_z\times S^0_1}}\left(\cO_{\dC_z\times S^0_1}\otimes_{\cO_{\dC_z\times S_1}} G_k \widehat{\cM}^{loc}_{\widetilde{A}},\cO_{\dC_z\times S^0_1}\right)
=\cO_{\dC^*_\tau\times S_1^0}\otimes_{\cO_{\dC_z\times S_1}} G^{\bD}_{k+(m+2)} \widehat{\cM}^{(0,\underline{0}),loc}_{\widetilde{A}}.
$$
Hence by proposition
\ref{prop:FiltrationDuality}, 4. from above we conclude that $\psi$ sends the module
$$
\cO_{\dC_z \times S_1^0}\otimes_{\cO_{\dC_z \times S_1}} G_0
\widehat{\cM}^{loc}_{\widetilde{A}}=\cO_{\dC_z \times S_1^0}\otimes_{\cO_{\dC_z \times S_1}} \BL
$$
isomorphically into
$$
\begin{array}{rcl}
& & {\cH}\!om_{\cO_{\dC_z\times S^0_1}} \left(\cO_{\dC_z \times S_1^0}\otimes_{\cO_{\dC_z \times S_1}} G_{-n} \widehat{\cM}^{loc}_{\widetilde{A}},\cO_{\dC_z\times S^0_1}\right)
\\ \\
& = & z^n{\cH}\!om_{\cO_{\dC_z\times S^0_1}} \left(\cO_{\dC_z \times S_1^0}\otimes_{\cO_{\dC_z \times S_1}} G_0 \widehat{\cM}^{loc}_{\widetilde{A}},\cO_{\dC_z\times S^0_1}\right) \\ \\
& =& z^n{\cH}\!om_{\cO_{\dC_z\times S^0_1}}\left(\cO_{\dC_z \times S_1^0}\otimes_{\cO_{\dC_z \times S_1}} \BL,\cO_{\dC_z\times S^0_1}\right),
\end{array}
$$
which is equivalent to the statement to be shown.
\end{enumerate}
\end{proof}

\section{$\cD$-modules with logarithmic structure and good bases}
\label{sec:LogQDMod}

In this section we apply the results of section \ref{sec:HyperGM} to
study hypergeometric $\cD$-modules on a subtorus of the $m$-dimensional
torus $S_1$. We suppose that our vectors $\underline{a}_1,\ldots,\underline{a}_m$
are defined by toric data.
In this situation, the subtorus is defined as  $S_2:=\Spec \dC[\dL]$,
where, as before, $\dL$ is the module of relations
between $\underline{a}_1,\ldots,\underline{a}_m$.
Following standard terminology, we call this torus the complexified K\"ahler moduli space of $\XSigA$.
We will consider a subfamily of Laurent polynomials
of the morphism $\varphi:S_0\times W\rightarrow \dC_t\times W$ from the last section,
parameterized by $S_2$ and we will show
that the associated Gau\ss-Manin system also has a hypergeometric structure.

For a good choice of coordinates on $S_2$ embedding it into some affine space $\dC^r$, we will construct
an extension of this hypergeometric modules to a certain lattice with logarithmic poles along the
boundary divisor $\dC^r\backslash S_2$. This ``$\cD$-module with logarithmic structure'' will play a crucial role
in the next section: on the one hand, we will see that it equals the so-called Givental connection defined
by the quantum cohomology of the variety $\XSigA$, on the other hand, we will use it to construct logarithmic Frobenius structures and express the mirror correspondence in terms of them. For that purpose, we will show that this
logarithmic extension is still a free module, and can be extended to a family of trivial bundle over $\dP^1\times \dC^r$ (or at least outside the locus where the family of mirror Laurent polynomials is degenerate) on which the connection extends with a logarithmic pole at infinity. This structure is the key ingredient
to construct a logarithmic Frobenius manifold, this will be done in section \ref{sec:AModel}.

\subsection{Landau-Ginzburg models and hypergeometric $\cD$-modules on K\"ahler moduli spaces}
\label{subsec:QDModDown}

We briefly recall the situation considered in the beginning of the last section, with the
more specific assumption that now the input data we are working with are of toric nature.
Hence, let again $N$ be a free abelian group of rank $n$ which we identify with $\dZ^n$
by chosing a basis. Let $\Sigma_A\subset N_\dR=N\otimes \dR$ be a fan
defining a smooth projective toric weak Fano variety $\XSigA$.
We write $\Sigma_A(1)$ for the set of rays (i.e., one dimensional cones) of $\Sigma_A$, we will
often denote such a ray by $v_i$. As before, $\underline{a}_1,\ldots,\underline{a}_m$
are the primitive integral generators of the rays $v_1,\ldots,v_m$ in $\Sigma_A(1)$. Consider the exact sequence
\begin{equation}
\label{eq:ExSeq}
0\longrightarrow \dL \stackrel{m}{\longrightarrow} \dZ^{\Sigma_A(1)}\cong \dZ^m \stackrel{A}{\longrightarrow} N \longrightarrow 0.
\end{equation}
Applying the functor $\mathit{Hom}_\dZ(-,\dC^*)$ yields
\begin{equation}
\label{eq:ExSeqTorFibr}
1 \longrightarrow S_0=\Spec \dC[N]\cong(\dC^*)^n \longrightarrow (\dC^*)^{\Sigma_A(1)}\cong (\dC^*)^m \stackrel{q}{\longrightarrow} S_2:=\Spec\dC[\dL] \cong \dL^\vee\otimes\dC^*\longrightarrow 1.
\end{equation}
The middle torus $(\dC^*)^{\Sigma_A(1)}$ is naturally dual to $S_1=\Spec\dC[\lambda_1^\pm,\ldots,\lambda_m^\pm]$, however, we
will from now on identify both (as well as the corresponding affine spaces $W$ and $W'$), so that
we denote $(\dC^*)^{\Sigma_A(1)}$ also by $S_1$.
Notice that the composition of the first map of the exact sequence \eqref{eq:ExSeqTorFibr} with the open embedding  $(\dC^*)^m \hookrightarrow \dC^m$ is nothing
but the map $k$ from proposition \ref{prop:ClosedEmb_NormalCMGorenstein}, which was shown to be closed.
Recall that for smooth toric varieties, $\dL^\vee$ equals the Picard group $\textup{Pic}(\XSigA)$.
Inside $\dL^\vee_\dR:= \dL^\vee \otimes \dR$ we have the K\"ahler cone $\cK_{\Sigma_A}$, which consists
of all classes $[a]\in \dL^\vee_\dR$ such that $a$, seen as a piecewise linear function on $N_\dR$ (linear
on each cone of $\Sigma_A$) is convex. The interior $\cK^0_{\Sigma_A}$ of the K\"ahler cone are the strictly convex
piecewise linear functions on $N_\dR$.  Write $D_i$ for the torus invariant divisors of $\XSigA$ associated
to the ray generated by $\underline{a}_i$, then the anti-canonical divisor of $\XSigA$ is
$\rho=\sum_{i=1}^m [D_i] \in \dL^\vee$. Recall that $\XSigA$ is Fano resp. weak Fano iff $\rho\in \cK^0_{\Sigma_A}$ resp.
$\rho\in \cK_{\Sigma_A}$.
We will choose a basis of $\dL^\vee$ consisting of classes $p_1,\ldots,p_r$ (with $r=m-n$) which lie in $\cK_{\Sigma_A}$
and such that $\rho$ lies in the cone generated by $p_1,\ldots,p_r$.
This identifies $S_2$ with $(\dC^*)^r$, and we write $q_1,\ldots,q_r$ for the coordinates defined by this identification.

The next definition describes one of the main objects of study of this paper.
\begin{definition}
Consider the linear function $W=w_1+\ldots+w_m:S_1 \rightarrow \dC_t$. The \textbf{Landau-Ginzburg model}
of the toric weak Fano variety $\XSigA$ is the restriction of the function $W$ to
the fibres of the torus fibration $q:S_1\cong (\dC^*)^m \rightarrow S_2\cong(\dC^*)^r$.
We will also sometimes call the morphism
$$
(W,q):S_1\cong(\dC^*)^m \longrightarrow \dC_t \times S_2\cong\dC_t\times(\dC^*)^r
$$
a Landau-Ginzburg model. Notice that the choice of the basis $p_1,\ldots,p_r$ (and hence the choice
of coordinates on $S_2$) are part of the data of the Landau-Ginzburg model, which would otherwise
only depend on the set of rays $\Sigma(1)$, but not on the fan $\Sigma$ itself.
\end{definition}
The choice of a basis $p_1,\ldots,p_r$ of $\dL^\vee$ also determines an open embedding $S_2 \hookrightarrow \dC^r$.
An important issue in this section will be to extend the various data defined by the Landau-Ginzburg model
of $\XSigA$ over the boundaray divisor $\dC^r\backslash S_2$. As a side remark, notice that
the K\"ahler cone of a toric Fano variety does not need to be simplicial, the simplest example being the toric del Pezzo surface obtained by blowing
up three points in $\dP^2$ in generic position. Hence the above chosen basis of $\dL^\vee$ does not necessarily generate the K\"ahler cone.

Using the dual basis $(p^\vee_a)_{a=1,\ldots,r}$ of $\dL$, the above map $m$ is given by a matrix $(m_{ia})$ with columns $\underline{m}_a$
and hence the torus fibration $q:(\dC^*)^m \twoheadrightarrow (\dC^*)^r$ is given by $q(w_1,\ldots,w_m)=(q_a=\underline{w}^{\underline{m}_a}:=\prod_{i=1}^m w_i^{m_{ia}})_{a=1,\ldots,r}$. We will also consider the product map $(\id_z,q):\dP^1_z \times S_1 \twoheadrightarrow \dP^1 \times S_2$
as well as its restriction to $\dC^*_z\times S_1 $.
Choose moreover a section $g:\dL^\vee\rightarrow \dZ^m$ of the projection $\dZ^m \twoheadrightarrow \dL^\vee$, which is given
in the chosen basis $p_1,\ldots,p_r$ of $\dL^\vee$ by a matrix $(g_{ia})$ with rows $\underline{g}_i$, so that
$\sum_{i=1}^m g_{ia} m_{ib}= \delta_{ab}$. The map $g$ induces a section
of the fibration $q$, still denoted by $g$, which is given as
$$
\begin{array}{rcl}
g: S_2 & \longrightarrow & S_1 \\ \\
(q_1,\ldots,q_r) & \longmapsto & (w_i:=\underline{q}^{g_i}:=\prod_{a=1}^r q_a^{g_{ia}})_{i=1,\ldots,m}
\end{array}
$$
Obviously, $g$ also gives a splitting of the fibration $q$, see diagram \eqref{eq:DiagramCoordChange} below.
Let us notice
that the section $g$ can be chosen such that the entries of the matrix $g_{ia}$
are non-negative integers. For this, recall (see, e.g., \cite[section 3.4.2]{CK})
the description of the K\"ahler cone as the intersection of cones in $\dL^\vee\otimes \dR$ each of which is generated by
the images under $\dZ^m\twoheadrightarrow \dL^\vee$ of some of the standard generators of $\dZ^m$ (the so-called anti-cones associated
to the cones $\sigma\in\Sigma_A$). Hence,
the chosen basis $(p_a)_{a=1,\ldots,r}$ of $\dL^\vee$ which consists of elements of $\cK_{\Sigma_A}$
can be expressed in the generators of any of these cones, and the coefficients are exactly the entries
of the matrix $(g_{ia})$, hence, non-negative. It follows that the section
$g:S_2\rightarrow S_1$ extends to a map $\overline{g}:\dC^r \rightarrow W=\Spec\dC[w_1,\ldots,w_m]$,
although the projection map $q:S_1\twoheadrightarrow S_2$ cannot be extended over the boundary
$\bigcup_{i=1}^m \{w_i=0\}\subset W$. In what follows, we will always assume that $g$ is constructed in such a way.

Write $S_2^0:=g^{-1}(S_1^0)=\{(q_1,\ldots,q_r)\in S_2\,|\, \widetilde{W}:=\sum_{i=1}^m \underline{q}^{\underline{g}_i} \underline{y}^{\underline{a}_i}
\textup{ is Newton non-degenerate }\}$. Finally, we define $\widetilde{g}=(\id_z,g):\dP^1_z\times S_2 \rightarrow \dP^1_z\times S_1$, which is a section of
the above projection map $(\id_z,q)$.
\begin{proposition}\label{prop:ReducedAHyperGeom}
The embedding $\widetilde{g}$ is non-characteristic for $\widehat{\cM}^{loc}_{\widetilde{A}}$ on
$\dP^1_z \times S_1^0$. Moreover,
the inverse image $\widetilde{g}^+\widehat{\cM}^{loc}_{\widetilde{A}}$ is given as the quotient of $\cD_{\dC_\tau\times S_2}[\tau^{-1}]/\widetilde{\cI}$,
where $\widetilde{\cI}$ is the left ideal
generated by
$$
\widetilde{\Box}_{\underline{l}}:=\prod_{a:p_a(\underline{l})>0} q_a^{p_a(\underline{l})} \prod_{i:l_i<0}
\prod_{\nu=0}^{-l_i-1} (\sum_{a=1}^r m_{ia} zq_a\partial_{q_a} -\nu z) -
\prod_{a:p_a(\underline{l})<0} q_a^{-p_a(\underline{l})} \prod_{i:l_i>0}
\prod_{\nu=0}^{l_i-1} (\sum_{a=1}^r m_{ia} zq_a\partial_{q_a} -\nu z)
$$
for any $\underline{l}\in\dL$ and by the single operator
$$
z\partial_z+\sum_{a=1}^r \rho(p^\vee_a) q_a\partial_{q_a}.
$$
Notice that $p_a$ is a linear form on $\dL$ and that we have $\sum_{i=1}^m g_{ia} l_i = \sum_{i,b} g_{ia} (m_{ib} p_b(\underline{l})) = p_a(\underline{l})\in \dZ$.
\end{proposition}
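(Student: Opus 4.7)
The plan is to establish the two claims separately. For the non-characteristic assertion, it suffices to observe that the characteristic variety of $\widehat{\cM}^{loc}_{\widetilde{A}}$ reduces to the zero section on all of $\dP^1_z\times S_1^0$: on $\dC^*_\tau\times S_1^0$ this is Proposition~\ref{prop:ResultsClassicalGKZ}(2c), and across $z=0$ the analogous conclusion for the $\cR$-lattice $\BL$ follows from the argument carried out in the proof of Theorem~\ref{theo:BrieskornLatticeFree} (based on \cite[Lemmas~3.1--3.3]{Adolphson}). Non-characteristicness of any closed embedding into $\dP^1_z\times S_1^0$ is then automatic.

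For the explicit presentation, the key idea is to diagonalize the situation via the section $g$. The $m\times m$ integer matrix $L:=[A^T\mid G]$ is unimodular, as $A$ and $g$ yield a pair of compatible splittings of the dual of the exact sequence~\eqref{eq:ExSeq}. Consequently, the multiplicative map
$$
\mu:S_0\times S_2\longrightarrow S_1,\qquad (y,q)\longmapsto\bigl(\underline{y}^{\underline{a}_i}\underline{q}^{\underline{g}_i}\bigr)_{i=1,\ldots,m}
$$
is an isomorphism of tori, under which $q$ becomes the second-factor projection and $\widetilde{g}$ becomes the embedding $\dP^1_z\times\{y=1\}\times S_2$. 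A chain-rule computation based on $A\cdot m=0$ and $G^T\cdot m=I_r$ (which forces the right block of $(L^T)^{-1}$ to equal $m$) gives
$$
\lambda_i\partial_{\lambda_i}\;=\;\sum_{k=1}^n\bigl((L^T)^{-1}\bigr)_{ik}\,y_k\partial_{y_k}\;+\;\sum_{a=1}^r m_{ia}\,q_a\partial_{q_a},
$$
and contracting with $A$ yields $\widehat{Z}_k=y_k\partial_{y_k}$ in the new coordinates. Since $y_k$ is invertible on $S_0$, we deduce $\partial_{y_k}\in\widehat{\cI}''$, and $\widehat{\cM}^{loc}_{\widetilde{A}}$ is therefore the pullback $\pi^+N$ of a cyclic $\cD_{\dC^*_\tau\times S_2}[\tau^{-1}]$-module $N$ under the projection $\pi:\dC^*_\tau\times S_0\times S_2\twoheadrightarrow\dC^*_\tau\times S_2$; since $q\circ g=\mathrm{id}_{S_2}$, this identifies $\widetilde{g}^+\widehat{\cM}^{loc}_{\widetilde{A}}\cong N$.

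To identify $N$ with the quotient $\cD_{\dC_\tau\times S_2}[\tau^{-1}]/\widetilde{\cI}$ of the statement, one substitutes $\lambda_i=\underline{q}^{\underline{g}_i}$ and $\lambda_i\partial_{\lambda_i}\equiv\sum_a m_{ia}q_a\partial_{q_a}\pmod{\partial_{y_k}}$ in the remaining generators of $\widehat{\cI}''$. Using $\sum_i l_i\underline{a}_i=0$ for $\underline{l}\in\dL$ and $\sum_i g_{ia}l_i=p_a(\underline{l})$, the monomial prefactor $\prod_i\lambda_i^{l_i}$ becomes $\prod_a q_a^{p_a(\underline{l})}$; multiplying the substituted $\widehat{\Box}''_{\underline{l}}$ by the unit $\prod_{a:p_a(\underline{l})<0}q_a^{-p_a(\underline{l})}$ then produces exactly the operator $\widetilde{\Box}_{\underline{l}}$. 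Likewise, writing $\rho=\sum_i[D_i]\in\dL^\vee$ and using the pairing $\langle[D_i],p_a^\vee\rangle=m_{ia}$, the operator $\widehat{E}$ reduces to $z\partial_z+\sum_a\rho(p_a^\vee)q_a\partial_{q_a}$. Completeness of $\widetilde{\cI}$ as the full annihilator is enforced by a rank comparison: $N$ is holonomic of generic rank $n!\cdot\vol(\Conv(\underline{a}_1,\ldots,\underline{a}_m))$ by Proposition~\ref{prop:ResultsClassicalGKZ}(3), and a parallel computation of the characteristic variety of $\cD_{\dC_\tau\times S_2}[\tau^{-1}]/\widetilde{\cI}$ (following the same lines as in \cite[Lemmas~3.1--3.3]{Adolphson}) yields the same rank, so the tautological surjection is forced to be an isomorphism.

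The main technical obstacle in executing this plan is the careful bookkeeping of the coordinate change; commutativity among the logarithmic vector fields $\lambda_i\partial_{\lambda_i}$ keeps the substitution unambiguous, and the identity $A\underline{l}=0$ for $\underline{l}\in\dL$ makes each $\widehat{\Box}''_{\underline{l}}$ reduce cleanly modulo $\partial_{y_k}$ to an operator in $q,\partial_q,z,\partial_z$ alone.
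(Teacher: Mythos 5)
Your coordinate-change computation is essentially the paper's own argument: the paper also introduces the torus isomorphism $\Phi(\underline y,\underline q)=\bigl(\underline q^{\underline g_i}\underline y^{\underline a_i}\bigr)_i$, observes that the operator $\widehat Z_k$ pulls back to $y_k\partial_{y_k}$, deduces that the transformed module is an exterior tensor product $\cO_{S_0}\boxtimes\bigl(\cD_{\dC_\tau\times S_2}[\tau^{-1}]/\widetilde\cI\bigr)$, and then restricts along the section $y=1$. The linear-algebra details you supply (unimodularity of $[A^T\mid G]$, the identification of the bottom block of $L^{-1}$ with $m^T$) are correct and just make explicit what the paper does by inspection of the diagram \eqref{eq:DiagramCoordChange}.

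However, the opening of your argument for the non-characteristic claim contains a genuine error. You assert that ``the characteristic variety of $\widehat{\cM}^{loc}_{\widetilde A}$ reduces to the zero section on all of $\dP^1_z\times S_1^0$'' and that non-characteristicity of any embedding is then automatic. This is false: $\widehat{\cM}^{loc}_{\widetilde A}$ has an irregular singularity along $\{z=0\}$ and a regular one along $\{z=\infty\}$ (cf.\ Lemma \ref{lem:MhatRegInfty}), so over $\{0,\infty\}\times S_1^0$ the characteristic variety necessarily contains conormal directions. The fact you cite from the proof of Theorem \ref{theo:BrieskornLatticeFree} concerns the symbol ideal of the lattice $\BL$ with respect to the filtration on $\cR'$ (where $z\partial_{\lambda_i}$ is treated as an order-one symbol), which is a different notion and does not control the $\cD$-module characteristic variety at $z=0$. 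The correct and sufficient observation, which is what the paper uses, is that the singular locus of $\widehat{\cM}^{loc}_{\widetilde A}$ inside $\dP^1_z\times S_1^0$ is contained in $\{0,\infty\}\times S_1^0$; since $\widetilde g$ is the identity in the $z$-factor, the conormal of its image lies purely in the $\lambda$-directions, while the extra characteristic directions over $\{0,\infty\}\times S_1^0$ lie purely in the $z$-direction, so they intersect only in the zero section.

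Finally, the closing rank comparison (``completeness of $\widetilde\cI$ as the full annihilator is enforced by a rank comparison'') is unnecessary, and by itself it would not suffice: matching generic ranks of two holonomic modules with a surjection between them only shows the kernel is supported in lower dimension, not that it vanishes. But no such argument is needed: once the pulled-back ideal is seen to be generated by $(y_k\partial_{y_k})_k$ together with the displayed $q$-operators (exactly because $\lambda_i\partial_{\lambda_i}\equiv\sum_a m_{ia}q_a\partial_{q_a}$ modulo the left ideal $(y_k\partial_{y_k})$), the module is literally $\cO_{S_0}\boxtimes\bigl(\cD_{\dC_\tau\times S_2}[\tau^{-1}]/\widetilde\cI\bigr)$, and restriction along the section $y=1$ gives the claimed cyclic presentation directly.
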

\begin{proof}
The non-characteristic condition is evident as the singular locus of $\widehat{\cM}^{loc}_{\widetilde{A}}$, seen as a $\cD_{\dP^1_z\times S_1}$-module
is contained in $(\{0,\infty\}\times S_1)\cup(\dP^1_z\times (S_1\backslash S_1^0))$. In order to calculate the inverse image, consider the following diagram
\begin{equation}
\label{eq:DiagramCoordChange}
\xymatrix@!0{
          &&&&& S_1 \ar[ddddlllll]^{\Phi^{-1}} \ar[ddddrrrrr]^{q} \\ \\ \\ \\
          S_0 \times S_2 \ar@(u,l)[uuuurrrrr]^\Phi \ar[rrrrrrrrrr]^{\pi} &&&&&&&&&&  S_2 \\
          (y_1,\ldots,y_n,q_1,\ldots,q_r) \ar@{|->}[rrrrrrrrrr] &&&&&&&&&& (q_1,\ldots,q_r) }
\end{equation}
where the coordinate change $\Phi$ is given as
$$
\Phi(\underline{y},\underline{q}) := \left(w_i=\underline{q}^{\underline{g}_i}\cdot y^{\underline{a}_i}\right)_{i=1,\ldots,m}
$$
As the diagram commutes,
the $\underline{q}$-component of $\Phi^{-1}$ is $q_a=\underline{w}^{\underline{m}_a}$.
Putting $\widetilde{\Phi}:S_0 \times \dC_\tau \times S_2 \rightarrow \dC_\tau \times S_1$, $(\underline{y},\tau,\underline{q})\mapsto(\tau,\Phi(\underline{y},\underline{q}))$
and similarly $\widetilde{\pi}: S_0 \times \dC_\tau \times S_2 \rightarrow \dC_\tau \times S_2$, $(\underline{y},\tau,\underline{q})
\mapsto(\tau,\underline{q})$, we consider the module $\widetilde{\Phi}^+ \widehat{\cM}^{loc}_{\widetilde{A}}$ which is
(using the presentation $\widehat{\cM}^{loc}_{\widetilde{A}}=\cD_{\widehat{T}}[\tau^{-1}]/\widehat{\cI}''$) equal to
the quotient of $\cD_{S_0\times \dC_\tau \times S_2}[\tau^{-1}]$ by the left ideal generated by
$$
\begin{array}{rcl}
\prod\limits_{a:p_a(\underline{l})<0} q_a^{p_a(\underline{l})} \widetilde{\Box}_{\underline{l}} & = &
\prod\limits_{a=1}^r q_a^{p_a(\underline{l})} \prod\limits_{i:l_i<0}
\prod\limits_{\nu=0}^{-l_i-1} (\sum_{a=1}^r m_{ia} zq_a\partial_{q_a} -\nu z) -
\prod\limits_{i:l_i>0}
\prod\limits_{\nu=0}^{l_i-1} (\sum_{a=1}^r m_{ia} zq_a\partial_{q_a} -\nu z) \\ \\
\widetilde{Z}_k & = & y_k\partial_{y_k}\\ \\
\widetilde{E} & = & z\partial_z+\sum_{a=1}^r (\sum_{i=1}^m m_{ia})q_a\partial_{q_a} = z\partial_z+\sum_{a=1}^r \rho(p^\vee_a) q_a\partial_{q_a}
\end{array}
$$
In other words, we have
$$
\widetilde{\Phi}^+\widehat{\cM}^{loc}_{\widetilde{A}}
=
\dfrac{\dC[z^\pm,y_1^\pm,\ldots,y_n^\pm,q_1^\pm,\ldots,q^\pm_r]\langle \partial_z, \partial_{q_1}, \ldots, \partial_{q_r}\rangle}{(\widetilde{\Box}_{\underline{l}})_{\underline{l}\in\dL}+\widetilde{E}}
$$

Obviously, the map $\widetilde{g}$ is given in the new coordinates by $\widetilde{g}(\tau,\underline{q}):=(\tau,\underline{1},\underline{q})\in \dC_\tau\times S_0\times S_2$, hence
we obtain
$$
\widetilde{g}^+\widehat{\cM}^{loc}_{\widetilde{A}}=
\dfrac{\dC[z^\pm,q_1^\pm,\ldots,q^\pm_r]\langle \partial_z, \partial_{q_1}, \ldots, \partial_{q_r}\rangle}{(\widetilde{\Box}_{\underline{l}})_{\underline{l}\in\dL}+\widetilde{E}}
$$
\end{proof}
As a consequence of this lemma, and using the comparison result in theorem \ref{theo:GM-GKZUp}, we can interpret this reduced GKZ-system as a Gau\ss-Manin-system.
\begin{corollary}
\label{cor:GM-GKZDown}
Consider the (restriction of the) Landau-Ginzburg model $(W,q):S^0_1 \rightarrow \dC_t \times S^0_2$.
Then there is an isomorphism of $\cD_{\dC_\tau\times S^0_2}$-modules
$$
\cD_{\dC_\tau\times S^0_2}[\tau^{-1}]/ \widetilde{\cI} \cong \FL^\tau_t(\cH^0((W,q)_+\cO_{S^0_1}))[\tau^{-1}]
$$
\end{corollary}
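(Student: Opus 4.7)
The plan is to deduce the corollary from Theorem \ref{theo:GM-GKZUp}, Proposition \ref{prop:ReducedAHyperGeom}, and a base-change argument that identifies the pullback of the generic family $\varphi$ along $\widetilde{g}$ with the Landau-Ginzburg model $(W,q)$.

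First, I would restrict the isomorphism $\widehat{\cM}^{loc}_{\widetilde{A}}\cong \FL^\tau_t(\cH^0\varphi_+\cO_{S_0\times S_1})[\tau^{-1}]$ of Theorem \ref{theo:GM-GKZUp} to $\dC^*_\tau\times S_1^0$ (where $\cH^0\varphi_+\cO$ is a smooth connection by proposition \ref{prop:ResultsClassicalGKZ}, 2(a)). Applying $\widetilde{g}^+$ to both sides is legitimate by the non-characteristic statement in Proposition \ref{prop:ReducedAHyperGeom}, and the left hand side becomes exactly $\cD_{\dC_\tau\times S_2^0}[\tau^{-1}]/\widetilde{\cI}$, which is the left hand side of the corollary.

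For the right hand side, I would argue in two steps. Since $\widetilde{g}=(\id_z,g)$ acts only in the coordinates $(z,\underline{q})$ and leaves $t$/$\tau$ untouched, the Fourier-Laplace transform $\FL^\tau_t$ commutes with $\widetilde{g}^+$; this is cleanest in the description of Lemma \ref{lem:DirectImageSimplified} via the de Rham complex of $(\cdot)\cdot e^{-\tau F}$, because the twist pulls back by substituting $\lambda_i=\underline{q}^{\underline{g}_i}$ in $F$. Next, I need base change for the outer direct image: in the cartesian square with horizontal maps $\id_{S_0}\times g$ and vertical maps $\varphi$, $\varphi_g$, I claim $(\id\times g)^+\cH^0\varphi_+\cO_{S_0\times S_1^0}=\cH^0(\varphi_g)_+\cO_{S_0\times S_2^0}$, where $\varphi_g(\underline{y},\underline{q})=(\sum_i \underline{q}^{\underline{g}_i}\underline{y}^{\underline{a}_i},\underline{q})$. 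Once this is established, the coordinate change $\Phi:S_0\times S_2\stackrel{\cong}{\to} S_1$ from diagram \eqref{eq:DiagramCoordChange} identifies $\varphi_g$ with $(W,q)\circ \Phi$, so $\cH^0(\varphi_g)_+\cO_{S_0\times S_2^0}=\cH^0(W,q)_+\cO_{S_1^0}$, and assembling these identifications yields the desired isomorphism.

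The main obstacle is justifying this base change for the $\cD$-module direct image along the closed embedding $\id\times g$. On $S_1^0$, however, the situation is transparent: the map $\varphi=(F,\pi)$ is a projection $\pi:S_0\times S_1\twoheadrightarrow S_1$ (twisted by $F$), so its direct image is computed by the relative de Rham complex (Lemma \ref{lem:DirectImageSimplified}). The latter commutes with any base change in the parameter $\underline{\lambda}$, because the relative de Rham complex $\pi_*\Omega^{\bullet+n}_{S_0\times S_1/S_1}[z^\pm]$ pulls back to $\pi'_*\Omega^{\bullet+n}_{S_0\times S_2/S_2}[z^\pm]$, and the differential $d-z^{-1}dF\wedge$ pulls back to $d-z^{-1}dF_g\wedge$. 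This reduces the desired base change to a flat pullback on an $\cO$-module. Higher cohomology vanishes because the affine torus projection $\pi$ is cohomologically affine. Combining this with Step 1 completes the proof.
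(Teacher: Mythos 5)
Your proposal is correct and follows essentially the same route as the paper's proof: both proceed by pulling back along $\widetilde{g}$ and using the coordinate change $\Phi$ from diagram \eqref{eq:DiagramCoordChange} to identify the base-changed family with the Landau--Ginzburg model. The only organizational difference is that the paper performs the base change on $\cH^0\varphi_+\cO$ and afterwards applies $\FL^\tau_t$ (citing \cite[section 1.7]{Hotta} for the base change), whereas you first pass to the Fourier--Laplace transform via the relative de Rham description of Lemma \ref{lem:DirectImageSimplified} and then observe that this complex restricts along $g$ by substitution. Your added explanation of the base-change step is genuinely useful; note however that the relevant justification is not really that ``higher cohomology vanishes'' for the torus projection, but rather that the object $G$ is by Lemma \ref{lem:DirectImageSimplified} the cokernel of $(zd - dF\wedge)$ on the top piece of the (shifted) relative de Rham complex, and cokernels of $\cO$-linear maps commute with $\cO$-module pullback because pullback is right exact; the affineness of $\pi$ enters only to ensure that $\pi_*$ itself commutes with the base change of quasi-coherent sheaves.
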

\begin{proof}
First notice that due to diagram \eqref{eq:DiagramCoordChange} we have an isomorphism
$$
\cH^0((W,q)_+\cO_{S_1^0})
\cong\cH^0((\widetilde{W},\pi)_+\cO_{S_0\times S_2^0}),
$$
recall that
$$
\widetilde{W}(\underline{y},\underline{q})= \sum_{i=1}^m \underline{y}^{\underline{a}_i} \underline{q}^{\underline{g}_i}=
\sum_{i=1}^m \left(\prod_{k=1}^n y_k^{a_{ki}}\right)\cdot\left(\prod_{a=1}^r q_a^{g_{ia}}\right).
$$
Consider the following cartesian diagram
\begin{equation}
\label{eq:DiagramBaseChange}
\xymatrix@!0{
          S_0\times S_2^0 \ar[dddd]_{\varphi':=(\widetilde{W},\pi)} \ar[rrrrrrr] &&&&&&& S_0\times S_1^0 \ar[dddd]^{\varphi} \\ \\ \\ \\
          \dC_t\times S_2^0 \ar[rrrrrrr]^{(\id_{\dC_t},g)} &&&&&&& U_1=\dC_t\times S_1^0
}
\end{equation}

Now we use the base change properties of the direct image (see, e.g., \cite[section 1.7]{Hotta}), from which we obtain
that
$$
(\id_{\dC_t},g)^+ \cH^0(\varphi_+\cO_{S_0\times S_1^0}) \cong \cH^0(\varphi'_+\cO_{S_0\times S_2^0}).
$$
This gives
$$
\FL^\tau_t(\id_{\dC_t},g)^+ \cH^0(\varphi_+\cO_{S_0\times S_1^0})[\tau^{-1}] \cong \FL^\tau_t\cH^0(\varphi'_+\cO_{S_0\times S_2^0})[\tau^{-1}],
$$
and as we have
$$
\FL^\tau_t\left((\id_{\dC_t},g)^+ \cH^0(\varphi_+\cO_{S_0\times S_1^0})\right) \cong
\widetilde{g}^+\FL^\tau_t\cH^0(\varphi_+\cO_{S_0\times S_1^0}),
$$
we finally obtain
$$
\widetilde{g}^+ \FL^\tau_t\left(\cH^0(\varphi_+\cO_{S_0 \times S_1^0})\right)[\tau^{-1}] = \FL^\tau_t\left(\cH^0(\varphi'_+\cO_{S_0 \times S_2^0})\right)[\tau^{-1}],
$$
from which the desired statement follows using proposition \ref{prop:ReducedAHyperGeom} and theorem \ref{theo:GM-GKZUp}.

\end{proof}

As a consequence of the last result, we have the following easy corollary
concerning the the family of Brieskorn lattices resp. the holonomic duality for
the Gau\ss-Manin-system of the Landau-Ginzburg model $(W,q)$.
\begin{corollary}\label{cor:BrieskornDualityDown}
\begin{enumerate}
\item
The $\cD_{\dC_\tau\times S_2^0}$-module $\qM:=\cO_{\dC_\tau\times S_2^0}\otimes_{\cO_{\dC_\tau\times S_2}}(\cD_{\dC_\tau\times S_2}[\tau^{-1}]/ \widetilde{\cI})$ is equipped with an increasing filtration $G_\bullet$ by $\cO_{\dC_z\times S^0_2}$-modules.
Moreover, for any $k\in\dN$, $G_k\qM$ is $\cO_{\dC_z\times S_2^0}$-locally free of rank $n!\cdot\vol(\Conv(\underline{a}_1,\ldots,\underline{a}_m)$.
\item
Write $\qMBL$ for the $\cO_{\dC_z\times S^0_2}$-module $G_0\qM$, then this is the restriction to $\dC_z\times S_2^0$ of the sheaf associated to
the module
$$
\frac{\dC[z,q^\pm_1,\ldots,q^\pm_r]\langle z\partial_{q_1},\ldots,z\partial_{q_r},z^2\partial_z\rangle}{(\widetilde{\Box}_{\underline{l}})_{\underline{l}\in \dL}+
(z^2\partial_z+\sum_{a=1}^r \rho(p^\vee_a) zq_a\partial_{q_a})}.
$$
\item
There is a non-degenerate flat $(-1)^n$-symmetric pairing
$$
P:\qM \otimes \iota^*\qM \rightarrow \cO_{\dC^*_\tau\times S^0_2}.
$$
\item
$P(\qMBL ,\qMBL )\subset z^n \cO_{\dC_z\times S^0_2}$,
and $P$ is non-degenerate on $\qMBL$.
\end{enumerate}
\end{corollary}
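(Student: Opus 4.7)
The plan is to derive every assertion by non-characteristic pullback along $\widetilde{g}$ from the corresponding statement for the upstairs module $\widehat{\cM}^{loc}_{\widetilde{A}}$ established in section \ref{sec:HyperGM}. Indeed, combining corollary \ref{cor:GM-GKZDown} with proposition \ref{prop:ReducedAHyperGeom}, the module $\qM$ is (the restriction to $\dC^*_\tau\times S_2^0$ of) $\widetilde{g}^+\widehat{\cM}^{loc}_{\widetilde{A}}$, and $\widetilde{g}$ is non-characteristic on $\dP^1_z\times S_1^0$; in particular $\widetilde{g}^+$ is exact and coincides with the underlying $\cO$-module pullback $\widetilde{g}^*$ on any $\cO$-coherent subsheaf, such as the filtration pieces $G_k\widehat{\cM}^{loc}_{\widetilde{A}}$ of proposition \ref{prop:FiltrationDuality} after restriction to $\dC_z\times S_1^0$.

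For parts 1 and 2, I would define $G_k \qM := \widetilde{g}^*\bigl(\cO_{\dC_z\times S_1^0}\otimes_{\cO_{\dC_z\times S_1}} G_k \widehat{\cM}^{loc}_{\widetilde{A}}\bigr)$. Theorem \ref{theo:BrieskornLatticeFree} combined with proposition \ref{prop:FiltrationDuality}.3 asserts that each such factor upstairs is $\cO$-locally free; its rank is $n!\,\vol(\Conv(\underline{a}_1,\ldots,\underline{a}_m))$, independent of $k$ because $z^k\cdot$ identifies $G_p$ with $G_{p-k}$ and the generic rank of $\widehat{\cM}^{loc}_{\widetilde{A}}$ is computed in proposition \ref{prop:ResultsClassicalGKZ}. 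Local freeness of this rank is preserved by the closed-immersion pullback $\widetilde{g}^*$, which settles part 1. For part 2, the explicit presentation of $\qMBL = G_0 \qM$ would be obtained by carrying out the coordinate-change argument of proposition \ref{prop:ReducedAHyperGeom} with the $\cR$-presentation $\BL = \cR/\cI$ of definition \ref{def:LatticeGKZ}: the operators $\widehat{\Box}''_{\underline{l}}$ transform into $\widetilde{\Box}_{\underline{l}}$, the relations $z\widehat{Z}_k$ become trivial after the substitution $w_i = \underline{q}^{\underline{g}_i}\underline{y}^{\underline{a}_i}$ followed by restriction to $\{\underline{y}=\underline{1}\}$, and $z\widehat{E}$ becomes $z^2\partial_z+\sum_{a=1}^r\rho(p^\vee_a)zq_a\partial_{q_a}$.

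For parts 3 and 4, the natural move is to pull back the pairing $P$ of corollary \ref{cor:PairingPUp} along $\widetilde{g}$. Since $\widetilde{g}$ is the identity in the $z$-direction it commutes with the involution $\iota$, so the pulled-back map is a pairing $\qM \otimes \iota^*\qM \to \cO_{\dC^*_\tau\times S_2^0}$; flatness and $(-1)^n$-symmetry are automatically inherited. Non-degeneracy of the upstairs pairing corresponds to an $\cO$-linear isomorphism between two locally free sheaves of the same finite rank, and such an isomorphism remains an isomorphism under $\widetilde{g}^*$. The lattice compatibility $P(\qMBL,\qMBL)\subset z^n\cO_{\dC_z\times S_2^0}$ and the non-degeneracy of the induced pairing $[z^{-n}P]$ on $\qMBL/z\qMBL$ then follow at once by restricting the analogous statements for $\BL$, invoking once more the $\cO$-local freeness of $\BL$ on $\dC_z\times S_1^0$ to guarantee that restriction preserves non-degeneracy.

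The one delicate technical point I would verify carefully first is the compatibility between $\widetilde{g}^+$ and the filtration $G_\bullet$ of proposition \ref{prop:FiltrationDuality}, which is not a good $\cD$-module filtration in the ordinary sense. This compatibility nevertheless holds because each $G_k\widehat{\cM}^{loc}_{\widetilde{A}}$ is $\cO$-coherent (in fact $\cO$-locally free) after restriction to $\dC_z\times S_1^0$ by theorem \ref{theo:BrieskornLatticeFree}, so the closed-embedding pullback is sufficiently flat and the remaining verifications reduce to the explicit manipulation of generators sketched above.
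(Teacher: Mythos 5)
Your proposal is correct and follows essentially the same route as the paper: identify $\qM$ as the non-characteristic pullback $\widetilde{g}^+\widehat{\cM}^{loc}_{\widetilde{A}}$ (which, as the paper notes, coincides with the $\cO$-module pullback in the category of meromorphic bundles with connection), define $G_\bullet\qM$ by pulling back the filtration $z^{-\bullet}\BL$, and transport local freeness and the pairing from propositions \ref{prop:FiltrationDuality} and corollary \ref{cor:PairingPUp}. The paper's proof is simply a more compressed version of exactly this argument.
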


\begin{proof}
As we have seen, the closed embedding $\widetilde{g}_{|\dC_z\times S_2^0}:\dC_z\times S^0_2 \hookrightarrow \dC_z\times S^0_1$ is
non-characteristic for $\cO_{\dC_z\times S_2^0}\otimes_{\cO_{\dC_z\times S_2}}\widehat{\cM}^{loc}_{\widetilde{A}} $.
It is actually nothing else but the inverse image in the category of meromorphic bundles with connections. Hence the increasing filtration $\cO_{\dC_z\times S_1^0}\otimes(z^{-\bullet}\BL)$ on
$\widehat{\cM}^{loc}_{\widetilde{A}}$ by locally free $\cO_{\dC_z \times S_1^0}$-modules pulls back to an increasing
filtration $G_\bullet$ on $\qM$ by locally free $\cO_{\dC_z \times S_2^0}$-modules, the zeroth term of which is given by the formula in 2.
All other statements follow from
proposition \ref{prop:FiltrationDuality}.
\end{proof}

\subsection{Logarithmic extensions}
\label{subsec:LogExtension}

In this subsection, we first construct a logarithmic extension
of the hypergeometric system $\qM$ on the K\"ahler moduli space.
Recall from the last subsection that
$S_2^0$ is a Zariski open subspace of $S_2:=\Spec\dC[\dL]$ consisting of points $q$
such that the Laurent polynomial $\widetilde{W}(-,\underline{q}):S_0\rightarrow \dC_t$ is non-degenerate.
Recall also that we have chosen a basis $p_1,\ldots, p_r$ of $\dL^\vee$ of nef classes, i.e., classes lying
in the K\"ahler cone $\cK \subset \dL^\vee_\dR$. The corresponding coordinates on $S_2$ are $q_1,\ldots,q_r$, and define
an embedding of $S_2$ into $\dC^r$.

Write $\Delta_{S_2}:=S_2\backslash S_2^0$ and denote by
$\overline{\Delta_{S_2}}$ the closure of $\Delta_{S_2}$ in $\dC^r$. Finally, put $\overline{S_2^0}:=\dC^r\backslash  \overline{\Delta_{S_2}}$.
We will write $Z_a$ for the divisor $\{q_a=0\}$ in both $\dC^r$ and $\overline{S_2^0}$, and we define
$Z=\bigcup_{a=1}^r Z_a$ which is a simple normal crossing divisor in $\dC^r$ resp. $\overline{S_2^0}$.
\begin{lemma}\label{lem:LimitPointqCoord}
\begin{enumerate}
\item
The origin of $\dC^r$ is contained in $\overline{S_2^0}$.
\item
If $\XSigA$ is Fano (i.e., $\rho\in\cK^0_{\Sigma_A}$), then $\Delta_{S_2}=\emptyset$, and, hence,
$\overline{S_2^0}=\dC^r$.
\item
If $\Delta_{S_2}\neq \emptyset$, then there is a ball $B:=B_r(0) \subset (\overline{S_2^0})^{an}$ with radius equal to $r:= \inf\{ |q| : q \notin \overline{\Delta_{S_2}} \}$. We set $B:=(\dC^r)^{an}$ if $\Delta_{S_2} = \emptyset$.
\end{enumerate}
\end{lemma}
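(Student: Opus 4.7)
Part~(2) is an immediate consequence of Lemma~\ref{lem:FanoNonDeg}: under the Fano hypothesis, $F(-,\underline{\lambda})$ is Newton non-degenerate for every $\underline{\lambda}\in S_1$, so specialising $\underline{\lambda}=g(\underline{q})\in S_1$ for $\underline{q}\in S_2$ gives $\widetilde{W}(-,\underline{q})=F(-,g(\underline{q}))$ non-degenerate throughout $S_2$. Hence $\Delta_{S_2}=\emptyset$ and $\overline{S_2^0}=\dC^r$.

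Part~(3) is a purely topological consequence of Part~(1). By construction $\overline{\Delta_{S_2}}$ is closed in $\dC^r$, and Part~(1) says $0\notin\overline{\Delta_{S_2}}$, so the distance
\[
r\;:=\;\inf\bigl\{|q|\,:\,q\in\overline{\Delta_{S_2}}\bigr\}
\]
is strictly positive; any point $p$ with $|p|<r$ then lies in $\dC^r\setminus\overline{\Delta_{S_2}}=\overline{S_2^0}$, so $B_r(0)\subset(\overline{S_2^0})^{an}$.

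The substantive statement is therefore Part~(1), $0\notin\overline{\Delta_{S_2}}$. The plan is to argue by contradiction. Assume a sequence $\underline{q}^{(\nu)}\in\Delta_{S_2}\subset(\dC^*)^r$ converges to $0$ in $\dC^r$. By the Newton non-degeneracy criterion recalled just before Proposition~\ref{prop:ResultsClassicalGKZ}, for each $\nu$ there exist a proper face $\tau^{(\nu)}$ of $\Conv(\underline{0},\underline{a}_1,\ldots,\underline{a}_m)$ not containing $\underline{0}$ and a critical point $\underline{y}^{(\nu)}\in S_0$ of
\[
\widetilde{W}_{\tau^{(\nu)}}(-,\underline{q}^{(\nu)})\;=\;\sum_{i\in I_{\tau^{(\nu)}}}(\underline{q}^{(\nu)})^{\underline{g}_i}\,\underline{y}^{\underline{a}_i},\qquad I_\tau:=\{i:\underline{a}_i\in\tau\}.
\]
Since there are only finitely many such faces, I pass to a subsequence with $\tau^{(\nu)}=\tau$ constant. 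Setting $\mu_i^{(\nu)}:=(\underline{q}^{(\nu)})^{\underline{g}_i}(\underline{y}^{(\nu)})^{\underline{a}_i}\in\dC^*$, the critical point equations become $\sum_{i\in I_\tau}a_{ki}\mu_i^{(\nu)}=0$ for $k=1,\ldots,n$.

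From here I would combine two structural inputs. First, the choice of section $g$ with $g_{ia}\geq 0$ forces each $(\underline{q}^{(\nu)})^{\underline{g}_i}$ to be bounded and to converge to $0$ unless $\underline{g}_i=\underline{0}$, in which case $(\underline{q}^{(\nu)})^{\underline{g}_i}\equiv 1$. Second, the weak Fano reformulation used in the proof of Proposition~\ref{prop:ClosedEmb_NormalCMGorenstein}(2), namely $\Conv(\underline{a}_1,\ldots,\underline{a}_m)=P(\Sigma_A)$, implies that $\tau$ is covered by simplices $\Conv(\underline{a}_{i_1},\ldots,\underline{a}_{i_s})$ coming from smooth cones of $\Sigma_A$, on which the vectors $\underline{a}_{i_j}$ are linearly independent. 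Normalising the relative sizes $|\mu_i^{(\nu)}/\mu_{i_0}^{(\nu)}|$ by passing to a further subsequence and isolating a subset $J\subset I_\tau$ of ``dominant'' indices supported on a single such simplex, the argument of Lemma~\ref{lem:FanoNonDeg} applied to the linearly independent family $\{\underline{a}_j\}_{j\in J}$ forces every dominant $\mu_j^{(\nu)}$ to vanish for large $\nu$, contradicting the definition of $J$.

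The main obstacle, and the only point at which the weak Fano (as opposed to Fano) case needs genuine new input, is the reduction to a dominant simplex when the face $\tau$ is non-simplicial, so that the vectors $\{\underline{a}_i\}_{i\in I_\tau}$ are linearly dependent. In that regime the critical point equation has non-trivial solutions for generic coefficients, and the contradiction must be extracted from the very specific shape of the coefficient path $\underline{q}\mapsto(\underline{q}^{\underline{g}_i})_i$. What makes the argument succeed is precisely the non-negativity $g_{ia}\geq 0$, i.e.\ the fact that the chosen basis $p_1,\ldots,p_r$ of $\dL^\vee$ lies in the K\"ahler cone $\cK_{\Sigma_A}$; any correct proof has to invoke this input in an essential way, since without it the limit $\underline{q}\to 0$ would not correspond to the large radius limit point of the mirror picture and the conclusion could fail.
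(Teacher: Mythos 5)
Your treatment of Parts~(2) and~(3) is correct and coincides with the paper's own (one-line) arguments: Part~(2) follows from Lemma~\ref{lem:FanoNonDeg} and Part~(3) is a trivial topological consequence of Part~(1). You also silently repair what is evidently a typo in the statement of the radius (the infimum should be taken over $q\in\overline{\Delta_{S_2}}$, not $q\notin\overline{\Delta_{S_2}}$, since otherwise Part~(1) would force $r=0$); that repair is the right one. Note, however, that for Parts~(2)--(3) you are matching the paper rather than improving on it.

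The substantive issue is Part~(1). The paper does not prove it at all; it cites \cite[appendix 6.1]{Ir2}. Your attempt to give a self-contained argument by contradiction is a reasonable direction, and the normalisation of $\mu_i^{(\nu)}:=(\underline{q}^{(\nu)})^{\underline{g}_i}(\underline{y}^{(\nu)})^{\underline{a}_i}$ followed by passage to a convergent subsequence is the correct set-up. However, the step on which everything hinges --- that after passing to a subsequence the ``dominant'' index set $J\subset I_\tau$ (those $i$ with $\mu_i^\infty\neq 0$ after normalisation) is contained in a single simplex $\Conv(\underline{a}_{i_1},\ldots,\underline{a}_{i_n})$ of the triangulation of $\tau$ induced by $\Sigma_A$ --- is asserted but not proved. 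You explicitly flag this yourself as ``the main obstacle'' and observe that for non-simplicial $\tau$ the critical point equations alone are not enough and one must exploit the particular form of the coefficient path $\underline{q}\mapsto(\underline{q}^{\underline{g}_i})_i$ through the non-negativity $g_{ia}\geq 0$; but you stop at naming this input rather than carrying out the estimate that would isolate a simplicial dominant set. Since Part~(1) is the only non-trivial assertion of the lemma, the proof as written has a genuine gap. To complete it along your chosen route you would need to show (for instance, by comparing the orders of vanishing of $(\underline{q}^{(\nu)})^{\underline{g}_i}$ along the chosen sequence against the relations imposed by the closure of the anticone of the maximal cone picked out by $g$) that the dominant $\mu_i^\infty$ are necessarily indexed by a linearly independent subset of $\{\underline{a}_i\}_{i\in I_\tau}$; alternatively one can simply follow Iritani's argument as the paper does.
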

\begin{proof}
\begin{enumerate}
\item
This has been shown in \cite[appendix 6.1]{Ir2}.
\item
This follows from lemma \ref{lem:FanoNonDeg}.
\item
This is clear from 1.
\end{enumerate}
\end{proof}
We proceed with a construction which is a variant of the arguments used in the proof
of theorem \ref{theo:BrieskornLatticeFree}, however, now we also take into account the logarithmic structure
along $Z$. We first define the appropriate non-commutative algebras, and then
show that they are actually locally free $\cO$-modules, possibly after a further restriction to
some Zariski open subset of $\overline{S_2^0}$.
\begin{definition}
\label{ref:defLogQDMod}
\begin{enumerate}
\item
Consider the ring
$$
\widetilde{R} := \dC[q_1,\ldots,q_r,z]\langle z q_1\partial_{q_1},\ldots, z q_r \partial_{q_r},z^2\partial_z\rangle
$$
i.e., the quotient of the free $\dC[q_1,\ldots,q_r,z]$-algebra generated by
$z q_1 \partial_{q_1}, \ldots, z q_r \partial_{q_r}, z^2\partial_z$ by the left ideal generated
by the relations
$$
\begin{array}{c}
[z q_a \partial_{q_a}, z] = 0, \quad [z q_a \partial_{q_a}, q_b]=\delta_{ab} z q_a, \quad [z^2\partial_z,q_a]=0,\quad [z^2\partial_z, z]= z^2, \\ \\
{[}z q_a \partial_{q_a}, z q_b \partial_{q_b} ]= 0, \quad [z^2\partial_z, z q_a \partial_{q_a}] = z\cdot zq_a\partial_{q_a} \;\;
\end{array}
$$
Write $\widetilde{\cR}$ for the associated sheaf of quasi-coherent $\cO_{\dC_z\times \dC^r}$-algebras, which restricts
to $\cD_{\dC_\tau^*\times S_2}$ on $\{(q_a\neq 0)_{a=1,\ldots,r},z\neq 0\}$.

We also consider the subring $\widetilde{R}':=\dC[q_1,\ldots,q_r,z]\langle z q_1\partial_{q_1},\ldots, z q_r \partial_{q_r}\rangle$
of $\widetilde{R}$, and the associated sheaf $\widetilde{\cR}'$. The
inclusion $\widetilde{\cR}'\hookrightarrow \widetilde{\cR}$ induces a functor from the category of $\widetilde{\cR}$-modules
to the category of $\widetilde{\cR}'$-modules, which we denote by $\textup{For}_{z^2\partial_z}$ (``forgetting the $z^2\partial_z$-structure'').
\item
Let $\widetilde{\cI}$ be the ideal in $\widetilde{\cR}$ generated by the operators $\widetilde{\Box}_{\underline{l}}$
for any $\underline{l}\in \dL$ and by
$z^2\partial_z+\sum_{a=1}^r \rho(p^\vee_a) zq_a\partial_{q_a}$ and consider the quotient $\widetilde{\cR}/\widetilde{\cI}$.
We have $\textup{For}_{z^2\partial_z}(\widetilde{\cR}/\widetilde{\cI})=(\widetilde{\cR}'/(\widetilde{\Box}_{\underline{l}})_{\underline{l}\in \dL})$
and $\widetilde{\cR}/\widetilde{\cI}$ equals $\qMBL$ on $\dC_z\times S_2$ (and hence equals $\qM$ on $\dC_\tau^*\times S_2$).
\end{enumerate}
\end{definition}
The basic finiteness result about the module $\qMBLlog$ is the following.
\begin{theorem}
\label{theo:LogExtQDMod}
There is a Zariski open subset $U$ of $\overline{S_2^0}$ containing the origin in $\dC^r$ such that the module
$\qMBLlog :=\cO_{\dC_z\times U}\otimes_{\cO_{\dC_z\times \dC^r}} \widetilde{\cR}/\widetilde{\cI}$
is $\cO_{\dC_z\times U}$-coherent.
If $\XSigA$ is Fano, i.e., if $\rho\in\cK^0(\Sigma_A)$, then one can choose $U$ to be $\dC^r$ (which equals $\overline{S_2^0}$ in this case).

There is a connection operator
$$
\nabla:\qMBLlog \longrightarrow \qMBLlog \otimes z^{-1}\Omega^1_{\dC_z\times U}\left(\log \left((\{0\}\times U) \cup (\dC_z\times Z)\right)\right)
$$
extending the $\cD_{\dC_\tau^*\times (U\cap S_2)}$-structure on $(\qM)_{|\dC_\tau^*\times (U\cap S_2)}$.
\end{theorem}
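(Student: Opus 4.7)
I would proceed in close analogy with Theorem~\ref{theo:BrieskornLatticeFree}. First, by applying the functor $\textup{For}_{z^2\partial_z}$, coherence reduces to the corresponding statement for $\cN:=\cO_{\dC_z\times U}\otimes\textup{For}_{z^2\partial_z}(\qMBLlog)$. I would equip $\widetilde{\cR}'$ with the filtration $F_\bullet$ by total order in the logarithmic vector fields $zq_1\partial_{q_1},\dots,zq_r\partial_{q_r}$, whose associated graded is the polynomial algebra $\cO_{\dC_z\times\dC^r}[\xi_1,\dots,\xi_r]$ with $\xi_a:=\gr(zq_a\partial_{q_a})$. A direct computation yields the principal symbol
$$
\sigma(\widetilde{\Box}_{\underline{l}}) \;=\; \prod_{a:p_a(\underline{l})>0} q_a^{p_a(\underline{l})} \prod_{i:l_i<0} \eta_i^{-l_i} \;-\; \prod_{a:p_a(\underline{l})<0} q_a^{-p_a(\underline{l})} \prod_{i:l_i>0} \eta_i^{l_i},
$$
with $\eta_i:=\sum_{a} m_{ia}\xi_a$; when $\overline{l}\neq 0$ only the higher-$F$-degree summand is the genuine principal symbol, but the other still contributes to the symbol ideal via the full relation together with lower-order terms. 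By the standard good-filtration argument, it suffices to produce a Zariski-open $U\subset\overline{S_2^0}$ containing the origin on which the subvariety $V\subset\dC_z\times U\times\dC^r_\xi$ cut out by these symbols is contained in the zero section.

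The crucial step is the analysis over $q=0$. Since $p_1,\dots,p_r\in\cK_{\Sigma_A}$, the cone $C_+:=\{\underline{l}\in\dL : p_a(\underline{l})\geq 0\ \forall a\}$ contains the Mori cone of $\XSigA$, and the weak-Fano assumption together with the choice of the $p_a$ ensures $\overline{l}=\rho(\underline{l})\geq 0$ for every $\underline{l}\in C_+$. For $\underline{l}$ with mixed signs among the $p_a(\underline{l})$, both summands of $\sigma(\widetilde{\Box}_{\underline{l}})$ carry a nontrivial $q$-monomial factor and so vanish at $q=0$; for $\underline{l}\in C_+\setminus\{0\}$ the second summand is $q$-free while the first vanishes at $q=0$, leaving the monomial $-\prod_{i:l_i>0}\eta_i^{l_i}$, with the symmetric statement for $-C_+$. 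Identifying $\xi_a$ with $p_a\in H^2(\XSigA,\dC)=\dL^\vee\otimes\dC$, so that $\eta_i$ corresponds to $[D_i]$, and invoking the classical toric-geometric fact that every primitive collection $S$ of $\Sigma_A$ arises as the positive support $\{i:l_i>0\}$ of some $\underline{l}_S\in C_+$, I conclude that the symbol ideal restricted to $q=0$ contains the Stanley--Reisner ideal of $\Sigma_A$; the resulting quotient of $\dC[\xi_1,\dots,\xi_r]$ therefore surjects onto the finite-dimensional cohomology ring $H^*(\XSigA,\dC)$. By $\xi$-homogeneity, the fibre of $V$ over $(z,0)$ collapses to the origin.

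Upper semicontinuity of fibre dimension then furnishes a Zariski-closed $\Delta\subset\dC^r$, not containing $0$, outside of which $V$ has zero-dimensional fibres; Theorem~\ref{theo:BrieskornLatticeFree} ensures the fibres are exactly $\{0\}$ over $S_2^0$, so enlarging $\Delta$ to include $\overline{\Delta_{S_2}}$ (which does not contain $0$ by Lemma~\ref{lem:LimitPointqCoord}) yields the required $U\subset\overline{S_2^0}$ on which $F_\bullet$ stabilizes and $\qMBLlog$ is $\cO_{\dC_z\times U}$-coherent. In the Fano case $\Delta_{S_2}=\emptyset$ by Lemma~\ref{lem:FanoNonDeg}, and the same argument applies at every $q\in\dC^r$, yielding $U=\dC^r$. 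The logarithmic connection is then read directly from the algebra $\widetilde{\cR}$ by setting
$$
\nabla[P] \;:=\; [z^2\partial_z\,P]\,\frac{dz}{z^2} \;+\; \sum_{a=1}^{r}\, [zq_a\partial_{q_a}\,P]\,\frac{dq_a}{zq_a}
$$
for a local section $[P]$ of $\qMBLlog$; this has the prescribed logarithmic pole type along $\{z=0\}\cup\bigcup_a\{q_a=0\}$, its flatness is immediate from the commutation relations listed in Definition~\ref{ref:defLogQDMod}, and its restriction to $\dC_\tau^*\times(U\cap S_2)$ recovers the $\cD$-module structure on $\qM$. The principal obstacle I anticipate is the combinatorial identification at $q=0$ of the symbol ideal with a quotient of the Stanley--Reisner ring, which requires a careful invocation of the weak-Fano hypothesis to relate primitive collections of $\Sigma_A$ to elements of $C_+$.
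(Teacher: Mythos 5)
Your argument is correct and runs along essentially the same lines as the paper's proof: apply $\textup{For}_{z^2\partial_z}$, filter by order in the logarithmic vector fields $zq_a\partial_{q_a}$, compute the symbol variety over $q=0$ via primitive relations (using that the $p_a$ are nef and $\rho$ lies in their positive span) to reduce to the finite-dimensionality of the Stanley--Reisner presentation of $H^*(\XSigA,\dC)$, and conclude by upper semicontinuity of fibre dimension combined with $\xi$-homogeneity; the explicit connection formula you give is exactly the one the paper reads off from the $\widetilde{\cR}$-module structure. The invocation of Theorem~\ref{theo:BrieskornLatticeFree} for the fibres over $S_2^0$ is redundant (homogeneity already forces zero-dimensional fibres to collapse to the origin), and the remark that the lower-order summand of $\widetilde{\Box}_{\underline{l}}$ ``still contributes to the symbol ideal'' is a distraction -- only the principal symbol is needed since one is establishing a lower bound on the symbol ideal -- but neither point affects the soundness of the argument.
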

\begin{proof}
The arguments used here have some similarities with the proof of theorem \ref{theo:BrieskornLatticeFree}.
We first suppose that $\XSigA$ is Fano, then we have to  show that $\qMBLlog$ is $\cO_{\dC_z\times \dC^r}$-coherent.
We will actually show the coherence of $\textup{For}_{z^2\partial_z}(\qMBLlog)$,
which is sufficient, as $\qMBLlog$ and $\textup{For}_{z^2\partial_z}(\qMBLlog)$ are equal
as $\cO_{\dC_z \times \dC^r}$-modules.
Consider the natural filtration
on $\widetilde{\cR}'$ given by order of operators, i.e., the filtration $F_\bullet\widetilde{\cR}'$ given on global sections by
$$
F_k \dC[q_1,\ldots,q_r,z]\langle zq_1\partial_{q_1},\ldots,zq_r\partial_{q_r}\rangle :=
\left\{P\,|\,P=\sum_{|\underline{s}|\leq k} g_{\underline{s}}(z,\underline{q})(z q_1 \partial_{q_1})^{s_1}\cdot\ldots\cdot(z q_r \partial_{q_r})^{s_r}\right\}.
$$
This filtration induces a filtration $F_\bullet$ on $\textup{For}_{z^2\partial_z}(\qMBLlog)$ which is good in the sense that
$$
F_k \widetilde{\cR}'\cdot F_l \textup{For}_{z^2\partial_z}(\qMBLlog) = F_{k+l} \textup{For}_{z^2\partial_z}(\qMBLlog).
$$
We have a natural identification
$$
\gr_\bullet^F(\widetilde{\cR}') = \pi_*\cO_{\dC_z\times T^* \dC^r(\log\,D)}
$$
where $T^* \dC^r(\log\,D)$ is the total space of the vector bundle associated to
the locally free sheaf $\Omega^1_{\dC^r}(\log\,D)$ and $\pi:\dC_z\times T^* \dC^r(\log\,D)\twoheadrightarrow \dC_z\times \dC^r$ is the projection. It will be sufficient to
show that the subvariety $\dC_z\times S$ of $\dC_z\times T^*\dC^r(\log\,D)$ cut out by the symbols of all operators
$\widetilde{\Box}_{\underline{l}}$ for ${\underline{l}\in\dL}$ equals $\dC_z\times \dC^r$, then by the usual argument the filtration $F_\bullet$
will become eventually stationary, and we conclude by the fact that all $F_k\textup{For}_{z^2\partial_z}(\qMBLlog)$ are
$\cO_{\dC_z\times \dC^r}$-coherent.
For the proof, we will use some elementary facts from toric geometry, namely, the notion
of primitive collections and primitive relations (see \cite{Bat1} and \cite{CoxRenesse}).
Suppose that $\underline{l}\in\dL$ corresponds to a primitive relation in the sense of loc.cit.,
then it follows that $p_a(\underline{l})\geq 0$ for all $a=1,\ldots,r$, as a primitive
relation lies in the Mori cone of $\XSigA$ and as $p_a$ is a nef class, i.e., by definition it takes
non-negative values on effective cycles. On the other hand, as $\XSigA$ is Fano,
we have that $\overline{l}=\rho(\underline{l})>0$, remember that $\rho=\sum_{i=1}^m D_i$ is the anti-canonical divisor which
by definition lies in the interior of the K\"ahler cone. Hence, $\sum_{i:l_i>0} l_i > \sum_{i:l_i<0} -l_i$,
moreover, for a primitive relation, we have $l_i=1$ for all $i$ such that $l_i>0$ (see \cite[proposition 3.1]{Bat1}). This yields
$$
\sigma(\widetilde{\Box}_{\underline{l}}) = \prod_{i:l_i=1} \left(\sum_{a=1}^r m_{ia} \sigma(zq_a\partial_{q_a})\right),
$$
Now identify $T^* \dC^r(\log\,Z)$ with the trivial bundle
$\dC^r \times X$ where $X$ is the vector space dual to the space
generated by $(\sigma(zq_a\partial_{q_a}))_{a=1,\ldots,r}$. Then the last equation shows that
the variety $S$ alluded to above is of the form $\dC^r \times Y_{red}$,
for some possibly non-reduced subvariety $Y\subset X$. We need to show that $Y_{red}=\{0\}$. First it is clear
that $Y$ is homogeneous so that it suffices to show that its
Krull dimension is zero. Recall from \cite[section 5.2, page 106]{Fulton} that
the classical cohomology ring of $\XSigA$ with complex coefficients is presented as
\begin{equation}\label{eq:CohomToric}
H^*(\XSigA,\dC)=\frac{\dC[(v_i)_{i=1,\ldots,m}]}{(\sum_{i=1}^m a_{ki} v_i)_{k=1,\ldots,n}+\left(
v_{i_1}\cdot\ldots\cdot v_{i_p} \right)}
\end{equation}
where the tuple $v_{i_1},\ldots,v_{i_p}$ runs over all primitive collections in $\Sigma_A(1)$.
However, it follows from the exactness of
the sequence \eqref{eq:ExSeq} that the spectrum of this ring equals the above subspace $Y$, in particular
the latter must be fat point, supported at the origin in the space $V$.
This shows that the variety $S$ is the zero section of $T^* \dC^r(\log\,D)$,
as required.

Now suppose only that $\XSigA$ is weak Fano, i.e., $\rho\in\cK_{\Sigma_A}$. Then it may
happen that for a primitive relation $\underline{l}$, we have $\overline{l}=\rho(\underline{l})=0$,
which implies that
$$
\sigma(\widetilde{\Box}_{\underline{l}})=
\prod_{a=1}^r q_a^{p_a(\underline{l})}\prod_{i:l_i<0} \left(\sum_{a=1}^r m_{ia} \sigma(zq_a\partial_{q_a})\right)^{-l_i}-
\prod_{i:l_i=1} \left(\sum_{a=1}^r m_{ia} \sigma(zq_a\partial_{q_a})\right),
$$
as $p_a(\underline{l})\geq 0$ for any primitive relation $\underline{l}$.
This shows that the fibre of $S$ over the point $q_1=0,\ldots,q_r=0$ is again the reduced
space of the spectrum of the cohomology algebra of $\XSigA$, i.e, it is
only the origin in the fibre of $T^*\dC^r(\log D)$ over
$\underline{q}=\underline{0}$. In particular, the projection map $S\twoheadrightarrow \dC^r$ is quasi-finite in a Zariski open neighborhood $U$ of
$\underline{0}\in \dC^r$. On the other hand, by its very definition, $S$ is homogeneous with respect to the fibre variables. Hence on $U$,
$S$ is the zero section of the projection $T^*U(\log D) \twoheadrightarrow U$, as required.

The statement concerning the connection follows directly from the definition of $\qMBLlog$, namely, $\qMBLlog$ is
invariant under the operators $\nabla_{zq_a\partial_{q_a}}$ for $a=1,\ldots,r$ and
$\nabla_{z^2 \partial_z}$.
\end{proof}
The next step is to discuss the restriction $(\qMBLlog)_{|\dC_z\times\{\underline{q}=\underline{0}\}}$, this is a coherent
$\cO_{\dC_z}$-module that we denote by $E$. It turns out that it is actually locally free, from which we deduce
the freeness of $\qMBLlog$ and certain extension properties of the pairing $P$ from corollary \ref{cor:BrieskornDualityDown}, 3.
\begin{lemma}\label{lem:ModuleE}
\begin{enumerate}
\item
There is a canonical isomorphism
$$
\alpha:\cO_{\dC_z}\otimes H^*(\XSigA,\dC)\stackrel{\cong }{\longrightarrow} E,
$$
hence, $E$ is $\cO_{\dC_z}$-free of rank $\mu:=n!\cdot\vol(\Conv(\underline{a}_1,\ldots,\underline{a}_m))$.
It comes equipped with a connection
$$
\nabla^{\mathit{res},\underline{q}}: E \longrightarrow E \otimes z^{-2}\Omega^1_{\dC_z}
$$
induced by the residue connection of $\nabla$ on $(\qMBLlog)_{|\dC^*_\tau\times \{\underline{q}=\underline{0}\}}$ along $\bigcup_{a=1}^r  \{q_a=0\}$.

Let $i:\dC_z\hookrightarrow \dC_z\times U$ be the inclusion and write $\pi:i^{-1}(\qMBLlog) \twoheadrightarrow E$ for the canonical
projection. Let $F=\pi\left(
\dC[zq_1\partial_{q_1},\ldots,zq_r\partial_{q_r}]\right) \subset E$, where we denote abusively by $\dC[zq_1\partial_{q_1},\ldots,zq_r\partial_{q_r}]$
the sheaf associated to the the image of this ring in $\Gamma(\dC_z\times U,\qMBLlog)$. Then $\alpha(1\otimes H^*(\XSigA,\dC)) = F$.

The restriction $E_{|z=0}=(\qMBLlog)_{|(0,\underline{0})}$ is canonically isomorphic, as a finite-dimensional
commutative algebra, to the cohomology ring $\left(H^*(\XSigA,\dC),\cup\right)$.
\item
$\qMBLlog$ is $\cO_{\dC_z\times U}$-free of rank $\mu$.
\item
Write $\qMlog$ for the restriction $(\qMBLlog)_{|\dC^*_\tau\times U}$. Then for any $a\in \{1,\ldots,r\}$, the residue endomorphisms $z
q_a\partial_{q_a}\in
{\cE\!}nd_{\cO_{\dC^*_\tau}}\left((\qMlog)_{|\dC^*_\tau\times \{\underline{0}\}}\right)=E_{|\dC^*_\tau}$
are nilpotent.
\item
There is a non-degenerate flat $(-1)^n$-symmetric pairing
$P:\qMBLlog \otimes \iota^*\qMBLlog \rightarrow z^n \cO_{\dC_z\times U}$,
i.e., $P$ is flat on $\dC^*_\tau\times (U\cap S_2)$, and the induced pairings
$P:(\qMBLlog/z\cdot\qMBLlog) \otimes (\qMBLlog/z\cdot\qMBLlog) \rightarrow z^n\cO_U$
and $P:(\qMBLlog/q_a\cdot\qMBLlog) \otimes \iota^*(\qMBLlog/q_a\cdot\qMBLlog) \rightarrow z^n\cO_{\dC_z\times Z_a}$ are non-degenerate.
\item
The induced pairing $P:E\otimes \iota^*E\rightarrow z^n\cO_{\dC_z}$ restricts to a pairing $P:F\times F\rightarrow z^n\dC$.
The pairing $z^{-n}P$ on $F$ coincides, under the identification made in 1., with the Poincar\'e pairing on $H^*(\XSigA,\dC)$ up to a non-zero constant.
\end{enumerate}
\end{lemma}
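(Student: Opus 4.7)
The plan is to establish (1) as the fundamental structural result by identifying the fibre $E|_{z=0}$ with the cohomology ring $H^*(\XSigA,\dC)$ at $\underline{q}=\underline{0}$, then deduce (2)--(4) by standard localization/Nakayama arguments, and treat (5) separately using the Gorenstein property of $H^*(\XSigA,\dC)$. The key input for (1) is implicit in the proof of Theorem \ref{theo:LogExtQDMod}: evaluating the operators $\widetilde{\Box}_{\underline{l}}$ at $\underline{q}=\underline{0}$ for a primitive $\underline{l}\in\dL$ in the sense of Batyrev yields, after introducing $\theta_a:=zq_a\partial_{q_a}$ and $v_i:=\sum_a m_{ia}\theta_a|_{\underline{q}=\underline{0}}$, exactly the Stanley--Reisner relation
$$\prod_{i:\,l_i=1}v_i \;=\;0\qquad\text{in }E.$$
Indeed, the correction terms $\nu z$ drop out because $l_i=1$ for positive entries of a primitive relation, and the factor $\prod_{a}q_a^{p_a(\underline{l})}$ kills the opposite summand at $\underline{q}=\underline{0}$ since $\underline{l}\neq 0$ together with $p_a(\underline{l})\geq 0$ forces at least one exponent to be strictly positive. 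The linear relations $\sum_i a_{ki}v_i=0$ are automatic from $A\cdot m=0$ by exactness of \eqref{eq:ExSeq}. Combining with the presentation \eqref{eq:CohomToric} produces an $\cO_{\dC_z}$-linear map $\alpha:\cO_{\dC_z}\otimes H^*(\XSigA,\dC)\to E$, surjective because $\qMBLlog$ is cyclic as a $\widetilde{\cR}$-module (hence $E$ is cyclic over $\dC[z,\theta_1,\ldots,\theta_r]$) and injective by comparing $\dim_\dC H^*(\XSigA,\dC)=n!\vol\Conv(\underline{a}_1,\ldots,\underline{a}_m)=\mu$ (valid for smooth complete fans by triangulating with maximal cones) with the generic rank of $E$ from Theorem \ref{theo:BrieskornLatticeFree}. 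The algebra identification at $z=0$ is then automatic since the multiplicative structure on $E|_{z=0}$ pulled back from the commuting operators $v_i$ matches cup product by construction, parallel to Lemma \ref{lem:BatyrevRing}.

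For (2), I would lift a $\dC$-basis of $E|_{z=0}$ back along $\alpha$ to $\mu$ sections of $\qMBLlog$ near $(0,\underline{0})$; Nakayama along the closed subscheme $\dC_z\times\{\underline{0}\}\subset \dC_z\times U$ then yields a surjection $\cO_{\dC_z\times U}^\mu\twoheadrightarrow \qMBLlog$ after possibly shrinking $U$. This surjection is an isomorphism on the dense open $\dC_z\times (U\cap S_2^0)$ by Corollary \ref{cor:BrieskornDualityDown}, hence its kernel is a torsion-free coherent subsheaf of $\cO^\mu$ of vanishing generic rank, and therefore zero. Part (3) is immediate from (1): under $\alpha$ the $\cO_{\dC_z}$-linear endomorphism $\theta_a|_{\underline{q}=\underline{0}}$ corresponds to cup product by $p_a\in H^2(\XSigA,\dC)$, which is nilpotent since $H^*(\XSigA,\dC)$ is concentrated in degrees $\leq 2n$. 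For (4), the pairing of Corollary \ref{cor:BrieskornDualityDown} already lives on $\dC_z\times S_2^0$ with image in $z^n\cO$, so by the freeness from (2) together with Hartogs-type extension of morphisms between locally free sheaves, $P$ extends uniquely to $\qMBLlog\otimes\iota^*\qMBLlog\to z^n\cO_{\dC_z\times U}$; non-degeneracy on $\qMBLlog/q_a\qMBLlog$ follows from non-degeneracy on the Zariski dense $(Z_a\cap U)\setminus\overline{\Delta_{S_2}}$ combined with openness of the non-degeneracy condition, while non-degeneracy on $\qMBLlog/z\qMBLlog$ at $\underline{q}=\underline{0}$ is precisely what (5) asserts.

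For (5), the pairing $z^{-n}P$ on $\qMBL/z\qMBL$ is, as in \cite{DS} and \cite{Sa2}, the Grothendieck residue pairing for the family of Laurent polynomials $\widetilde{W}$, hence satisfies the Frobenius property $\langle \xi\cup\eta,\zeta\rangle=\langle\xi,\eta\cup\zeta\rangle$ with respect to the Jacobian algebra structure identified in Lemma \ref{lem:BatyrevRing}. On $F\cong H^*(\XSigA,\dC)$ this algebra structure is classical cup product by part (1), and since $H^*(\XSigA,\dC)$ is an Artinian graded Gorenstein ring with one-dimensional socle $H^{2n}(\XSigA,\dC)$, any non-degenerate symmetric Frobenius form on it is a non-zero scalar multiple of the Poincaré pairing. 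Thus $z^{-n}P|_F$ is such a multiple, completing (5) and closing the non-degeneracy gap left open in (4).

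The hard part will be legitimizing the last step: one must verify independently that $z^{-n}P|_F$ is actually non-degenerate, as the Frobenius/socle argument only classifies non-degenerate pairings up to scalar. Since $\widetilde{W}(-,\underline{0})$ degenerates, one cannot invoke Grothendieck non-degeneracy directly at the large-radius limit; instead, the natural route is to argue that the pairing on $\qMBLlog/z\qMBLlog$ at $\underline{q}=\underline{0}$ is the limit, along paths in $S_2^0$, of the genuine Grothendieck residue pairings of the deformed Laurent polynomials. This limit exists by the freeness of $\qMBLlog$ established in (2), and non-degeneracy then transfers because $\qMBLlog/z\qMBLlog$ has constant rank $\mu$ over $U$; alternatively, one can use an explicit Jeffrey--Kirwan-type residue formula indexed by the maximal cones of $\Sigma_A$ to identify the pairing vertex by vertex with the Poincaré pairing.
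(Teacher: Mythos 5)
Your treatment of parts (1)--(3) matches the paper's approach and is correct in substance: the evaluation of $\widetilde{\Box}_{\underline{l}}$ at $\underline{q}=\underline{0}$ yields the Stanley--Reisner relations, the surjection $\alpha$ is an isomorphism by a rank count using coherence of $\qMBLlog$ (this rank input actually comes from Theorem~\ref{theo:LogExtQDMod}, not Theorem~\ref{theo:BrieskornLatticeFree}, but that is a minor citation slip), and the nilpotency of $[zq_a\partial_{q_a}]$ follows from the identification with cup product by a degree-two class. Part (2) via Nakayama plus kernel-is-torsion-free is also fine.

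Part (4) has a genuine gap. You invoke ``Hartogs-type extension of morphisms between locally free sheaves'' to extend $P$ from $\dC_z\times(U\cap S_2^0)$ to $\dC_z\times U$, but the complement $\dC_z\times(Z\cap U)$ is a \emph{divisor}, not of codimension~$\geq 2$, so Hartogs does not apply: a priori $P$ could have poles along $Z$. The subsequent non-degeneracy claim is equally unsupported, since the Zariski-dense subset $(Z_a\cap U)\setminus\overline{\Delta_{S_2}}$ on which you claim to know non-degeneracy lies entirely inside $Z_a$, where $P$ is not yet defined (nothing in Corollary~\ref{cor:BrieskornDualityDown} says anything about $\{q_a=0\}$). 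The paper's argument is entirely different: it uses the nilpotency from part (3) to show that $\qMlog$ is the canonical Deligne extension $V_{\mathbf 0}\qM$ along the normal crossing divisor $Z$, then applies the comparison of pairings on the $V$-filtration from \cite[lemma~3.4]{HS4} to extend $P$ non-degenerately to $\qMlog$, and only at the \emph{last} step extends to $\qMBLlog$ over the genuinely codimension-two locus $\{0\}\times Z$ via Hartogs.

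Part (5) is also incomplete. The bulk of the paper's proof establishes, by a delicate induction using the Hard Lefschetz theorem and an explicit basis adapted to the Lefschetz decomposition, that $P(F,F)\subset z^n\dC$ (not merely $z^n\cO_{\dC_z}$). Your proposal takes this as given and jumps straight to the Gorenstein/Frobenius argument; but without knowing that the values are constant in $z$, the multiplication-invariance $P(a,b)=P(1,a\cup b)$ on $F$ cannot be deduced from flatness (the paper itself flags exactly this subtlety), and the Gorenstein classification of the form is vacuous. Your closing paragraph, suggesting a degeneration or Jeffrey--Kirwan residue argument, is an interesting alternative sketch but is not carried out, and the proposed ``non-degeneracy transfer from constancy of rank'' is not a proof: constant rank of the quotient $\qMBLlog/z\qMBLlog$ does not imply that a family of pairings on it stays non-degenerate at the boundary. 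Note also the circularity: you defer non-degeneracy of $P$ at $\underline{q}=\underline{0}$ from part (4) to part (5), but your part (5) needs non-degeneracy of $z^{-n}P|_F$ to invoke the Gorenstein uniqueness; the paper instead gets non-degeneracy of $P$ unconditionally from part (4) and uses it in part (5) to conclude $P(1,\mathit{PD}([\mathit{pt}]))\neq 0$.
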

\begin{proof}
\begin{enumerate}
\item
In order to construct the map $\alpha$
notice first that we have
$$
\left(\widetilde{\Box}_{\underline{l}}\right)_{|\{\underline{q}=\underline{0}\}}
=\left\{
\begin{array}{rl}
\prod_{i:l_i>0} \prod_{\nu=0}^{l_i-1} (\sum_{a=1}^r m_{ia} zq_a\partial_{q_a} -\nu z)
 & \text{ if } p_a(\underline{l})\geq 0 \text{ for all } a=1,\ldots,r \\ \\
\prod_{i:l_i<0} \prod_{\nu=0}^{-l_i-1} (\sum_{a=1}^r m_{ia} zq_a\partial_{q_a} -\nu z)
 & \text{   if } p_a(\underline{l})\leq 0 \text{ for all } a=1,\ldots,r \\ \\
0 & \text{else}
\end{array}
\right.
$$
Hence
we obtain the following isomorphimsm of $\cO_{\dC_z}$-modules
$$
E=\left(\textup{For}_{z^2\partial_z}
(\qMBLlog)\right)_{|\dC_z\times\{\underline{q}=\underline{0}\}}
\cong\frac{\dC[z,zq_1 \partial_{q_1},\ldots,zq_r \partial_{q_r}]}{\left(\left\{\left(\widetilde{\Box}_{\underline{l}}\right)_{|\{\underline{q}=\underline{0}\}}
\,|\, \underline{l}\in\textup{Eff}_{\XSigA} \cap \dL\right\}\right)},
$$
where $\textup{Eff}_{\XSigA}\subset \dL_\dR$
 is the Mori cone of $\XSigA$.
Notice that if $\underline{l}\in \Leff:=\textup{Eff}_{\XSigA} \cap \dL$, then any $\left(\widetilde{\Box}_{\underline{l}}\right)_{|\{\underline{q}=\underline{0}\}}$ contains
$\prod_{i:l_i\geq 0} (\sum_{a=1}^r m_{ia}zq_a\partial_{q_a})$ as a factor. The Mori cone can be
characterized as follows  (see, e.g., the discussion in \cite[3.4.2]{CK}):
\begin{equation}\label{eq:MoriCone}
\textup{Eff}_{\XSigA}= \sum_{\sigma \in \Sigma_A(n)} C_{\sigma},
\end{equation}
where $C_{\sigma}$ is the cone generated by elements $\underline{l}\in\dL$ with $l_i \geq 0$ whenever $\dR_{\geq 0}\underline{a}_i$ is not a ray of $\sigma$.
It follows that whenever $\underline{l}\in\Leff\backslash\{0\}$, then the set $\{\underline{a}_i \,|\, l_i \geq 0\}$ cannot generate
a cone in $\Sigma_A$, for otherwise $-\underline{l}$ would also lie in $\textup{Eff}_{\XSigA}$, and thus $\underline{l}=0$.
As a consequence, for any $\underline{l}\in\Leff\backslash\{0\}$, the element
$(\widetilde{\Box}_{\underline{l}})_{|\{\underline{q}=\underline{0}\}}$ contains a factor
$\prod_{i\in I} (\sum_{a=1}^r m_{ia}zq_a\partial_{q_a})$ where $\sum_{i\in I}\dR_{\geq 0} \underline{a}_i\notin \Sigma_A$.

Now consider the case where $\underline{l}$ is primitive, in particular, $\underline{l}\in\Leff$. Then
$(\widetilde{\Box}_{\underline{l}})_{|\{\underline{q}=\underline{0}\}}$ is equal to
$\prod_{i\in I} (\sum_{a=1}^r m_{ia}zq_a\partial_{q_a})$, where $\{\underline{a}_i\,|\,i\in I\}$ is
a primitive collection.
As any set of rays $\{\underline{a}_j\,|\,j\in J\}$ which does not generate a cone
contains a primitive collection, we conclude from the above discussion that $E$ is equal to
$$
\frac{\dC[z,zq_1 \partial_{q_1},\ldots,zq_r \partial_{q_r}]}{\left(\left\{\left(\widetilde{\Box}_{\underline{l}}\right)_{|\{\underline{q}=\underline{0}\}}
\;\;|\;\; \underline{l}\textup{ primitive }\right\}\right)}
\cong
\dC[z]\otimes\frac{\dC[zq_1 \partial_{q_1},\ldots,zq_r \partial_{q_r}]}{\left(
\prod_{i\in I} (\sum_{a=1}^r m_{ia}zq_a\partial_{q_a})\right)_I},
$$
where the index set $I$ in the denominator of the right hand side runs over all subsets of $\{1,\ldots,m\}$ such that
$\{\underline{a}_i\,|\,i\in I\}$ is a primitive collection.

Now to define $\alpha$ we use again the presentation of $H^*(\XSigA,\dC)$ from formula \eqref{eq:CohomToric}. We conclude from the above discussion that
putting $\alpha(v_i):=
\sum_{a=1}^r m_{ia} z q_a \partial_{q_a}$ yields a well-defined map $\cO_{\dC_z} \otimes H^*(\XSigA,\dC) \rightarrow E$, which is obviously surjective.
We have seen in theorem \ref{theo:LogExtQDMod} that $\qMBLlog$ is coherent, and its generic rank is that of $\qM$, i.e., $\mu$. On the other
hand, $\cO_{\dC_z} \otimes H^*(\XSigA,\dC)$ is $\cO_{\dC_z}$-free of rank $\mu$, hence by semi-continuity and comparison of rank, we obtain that $\alpha$ is an isomorphism. Then
we also have that $\alpha(H^*(\XSigA,\dC))=F$. The pole order property of the connection operator $\nabla^{res,\underline{q}}$ on $E$ follows from the pole order properties
of $\nabla$ on $\qMBLlog$ as stated in theorem \ref{theo:LogExtQDMod}.

\item
This is now a standard argument: For any $I\subset\{1,\ldots,r\}$, put $\widetilde{Z}_I:=\bigcap_{a\in I}Z_a$ and consider the restriction
$(\qMBLlog)_{|\dC^*_\tau \times Z_I}$, where $Z_I:=\widetilde{Z}_I\backslash\left(\bigcup_{J\supsetneq I} \widetilde{Z}_J\right)$.
This restriction is equipped with the structure of a $\cD_{\dC^*_\tau \times Z_I}$-module, so that
it must be locally free. Hence it suffices to show freeness of $\qMBLlog$ in a neighborhood of $0\in\dC_z\times U$.
But this is clear after from point 1.: The dimension of the fibre at $0$ is
$n!\cdot\vol(\Conv(\underline{a}_1,\ldots,\underline{a}_m))$, which is also the rank  on $\dC_z \times S_2^0$. Hence it can neither be smaller nor bigger
at any point in a neighborhood of the origin in $\dC_z\times U$.
\item
Using the isomorphism $\alpha$ from 1., the residue endomorphism $[zq_a\partial_{q_a}]$ equals $\mathit{Id}_{\cO_{\dC^*_\tau}}\otimes (D_a\cup -)
\in {\cE\!}nd_{\cO_{\dC^*_\tau}}(E_{|\dC^*_\tau})$ from which its nilpotency follows easily.
\item
Using the $\cO_{\dC_z\times U}$-freeness of $\qMBLlog$ and point 5. above, this can be shown by
an argument similar to \cite[lemma 3.4]{HS4}. Namely,
consider
the canonical $V$-filtration (denoted by $V_{\underline{\bullet}}$) on $\qM$ along the normal crossing divisor $Z$. Then
the last point shows that we have
$V_{\mathbf{0}} \qM = \qMlog$ (recall that $\qMlog$ is the restriction
of $\qMBLlog$ to $\dC^*_\tau\times U$), hence,
$\gr_{\mathbf{0}}^V(\qMlog) =(\qMlog)_{|\dC_\tau^*\times\{\underline{0}\}}$.
This implies immediately (see \cite[proof of lemma 3.4 and formula 3.4]{HS4}) that $P$ extends in a non-degenerate way to
$\qMlog$. Hence we obtain a non-degenerate pairing on the restriction
$(\qMBLlog)_{|(\dC_z\times U)\backslash(\{0\}\times Z)}$. However, as $\{0\}\times Z$
has codimension two in $\dC_z\times U$, $P$ necessarily extends to a non-degenerate pairing
on $\qMBLlog$, as required.
\item
The non-degenerate pairing $P:E\otimes \iota^*E\rightarrow z^n\cO_{\dC_z}$ restricts to a pairing $P:F\times F \rightarrow z^n\cO_{\dC_z}$.
Let us show that it actually takes values in $z^n\dC$ on $F$.
Set $r_i = \text{dim} H^{2i}(\XSigA,\dC)$ and choose a homogeneous basis
$$
w_{1,0}=1, w_{1,1}, \ldots ,w_{r_1,1},\ldots ,
w_{1,n-1}, \ldots , w_{r_{n-1},n-1},w_{1,n}$$
where $w_{i,k} \in H^{2k}(\XSigA,\dC)$  and
which is adapted to the Lefschetz decomposition.
Recall that the Hard Lefschetz theorem says the following:
\[
H^m(\XSigA,\dC)= \bigoplus_i L^i H^{m-2i}(\XSigA,\dC)_p\, ,
\]
where $H^{n-k}(\XSigA, \dC)_p = ker(L^{k+1}: H^{n-k}(\XSigA,\dC) \rightarrow H^{n+k+2}(\XSigA,\dC))$ and the map $L$ is equal to cup-product with $c_1(\XSigA)$.
It follows from equation \ref{eq:ConnectAtQ0} that
\begin{align}
z \nabla_{\partial_z}^{\mathit{res},\underline{q}}(w_{i,k}) &= k \cdot w_{i,k} + \frac{1}{z} \sum_{m=1}^{r_{k+1}}\Theta_{m,i,k} w_{m,k+1} \quad \text{for} \quad k < n\, , \notag \\
z \nabla_{\partial_z}^{\mathit{res},\underline{q}}(w_{1,n}) &= n \cdot w_{1,n \notag}\, ,
\end{align}
where $\Theta_{m,i,k} := (\check{A}_0)_{u,v}$ with $u= m + \sum_{l=1}^{k}r_l$ and $v = i + \sum_{l=1}^{k-1}r_l$ and $\check{A}_0$ is the matrix with respect to the basis $w_{1,0}, \ldots, w_{1,n}$ of the endomorphism $-c_1(\XSigA) \cup$.
The first claim is that $P(w_{i,k},w_{j,l}) = c_{ikjl}z^{k+l}$ with $c_{ikjl} \in \dC$. Using the fact that $P$ takes values in $z^n\cO_{\dC_z}$ on $E$, this implies in particular $P(w_{i,k},w_{j,l}) = 0$ for $k+l < n$.
We have
\[
z \partial_z P(w_{1,n},w_{1,n}) = 2n P(w_{1,n},w_{1,n}) \in z^n \cO_{\dC_z}\, ,
\]
thus it follows that $P(w_{1,n},w_{1,n}) = c \cdot z^{2n}$ for some $c \in \dC$. Now assume that we have $P(w_{i,s},w_{j,t}) = c_{isjt} z^{s+t}$ for $c_{isjt} \in \dC$ and $s+t \geq d+1$.
We have for $k+l =d$
\begin{align}
z \partial_z P(w_{i,k}w_{j,l})&= P(k \cdot w_{i,k} + \frac{1}{z}(\sum_{m=1}^{r_{k+1}} \Theta_{m,i,k} w_{m,k+1}),w_{j,l})  \notag \\
&+ P(w_{i,k}, l \cdot w_{j,l} + \frac{1}{z}(\sum_{m=1}^{r_{l+1}}\Theta_{m,j,l}\, w_{m,l+1})) \notag \\
&= (k+l) P(w_{i,k},w_{j,l}) + c \cdot z^d \quad \text{for some} \quad c \in \dC\, ,\notag
\end{align}
where the last equality follows from the inductive assumption for $d+1$ and $d+2$. Thus we have
\[
(z \partial_z - d)^2 P(w_{i,k}w_{j,l}) = 0\, ,
\]
which shows $P(w_{i,k}w_{j,l}) - c \cdot z^d \in z^d \dC$. This shows the first claim, i.e. $P(w_{i,k},w_{j,l}) = c_{ijkl} z^{k+l}$ for $k+l \geq n$ and
$P(w_{i,k},w_{j,l}) = 0$ for $k+l <n$.\\

As a second step we want to show $P(w_{i,k},w_{j,l}) = 0$ for $k+l > n$. We prove this by descending induction, beginning with the case $k+l = 2n$. We first introduce some notation. We say $w_{i,k}$ is primitive if it is not of the form $-c_1(\XSigA) \cup v$ for some $v \in H^{2k-2}(\XSigA,\dC)$. We say ${_q}w_{i,k}\in H^{2k-2q}(\XSigA,\dC)$ is a $q$-th primitive of $w_{i,k}$ if $(-c_1(\XSigA))^q \cup {_q}w_{i,k} = w_{i,k}$. The Hard Lefschetz Theorem tells us that for $2k \geq n$ the element $w_{i,k}$ is never primitive.\\

As the base case we have to prove $P(w_{1,n},w_{1,n})=0$. Let ${_1}w_{1,n}$ be a first primitive of $w_{1,n}$. We have
\begin{align}
0=(z\partial_z- (2n-1))P({_1}w_{1,n},w_{1,n}) =&P((n-1)\cdot{_1}w_{1,n},w_{1,n}) + P(\frac{1}{z}w_{1,n},w_{1,n})\notag \\
+&P({_1}w_{1,n},n\cdot w_{1,n}) -(2n-1)P({_1}w_{1,n},w_{1,n}) \notag \\
=& \frac{1}{z}P(w_{1,n},w_{1,n})\, .\notag
\end{align}
Now assume $P(w_{i,k},w_{j,l}) = 0$ for $k+l \geq s+1$. We will prove $P(w_{i,k},w_{j,l})=0$ for $k+l=s$ by descending induction on $k$. Notice that by $(-1)^w$-symmetry we only have to prove this
for $k \geq l$.

The base case is to show that $P(w_{1,n},w_{j,s-n}) =0$ for $j \in \{1, \ldots , r_{s-n}\}$ (recall that $n+1 \leq s < 2n$). We have to distinguish two cases:\\

I. case: $w_{j,s-n}$ is not primitive. Thus there exists ${_1}w_{j,s-n}$ with $-c_1(\XSigA) \cup {_1}w_{j,s-n} = w_{j,s-n}$. We calculate
\begin{align}
0 &= (z\partial_z - (s-1)) P(w_{1,n}, {_1}w_{j,s-n}) \notag \\
&= P(n \cdot w_{1,n}, {_1}w_{j,s-n}) + P(w_{1,n}, (s-n-1){_1}w_{j,s-n}) + P(w_{1,n}, \frac{1}{z}w_{j,s-n})
- (s-1)P(w_{1,n},{_1}w_{j,s-n}) \notag \\
&= -\frac{1}{z}P(w_{1,n}, w_{j,s-n}). \notag
\end{align}

II. case: $w_{j,s-n}$ is primitive. This means that
\[
w_{j,s-n} \in H^{2s-2n}(\XSigA,\dC)_p = \ker\,\left(c_1(\XSigA)^{3n-2s+1}:H^{2s-2n}(\XSigA,\dC) \rightarrow H^{4n-2s+2}(\XSigA,\dC)\right).
\]
We have
\begin{align}
0 =& (z \partial_z - (s-1))P({_1}w_{1,n},w_{j,s-n}) \notag \\
=&P((n-1)\cdot {_1}w_{1,n}, w_{j,s-n}) + P(\frac{1}{z}w_{1,n},w_{j,s-n}) + P({_1}w_{1,n}, (s-n)\cdot w_{j,s-n}) \notag \\
+&P({_1}w_{1,n}, \frac{1}{z} (-c_1(\XSigA)) \cup w_{j,s-n}) - (s-1)P({_1}w_{1,n},w_{j,s-n}) \notag \\
=& P(\frac{1}{z}w_{1,n},w_{j,s-n}) + P({_1}w_{1,n}, \frac{1}{z} (-c_1(\XSigA)) \cup w_{j,s-n})\, , \notag
\end{align}
which gives $P(w_{1,n},w_{j,s-n}) = P({_1}w_{1,n}, c_1(\XSigA) \cup w_{j,s-n})$. Notice that $3n-2s < n$. Because $w_{1,n}$ has an $n-th$-primitive (this follows from
the Hard Lefschetz theorem: $c_1(\XSigA)^n: H^0(\XSigA,\dC) \overset{\simeq}{\longrightarrow} H^{2n}(\XSigA,\dC)$), we can repeat this step $3n-2s+1$ times to get
\[
P(w_{1_n},w_{j,s-n}) = P({_{(3n-2s+1)}}w_{1,n}, (-c_1(\XSigA))^{3n-2s+1} \cup w_{j,s-n}) = 0.
\]
This shows the second case.

We now assume that $P(w_{i,k},w_{j,l})=0$ for $k \geq t+1$ and $k+l =s$ as well as $P(w_{i,k},w_{j,l})=0$ for $k+l \geq s+1$. We have to prove $P(w_{i,t},w_{j,s-t})=0$ for $i \in \{1, \ldots r_t\}$ and $j \in \{1, \ldots , r_{t-s}\}$ and $t \geq s-t$ (the last restriction is allowed because of the $(-1)^w$-symmetry of $P$).

I. case: $w_{j,s-t}$ is not primitive: Thus there exists ${_1}w_{j,s-t}$ with $-c_1(\XSigA) \cup {_1}w_{j,s-t} = w_{j,s-t}$. We calculate
\begin{align}
0 =& (z\partial_z - (s-1)) P(w_{i,t},{_1}w_{j,s-t}) \notag \\
=& P(t \cdot w_{i,t},{_1}w_{j,s-t}) + P(\frac{1}{z}(-c_1(\XSigA) \cup w_{i,t}),{_1}w_{j,s-t}) + P(w_{i,t},(s-t-1)\cdot{_1}w_{j,s-t})\notag \\
+& P(w_{i,t},\frac{1}{z} w_{j,s-t}) - (s-1)P(w_{i,t},{_1}w_{j,s-t}) \notag \\
=& P(\frac{1}{z} (-c_1(\XSigA) \cup w_{i,t}),{_1}w_{j,s-t}) + P(w_{i,t},\frac{1}{z} w_{j,s-t}) \notag \\
=& P(w_{i,t},\frac{1}{z} w_{j,s-t}). \notag
\end{align}
Notice that $P(c_1(\XSigA) \cup w_{i,t},{_1}w_{j,s-t})$ vanishes because $c_1(\XSigA) \cup w_{i,t}$ is a linear combination of $\{w_{i,t+1}\}$ and $P(w_{i,t+1}, {_1}w_{j,s-t})$ vanishes for every $i \in \{1, \ldots , r_{t+1}\}$ by the induction hypothesis.

II. case: $w_{j,s-t}$ is primitive. This means that
\[
w_{j,s-t} \in H^{2s-2t}(\XSigA,\dC)_p = ker\left(c_1(\XSigA)^{n+ 2t -2s+1}: H^{2s-2t}(\XSigA,\dC) \rightarrow H^{2n-2s +2t +2}(\XSigA,\dC) \right).
\]
Notice that $w_{i,t}$ has a $(2t -n)$-th primitive and we have $2t-n \geq n +2t -2s +1$, because of $s \geq n+1$.
We calculate
\begin{align}
0 =& (z\partial_z - (s-1))P({_1}w_{i,t},w_{j,s-t}) \notag \\
=& P((t-1) \cdot {_1}w_{i,t},w_{j,s-t}) + P(\frac{1}{z} w_{i,t},w_{j,s-t}) + P({_1}w_{i,t},(s-t) \cdot w_{j,s-t}) \notag \\
+& P({_1}w_{i,t},\frac{1}{z} (-c_1(\XSigA)) \cup w_{j,s-t}) - (s-1)P({_1}w_{i,t},w_{j,s-t}) \notag \\
=& P(\frac{1}{z} w_{i,t},w_{j,s-t}) + P({_1}w_{i,t},\frac{1}{z} (-c_1(\XSigA)) \cup w_{j,s-t}) \notag
\end{align}
which gives $P(w_{i,t},w_{j,s-t}) = P({_1}w_{i,t}, (-c_1(\XSigA)) \cup w_{j,s-t})$. As $w_{i,t}$ has a $(2t-n)$-th primitive we can repeat this step $n +2t -2s+1$ times to get
\[
P(w_{i,t},w_{j,s-t}) = P({_{n+2t-2s+1}}w_{i,t}, (-c_1(\XSigA))^{n+2t-2s+1} \cup w_{j,s-t}) = 0\,.
\]
This finishes the induction over $t$. Thus we have shown that $P(w_{i,k},w_{j,l})=0$ if $k+l=s \geq n+1$ and $k \geq l$. The case $ k \leq l$ follows by symmetry and
this finishes the induction over $s$. This means that the pairing $P: F \times F \longrightarrow z^n \cO_{\dC_z}$ takes values in $z^n \dC$.

It remains to show that the pairing $z^{-n}P$ coincides, under the isomorphism $\alpha:1\otimes H^*(\XSigA,\dC)\rightarrow F$ and possibly up to a
non-zero constant, with the Poincar\'e pairing on the cohomology algebra. First notice that by construction, $z^{-n}P$, seen as defined
on $H^*(\XSigA,\dC)$ is multiplication invariant, i.e., $P(a,b)=P(1,a\cup b)$ for any two classes $a,b\in H^*(\XSigA,\dC)$.
This can be deduced from the flatness of $P$ on $\qM$, more precisely, by considering the restriction of $P$ defined
on the family of commutative algebras $\qMBLlog/z\cdot \qMBLlog$. Notice however that this argument holds a priori only modulo $z$,
and in order to obtain the multiplication invariance of $z^{-n}P$ on $1\otimes H^*(\XSigA,\dC)$ one first needs to know that it takes constant
values on that space. It suffices now
to show that $P(1,a)$ equals the value of the Poincar\'e pairing on $1$ and $a$. But as we have seen above, $P(1,a)$ can only be non-zero
if $a\in H^{2n}(\XSigA,\dC)$, so that the $P$ on $H^*(\XSigA,\dC)$ is entirely determined by the non-zero complex number $P(1,\mathit{PD}([\mathit{pt}]))$.

\end{enumerate}
\end{proof}
\textbf{Remark:} The value of the pairing $P$ at the point $(0,\underline{0})\in\dC_z\times U$ is determined, by the above
argument, up to multiplication by a non-zero complex number. In order to simplify the statements of the subsequent results, we will
without further mentioning assume that this number is chosen such that $P$ corresponds under the above identifications exactly
to the Poincar\'e pairing on $H^*(\XSigA,\dC)$. Such a choice is always possible by changing
the morphism $\phi:M_{\widetilde{A}}=M^{(1,\underline{0})}_{\widetilde{A}}\rightarrow \bD M_{\widetilde{A}}=M^{(0,\underline{0})}_{\widetilde{A}}$ from
the proof of theorem \ref{theo:Duality} by multiplication by a non-zero complex number (and these are the only non-trivial
morphisms between these two modules, due to \cite[theorem 3.3(3)]{SaitoMut1}).\\

We now show how to construct a specific basis of $\qMBLlog$
defining an extension to a family of trivial $\dP^1$ parameterized by an analytic neighborhood of the origin in $U$ and such that
the connection has a logarithmic pole at $z=\infty$.
As already mentioned in the introduction, the method goes back to \cite{Guest}, namely,
we first construct an extension of $E=(\qMBLlog)_{|\dC_z\times\{\underline{0}\}}$ to $\dP^1_z\times\{\underline{0}\}$
and then show that it can be extended to a family of $\dP^1$-bundles restricting to $\qMBLlog$ outside $z=\infty$.
At any point $\underline{q}$ near the origin in $U$ this yields
a solution to the Birkhoff problem (in other words, a good base in the sense of \cite{SM}) of the restriction
of $(\qMBL)_{|\dC_z\times\{\underline{q}\}}$, but it also gives an extension of the whole family $\qMBLlog$ taking into
account the logarithmic degeneration behavior at $D$.

\begin{proposition}\label{prop:ExtensionInftyAtZero}
Consider the $\cO_{\dC_z}$-module $E$ with the connection $\nabla^{\mathit{res},\underline{q}}$ and the subspace $F\subset E$ from
lemma \ref{lem:ModuleE}.
\begin{enumerate}
\item
The connection operator $\nabla^{\mathit{res},\underline{q}}:E\rightarrow z^{-2}\cdot E$ sends $F$ into $z^{-2}F\oplus z^{-1}\cdot F$.
\item
Let $\widehat{E}:=\cO_{\dP^1_z\times\{\underline{0}\}} \cdot F $ be an extension of $E$ to
a trivial $\dP^1$-bundle. Then the connection $\nabla^{\mathit{res},\underline{q}}$ has a logarithmic pole at $z=\infty$ with spectrum (i.e.,
set of residue eigenvalues) equal to the (algebraic) degrees of the cohomology classes of $H^*(\XSigA,\dC)$. This logarithmic extension
corresponds to an increasing filtration $F_\bullet$ on the local system $E_{|\dC_\tau^*}^{\mathit{an},\nabla^{\mathit{res},\underline{q}}}$ by
subsystems which are invariant under the monodromy of $\nabla^{\mathit{res},\underline{q}}$.
Let $j_\tau:\dC^*_\tau\hookrightarrow (\dP^1_z\backslash\{0\})$,
and put $E^\infty:=\psi_\tau j_{\tau,!} (E^{an})_{|\dC_\tau^*}^{\nabla^{\mathit{res},\underline{q}}}$,
where $\psi_\tau$ is Deligne's nearby cycle functor.
Then $F_\bullet$ is defined on $E^\infty$,
and there is an isomorphism $H^0(\dP^1_z,\widehat{E})=F\rightarrow E^\infty$.
\item
Write $N_a$ for the nilpotent part of the monodromy
of $(\qM)^{an,\nabla}$ around $\dC^*_\tau\times Z_a$, then $N_a$ acts on $E^\infty$ and satisfies
$N_a F_\bullet \subset F_{\bullet-1}$.
\item
The pairing $P$ on $E$ extends to a non-degenerate
pairing $P:\widehat{E}\otimes_{\cO_{\dP^1}} \iota^*\widehat{E} \rightarrow \cO_{\dP^1}(-n,n)$,
where $\cO_{\dP^1}(a,b)$ is the subsheaf of $\cO_{\dP^1}(*\{0,\infty\})$ consisting of meromorphic
functions with a pole of order $a$ at $0$ and a pole of order $b$ at $\infty$.
\end{enumerate}
\end{proposition}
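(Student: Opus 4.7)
My plan rests on an explicit computation of $\nabla^{res,\underline{q}}_{\partial_z}$ on the frame $\{w_{i,k}\}$ of $F \subset E$. I would lift each $w_{i,k}$ to a representative $Q_{i,k}(zq_1\partial_{q_1},\ldots,zq_r\partial_{q_r})\cdot[1]\in\qMBLlog$, with $Q_{i,k}$ a homogeneous polynomial of degree $k$ in its arguments (possible by Lemma~\ref{lem:ModuleE}~(1)). Three inputs then combine: the Euler relation $z^2\partial_z = -\sum_a\rho(p_a^\vee)\,zq_a\partial_{q_a}$ coming from the generator of $\widetilde{\cI}$; the residue identification of Lemma~\ref{lem:ModuleE}~(3), which gives $\nabla_{z^2\partial_z}[1]|_{\underline{q}=\underline{0}} = -c_1(\XSigA)$ in $E$; and the commutator computation $[z^2\partial_z,Q_{i,k}] = kz\,Q_{i,k}$, obtained by applying Leibniz to the identity $[z^2\partial_z, zq_a\partial_{q_a}] = z\cdot zq_a\partial_{q_a}$. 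Assembling them produces
\[
z\,\nabla^{res,\underline{q}}_{\partial_z}(w_{i,k}) \;=\; k\cdot w_{i,k}\;-\;z^{-1}\,c_1(\XSigA)\cup w_{i,k},
\]
whose two summands both lie in $F$; this proves Part 1.

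Taking $\widehat{E}:=\cO_{\dP^1}\cdot F$ and switching to $\tilde{z}=1/z$, the same identity reads $\nabla_{\tilde{z}\partial_{\tilde{z}}}(w_{i,k}) = -k\,w_{i,k}+\tilde{z}\,c_1(\XSigA)\cup w_{i,k}$, showing that the connection extends over $z=\infty$ with a logarithmic pole whose residue eigenvalues are the (negatives of the) algebraic degrees of cohomology classes; this yields the spectrum statement in Part 2. The associated increasing filtration $F_\bullet$ on $E^\infty$ and the identification $F=H^0(\dP^1,\widehat{E})\cong E^\infty$ are then supplied by Deligne's theory of canonical extensions: a logarithmic lattice at $\tilde{z}=0$ is equivalent to a $\dZ$-indexed filtration by monodromy-invariant subsystems of the nearby cycles, and for a trivial extension the canonical map on fibers realises the stated nearby-cycle isomorphism directly. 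For Part 3, Theorem~\ref{theo:LogExtQDMod} gives $\nabla$ logarithmic along each $Z_a$, so $N_a$ equals $-2\pi i$ times the nilpotent part of the residue along $Z_a$; restricted to $\underline{q}=\underline{0}$ this residue is cup-product by $p_a\in H^2(\XSigA,\dC)$ by Lemma~\ref{lem:ModuleE}~(3). Since $F_\bullet$ corresponds under the identification of Part 2 to the algebraic-degree filtration on $H^*(\XSigA,\dC)$ and $p_a$ has algebraic degree one, the required inclusion $N_a F_\bullet\subset F_{\bullet-1}$ falls out by inspection of degrees.

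Part 4 is essentially a direct corollary of the computation in Lemma~\ref{lem:ModuleE}~(5), where it is shown that $P(w_{i,k},w_{j,l})=c_{ijkl}\,z^{k+l}$ with $c_{ijkl}\neq 0$ only when $k+l=n$, in which case $c_{ijkl}$ is the Poincar\'e pairing. Since $\iota$ fixes every frame element and merely flips the sign of $z$, the pairing $P$ takes values in $\dC\cdot z^n$ on frame pairs, i.e., in $H^0(\dP^1,\cO_{\dP^1}(-n,n))$; $\cO_{\dP^1}$-bilinear extension then yields the desired pairing $\widehat{E}\otimes\iota^*\widehat{E}\to\cO_{\dP^1}(-n,n)$, non-degenerate at both ends by the non-degeneracy of Poincar\'e duality on $H^*(\XSigA,\dC)$. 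The single most delicate point I anticipate is the sign-and-indexing bookkeeping in Parts 2 and 3: Deligne's canonical extension admits several sign normalisations, and one must align the residue eigenvalues $-k$ with the algebraic-degree indexing so that $N_a F_\bullet\subset F_{\bullet-1}$ comes out literally rather than shifted or reversed.
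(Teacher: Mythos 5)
Your proposal reproduces the paper's argument essentially step for step. The explicit commutator computation $[z^2\partial_z,Q_{i,k}]=kzQ_{i,k}$ combined with the Euler relation $z^2\partial_z=-\sum_a\rho(p_a^\vee)zq_a\partial_{q_a}$ is exactly formula \eqref{eq:ConnMatrixQ0B-Side} in the paper's proof, giving $z^2\nabla_z^{\mathit{res},\underline{q}}=A_0+zA_\infty$ with $A_\infty$ diagonal of algebraic degrees and $A_0=-c_1(\XSigA)\cup$. Your invocation of the correspondence between logarithmic lattices at $\tilde z=0$ and monodromy-invariant filtrations matches the paper's reference to \cite[III.1.ab]{Sa4}, \cite[lemma 7.6 and lemma 8.14]{He3}, though the paper is a bit more explicit in giving the isomorphism $F\to E^\infty$ as multiplication by $z^{-A_\infty}z^{-A_0}$, which is the concrete realisation of the ``canonical map on fibers'' you allude to. Your argument for Part 3 via $N_a\leftrightarrow p_a\cup-$ dropping the degree filtration is the same in substance as the paper's observation that $N_a$ acts as multiplication by $zq_a\partial_{q_a}$ on $F$ under the identification $F\cong E^\infty$. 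One small attribution remark: for Part 4 you cite lemma \ref{lem:ModuleE}, 5., which is indeed the decisive input ($P(F,F)\subset z^n\dC$ controls the pole order at both $0$ and $\infty$ simultaneously), whereas the paper's proof text references part 4.\ of that lemma; your pointer is arguably the more precise one. The concern you raise about sign normalisation in Parts 2 and 3 is legitimate and is reflected in the paper's own careful distinction, in the proof of Part 3, between the residue of $z\nabla_z$ (eigenvalues $0,\ldots,n$) and that of $z^{-1}\nabla_{z^{-1}}=-z\nabla_z$ (eigenvalues $0,\ldots,-n$).
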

\begin{proof}
\begin{enumerate}

\item
Let $w_1,\ldots,w_\mu$ be a $\dC$-basis of $F$ which consists
of monomials in $zq_a\partial_{q_a}$.
We will show that
\begin{equation}\label{eq:ConnectAtQ0}
(z^2\nabla^{\mathit{res},\underline{q}}_z)(\underline{w})=\underline{w} \cdot (A_0+zA_\infty),
\end{equation}
where $A_0,A_\infty\in M(\mu\times\mu,\dC)$ and that the eigenvalues of $A_\infty$ are
exactly the set (counted with multiplicity) of the (algebraic) degrees of the cohomology classes of $\XSigA$. First notice
that under the identification of $H^*(\XSigA,\dC)$ with the quotient
$\dC[(v_i)_{i=1,\ldots,m}]/\left((\sum_{i=1}^m a_{ki} v_i)_{k=1,\ldots,n}+(
v_{i_1}\cdot\ldots\cdot v_{i_p} )\right)$ in formula \eqref{eq:CohomToric}, a ray $v_i$
is mapped to the cohomology class in $H^2(\XSigA, \dC)$ of the torus invariant divisor
it determines.

From the definition of $\qMBL$ we see that
\begin{equation}\label{eq:ConnMatrixQ0B-Side}
\begin{array}{rcl}
(z^2\nabla^{\mathit{res},\underline{q}}_z)(zq_{b_i}\partial_{q_{b_i}})^{k_i} & = & (z^2\partial_z)\cdot (zq_{b_i}\partial_{q_{b_i}})^{k_i} \\ \\
& = & (zq_{b_i}\partial_{q_{b_i}})^{k_i}\cdot (z^2\partial_z) + {k_i}\cdot z \cdot (zq_{b_i}\partial_{q_{b_i}})^{k_i} \\ \\
& = & \left[-\sum\limits_{a=1}^r \rho(p^\vee_a) zq_a\partial_{q_a}\right]\cdot (zq_{b_i}\partial_{q_{b_i}})^{k_i} + {k_i}\cdot z \cdot (zq_{b_i}\partial_{q_{b_i}})^{k_i}
\end{array}
\end{equation}
Hence $A_\infty$ is diagonal with eigenvalues equal to the
algebraic cohomology degrees of $H^*(\XSigA,\dC)$.

As a by-product of the above calculation, we also see that the endomorphism of $E/z\cdot E$ represented by the
matrix $A_0$ is the multiplication with $-c_1(\XSigA)$, and hence, is nilpotent. With a little more work, this shows
that $\nabla^{\mathit{res},\underline{q}}$ has a regular singularity at $z=0$ on $E$. However, as we are not going to use this fact in the sequel, we will
not give the complete proof here. In any case, we see that $[A_\infty, A_0] = A_0$.

\item
Formula \eqref{eq:ConnectAtQ0} and formula \eqref{eq:ConnMatrixQ0B-Side} show that the connection $\nabla^{\mathit{res},\underline{q}}$ has a logarithmic pole at $z=\infty$ on $\widehat{E}$
with residue eigenvalues equal to the algebraic cohomology degrees of the cohomology classes of $H^*(\XSigA,\dC)$.
The correspondence between logarithmic extensions of flat bundles over a divisor and
filtrations on the corresponding local system is a general fact, see, e.g., \cite[III.1.ab]{Sa4} or \cite[lemma 7.6 and lemma 8.14]{He3}.
The isomorphism $F\rightarrow E^\infty$ is explicitly given by multiplication by $z^{-A_\infty}\cdot z^{-A_0}$.
\item
We have seen in the proof of theorem \ref{lem:ModuleE}, 4., that $E_{|\dC^*_\tau}\cong \gr^V_{\mathbf{0}}\qMlog$ as flat bundles. $N_a$ naturally acts on the latter one,
and is flat with respect to the residue connection $\nabla^{\mathit{res},\underline{q}}$, hence it acts on $E_{|\dC^*_\tau}^{\mathit{an},\nabla^{\mathit{res},\underline{q}}}$ and thus on $E^\infty$. Under the identification of 2.,
the filtration $F_\bullet$ is induced by
$$
F_p = \sum\limits_{|k|\geq -p} \dC\left((zq_1\partial_{q_1})^{k_1}\cdot\ldots\cdot(zq_r\partial_{q_r})^{k_r}\right).
$$
Notice that the only non-trivial filtration steps are those for $p\in[-n,0]$, which corresponds
to the residue eigenvalues of $z^{-1}\nabla_{z^{-1}}=-z\nabla_z$ on $\widehat{E}$ at $z=\infty$ (see formula \eqref{eq:ConnMatrixQ0B-Side} above).
By definition,
$N_a$, seen as defined on $F$ is simply the multiplication by $zq_a\partial_{q_a}$, from which it follows that $N_a F_\bullet\subset F_{\bullet-1}$.
\item
This follows directly from lemma \ref{lem:ModuleE}, 4. and from the definition of $\widehat{E}$.
\end{enumerate}
\end{proof}
The next result gives an extension of $\qMBLlog$ to a family of trivial $\dP^1$-bundles, possibly after restricting
to a smaller open subset inside $U$.
\begin{proposition}\label{prop:LogTrTLEP}
There is an analytic open subset $U^0\subset U^{an}$ still containing the origin of $\dC^r$ and a holomorphic bundle $\qMBLhat \rightarrow \dP^1_z\times U^0$
(notice that here $\;\widehat{\,}\;$ signifies an extension to $z=\infty$, this should not be confused with notation for the partial Fourier-Laplace transformation used before) such that
\begin{enumerate}
\item $(\qMBLhat)_{|\dC_z \times U^0} \cong (\qMBLlog^{an})_{|\dC_z \times U^0}$
\item $(\qMBLhat)_{|\dP^1_z\times \{0\}} \cong \widehat{E}$
\item $\qMBLhat$ is a family of trivial $\dP^1_z$-bundles, i.e., $\qMBLhat=p^*p_*(\qMBLhat)$,
where $p:\dP^1_z\times U^0
\rightarrow U^0$ is the projection.
\item The connection $\nabla$ has a logarithmic pole along $\widehat{\cZ}$ on $\qMBLhat$,
where $\widehat{\cZ}$ is the normal crossing divisor $\left(\{z=\infty\}\cup\bigcup_{a=1}^r \{q_a=0\}\right)\cap \dP^1_z\times U^0$.
\item
The given pairings $P:\qMBLlog \otimes \iota^*\qMBLlog \rightarrow z^n\cO_{\dC_z\times U}$ and $P:\widehat{E}\otimes_{\cO_{\dP^1_z}} \iota^*\widehat{E} \rightarrow \cO_{\dP^1_z}(-n,n)$ extend to a non-degenerate pairing
$P: \qMBLhat \otimes_{\cO_{\dP^1_z\times U^0}} \iota^* \qMBLhat \rightarrow \cO_{\dP^1_z\times U^0}(-n,n)$, where the latter sheaf is
defined as in point 4. of proposition \ref{prop:ExtensionInftyAtZero}.
\item
The residue connection
$$
\nabla^{\mathit{res},\tau}:\qMBLhat/\tau \cdot \qMBLhat \longrightarrow \qMBLhat/\tau \cdot \qMBLhat\otimes\Omega^1_{\{\infty\}\times U^0}(\log(\{\infty\}\times Z)).
$$
has trivial monodromy around $\{\infty\}\times Z$ and any element of $F\subset H^0(\dP^1_z\times U^0, \qMBLhat)$ is horizontal for $\nabla$.
\end{enumerate}
\end{proposition}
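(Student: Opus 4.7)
The plan is to construct $\qMBLhat$ by exhibiting a global holomorphic frame on $\dP^1_z\times U^0$ and then verifying the six conditions one at a time. First, choose a $\dC$-basis $\underline{w}=(w_1,\ldots,w_\mu)$ of the subspace $F\subset H^0(\dC_z\times U,\qMBLlog)$ consisting of monomials in the operators $zq_a\partial_{q_a}$. By Lemma \ref{lem:ModuleE}, 1., the images of these sections form a $\dC$-basis of the fiber of $E=(\qMBLlog)_{|\underline{q}=\underline{0}}$ at $(0,\underline{0})$. Using the $\cO_{\dC_z\times U}$-freeness of $\qMBLlog$ (Lemma \ref{lem:ModuleE}, 2.) together with Nakayama's lemma and a rank count, the natural map $\cO_{\dC_z\times U^0}\otimes_\dC F \to (\qMBLlog)_{|\dC_z\times U^0}$ will be an isomorphism for some analytic open neighborhood $U^0\subset U^{an}$ of $\underline{0}$.

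I would then define $\qMBLhat$ on $\dP^1_z\times U^0$ by gluing $(\qMBLlog)^{an}_{|\dC_z\times U^0}$ with the free $\cO_{(\dP^1_z\setminus\{0\})\times U^0}$-module with frame $\underline{w}$, using the identity isomorphism on the overlap $\dC_z^*\times U^0$. Properties (1), (2) (via Proposition \ref{prop:ExtensionInftyAtZero}, 2.) and (3) are then immediate from the construction, since $p_*\qMBLhat\cong\cO_{U^0}^{\mu}$ with frame $\underline{w}$, so that $\qMBLhat\cong p^*p_*\qMBLhat$ trivially.

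The substantive verification is (4), the logarithmic pole of $\nabla$ along $\widehat{\cZ}$ in the frame $\underline{w}$. Using the relation $z^2\partial_z=-\sum_{a=1}^r \rho(p^\vee_a)\,zq_a\partial_{q_a}$ built into the defining ideal of $\qMBLlog$, it suffices to show that the matrix $D_a(z,\underline{q})$ representing the action of $zq_a\partial_{q_a}$ in the basis $\underline{w}$ is at most linear in $z$ and holomorphic in $\underline{q}$ near $\underline{0}$. This can be established by induction on the monomial degree, using the relations $\widetilde{\Box}_{\underline{l}}$ to rewrite products of more than $n$ factors of the $zq_a\partial_{q_a}$: each such rewriting introduces only non-negative powers of the $q_a$ and a power of $z$ no larger than the cohomological dimension $n$, because of the finite grading of $H^*(\XSigA,\dC)$ (compare Lemma \ref{lem:ModuleE}, 1.). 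Combined with the relation above, this yields $z^2\nabla_z(\underline{w})=\underline{w}\cdot(A_0(\underline{q})+z A_\infty(\underline{q}))$ with $A_\bullet$ holomorphic, hence a simple pole at $z=\infty$; the log poles along each $q_a=0$ follow from the analogous expression for $z\nabla_{q_a\partial_{q_a}}(\underline{w})$.

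Conditions (5) and (6) then follow fairly mechanically. For (5), the pairing on $\qMBLlog$ from Lemma \ref{lem:ModuleE}, 4., and the one on $\widehat{E}$ from Proposition \ref{prop:ExtensionInftyAtZero}, 4., agree on the overlap by construction and therefore glue to a pairing on $\qMBLhat$ taking values in $\cO_{\dP^1_z\times U^0}(-n,n)$; its non-degeneracy at $(\infty,\underline{0})$ propagates to an open neighborhood, possibly after shrinking $U^0$. For (6), the frame $\underline{w}$ consists of $\nabla^{\mathit{res},\tau}$-constant sections by construction (they lie in $F\subset H^0(\dP^1_z\times U^0,\qMBLhat)$), so the monodromy of the residue connection around each $\{\infty\}\times Z_a$ is trivial and horizontality of $F$ is automatic. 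The main obstacle throughout is the polynomial-degree bound on $D_a(z,\underline{q})$ underlying (4): this is where the combinatorial structure of the ideal $(\widetilde{\Box}_{\underline{l}})_{\underline{l}\in\dL}$, together with the finite cohomological grading of $H^*(\XSigA,\dC)$, must be used in an essential way.
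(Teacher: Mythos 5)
Your strategy is genuinely different from the paper's: you try to construct the extension to $\dP^1_z$ by exhibiting $F$ directly as a global frame and then computing the connection matrices, whereas the paper works with the correspondence between logarithmic extensions of flat bundles and monodromy-invariant filtrations on the local system of multivalued flat sections, constructs the extension at $z=\infty$ from the filtration $F_\bullet$ of Proposition \ref{prop:ExtensionInftyAtZero}, and then obtains the triviality of the resulting family of $\dP^1_z$-bundles by an openness argument from the fact that its restriction to $\dP^1_z\times\{\underline{0}\}$ is the trivial bundle $\widehat{E}$. As written, your version has two genuine gaps.

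First, the claim that the map $\cO_{\dC_z\times U^0}\otimes_\dC F\rightarrow (\qMBLlog)_{|\dC_z\times U^0}$ is an isomorphism for some $U^0$ does not follow from Nakayama together with a rank count. Nakayama at $(0,\underline{0})$ only yields that $\underline{w}$ is a frame in a bounded neighborhood of the origin of $\dC_z\times U$; Lemma \ref{lem:ModuleE}, 1.\ only gives it along $\dC_z\times\{\underline{0}\}$. The locus where $\underline{w}$ fails to be a frame is a priori an analytic hypersurface of $\dC_z\times U$ disjoint from $\dC_z\times\{\underline{0}\}$ which can nonetheless meet $\dC_z\times U^0$ for every open neighborhood $U^0$ of $\underline{0}$ (the model example being $\{q_1 z = 1\}$). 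Ruling this out is essentially equivalent to the triviality of the $\dP^1_z$-family, which is precisely what remains to be proved; it cannot be fed in at this point.

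Second, the polynomial-degree bound in your proof of (4) is not actually established. You correctly identify that the needed statement is that the matrix $D_a$ of $z\nabla_{q_a\partial_{q_a}}$ in the frame $\underline{w}$ is at most linear in $z$ with coefficients holomorphic near $\underline{0}$ (so that $D_a/z$ is bounded at $z=\infty$), but the argument you give only asserts ``a power of $z$ no larger than the cohomological dimension $n$,'' which is both weaker than what is needed and not itself justified: each rewriting via a relation $\widetilde{\Box}_{\underline{l}}$ introduces, through the $-\nu z$ shift factors, extra powers of $z$, and a single-step bound does not control the accumulation over iterated rewritings. Formula \eqref{eq:ConnMatrixQ0B-Side} gives the degree-$1$ bound explicitly at $\underline{q}=\underline{0}$; extending it to all $\underline{q}$ near $\underline{0}$ is the substantive content of the assertion and requires an actual argument. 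The paper circumvents this computation entirely: the pole order at $z=\infty$ is encoded as a split filtration $F_\bullet$ on the local system $L^\infty$ of flat sections, and the properties in (4)--(6) are read off from the corresponding general structure results (e.g. $N_a F_\bullet\subset F_{\bullet-1}$ for (6), orthogonality of $F_\bullet$ for (5)), rather than from the shape of a connection matrix in some frame.
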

\begin{proof}Recall that $\qMlog$ is the restriction of
$\qMBLlog$ to $\dC^*_\tau\times U$.
The strategy of the proof is to show that there is a holomorphic bundle $\qMhat$ on $(\dP^1_z \backslash\{0\})\times B$ (where $B$ is the analytic neighborhood of $0\in\dC^r$ which was defined in lemma \ref{lem:LimitPointqCoord}) which is an extension of $(\qMlog^{an})_{|\dC^*_\tau\times B}$ over $z=\infty$ with
a logarithmic pole along $\widehat{\cZ}^{an}\cap (\dP^1_z\times B)$ and such that the bundle
obtained by gluing this extension to $\qMBLlog$
is a family of trivial $\dP^1_z$-bundles, possibly
after restricting to some open subset $\dP^1_z\times U^0$ of $\dP^1_z\times B$.

A logarithmic extension of $(\qM)^{an}_{|\dC^*_\tau\times (B\cap S^{an}_2)}$ over $\widehat{\cZ}^{an}\cap (\dP^1_z\times B)$ is given by a $\dZ^{r+1}$-filtration on the local system
$\cL=(\qM)^{an,\nabla}$ which is split iff the extension is locally free (see \cite[lemma 8.14]{He3}).
In our situation,
the bundle $\qMlog$ already yields a logarithmic extension over $\dC^*_\tau\times Z$
and we are seeking a bundle $\qMhat \rightarrow (\dP^1_z\backslash \{0\})\times B$
restricting to $\qMlog^{an}$ on $\dC^*_\tau \times B$. Moreover,
the $\dZ^r$-filtration $P_{\underline{\bullet}}$ corresponding to $\qMlog^{an}$ is trivial,
as this bundle is a Deligne extension due to lemma \ref{lem:ModuleE}, 4.
It follows that if we choose an extra single filtration $F_\bullet$ on $\cL$ (this will be the one which define the extension $\qMhat$ over $\{z=\infty\}$), then
 the corresponding
$\dZ^{r+1}$-filtration $\widetilde{P}_{\underline{\bullet}}:=(F_\bullet,P_{\underline{\bullet}})$ will automatically be split.
Write $L^\infty$ for the space $\psi_{\tau}(\psi_{q_1}(\ldots(\psi_{q_r}(j_!\cL)\ldots)))$, where $j:\dC_\tau^*\times (U\backslash Z)^{an}
\hookrightarrow (\dP^1_z\backslash\{0\})\times U^{an}$ i.e., $L^\infty$ is the space of multivalued flat sections of $\qM$. The
basic fact used in order to construct $F_\bullet$ is that we have $L^\infty = \psi_{\tau} j_{\tau,!}\left((\qMlog^{an})_{|\dC^*_\tau\times\{\underline{0}\}}^{\nabla^{\mathit{res},\underline{q}}}\right)$. This is again due to
lemma \ref{lem:ModuleE}, 4. More precisely, we have already seen that $V_{\mathbf{0}} \qM = \qMlog$, i.e.,
$\gr_{\mathbf{0}}^V(\qMlog) =(\qMlog)_{|\dC^*_\tau\times\{\underline{0}\}}=E_{|\dC^*_\tau}$,
where $V_{\underline{\bullet}}$ is
the canonical $V$-filtration on $\qM$ along the normal crossing divisor $Z$,
and then the statement follows from the comparison theorem for nearby cycles.

Now we have already constructed an extension of $(\qMlog)_{|\dC^*_\tau\times\{\underline{0}\}}$
to $(\dP^1_z\backslash\{0\})\times \{\underline{0}\}$: namely, the chart at $z=\infty$ of the bundle $\widehat{E}$ from proposition
\ref{prop:ExtensionInftyAtZero}, and we have seen in point 3 of this proposition that it is encoded
by a filtration $F_\bullet$ on $\psi_{\tau} j_{\tau,!}\left((\qMlog^{an})_{|\dC^*_\tau\times\{\underline{0}\}}^{\nabla^{\mathit{res},\underline{q}}}\right)$.
Hence we obtain a filtration $F_\bullet$ on $L^\infty$ that we are looking for.
As explained above, this yields a split $\dZ^{r+1}$-filtration $\widetilde{P}_{\underline{\bullet}}$ giving rise to
a bundle $\qMhat \rightarrow \left((\dP^1_z\backslash \{0\})\times B\right)$ with logarithmic poles along $\widehat{\cZ}^{an}\cap(\dP^1_z\times B)$, and by construction this bundle restricts to $\qMlog$ on $\dC^*_\tau \times B$ and to $\widehat{E}_{|(\dP^1_z\backslash\{0\})\times\{\underline{0}\}}$
on $(\dP^1_z\backslash\{0\})\times\{\underline{0}\}$. Hence we can glue $\qMhat$ and
$\qMBLlog^{an}$ on $\dC^*_\tau \times B$ to a holomorphic $\dP^1_z\times B$-bundle. Its restriction
to $\dP^1_z \times\{\underline{0}\}$ is trivial, namely, it is the bundle $\widehat{E}$
constructed in proposition \ref{prop:ExtensionInftyAtZero}. As triviality is an open condition,
there exists an open subset (with respect to the analytic topology) $U^0 \subset B$ such that
the restriction of this bundle to $\dP^1_z\times U^0$, which we call
$\qMBLhat$, is fibrewise trivial, i.e., satisfies $\qMBLhat = p^* p_* \qMBLhat$.
This shows the points 1. to 4.

Concerning the statement on the pairing, notice
that the flat pairing $P$ defined on $L^\infty$ gives rise to a pairing on
$\psi_{\tau} j_{\tau,!}\left((\qMlog^{an})_{|\dC^*_\tau\times\{\underline{0}\}}^{\nabla^{\mathit{res},\underline{q}}}\right)$.
Then the pole order property of $P$ on $\widehat{E}$ at $z=\infty$ can be encoded by an orthogonality
property of the filtration $F_\bullet$ with respect to that pairing (the one defined on
$\psi_{\tau} j_{\tau,!}\left((\qMlog^{an})_{|\dC^*_\tau\times\{\underline{0}\}}^{\nabla^{\mathit{res},\underline{q}}}\right)$)
see, e.g., \cite[theorem 7.17 and definition 7.18]{He4}. Hence the very same property must hold
for $P$ and $F_\bullet$, seen as defined on $L^\infty$, so that we conclude that
we obtain $P: \qMBLhat \otimes_{\cO_{\dP^1_z\times U^0}} \iota^* \qMBLhat \rightarrow \cO_{\dP^1_z\times U^0}(-n,n)$,
as required.

Finally, let us show the last statement:
It follows from the correspondence between
monodromy invariant filtrations and logarithmic poles used above
that the residue connection $\nabla^{\mathit{res},\tau}$ along $z=\infty$ on
$\qMBLhat/z^{-1} \qMBLhat$ has trivial monodromy around $Z$ if for any $a=1,\ldots,r$,
the nilpotent part $N_a$ of the monodromy of $\nabla$ on the local system $(\qM)^\nabla$
kills $\gr_\bullet^F$, i.e., $N_a F_\bullet \subset F_{\bullet-1}$. Now by the above identification,
we can see $F_\bullet$ as defined on $\psi_{\tau} j_{\tau,!}\left((\qMlog^{an})_{|\dC^*_\tau\times\{\underline{0}\}}^{\nabla^{\mathit{res},\underline{q}}}\right)$,
and then $N_a F_\bullet\subset F_{\bullet-1}$ has been shown in proposition \ref{prop:ExtensionInftyAtZero}, 3.
It follows directly from the above construction that all elements of $F$, seen as
global sections over $\dP^1_z\times U^0$ are horizontal for $\nabla^{\mathit{res},\tau}$.
\end{proof}
\textbf{Remark: } If the algebraic subset $\Delta_{S_2}=S_2\backslash S_2^0$, i.e., the subspace on which the Laurent
polynomial $W(-,\underline{q}):S_0 \rightarrow \dC_t$ is degenerate, is a divisor, then additional
monodromy phenomena may occur. For this reason, the bundle $\qMBLlog$ cannot in general be extended as an algebraic bundle
over a Zariski open subset of $\dP^1\times U$. Such an extension a priori can only be defined on some covering space
of a Zariski open subset of $\dP^1\times U$.
The choice of this covering space depends on the
structure of the fundamental group of $U$, which is not a priori known.
We therefore
restrict ourselves to the construction of an analytic extension parameterized by the ball $B$. Notice however that if
$\XSigA$ is Fano, then $\qMBLhat$ exists as an algebraic family of $\dP_z^1$-bundles on
some Zariski open subset of $\dC^r$. \\

At this point it is convenient to introduce the so-called $I$-function of the
toric variety $X_{\Sigma_A}$. We follow the definition of Givental (see \cite{Giv7}),
and relate this function to the hypergeometric module $\qM$ discussed above.
\begin{definition}\label{def:IFunction}
Define $I$ resp. $\widetilde{I}$ to be the $H^*(\XSigA,\dC)$-valued formal power series
$$
I= e^{\delta/z} \cdot \sum\limits_{\underline{l}\in\dL} q^{\underline{l}}\; \cdot
\prod\limits_{i=1}^m\frac{\prod_{\nu=-\infty}^0\left([D_i]+\nu z\right)}{
\prod_{\nu=-\infty}^{l_i}\left([D_i]+\nu z\right)}
\in H^*(\XSigA,\dC)[z][[q_1, \ldots , q_r]][[z^{-1},t_1,\ldots,t_r]].
$$
resp. $\widetilde{I}=z^{-\rho}\cdot z^{\mu} \cdot I$.
Here we have set $q^{\underline{l}} = \prod_{a=1}^r q_a^{p_a(\underline{l})}$ and $(t_1,\ldots,t_r)$ are the coordinates on $H^2(\XSigA,\dC)$ induced by the
basis $(p_1,\ldots,p_r)$ of $\dL^\vee$ which were chosen at the beginning of subsection \ref{subsec:QDModDown}. Notice that
$\delta=\sum_{a=1}^r t_a p_a$ is a cohomology class in $H^2(\XSigA,\dC)$.
Later we will set  $q_i = e^{t_i}$ for $i = 1, \ldots ,r$.
As before $\rho=\sum_{i=1}^m[D_i]\in\dL^\vee$ is the anti-canonical class of $\XSigA$ and we write $\mu \in \Aut(H^*(\XSigA,\dC))$ for the
grading automorphism which take the value $k\cdot c$ on a homogeneous class $c\in H^{2k}(\XSigA,\dC)$.
\end{definition}
We collect the main properties of the $I$-function that we will need in the sequel. Most of the statements of
the next proposition are well-known, but rather scattered in the literature.
\begin{proposition}\label{prop:IFunction}
\begin{enumerate}
\item
We have
\begin{equation}\label{eq:IFunctionSecondDef}
\widetilde{I} = \Gamma(T \XSigA)\cdot e^\delta \cdot z^{-\rho}\cdot
\sum\limits_{\underline{l}\in\dL}
\frac{q^{\underline{l}}\; \cdot z^{-\overline{l}}}{\prod_{i=1}^m \Gamma(D_i+l_i+1)},
\end{equation}
where $\Gamma(T \XSigA) := \prod_{i=1}^m \Gamma(1+D_i)$. Moreover,
\begin{equation}\label{eq:I-NoPoles}
e^{-\delta/z}\cdot I, z^\rho\cdot e^{-\delta}\cdot \widetilde{I} \in H^*(\XSigA,\dC)[[q_1,\ldots,q_r,z^{-1}]],
\end{equation}
that is, these series are univalued and have no poles in $\{z=\infty\}\cup\bigcup_{a=1}^r\{q_a=0\}$.
\item
$I$ has the development
$$
I=1+\gamma(q_1,\ldots,q_r) \cdot z^{-1} + o(z^{-1})
$$
where $\gamma= \delta + \gamma'(q_1,\ldots,q_r)$ lies in $\delta+H^2(\XSigA,\dC)[[q_1,\ldots,q_r]]$.
If $\XSigA$ is Fano, then $\gamma'=0$.
\item
There is an open neighborhood $S$ of $\underline{0}$ in $\dC^{r,an}$ such that both $e^{-\delta/z}\cdot I$ and $z^\rho\cdot e^{-\delta}\cdot \widetilde{I}$
are elements in $H^*(\XSigA,\dC)\otimes\cO^{an}_{\dC^*_\tau\times S^*}$, where
$S^*:=S \cap S^0_2$. In particular,
if we put $\kappa:=\underline{q}\cdot e^{\gamma'}$
then $\kappa$ lies in $(\cO^{an}_S)^r$ and defines a coordinate change on $S$.
Notice that in the Fano case, $\kappa$ is the identity, in general it is called the \textbf{mirror map}. It will reappear
in theorem \ref{theo:IgleichJ} and proposition \ref{prop:IsologtrTLEP}.
\item
Write $\pi:(\widetilde{\dC^*_\tau\times S^*})^{an}\rightarrow (\dC^*_\tau\times S^*)^{an}$ for the universal cover, then
for any linear function $h\in (H^*(\XSigA,\dC))^\vee$, we have
$$
h\circ \widetilde{I} \in \dH^0\left((\widetilde{\dC^*_\tau\times S^*})^{an},\pi^*{\cS\!}ol^\bullet(\qM)\right)
=
H^0\left((\widetilde{\dC^*_\tau\times S^*})^{an},
\pi^*{\cH\!}om_{\cD_{\dC^*_\tau\times S^*}}(\qM,\cO_{\dC_\tau^*\times S^*})\right)
$$
\item
For all $h\in (H^*(\XSigA,\dC))^\vee$, if $h\circ \widetilde{I}=0$, then $h=0$, in other words, $\widetilde{I}$ yields a fundamental system of solutions of $(\qM)_{|\dC^*_\tau\times S^*}$.
\end{enumerate}
\end{proposition}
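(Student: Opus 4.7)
The five parts split into computational reformulations (parts 1 and 2), a convergence statement (part 3), the verification that $\widetilde{I}$ solves $\qM$ (part 4), and a rank-based linear-independence argument (part 5).

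For part 1, I would apply $\Gamma(x+n+1)/\Gamma(x+1) = \prod_{\nu=1}^n(x+\nu)$ factor-by-factor in the cohomology ring: when $l_i \geq 0$ the ratio of infinite products collapses to $z^{-l_i}\Gamma(1+D_i/z)/\Gamma(1+l_i+D_i/z)$, with a mirror identity for $l_i<0$. Left-multiplying by $z^\mu$ and using the conjugation rule $z^\mu D_i = zD_i \cdot z^\mu$ converts every $D_i/z$ into $D_i$, and the assembled constants $\prod_i \Gamma(1+D_i)$ produce $\Gamma(T\XSigA)$; this yields \eqref{eq:IFunctionSecondDef}. The no-pole statements \eqref{eq:I-NoPoles} follow because $q^{\underline{l}}$ is absent from the formal power series ring for $\underline{l} \notin \Leff$, and because $1/\Gamma(D_i+l_i+1)$ in the second expression, Taylor-expanded around an integer $l_i+1\leq 0$, is a nilpotent polynomial in $D_i$ that cancels all potentially positive powers of $z$. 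For part 2, I would isolate the $\underline{l}=0$ summand contributing $e^{\delta/z}$; each $\underline{l}\neq 0$ in $\Leff$ has leading order $z^{-\overline{l}-k'}$ with $k' = |\{i:l_i<0\}|$ and cohomological coefficient of degree $k'$. Only pairs $(\overline{l},k')$ with $\overline{l}+k'=1$ contribute to $z^{-1}$, and these lie in $H^{\geq 2}$, assembling into $\gamma'$. In the Fano case $\rho\in\cK^0_{\Sigma_A}$ forces $\overline{l}\geq 1$ on nonzero effective classes, and $(\overline{l},k')=(1,0)$ is excluded by $\underline{a}_i\neq 0$ for all $i$, so $\gamma' = 0$.

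Part 3 follows from Stirling's formula applied to the $\Gamma$-function coefficients in \eqref{eq:IFunctionSecondDef}, viewed as polynomials of bounded degree in the nilpotent classes $D_i$: this yields uniform absolute convergence on a polydisc about $\underline{0}$. Combined with \eqref{eq:I-NoPoles} this gives holomorphy of $e^{-\delta/z}I$ and $z^\rho e^{-\delta}\widetilde{I}$ on $\dC^*_\tau\times S^*$ for some small $S$. Since $\kappa(\underline{0})=\underline{0}$ and $(\partial\kappa/\partial q)|_{\underline{0}}=\id$ by part 2, $\kappa$ is a local biholomorphism after shrinking $S$.

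For part 4, I would verify that each $h\circ\widetilde{I}$ is annihilated by $\widetilde{\Box}_{\underline{l}_0}$ and by the Euler operator $z\partial_z + \sum_a \rho(p_a^\vee) q_a\partial_{q_a}$ of proposition \ref{prop:ReducedAHyperGeom}. The crucial computation is that $\sum_a m_{ia}zq_a\partial_{q_a}$ acts on the $\underline{l}$-th summand of \eqref{eq:IFunctionSecondDef} as multiplication by $z(D_i+l_i)$, using $D_i=\sum_a m_{ia}p_a \in \dL^\vee$, the identity $q_a\partial_{q_a}(q^{\underline{l}}e^\delta)=(p_a(\underline{l})+p_a)q^{\underline{l}}e^\delta$, and the commutation of $z^{-\rho}$ with each $q_a\partial_{q_a}$. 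Hence $\prod_{\nu=0}^{(l_0)_i-1}(\sum_a m_{ia}zq_a\partial_{q_a}-\nu z)$ acts as $z^{(l_0)_i}\prod_{\nu=0}^{(l_0)_i-1}(D_i+l_i-\nu)$, and after re-indexing one of the two halves of $\widetilde{\Box}_{\underline{l}_0}$ by $\underline{l}\leftrightarrow\underline{l}-\underline{l}_0$ (absorbed by the monomial prefactor in $\widetilde{\Box}_{\underline{l}_0}$), the identity $\widetilde{\Box}_{\underline{l}_0}(h\circ\widetilde{I})=0$ reduces to iterated application of the recursion $\Gamma(x+1)=x\Gamma(x)$ to $\Gamma(D_i+l_i+1)$. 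The Euler equation follows from the homogeneity of $\widetilde{I}$ under $z^\mu z^{-\rho}$ combined with the $q$-scaling: $z\partial_z$ contributes $-\overline{l}$ on the $\underline{l}$-th summand (from $z^{-\overline{l}}$) while $\sum_a\rho(p_a^\vee)q_a\partial_{q_a}$ contributes $+\overline{l}$, the $\rho$-contributions from $z^{-\rho}$ and $e^\delta$ cancelling likewise. For part 5, a rank argument closes the proof: by proposition \ref{prop:ResultsClassicalGKZ}, 3., combined with corollary \ref{cor:BrieskornDualityDown}, the generic rank of $\qM$ on $\dC^*_\tau\times S^*$ equals $\mu=n!\cdot\vol(\Conv(\underline{a}_1,\ldots,\underline{a}_m))=\dim H^*(\XSigA,\dC)$, and part 4 provides $\mu$ candidate solutions $h\circ\widetilde{I}$ indexed by a basis of $H^*(\XSigA,\dC)^\vee$; their linear independence is detected by the leading behavior at $\underline{q}\to\underline{0}$, where $\widetilde{I}$ reduces to $e^\delta z^{-\rho}=\exp(\sum_a p_a\log q_a - \rho\log z)$ acting on $1 \in H^0$ (since $\Gamma(T\XSigA)/\prod_i\Gamma(D_i+1)=1$), which, expanded in $\log q_a$ and $\log z$, separates all cohomological degrees. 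The main obstacle will be part 4: the bookkeeping between the difference operators $\widetilde{\Box}_{\underline{l}_0}$, the $\Gamma$-function shifts and the nilpotent action of the $D_i$ must be tracked term-by-term over $\underline{l}\in\Leff$, while the remaining parts are essentially formal consequences of part 4 and the structural results already established in sections \ref{sec:HyperGM} and \ref{sec:LogQDMod}.
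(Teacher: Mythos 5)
Your overall route coincides with the paper's: derive \eqref{eq:IFunctionSecondDef} by the $z^{\mu}$-conjugation identity, read off the first terms of the $z^{-1}$-development, establish convergence, check that the operators $\widetilde{\Box}_{\underline{l}^0}$ and the Euler operator annihilate each $h\circ\widetilde{I}$, and conclude by a non-degeneracy argument at $\underline{q}\to\underline{0}$. Parts 2, 4 and 5 of your proposal essentially reproduce the paper's own proof.

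In part 1, however, there is a genuine gap in the no-pole claim. You assert that the sum in \eqref{eq:IFunctionSecondDef} effectively restricts to $\Leff$, and justify this via the observation that $1/\Gamma(D_i+l_i+1)$ is, for $l_i<0$, divisible by $D_i$. This only shows that the $\underline{l}$-summand is a multiple of $\prod_{i:l_i<0}D_i$; to conclude that this class vanishes in $H^*(\XSigA,\dC)$ when $\underline{l}\notin\Leff$ one must use the Mori-cone decomposition \eqref{eq:MoriCone}: if the set of rays $\underline{a}_i$ with $l_i<0$ did span a cone $\sigma\in\Sigma_A$, then $\underline{l}$ would lie in $C_\sigma\subset\textup{Eff}_{\XSigA}$, so for $\underline{l}\notin\Leff$ no such cone exists and therefore $\prod_{i:l_i<0}D_i=0$. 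You also omit the two further facts needed to rule out poles of the surviving terms: $\overline{l}\geq 0$ for $\underline{l}\in\Leff$ uses that $\rho$ is nef (the weak Fano hypothesis), and $p_a(\underline{l})\geq 0$ uses that the $p_a$ were chosen inside the K\"ahler cone; both are indispensable and are explicitly invoked in the paper. Finally, for part 3 you propose a direct Stirling estimate, whereas the paper bounds the series only on a region of the form $|z|\geq 1$, $|q_a|\leq L$ (via the inequality of Borisov and Horja) and then extends to all of $\dC^*_\tau$ using the regular singularity at $z^{-1}=0$ of the ODE obtained by fixing $\underline{q}$. A careful Stirling bound can give convergence for every fixed $z\neq 0$, but as stated your argument addresses only a polydisc about $\underline{0}$ and skips the continuation to small $|z|$; you should either carry out that estimate or add the regular-singularity extension step.
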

\begin{proof}
\begin{enumerate}
\item
From $z^{\mu} \cdot \delta/z = \delta \cdot z^{\mu}$ and $z^{\mu} \cdot D_i /z = D_i \cdot z^{\mu}$ we deduce
{\small\begin{align}
z^{- \rho} \cdot z^{\mu} \cdot I &= z^{-\rho} \cdot e^{\delta} \cdot \sum\limits_{\underline{l}\in\dL} q^{\underline{l}}\;\cdot
\prod\limits_{i=1}^m\frac{\prod_{\nu=-\infty}^0 z \left([D_i] +\nu \right)}{
\prod_{\nu=-\infty}^{l_i} z \left([D_i] +\nu \right)} \notag \\
&= e^{\delta} \cdot z^{-\rho} \cdot \sum\limits_{\underline{l}\in\dL} q^{\underline{l}}\;\cdot
\prod\limits_{l_i \geq 0} \frac{\Gamma(D_i +1)}{\Gamma(D_i +1) \cdot \prod_{\nu =1}^{l_i} z \left( D_i + \nu\right)} \cdot \prod_{l_i < 0} \frac{\Gamma(D_i+1) \cdot \prod_{\nu = l_i +1}^{0} z \cdot \left( D_i + \nu \right)}{\Gamma(D_i +1)} \notag \\
&= e^{\delta} \cdot z^{-\rho} \cdot \prod_{i=1}^m \Gamma(D_i +1) \cdot \sum_{\underline{l} \in \dL} q^{\underline{l}} \prod_{l_i \geq 0} \frac{1}{\Gamma(D_i + l_i +1) \cdot z^{l_i}} \cdot \prod_{l_i < 0} \frac{z^{-l_i}}{\Gamma(D_i + l_i +1)}.\notag
\end{align}}
The identity $\prod_{i=1}^m\Gamma(D_i +1) = \Gamma(T \XSigA)$ yields
$$
\widetilde{I} = \Gamma(T \XSigA)\cdot e^\delta\cdot z^{-\rho}\cdot
\sum\limits_{\underline{l}\in\dL}
\frac{q^{\underline{l}}\; \cdot z^{-\overline{l}}}{\prod_{i=1}^m \Gamma(D_i+l_i+1)}
$$
For the second point, notice first that
$$
\widetilde{I} = \Gamma(T \XSigA)\cdot e^\delta\cdot z^{-\rho}\cdot
\sum\limits_{\underline{l}\in\dL\cap\textup{Eff}_{\XSigA}}
\frac{q^{\underline{l}}\; \cdot z^{-\overline{l}}}{\prod_{i=1}^m \Gamma(D_i+l_i+1)},
$$
where again $\textup{Eff}_{\XSigA}\subset \dL_\dR$ denotes the Mori cone of classes of effective curves in $\XSigA$. Indeed,
we will see that for any $\underline{l}^0$ outside $\Leff=\dL\cap\textup{Eff}_{\XSigA}$,
the term $\frac{q^{\underline{l}^0}\; \cdot z^{-\overline{l}^0}}{\prod_{i=1}^m \Gamma(D_i+l^0_i+1)}$ vanishes in $H^*(\XSigA,\dC)$.
Assume the contrary, and first notice that for $l^0_i < 0$ the factor $\frac{1}{\Gamma(D_i + l^0_i +1)}$ is divisible by $D_i$. For $\frac{q^{\underline{l}^0}\; \cdot z^{-\overline{l}^0}}{\prod_{i=1}^m \Gamma(D_i+l^0_i+1)}$ to be non-zero, there must be a maximal cone $\sigma^0$ containing the set of all $\underline{a}_i$ such that $l^0_i <0$,
as otherwise the term $\prod_{i:l^0_i<0} D_i$ which occurs as a factor in $\frac{q^{\underline{l}^0} \cdot z^{-\overline{l}^0}}{\prod_{i=1}^m \Gamma(D_i+l^0_i+1)}$
is zero in $H^*(\XSigA,\dC)$. We use again (see formula \eqref{eq:MoriCone}) that $\textup{Eff}_{\XSigA}= \sum_{\sigma \in \Sigma_A(n)} C_{\sigma}$,
where $C_{\sigma}$ is the cone generated by elements $\underline{l} =(l_1, \ldots, l_m)$ with $l_i \geq 0$ whenever $\dR_{\geq 0}\underline{a}_i$ is not a ray of $\sigma$. Thus $\underline{l}^0 \in C_{\sigma^0} \subset \textup{Eff}_{\XSigA}$, which shows the claim.
Now remember from the proof of theorem \ref{theo:LogExtQDMod} that for all $\underline{l}\in\Leff$ we have
$\overline{l}\geq 0$ as $\XSigA$ is weak Fano, hence,
$z^{-\overline{l}}$ has no poles at $z=\infty$.
Moreover, by the same argument $p_a(\underline{l})$ is non-negative for $\underline{l}\in\Leff$, which gives that
$q^{\underline{l}}$ has no poles along $\cup_{a=1}^r \{q_a=0\}$.
Hence we obtain $e^{-\delta/z}\cdot I, z^\rho\cdot e^{-\delta}\cdot\widetilde{I}\in H^*(\XSigA,\dC)[[q_1,\ldots,q_r,z^{-1}]]$.
\item
After what has been said before, it is evident that the $I$-function can be written as
$$
I= e^{\delta/z} \cdot \sum\limits_{\underline{l}\in\Leff} q^{\underline{l}}\;\cdot z^{-\overline{l}}\cdot
\prod\limits_{i=1}^m\frac{\prod_{\nu=-\infty}^0\left(\frac{[D_i]}{z}+\nu\right)}{
\prod_{\nu=-\infty}^{l_i}\left(\frac{[D_i]}{z}+\nu \right)}.
$$
Let us calculate the first terms in the $z^{-1}$-development of this expression: The constant term
can only get contributions from elements $\underline{l}\in\Leff$ with $\overline{l}=0$. The zero relation $\underline{l}=0$
gives the cohomology class $1$, on the other hand, for any $\underline{l}\neq 0$ with $\overline{l}=0$, there must be
at least one $i\in\{1,\ldots,m\}$ with $l_i<0$, and then constant coefficient in the product
$\frac{\prod_{\nu=-\infty}^0\left(\frac{[D_i]}{z}+\nu\right)}{
\prod_{\nu=-\infty}^{l_i}\left(\frac{[D_i]}{z}+\nu \right)}$ gets a factor $\nu=0$, i.e., is zero.
By a similar argument, the coefficient $\gamma$ of the $z^{-1}$-term cannot have a $H^0(\XSigA,\dC)$-component. One also sees immediately
that $\gamma$ has no components in $H^{>2}(\XSigA,\dC)$. Hence we are left to show that $\gamma (q_1,\ldots,q_r) = \delta + \gamma'(q_1,\ldots,q_r)$.
We have
$$
I = \left(1+\delta/z+o(z^{-1})\right) \cdot \sum\limits_{\underline{l}\in\Leff} q^{\underline{l}}\;\cdot z^{-\overline{l}}\cdot
\prod\limits_{i=1}^m\frac{\prod_{\nu=-\infty}^0\left(\frac{[D_i]}{z}+\nu\right)}{
\prod_{\nu=-\infty}^{l_i}\left(\frac{[D_i]}{z}+\nu \right)}.
$$
For the coefficient $\gamma$, we have a contribution from the $\delta/z$-term in the first factor, and if $\XSigA$ is Fano,
this is the only term as then $\overline{l}>0$ for all $\underline{l}\in\Leff\backslash\{0\}$. In the weak Fano case, any $\underline{l}\in\Leff\backslash\{0\}$ with
$\overline{l}=0$ give some extra contribution from the $[D_i]/z$-terms, but this part is multiplied by $q^{\underline{l}}\;$, i.e., a
univalued function in $q_1,\ldots,q_r$.

\item
As a first step, we show that there is a constant $L>0$ such that for any $x =(x_1, \ldots , x_m) \in \dC^m$, the expression
$$
\sum_{\underline{l} \in \Leff} \frac{q^{\underline{l}}\; \cdot z^{- \overline{l}}}{\prod_{i=1}^{m} \Gamma(x_i + l_i +1)} = \sum_{\underline{l} \in \Leff} z^{- \overline{l}} \frac{\prod_{a=1}^r q_a^{p_a(\underline{l})}}{\prod_{i=1}^{m} \Gamma(x_i + l_i +1)}
$$
is convergent on
$\{(z,q_1, \ldots ,q_r) \mid |z| \geq 1 , |q_a| \leq L\} \cap \dC_\tau^* \times S^0_2$. Using \cite[Lemma A.4]{BorHor}  we have
$$
\left| z^{- \overline{l}} \frac{\prod_{a=1}^r q_a^{p_a(\underline{l})}}{\prod_{i=1}^m \Gamma(x_i + l_i +1)}\right| \leq A(x) (4m)^{||\underline{l}||}\cdot e^{- \overline{l} \cdot \log|z| + \sum_{a=1}^r p_a(\underline{l}) \cdot \log|q_a|}
$$
Let $\epsilon > 0$, the series is absolutely and uniformly convergent if
\begin{equation}\label{Ifuncpropconv}
\|\underline{l} \| \cdot \log(4m) - \overline{l} \cdot \log|z| + \sum_{a=1}^r p_a(\underline{l}) \cdot \log|q_a| \leq - \epsilon ||\underline{l}||
\end{equation}
for all $\underline{l} \in \Leff$. This gives the condition
\[
\overline{l} \cdot \log|z| + \sum_{a=1}^r p_a(\underline{l}) \cdot (-\log|q_a|) \geq (\epsilon + \log(4m)) \cdot ||\underline{l}||
\]
Let $||M||$ be the norm of the matrix $(m_{ia})$. For $|z| \geq 1$ and $\underline{q} \in S_2^0$ we have
\begin{align}
&\overline{l} \cdot \log|z| + \sum_{a=1}^r p_a(\underline{l}) \cdot (-\log|q_a|) \geq \sum_{a=1}^r p_a(\underline{l}) \cdot (-\log|q_a|) \notag \\
\geq &\sum_{a=1}^r p_a(\underline{l}) \cdot \min_{a=1, \ldots ,r } (- \log |q_a|) \geq  \frac{1}{||M||} \cdot ||\underline{l}|| \cdot \min_{a=1, \ldots ,r } (- \log |q_a|) \notag
\end{align}
where we have used $\sum_{a=1}^r m_{ia} p_{a}(\underline{l}) = l_i$ and $p_a(\underline{l}) \geq 0$ for $\underline{l} \in \Leff$. Thus condition (\ref{Ifuncpropconv}) is satisfied for
\[
\max_{a=1, \ldots r} |q_a| \leq e^{-||M||( \epsilon + log(4m))} =:L
\]
This shows convergence of
$\sum_{\underline{l} \in \Leff} \frac{q^{\underline{l}}\; \cdot z^{- \overline{l}}}{\prod_{i=1}^{m} \Gamma(x_i + l_i +1)}$ on
$\widetilde{S}^*:=\{(z,q_1, \ldots ,q_r) \mid |z| \geq 1 , |q_a| \leq L\} \cap \dC_\tau^* \times S^0_2$. From the nilpotency of the operators $D_i \cup\in \mathit{End}(H^*(\XSigA,\dC)$
we see that
$$
\sum_{\underline{l} \in \Leff} \frac{q^{\underline{l}}\; \cdot z^{- \overline{l}}}{\prod_{i=1}^{m} \Gamma(D_i + l_i +1)}
\in H^*(\XSigA,\dC)\otimes \cO^{an}_{\widetilde{S}^*}.
$$

For the readers convenience, we recall next how to derive the identities
\begin{equation}\label{eq:OperatorsAnnihilateIFunc}
\begin{array}{rcccl}
\widetilde{\Box}_{\underline{l}} (\widetilde{I})  & = & 0 & \quad & \forall \underline{l} \in \dL\\ \\
\left(z\partial_z+\sum_{a=1}^r \rho(p^\vee_a) q_a\partial_{q_a}\right) (\widetilde{I}) & = & 0.
\end{array}
\end{equation}
Write $\widetilde{\Box}_{\underline{l}^0}:=\widetilde{\Box}^-_{\underline{l}^0} - \widetilde{\Box}^+_{\underline{l}^0}$ where
$$
\begin{array}{rcl}
\widetilde{\Box}^-_{\underline{l}^0}  & := &\prod\limits_{a:p_a(\underline{l}^0)>0} q_a^{p_a(\underline{l}^0)} \prod\limits_{i:l_i^0<0}
\prod\limits_{\nu=0}^{-l^0_i-1} (\sum_{a=1}^r m_{ia} zq_a\partial_{q_a} -\nu z)  \\ \\
\widetilde{\Box}^+_{\underline{l}^0} &:= & \prod\limits_{a:p_a(\underline{l}^0)<0} q_a^{-p_a(\underline{l}^0)} \prod\limits_{i:l^0_i>0}
\prod\limits_{\nu=0}^{l^0_i-1} (\sum_{a=1}^r m_{ia} zq_a\partial_{q_a} -\nu z)
\end{array}
$$
Using the fact that $z q_a \partial_{q_a} \widetilde{I} = z (p_a + p_a(\underline{l})) \cdot \widetilde{I}$ we get
\begin{align}
\widetilde{\Box}^-_{\underline{l}^0} \widetilde{I} &= \Gamma(T \XSigA)\cdot e^\delta \cdot z^{-\rho}\cdot
\sum\limits_{\underline{l}\in\dL}\left( \prod_{a:p_a(\underline{l}^0)>0} q_a^{p_a(\underline{l}^0)} \prod_{i:l_i^0 < 0} z^{-l^0_i}
\prod\limits_{\nu=l_i^0+1}^{0}(D_i + l_i+\nu) \right)
\frac{\prod_{a=1}^r q_a^{p_a(\underline{l})} \cdot z^{-\overline{l}}}{\prod_{i} \Gamma(D_i+ l_i +1)}  \notag \\
&= \Gamma(T \XSigA)\cdot e^\delta \cdot z^{-\rho}\cdot
\sum\limits_{\underline{l}\in\dL}
\frac{\prod_{a : p_a(\underline{l})>0} q_a^{p_a(\underline{l} + \underline{l}^0)} \cdot \prod_{a : p_a(\underline{l})<0} q_a^{p_a(\underline{l})} \cdot z^{-\overline{l}- \sum_{i:l_i^0 <0} l^0_i}}{\prod_{i : l_i^0 <0} \Gamma(D_i+ l_i + l_i^0 +1) \cdot \prod_{i: l_i \geq 0} \Gamma(D_i +l_i +1)} \notag \\
&= \Gamma(T \XSigA)\cdot e^\delta \cdot z^{-\rho}\cdot
\sum\limits_{\underline{l}\in\dL}
\frac{\prod_{a : p_a(\underline{l})>0} q_a^{p_a(\underline{l})} \cdot \prod_{a : p_a(\underline{l})<0} q_a^{p_a(\underline{l}- \underline{l}^0)} \cdot z^{-\overline{l}+ \sum_{i:l_i^0 >0} l^0_i}}{\prod_{i : l_i^0 <0} \Gamma(D_i+ l_i +1) \cdot \prod_{i: l_i \geq 0} \Gamma(D_i +l_i -l_i^0 +1)} \notag \\
&= \Gamma(T \XSigA)\cdot e^\delta \cdot z^{-\rho}\cdot
\sum\limits_{\underline{l}\in\dL}\left( \prod_{a:p_a(\underline{l}^0)<0} q_a^{p_a(\underline{l}^0)} \prod_{i:l_i^0 > 0} z^{l^0_i} \prod\limits_{\nu=1-l_i^0}^{0}(D_i + l_i+\nu) \right)
\frac{\prod_{a=1}^r q_a^{p_a(\underline{l})} \cdot z^{-\overline{l}}}{\prod_{i} \Gamma(D_i+ l_i +1)}  \notag \\
&= \widetilde{\Box}^+_{\underline{l}^0} \widetilde{I}
\end{align}
which shows $\widetilde{\Box}_{\underline{l}} (\widetilde{I})  = 0 $. The second one of the equations \eqref{eq:OperatorsAnnihilateIFunc} follows from
\[
\left( z \partial_z +\sum_{a=1}^r \rho(p_a^\vee) q_a \partial_a \right) \widetilde{I} = \left( (-\rho - \overline{l}) + \sum_{a=1}^r \rho(p_a^\vee) (p_a + p_a(\underline{l})) \right) \widetilde{I} = 0
\]
Now we conclude by a classical argument from the theory of ordinary differential equations
(see, e.g., \cite[Theorem 3.1]{CoddLev}): Fix $\underline{q}^0\in S_2^0$ with $|q^0_a|< L$,
then $\widetilde{I}(z^{-1},\underline{q}^0)$ satisfies a system of differential equations in $z^{-1}$ with a regular singularity at $z^{-1}=0$.
Hence $\widetilde{I}(z^{-1},\underline{q}^0)$ is a multivalued analytic function on all of $\dC^*_\tau\times \{\underline{q}_0\}$, that is,
$\widetilde{I}$ is (multivalued) analytic in $\dC^*_\tau \times S^*$, with $S=\{\underline{q}\in\dC^r\,|\,|q_a|< L\}$, this implies the statement
on $e^{-\delta/z}\cdot I$ and $z^\rho\cdot e^{-\delta}\cdot \widetilde{I}$ and obviously also the convergence of the
coordinate change $\kappa$.
\item
This is a direct consequence of the equations \eqref{eq:OperatorsAnnihilateIFunc}.
\item
We follow the argument in \cite[proposition 2.19]{BorHor}.
Let $h\in(H^*(\XSigA,\dC))^\vee\backslash\{0\}$ be given,  and let $c=p_1^{k_1}\cdot \ldots \cdot p_r^{k_r}
\in H^*(\XSigA,\dC)$ be a monomial cohomology class of maximal degree
such that $h(c)\neq 0$. Consider $\widetilde{I}$ as a multivalued section of the trivial bundle
$H^*(\XSigA,\dC)\times (\dC^*_\tau\times S^*) \twoheadrightarrow \dC^*_\tau\times S^*$, then as $e^{-\delta}\cdot \widetilde{I}$ is univalued, the monodromy
operator $M_a$ corresponding to a loop around $q_a=0$ sends $\widetilde{I}$ to  $e^{2\pi i p_a}\cdot \widetilde{I}$.
Hence we have
$$
\left(\log(M_1)^{k_1}\circ \ldots\circ \log(M_r)^{k_r}\right) h(\widetilde{I}) = h((2\pi i)^r \cdot p_1^{k_1}\cdot\ldots\cdot p_r^{kr} \cdot \widetilde{I}),
$$
and it suffices to show that the right hand side of this equation is not the zero function as then
$h(\widetilde{I})$ itself cannot be identically zero. We have
$$
h(p_1^{k_1}\cdot\ldots\cdot p_r^{kr} \cdot \widetilde{I}) =
\sum\limits_{l\in\Leff} q^{\underline{l}}\;\cdot z^{-\overline{l}} \cdot
h\left(p_1^{k_1}\cdot\ldots\cdot p_r^{k_r} \cdot \frac{\Gamma(T \XSigA)\cdot e^\delta}{\prod_{i=1}^m\Gamma(D_i+l_i+1)}\cdot z^{-\rho}\right)
$$
The contribution of $\underline{l}=(0,\ldots,0)\in\Leff$ is
$$
\begin{array}{rcl}
h\left(p_1^{k_1}\cdot\ldots\cdot p_r^{k_r} \cdot \dfrac{\Gamma(T \XSigA)\cdot}{\prod_{i=1}^m\Gamma(D_i+1)}\cdot e^\delta\cdot z^{-\rho}\right)
& = & h\left(p_1^{k_1}\cdot\ldots\cdot p_r^{k_r} \cdot e^\delta\cdot z^{-\rho}\right) \\ \\
& = & h\left(p_1^{k_1}\cdot\ldots\cdot p_r^{k_r} \cdot (1+\widetilde{c})\right).
\end{array}
$$
where $\widetilde{c} \in H^{>0}(\XSigA,\dC)[\log(z),\log(q_1),\ldots,\log(q_r)]$. As $h$ is zero
on any cohomology class of degree strictly bigger than $p_1^{k_1}\cdot\ldots\cdot p_r^{k_r}$, we get
$h\left(p_1^{k_1}\cdot\ldots\cdot p_r^{k_r} \cdot (1+\widetilde{c})\right)\neq 0$. On the other hand,
this term cannot be killed by a contribution from any $\underline{l}\in\Leff\backslash\{0\}$, as for such
an $\underline{l}$, $e^{\delta(\underline{l})}$ will have positive degree.
\end{enumerate}
\end{proof}
As an easy consequence, we obtain the following interpretation of the $I$- resp. the $\widetilde{I}$-function.
\begin{corollary}
For any homogeneous basis $T_0,T_1,\ldots, T_s$ of $H^*(\XSigA,\dC)$,
write $\widetilde{I} = \sum_{t=0}^s \widetilde{I}_t \cdot T_t$, so that
$\widetilde{I}_t\in \dH^0\left((\widetilde{\dC^*_\tau\times S^*})^{an},\pi^*{\cS\!}ol^\bullet(\qM)\right)$ by
proposition \ref{prop:IFunction}, 3. Moreover, $(\widetilde{I}_0,\ldots,\widetilde{I}_s)$
is a basis of $\dH^0\left((\widetilde{\dC^*_\tau\times S^*})^{an},\pi^*{\cS\!}ol^\bullet(\qM)\right)$
by proposition \ref{prop:IFunction}, 4.
Using the natural duality
$$
\begin{array}{rcl}
\dH^0\left((\widetilde{\dC^*_\tau\times S^*})^{an},\pi^*{\cS\!}ol^\bullet(\qM)\right)
& \stackrel{!}{=} &
\left(
\dH^0\left((\widetilde{\dC^*_\tau\times S^*})^{an},\pi^*\DR^\bullet(\qM)\right)
\right)^\vee \\ \\
& = &
H^0\left((\widetilde{\dC^*_\tau\times S^*})^{an},
\pi^*{\cH\!}om_{\cD_{\dC^*_\tau\times S^*}}(\cO_{\dC_\tau^*\times S^*},\qM)\right)^\vee,
\end{array}
$$
let $(f_0,\ldots,f_s) \in \left(
\dH^0\left((\widetilde{\dC^*_\tau\times S^*})^{an},\pi^*\DR^\bullet(\qM)\right)
\right)^{s+1}$ be the dual basis, then we have
$$
\id = \sum_{t=0}^s f_t \circ \widetilde{I}_t \in H^0\left((\dC^*_\tau\times S^*)^{an},{\cE\!}nd_{\cD_{(\dC^*_\tau\times S^*)^{an}}}(\qM)\right).
$$
In particular, seeing $\widetilde{I}_t$ (or, more precisely $\widetilde{I}_t(1)$) as a multivalued function in $\cO_{\dC^*_\tau\times S^*}$, we obtain
a representation
\begin{equation}\label{eq:RepresentationOfOne-BSide}
1 = \sum_{t=0}^s \widetilde{I}_t(z^{-1},q_1,\ldots,q_r) \cdot f_t
\end{equation}
of the element $1\in \qM$, where $f_t$ are multivalued sections of the local system $((\qM)^{an}_{|\dC^*_\tau\times S^*})^\nabla$.
\end{corollary}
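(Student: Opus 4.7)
The plan is to view the displayed identity as a purely formal consequence of the fact that $\widetilde{I}_0,\ldots,\widetilde{I}_s$ form a basis of the space of (multivalued, holomorphic) solutions of $\qM$ on $\dC_\tau^*\times S^*$. First I would recall that on the Zariski open subset $\dC_\tau^*\times S_2^0$ the module $\qM$ is $\cO$-coherent with integrable connection (by proposition \ref{prop:ResultsClassicalGKZ} together with corollary \ref{cor:GM-GKZDown}), hence on the pullback to the universal cover it becomes the trivial holomorphic bundle of rank $\mu=s+1$ equipped with the canonical flat connection. The solution sheaf ${\cS\!}ol^0(\qM)=\cH\!om_{\cD}(\qM,\cO)$ and the de Rham sheaf $\DR^0(\qM)=\cH\!om_{\cD}(\cO,\qM)$ are then locally free of the same rank $\mu$, and the canonical pairing
\[
\dH^0\bigl((\widetilde{\dC_\tau^*\times S^*})^{an},\pi^*{\cS\!}ol^\bullet(\qM)\bigr)\otimes
\dH^0\bigl((\widetilde{\dC_\tau^*\times S^*})^{an},\pi^*\DR^\bullet(\qM)\bigr)\longrightarrow \dC
\]
given by evaluation is perfect; this is the standard duality for flat connections on a simply connected base, and identifies the two global section spaces as $\dC$-vector spaces of dimension $\mu$ dual to each other.

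Once this duality is in place, the rest is linear algebra. By proposition \ref{prop:IFunction}, 5., the components $\widetilde{I}_0,\ldots,\widetilde{I}_s$ are linearly independent elements of the $\mu$-dimensional space of global solutions on the universal cover, so they form a basis. Let $(f_0,\ldots,f_s)$ be the dual basis on the de Rham side. The natural map
\[
\cH\!om_{\cD}(\qM,\cO)\otimes_{\cO}\cH\!om_{\cD}(\cO,\qM)\longrightarrow \cE\!nd_{\cD}(\qM),\qquad \varphi\otimes\psi\longmapsto \psi\circ\varphi,
\]
together with the fact that $\qM$ is $\cO$-locally free of rank $\mu$ on the smooth locus, sends a pair of dual bases to the identity endomorphism; this yields the first formula $\id=\sum_t f_t\circ\widetilde{I}_t$.

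For the second statement, I would simply evaluate this endomorphism identity on the class of the section $1\in\qM$. Since $\widetilde{I}_t(1)$ is by definition the scalar-valued multivalued function also denoted $\widetilde{I}_t(z^{-1},q_1,\ldots,q_r)$, and since $f_t\in\cH\!om_{\cD}(\cO,\qM)$ corresponds to a flat (multivalued) section of $\qM$, the formula $\id(1)=\sum_t f_t\circ\widetilde{I}_t(1)$ becomes exactly the representation \eqref{eq:RepresentationOfOne-BSide}. The only delicate point, and the one I expect to warrant most care in a full write-up, is to check that the pairing between solutions and de Rham cohomology is indeed perfect globally on the universal cover (as opposed to only locally), but this follows because the pullback of $\qM$ to $(\widetilde{\dC_\tau^*\times S^*})^{an}$ is an integrable connection on a simply connected manifold and hence a trivial flat bundle, so both global section spaces are just the respective fibres at any chosen basepoint.
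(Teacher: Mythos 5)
The paper treats this corollary as an immediate consequence (``As an easy consequence...'') and gives no explicit proof; your proposal supplies exactly the standard argument one would write out, and it is correct. The key observations you make — that on the simply connected cover $\pi^*\qM^{an}$ is a trivial flat bundle so the duality between global solutions and global flat sections is the usual finite-dimensional pairing, that linear independence of the $\widetilde{I}_t$ follows from proposition \ref{prop:IFunction}, 5., and that dual bases under the evaluation pairing compose to the identity — are precisely what is needed, and evaluating at $1\in\qM$ then yields \eqref{eq:RepresentationOfOne-BSide}.
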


\subsection{Logarithmic Frobenius structures}
\label{subsec:LogFrob}

We derive in this subsection the existence of a Frobenius manifold
with logarithmic poles associated to the Landau-Ginzburg model of
$\XSigA$. This extends, for the given class of functions, the construction from \cite{DS},
in the sense that we obtain a family of germs of Frobenius manifolds
along the space $U^0$ from the last subsection, with a logarithmic
degeneration behavior at the divisor $Z$.
For the readers convenience, we first recall briefly
the notion of a Frobenius structure with logarithmic poles,
and one of the main result from \cite{Reich1}, which produces
such structures starting from a set of initial data with specific
properties. In contrast to the earlier parts of the paper,
all objects in this subsection are analytic, unless otherwise stated.

\begin{deflemma}\label{deflem:logFrob-logtrTLEP}
Let $M$ be a complex manifold of dimension bigger or equal to one, and $Z\subset M$ be a simple normal crossing divisor.
\begin{enumerate}
\item
Suppose that $(M\backslash Z, \circ, g, e,E)$ is a Frobenius manifold. Then we say that it
has a logarithmic pole along $Z$ (or that $(M,Z,\circ, g, e,E)$ is a logarithmic Frobenius manifold for short) if
$\circ\in\Omega^1_M(\log\,Z)^{\otimes 2}\otimes \Theta_M(\log\,Z)$, $g\in\Omega^1_M(\log\,Z)^{\otimes 2}$,
$E,e\in\Theta(\log\,Z)$ and if $g$ is non-degenerate on $\Theta_M(\log\,Z)$.
\item
A $\trTLEPlog$-structure on $M$ is a holomorphic vector bundle $\cH\rightarrow \dP_z^1\times M$ such that $p^*p_* \cH = \cH$ (where $p:\dP_z^1\times M\twoheadrightarrow M$ is the projection) which is equipped with an integrable connection $\nabla$ with a pole of type $1$ along $\{0\}\times M$ and
a logarithmic pole along $(\dP^1_z \times Z) \cup (\{\infty\}\times M)$ and a flat, $(-1)^n$-symmetric, non-degenerate pairing
$P:\cH\otimes \iota^*\cH\rightarrow \cO_{\dP^1_z\times M}(-n,n)$.
\item
Any logarithmic Frobenius manifold gives rise to a $\trTLEPlog$-structures on $M$, basically by setting $\cH:=p^*\Theta(\log\,Z)$,
$\nabla:=\nabla^{\mathit{LC}}-\frac1{z}\circ + \left(\frac{\cU}{z}-\cV\right)\frac{dz}{z}$, where
$\nabla^{\mathit{LC}}$ is the Levi-Civita connection of $g$ on $\Theta(\log\,Z)$, $\cU:=E\circ \in{\cE\!}nd(\Theta(\log\,Z))$
and $\cV:=\nabla^{\mathit{LC}}_\bullet E-\textup{Id}\in{\cE\!}nd(\Theta(\log\,Z))$ (see \cite[proposition 1.7 and proposition 1.10]{Reich1} for more details).
\end{enumerate}
\end{deflemma}
Under certain conditions, a given $\trTLEPlog$-structure can be unfolded to a logarithmic Frobenius manifold.
This is summarized in the following theorem which we extract from \cite[theorem 1.12]{Reich1}, notice that
a non-logarithmic version of it was shown in \cite{HM}, and goes back to earlier work of Dubrovin and Malgrange (see
the references in \cite{HM}).
\begin{theorem}\label{theo:logUnfoldingTheorem}
Let $(N,0)$ be a germ of a complex manifold and $(Z,0)\subset (N,0)$ a normal crossing divisor.
Let $(\cH,0)$ be a germ of a $\trTLEPlog$-structure on $N$. Suppose moreover that
there is a global section $\xi\in H^0(\dP^1\times N, \cH)$ whose restriction to $\{\infty\}\times N$ is horizontal
for the residue connection $\nabla^{\mathit{res},\tau}:\cH/\tau \cH\rightarrow\cH/\tau \cH\otimes\Omega^1_{\{\infty\}\times N}(\log\,(\{\infty\}\times Z))$
and which satisfies the following three conditions
\begin{enumerate}
\item
The map from $\Theta(\log\,Z)_{|\underline{0}} \rightarrow p_*\cH_{|\underline{0}}$ induced by the Higgs field $[z\nabla_\bullet](\xi):\Theta(\log\,Z)\rightarrow p_*\cH$ is injective \textup{(injectivity condition (IC))},
\item
The vector space $p_*\cH_{|\underline{0}}$ is generated by $\xi_{|(0,\underline{0})}$ and its images under
iteration of the elements of $\mathit{End}(p_*\cH_{|\underline{0}})$ induced by $\cU$ and by $[z\nabla_X]\in$ for any $X\in\Theta(\log\,Z)$
\textup{(generation condition (GC))},
\item
$\xi$ is an eigenvector for the residue endomorphism $\cV \in {\cE\!nd}_{\cO_{\{\infty\}\times N}}(\cH/z^{-1}\cH)$ \textup{(eigenvector condition (EC))}.
\end{enumerate}
Then there exists a germ of a logarithmic Frobenius manifold $(M,\widetilde{Z})$, which is unique
up to canonical isomorphism, a unique embedding $i:N\hookrightarrow M$ with $i(M) \cap \widetilde{Z} = i(Z)$ and a unique isomorphism
$\cH \rightarrow (\id_{\dP^1_z}\times i)^* p^* \Theta_M(\log\,\widetilde{Z})$ of $\trTLEPlog$-structures.
\end{theorem}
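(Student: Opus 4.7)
The plan is to reduce the statement to the construction of a universal unfolding of the given germ of a $\trTLEPlog$-structure, and then to transport the available data ($P$, $\nabla$, $\xi$) along the Higgs field isomorphism in order to manufacture the metric, the multiplication, the unit and the Euler field. So, starting from the germ $(N,0,\cH,\xi)$, I would first look for a germ $(M,0)\supset (N,0)$ with a normal crossing divisor $\widetilde{Z}\supset Z$ (with $\widetilde{Z}\cap N=Z$) and a $\trTLEPlog$-structure $(\widetilde{\cH},\widetilde{\nabla},\widetilde{P})$ extending $(\cH,\nabla,P)$ together with an extension $\widetilde{\xi}$ of $\xi$, such that the Higgs field applied to $\widetilde{\xi}$ yields an \emph{isomorphism}
\[
\Psi:\Theta_M(\log\widetilde{Z}) \xrightarrow{\;\cong\;} p_*\widetilde{\cH},\qquad X\longmapsto [z\nabla_X](\widetilde{\xi}),
\]
on a neighbourhood of $0\in M$. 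This is the logarithmic analogue of the Hertling--Manin unfolding construction.

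The unfolding is produced inductively. At each step one takes the cokernel of the current Higgs map $\Theta_N(\log Z) \to p_*\cH$, chooses lifts to $p_*\cH$ of a basis of this cokernel, and uses a logarithmic variant of the Malgrange--Dubrovin extension theorem for integrable connections with logarithmic poles to extend the connection and the pairing to $N\times (\dC^k,0)$, treating the new coordinate directions as flat deformation parameters. The condition (IC) guarantees that $\Psi$ is already injective at the starting point, and semicontinuity propagates this injectivity to a neighbourhood; the condition (GC) is exactly the requirement that after finitely many such steps the iterated action of $\cU$ and of $[z\nabla_X]$ exhausts the fibre $p_*\cH_{|0}$, so that $\Psi$ becomes an isomorphism at $0$. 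Uniqueness of this unfolding (up to canonical isomorphism) will follow from the universal property inherent in the Malgrange-style extension: any other such datum must factor through the maximal one.

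Once $\Psi$ is an isomorphism, the Frobenius datum on $(M,\widetilde{Z})$ is defined by transport. The multiplication $\circ$ on $\Theta_M(\log\widetilde{Z})$ is forced by
\[
\Psi(X\circ Y) \;=\; [z\nabla_X]\bigl([z\nabla_Y](\widetilde{\xi})\bigr),
\]
which is well-defined modulo the relations among Higgs fields, and which is commutative and associative because of the flatness of $\nabla$ (the iterated curvature vanishes). The metric $g$ is obtained from the pairing $P$ by
\[
z^{n}\, g(X,Y) \;:=\; P\bigl(\Psi(X),\Psi(Y)\bigr)\big|_{\text{leading order in }z},
\]
where one extracts the canonically defined leading $z^{n}$-coefficient; symmetry, non-degeneracy and the Frobenius property $g(X\circ Y,Z)=g(X,Y\circ Z)$ are inherited from the corresponding properties of $P$. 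The unit field $e$ is defined as $\Psi^{-1}(\widetilde{\xi})$, which by the horizontality of $\widetilde{\xi}$ along $\{\infty\}\times N$ together with (EC) lies in $\Theta_M(\log \widetilde{Z})$; the Euler field $E$ is $\Psi^{-1}(\cU(\widetilde{\xi}))$, and (EC) says precisely that $\cV$ acts on $\widetilde{\xi}$ as a scalar, which translates into the correct grading property $\nabla^{\textit{LC}}_\bullet E-\id = \cV$. Finally, the logarithmic pole structure of $\nabla$ along $\dP^1\times\widetilde{Z}$ together with the tensor property of $\circ$ gives that $\circ\in\Omega^1_M(\log\widetilde{Z})^{\otimes 2}\otimes\Theta_M(\log\widetilde{Z})$ and $g\in\Omega^1_M(\log\widetilde{Z})^{\otimes 2}$, so that the whole structure has logarithmic poles along $\widetilde{Z}$.

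The main obstacle is the logarithmic unfolding step itself: one has to verify that at each stage the extension of $\widetilde{\nabla}$ can be chosen so that its new components still have only logarithmic poles along $\dP^1\times\widetilde{Z}\cup\{\infty\}\times M$ and a pole of type $1$ along $\{0\}\times M$, and that $\widetilde{P}$ extends without losing non-degeneracy or changing its pole orders. Equivalently, one must show that the infinitesimal deformation problem governing unfoldings of $\trTLEPlog$-structures is unobstructed in the logarithmic direction, given the injectivity built into (IC) and the finiteness provided by (GC). All the remaining verifications (WDVV, flatness of $g$, homogeneity with respect to $E$) are then formal consequences of the integrability of $\widetilde{\nabla}$ and the flatness and $(-1)^n$-symmetry of $\widetilde{P}$.
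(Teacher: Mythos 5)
The paper does not actually prove this theorem; it is quoted verbatim from \cite[theorem~1.12]{Reich1}, and the sentence preceding the statement explicitly says so, adding that a non-logarithmic version appeared in \cite{HM}. So there is no in-paper proof against which to compare your argument line by line.

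Your sketch does capture the intended shape of the argument in \cite{Reich1}: one enlarges the base $(N,0)$ to some $(M,0)$ until the Higgs field applied to the primitive section $\widetilde{\xi}$ becomes an isomorphism $\Theta_M(\log\widetilde{Z})\to p_*\widetilde{\cH}$, and then transports the pairing, the $z$-connection and the pole structure through this isomorphism to manufacture the metric, the multiplication, the unit field and the Euler field. Conditions (IC), (GC), (EC) are used in the way you describe, and the construction of $g$, $\circ$, $e$, $E$ from $\Psi$, $P$ and $\cU$, $\cV$ is correct in outline.

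The difficulty is that you have not proved the one thing that is genuinely at stake. You write that ``one must show that the infinitesimal deformation problem governing unfoldings of $\trTLEPlog$-structures is unobstructed in the logarithmic direction'' and that you need ``a logarithmic variant of the Malgrange--Dubrovin extension theorem.'' But that logarithmic unfolding statement \emph{is} the theorem: its existence, its uniqueness up to canonical isomorphism, and the fact that the extended connection retains a logarithmic pole along $\dP^1\times\widetilde{Z}\cup\{\infty\}\times M$, a pole of type $1$ along $\{0\}\times M$, and a non-degenerate flat pairing with unchanged pole orders, is precisely the content of \cite[theorem~1.12]{Reich1}. Invoking the needed extension theorem as an available black box is circular; Hertling--Manin prove it in the non-logarithmic case by a fairly delicate formal power-series argument combined with a solution of a Riemann--Hilbert--Birkhoff problem, and Reichelt has to redo that construction keeping track of the logarithmic pole orders throughout. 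Your sketch names the obstruction but does not show that it vanishes, so as a self-contained proof it has a genuine gap exactly where the real work lies.
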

Using proposition \ref{prop:LogTrTLEP}, we show now how to associate a logarithmic Frobenius manifold
to the Landau-Ginzburg model $(W,q)$ of the toric manifold $\XSigA$.
\begin{theorem}
\label{theo:logFrob-BSide}
\begin{enumerate}
\item
Let $\XSigA$ be a smooth toric weak Fano manifold, defined by a fan $\Sigma_A$. Let
$(W,q):S_1 \rightarrow \dC_t \times S_2$ be the Landau-Ginzburg model of $\XSigA$ and let $q_1,\ldots,q_r$ be the coordinates
on $S_2$ defined by the choice of a nef basis $p_1,\ldots,p_r$ of $\dL^\vee$.
Consider the tuple $(\qMBLhat, \nabla, P)$ associated to $(W,q)$ by proposition \ref{prop:LogTrTLEP}.
Then the corresponding analytic object
$(\qMBLhat, \nabla, P)^{an}$ is a $\trTLEPlog$-structure on $U^{0,an}$.
\item
There is a canonical Frobenius structure on $(U^{0,an} \times \dC^{\mu-r},0)$ with a logarithmic
pole along $(Z \times \dC^{\mu-r},0)$, where, as before, $Z=\bigcup_{a=1}^r \{q_a=0\}\subset U^{0,an}\subset \dC^r$.
\end{enumerate}
\end{theorem}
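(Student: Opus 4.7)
The plan is to deduce part (1) by assembling the properties already established in proposition \ref{prop:LogTrTLEP}, and then to obtain part (2) by invoking the unfolding theorem \ref{theo:logUnfoldingTheorem} applied to the distinguished section $\xi$ corresponding to $1\in H^*(\XSigA,\dC)\cong F$.

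For part (1), the four axioms of a $\trTLEPlog$-structure from definition-lemma \ref{deflem:logFrob-logtrTLEP}(2) are assembled as follows: fibrewise triviality is item 3 of proposition \ref{prop:LogTrTLEP}, the logarithmic pole of $\nabla$ along $(\dP^1_z\times Z)\cup(\{\infty\}\times U^0)$ is item 4, and the properties of $P$ are item 5. The only remaining point is that $\nabla$ has a pole of type $1$ along $\{0\}\times U^0$, which is immediate from definition \ref{ref:defLogQDMod}: the algebra $\widetilde{\cR}$ is generated over $\cO_{\dC_z\times U}$ by $zq_a\partial_{q_a}$ and $z^2\partial_z$, so $\nabla$ takes $\qMBLlog$ into $z^{-1}\Omega^1(\log)\otimes\qMBLlog$; this passes to $\qMBLhat$ because the gluing in proposition \ref{prop:LogTrTLEP} takes place away from $z=0$.

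For part (2), I take $\xi:=1\in F\subset H^0(\dP^1_z\times U^0,\qMBLhat)$. By proposition \ref{prop:LogTrTLEP}(6) this section is $\nabla^{\mathit{res},\tau}$-horizontal along $\{\infty\}\times U^0$, as required by theorem \ref{theo:logUnfoldingTheorem}. It remains to verify the three hypotheses of that theorem at the origin $\underline{q}=\underline{0}$, using the identification of the fibre $p_*\qMBLhat_{|\underline{0}}$ with $\bigl(H^*(\XSigA,\dC),\cup\bigr)$ given by lemma \ref{lem:ModuleE}(1). The Higgs field sends $q_a\partial_{q_a}|_{\underline{0}}$ to $[zq_a\partial_{q_a}]\cdot 1=D_a\cup 1=D_a$ by lemma \ref{lem:ModuleE}(3); since the $D_a=p_a$ form a basis of $H^2(\XSigA,\dC)\cong\dL^\vee_\dC$, condition (IC) holds. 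Condition (EC) is automatic from formula \eqref{eq:ConnMatrixQ0B-Side}, which shows that $\cV$ acts diagonally on $F$ according to the algebraic degree, so $1\in H^0(\XSigA,\dC)$ is an eigenvector (with eigenvalue zero).

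The key step, and the main obstacle, is the generation condition (GC). Iterated application of the operators $[z\nabla_{q_a\partial_{q_a}}]$ to $\xi=1$ at $\underline{q}=\underline{0}$ produces precisely the monomials in the classes $D_a$, and these span the full fibre exactly because, by the presentation \eqref{eq:CohomToric}, the cohomology ring of a smooth toric manifold is generated as a $\dC$-algebra by $H^2$; the $n$ linear relations $\sum_i a_{ki}v_i=0$ then express every $D_i$ in terms of the basis $(D_a)$. This ``$H^2$-generation property'' is the single place where smoothness together with the toric hypothesis is essential, and, as pointed out in the introduction, is exactly the feature which fails for the (orbifold) cohomology of a weak Fano toric orbifold, preventing a direct extension of the construction to that setting. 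Once (IC), (GC) and (EC) are in place, theorem \ref{theo:logUnfoldingTheorem} produces the asserted germ of a logarithmic Frobenius manifold on $(U^{0,an}\times\dC^{\mu-r},0)$ with logarithmic pole along $(Z\times\dC^{\mu-r},0)$, of dimension $\mu=\dim_\dC H^*(\XSigA,\dC)$.
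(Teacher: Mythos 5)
Your proposal is correct and follows essentially the same line as the paper: part (1) is read off directly from proposition \ref{prop:LogTrTLEP}, and part (2) invokes theorem \ref{theo:logUnfoldingTheorem} with $\xi=1\in F$, verifying $\nabla^{\mathit{res},\tau}$-horizontality and (IC), (GC), (EC) via lemma \ref{lem:ModuleE} (identification of the fibre at $(0,\underline{0})$ with $(H^*(\XSigA,\dC),\cup)$ and the Higgs field with cup product by degree-two classes, whence $H^2$-generation gives (GC)) and proposition \ref{prop:ExtensionInftyAtZero}(2) / formula \eqref{eq:ConnMatrixQ0B-Side} for (EC). One small notational point: the class $[zq_a\partial_{q_a}]$ acts as cup product by $p_a$, not by the torus-invariant divisor $D_a$ (since $\alpha(v_i)=\sum_a m_{ia}zq_a\partial_{q_a}$ and $\sum_i g_{ia}[D_i]=p_a$), so writing ``$D_a=p_a$'' is a slight abuse — but it is the same abuse the paper itself makes and does not affect the validity of (IC) or (GC).
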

\begin{proof}
\begin{enumerate}
\item
This follows directly from the properties of $\qMBLhat$, $\nabla$ and $P$ as described
in proposition \ref{prop:LogTrTLEP}.
\item
We apply theorem \ref{theo:logUnfoldingTheorem} to the germ $(N,0):=(U^{0,an},0)$ and
the germ of the $\trTLEPlog$-structure $(\qMBLhat, \nabla, P)^{an}$. Define the section $\xi$ to be the class
of $1$ in $F\subset H^0(\dP^1_z\times U^0, \qMBLhat)$, recall that $F\cong H^0(\dP^1\times\{\underline{0}\},(\qMhat)_{|\dP^1_z\times\{\underline{0}\}})$
was defined as the subspace of $E\cong (\qMBLlog)_{|\dC_z\times\{\underline{0}\}}$ generated by monomials in $(zq_a\partial_{q_a})_{a=1,\ldots,r}$. The $\nabla^{\mathit{res},\tau}$-flatness of $\xi$ follows from proposition
\ref{prop:LogTrTLEP}, 6. Conditions (IC) and (GC) are a consequence of the identification
of $(\qMBLlog)_{|(0,\underline{0})}$ with $(H^*(\XSigA,\dC),\cup)$ (lemma \ref{lem:ModuleE}, 1.) and the fact that
the latter algebra is ``$H^2$-generated'', i.e., from the description given by formula \eqref{eq:CohomToric}.
More precisely, the action of the logarithmic Higgs fields $[zq_a\partial_{q_a}]$ on $H^0(\dP^1_z,\widehat{E})\cong F\cong(\qMBLlog)_{(0,\underline{0})}$
correspond, under the isomorphism $\alpha$ from lemma \ref{lem:ModuleE} exactly to the multiplication with the divisors classes $D_a\in H^2(\XSigA,\dC)$ on $H^*(\XSigA,\dC)$, and
$H^2$-generation implies that the images under iteration of these multiplications generate the whole vector space $(\qMBLlog)_{|(0,\underline{0})}$.
Finally, condition (EC) follows from proposition \ref{prop:ExtensionInftyAtZero}, 2. Hence the conditions of theorem \ref{theo:logUnfoldingTheorem} are satisfied
and yield the existence of a Frobenius structure on a germ $(N\times \dC^{\mu-r},0)$, which is canonical
in the sense that it does not depend on any further choice, and which is universal for chosen section $\xi$ by
the universality property of theorem \ref{theo:logUnfoldingTheorem}.
\end{enumerate}
\end{proof}
\textbf{Remark: } It follows from conditions (GC) and (EC) that $\xi$
is a primitive and homogeneous section in the sense of \cite{DS} (this notion
goes back to the theory of ``primitive forms'' of K.~Saito). In particular,
for a representative $U^{0,an}$ of the germ $(U^{0,an},0)$ and any point
$\underline{q}\in    U^{0,an}\backslash Z$, the Frobenius structure from theorem
\ref{theo:logUnfoldingTheorem}, 2., is one of those constructed in loc.cit.
It is a natural to ask the following
\begin{question}\label{quest:CanonicalFrob}
Is the (restriction of the) Frobenius structure from above to
a small neighborhood of $\underline{q} \in U^{0,an}\backslash Z$ the
canonical Frobenius structure of the map $\widetilde{W}(-,\underline{q}):S_0\rightarrow \dC_t$ from
\cite{DS} (see also \cite{DouaiMZ})?
\end{question}
Notice that for $\XSigA=\dP^n$, it follows from the computations done in \cite{DS2} (which concern
the more general case of weighted projective spaces), that this question can be answered in the affirmative.

\section{The quantum $\cD$-module and the mirror correspondence}
\label{sec:AModel}

We start this section by recalling for the readers convenience
some well-known constructions from quantum cohomology of smooth
projective varieties, mainly in order to fix the notations.
In particular, we explain the so-called quantum $\cD$-module (resp. the Givental connection) and
the $J$-function. We next show that the quantum $\cD$-module can be identified with the object
$\qMBLhat$ constructed in the last section.
This identification uses the famous $I\!=\!J$-theorem of Givental and can be seen as the essence of
the mirror correspondence for smooth toric weak Fano varieties. As
a consequence, using the results of subsection \ref{subsec:LogFrob}, we obtain
a mirror correspondence as an isomorphism of logarithmic Frobenius manifolds.

\subsection{Quantum cohomology and Givental connection}
\label{subsec:GivConnection}

We review very briefly some well known constructions from
quantum cohomology of smooth projective complex varieties
and explain the the so-called quantum
$\cD$-module, also called Givental connection.
\begin{deflemma}
Let $X$ be smooth and projective over $\dC$ with $\dim_\dC(X)=n$. Choose once and
for all a homogeneous basis $T_0,T_1,\ldots,T_r,T_{r+1},\ldots,T_s$
of $H^{2*}(X,\dC)$, where $T_0=1\in H^0(X,\dC)$, $T_1,\ldots,T_r$ are nef classes in $H^2(X,\dZ)$
(here and in what follows, we consider without mentioning only the torsion free parts
of the integer cohomology groups)
and $T_i\in H^{2k}(X,\dC)$ with $k>2$ for all $i>r$. If $X=\XSigA$ is
toric and weak Fano, then we suppose moreover that $T_i=p_i$, i.e,
that the basis $T_0,\ldots,T_s$ extends the basis of $\dL^\vee\cong H^2(\XSigA,\dC)$ chosen
at the beginning of section \ref{subsec:QDModDown}. We write $t_0,\ldots,t_s$ for the
coordinates induced on the space $H^{2*}(X,\dC)$. We denote by $(-,-)$ the Poincar\'e pairing
on $H^{2*}(X,\dC)$ and by $(T^k)_{k=0,\ldots,s}$ the dual basis with respect to $(-,-)$.
\begin{enumerate}
\item
For any effective class $\beta\in H_2(X,\dZ)/\mathit{Tors}$ denote by
$\overline{\cM}_{0,n,\beta}(X)$ the Deligne-Mumford stack of stable
maps $f:C\rightarrow X$ from rational nodal pointed curves $C$ to $X$ such that
$f_*([C])=[\beta]$. For any $i=1,\ldots,n$, let
$\omega_\pi$ be the relative dualizing sheaf of the
``forgetful'' morphism
$\pi:\overline{\cM}_{0,n+1,\beta}(X) \rightarrow \overline{\cM}_{0,n,\beta}(X)$
(i.e., the morphism forgetting the $i$-th point and stabilizing if necessary) which represents the universal family.
Define a Cartier divisor $L_i:=x_i^*(\omega_\pi)$
on $\overline{\cM}_{0,n,\beta}(X)$, where $x_i:\overline{\cM}_{0,n,\beta}(X)\rightarrow \overline{\cM}_{0,n+1,\beta}(X)$
is the $i$-th marked point, and put $\psi_i=c_1(L_i)$.
\item
For any tuple $\alpha_1,\ldots,\alpha_n\in H^{2*}(X,\dC)$, let
$$
\langle \psi_1^{i_1}\alpha_1,\ldots,\psi_n^{i_n}\alpha_n\rangle_{0,n,\beta}:=\int_{[\overline{\cM}_{0,n,\beta}(X)]^{virt}}
\psi_1^{i_1}\mathit{ev}_1(\alpha_1)\cup\ldots\cup\psi_n^{i_n}\mathit{ev}_n(\alpha_n)
$$
and put $\langle \alpha_1,\ldots,\alpha_n \rangle_{0,n,\beta}:=
\langle \psi_1^0 \alpha_1,\ldots,\psi_n^0 \alpha_n\rangle_{0,n,\beta}$. Here $ev_i:\overline{\cM}_{0,n,\beta}(X)\rightarrow
X$ is the $i$-th evaluation morphism $ev_i([C,f,(x_1,\ldots,x_n)]):=f(x_i)$ and
$[\overline{\cM}_{0,n,\beta}(X)]^{virt}$ is the so-called virtual fundamental class of $\overline{\cM}_{0,n,\beta}(X)$,
which has dimension $\dim_\dC(X)+\int_\beta c_1(X)+n-3$. $\langle \alpha_1,\ldots,\alpha_n\rangle_{0,n,\beta}$ is called
a Gromov-Witten invariant and
$\langle \psi_1^{i_1}\alpha_1,\ldots,\psi_n^{i_n}\alpha_n\rangle_{0,n,\beta}$ is a Gromov-Witten invariant with gravitational descendent.
\item
Let $\alpha,\gamma,\tau\in H^{2*}(X,\dC)$ be given, write $\tau=\tau'+\delta$ where
$\delta\in H^2(X,\dC)$ and $\tau'\in H^0(X,\dC)\oplus H^{>2}(X,\dC)$. Define the \emph{big quantum product} to be
\begin{align}
\alpha \circ_\tau \gamma :&= \sum_{\beta\in\textup{Eff}_X} \sum_{n,k\geq 0} \frac1{n!} \langle \alpha,\gamma,\underbrace{\tau,\ldots,\tau}_{n-\textup{times}},T_k \rangle_{0,n+3,\beta}T^k Q^\beta
\notag\\
&=\sum_{\beta\in\textup{Eff}_X} \sum_{n,k\geq 0} \frac{e^{\delta(\beta)}}{n!} \langle \alpha,\gamma,\underbrace{\tau',\ldots,\tau'}_{n-\textup{times}},T_k \rangle_{0,n+3,\beta}T^k Q^\beta
\in H^{2*}(X,\dC)\otimes \dC[[\underline{t}]][[\textup{Eff}_X]]
\end{align}
where $\textup{Eff}_X$ is the semigroup of effective classes in $H_2(X,\dZ)$, i.e., the
intersection of $H_2(X,\dZ)$ with the Mori cone in $H_2(X,\dR)$. Notice that in order to obtain
the last equality, we have used the divisor axiom for Gromov-Witten invariants, see, e.g., \cite[section 7.3.1]{CK}.

The Novikov ring $\dC[[\textup{Eff}_X]]$ was introduced to split the contribution
of the different $\beta \in \textup{Eff}_X$, as otherwise the formula above would not be a formal power series. However, if one knows the convergence of this power series, one can set $Q=1$.\item
Suppose that as before $\alpha,\gamma\in H^{2*}(X,\dC)$ and that $\delta\in H^2(X,\dC)$. Define the \emph{small
quantum product} by
$$
\alpha \star_\delta \gamma :=\sum_{k=0}^s \sum_{\beta\in\textup{Eff}_X}e^{\delta(\beta)}\langle \alpha,\gamma,T_k\rangle_{0,3,\beta} T^k Q^\beta \in H^{2*}(X,\dC) \otimes \cO^{an}_{H^2(X,\dC)}[[\textup{Eff}_X]].
$$
\end{enumerate}

\end{deflemma}

As we have seen, the quantum product exists
as defined only formally near the origin in $H^{2*}(X,\dC)$. However, we will need
to consider the asymptotic behavior of the quantum product in another limit direction inside this cohomology space. For that purpose
we will use the following
\begin{theorem}[{\cite[theorem 1.3]{IrConv}}]
The quantum product for a projective smooth toric variety is convergent on a simply connected neighborhood
W of
$$
\left\{
\tau = \tau'+\delta \in H^{2*}(X,\dC)\,|\, \Re(\delta(\beta)) < -M \;\;\forall \beta\in\textup{Eff}_X\backslash\{0\},\;\|\tau'\| < e^{-M}
\right\}
$$
for some $M\gg 0$, here $\|\cdot \|$ can be taken to be the standard hermitian norm on $H^{2*}(X,\dC)$ induced by the basis $T_0,\ldots,T_s$.
\end{theorem}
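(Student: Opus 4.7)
The plan is to prove convergence by separating the problem into a ``small quantum cohomology'' part (dependence on $\delta \in H^2(X,\dC)$) and a ``bulk'' part (dependence on $\tau' \in H^0(X,\dC) \oplus H^{>2}(X,\dC)$), and to handle them in succession. After the change of variables $q_a = e^{t_a}$ for $a = 1,\ldots,r$, the divisor axiom rewrites the $\delta$-dependence of $\alpha \circ_\tau \gamma$ through the monomial factors $e^{\delta(\beta)} = \prod_a q_a^{p_a(\beta)}$, and the region $\Re(\delta(\beta)) < -M$ for all $\beta \in \textup{Eff}_X \setminus \{0\}$ becomes a polydisk $|q_a| < e^{-M}$. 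The claim is therefore a statement about convergence of a formal power series in the variables $(q_1,\ldots,q_r,\tau')$ around the origin.

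First I would prove convergence in the variables $q_a$ alone, i.e.\ for the small quantum product $\star_\delta$. By Givental's $I = J$ theorem (to be recalled in theorem~\ref{theo:IgleichJ} below) the $J$-function of $\XSigA$ equals the $I$-function after the mirror map. The convergence of the $I$-function on the polydisk $\{|q_a| < e^{-M}\} \times \{|z| > 1\}$ is precisely what was shown in proposition~\ref{prop:IFunction}.(3) by a direct $\Gamma$-quotient estimate using Stirling's formula. Since the coefficients of $J$ in its $z^{-1}$-expansion determine the two-point invariants and thus, via the string and divisor equations, the three-point invariants appearing in $\star_\delta$, the small quantum product is analytic on $\{|q_a| < e^{-M}\}$.

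Next I would extend the convergence from $\tau' = 0$ into the $\tau'$-directions. The algebraic input is the $H^2$-generation of $H^*(\XSigA,\dC)$ (visible in formula~\eqref{eq:CohomToric}), which by Kontsevich--Manin's first reconstruction theorem expresses every genus-zero Gromov--Witten invariant in terms of three-point invariants, hence in terms of $\star_\delta$. To turn this algebraic reconstruction into an analytic one, I would view the WDVV equations as a quasi-linear PDE system for the third partial derivatives of the Frobenius potential $F$ with respect to the $t_i$, with Cauchy data along $\{\tau' = 0\}$ supplied by the now-convergent small potential. The flat identity $e = \partial_{t_0}$ and the non-degeneracy of the Poincar\'e pairing put this system into Cauchy--Kovalevskaya form transverse to any fixed $\tau'$-direction, yielding a unique holomorphic extension $F$ to a polydisk $\{|q_a| < e^{-M}\} \times \{\|\tau'\| < e^{-M'}\}$ which, by formal uniqueness, must coincide with the Gromov--Witten potential.

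The main obstacle is the last step: the radius of convergence furnished by Cauchy--Kovalevskaya depends on estimates for the coefficients of the small quantum product on $\{|q_a| < e^{-M}\}$, and to recover the uniform bound $\|\tau'\| < e^{-M}$ stated in the theorem one must track this dependence carefully and, if necessary, enlarge $M$ so that $\star_\delta$ is a sufficiently small deformation of the classical cup product. Controlling the non-characteristic condition uniformly along the full polydisk in $q$, and in particular excluding degeneration of the quantum metric there, is the delicate quantitative point; for this it is useful that $\star_\delta$ limits to the ordinary cup product as $|q_a| \to 0$, so that its non-degeneracy (and hence the Cauchy--Kovalevskaya hypothesis) propagates from the origin by continuity.
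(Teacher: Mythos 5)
The paper cites this statement from \cite{IrConv} as a black box and does not prove it, so there is no internal argument to compare your proposal against. Your two-step decomposition --- convergence of the small quantum product $\star_\delta$ via the mirror theorem and the $I$-function estimate, then extension into the $\tau'$-directions using $H^2$-generation --- is the right high-level skeleton. However, two steps as written contain genuine gaps.

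The first is a bookkeeping issue: recovering $\star_\delta$ from the $J$-function is not a direct application of the string and divisor equations, since $J$ only encodes two-point gravitational descendants with a fundamental class insertion. The clean route (implicitly the one this paper uses in proposition~\ref{prop:ConvergenceLIJ} and lemma~\ref{lem:diffeqS}) is to deduce convergence of the fundamental solution $\mathbf{S}$ from that of $J$, and then read off the connection matrices $\mathbf{\Omega}_i$ --- i.e.\ the operators $T_i\star_\delta(-)$ --- from the first differential equation in lemma~\ref{lem:diffeqS}, which expresses $\mathbf{\Omega}_i$ in terms of $\mathbf{S}$ and $q_i\partial_{q_i}\mathbf{S}$.

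The second gap is the Cauchy--Kovalevskaya step, which does not go through as stated. The WDVV equations form an overdetermined system of quadratic constraints on the third partials $F_{abc}$; there is no distinguished normal direction in which the highest derivative is isolated, so the system is not in Cauchy--Kovalevskaya normal form, and the flat identity and nondegenerate pairing do not change that. Moreover the data along $\{\tau'=0\}$ alone does not determine the $\tau'$-jet of $F$; what determines it is the Kontsevich--Manin reconstruction recursion driven by $H^2$-generation (equivalently, the generation condition in the Hertling--Manin unfolding theorem, cf.\ theorem~\ref{theo:logUnfoldingTheorem}). That theorem, however, only produces a \emph{germ}, while the statement to be proved asserts a uniform polydisk $\|\tau'\|<e^{-M}$. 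To close the gap one needs explicit growth estimates on the reconstruction coefficients (a majorant-series argument) strong enough to give a radius independent of the point in $\{|q_a|<e^{-M}\}$. You correctly identify this uniform estimate as the crux, but the proposal supplies no such estimate --- and that estimate is precisely the nontrivial technical content of Iritani's theorem, so gesturing at ``enlarging $M$'' is not sufficient.
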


If $\alpha$ and $\gamma$ are seen as sections
of the tangent bundle of the cohomology space, we also write $\alpha\circ\gamma$ for the quantum product, which is also a section of $T H^{2*}(X,\dC)$.

The next step is to define the Givental connection, also known as the quantum $\cD$-module. For a smooth toric weak Fano manifold, this is
the object that we will compare to the various hypergeometric differential systems constructed
in the last section from the Landau-Ginzburg model of this variety.
\begin{deflemma}
\begin{enumerate}
\item
Write $p:\dP^1_z\times W \twoheadrightarrow W$ for the projection, and
let $\cF^{\mathit{big}}:=p^* T W$ be the pull-back of the tangent bundle of $W$. Define
a connection with a logarithmic pole along $\{\infty\}\times W$ and with
pole of type $1$ along $\{0\}\times W$ on $\cF^{\mathit{big}}$ by putting
for any $s\in H^0(\dP^1_z\times W,\cF^{\mathit{big}})$
\begin{equation}\label{eq:QDmod}
\begin{array}{rcl}
\nabla^{\mathit{Giv}}_{\partial_{t_k}} s& := & \nabla^{\mathit{res},z^{-1}}_{\partial_{t_k}}(s) - \frac1z \cdot T_k \circ T_l\\ \\
\nabla^{\mathit{Giv}}_{\partial_z} s & := & \frac1z\left(\frac{E\circ s }{z}+\mu(s)\right)
\end{array}
\end{equation}
where $\mu\in\mathit{End}_\dC(H^{2*}(X,\dC))$ is the grading operator already used in definition \ref{def:IFunction},
$$
E:=\sum_{i=0}^s \left(1-\frac{\deg(T_i)}{2}\right) + \sum_{a=1}^r k_a\partial_{T_a}
$$
is the so-called Euler field which is defined by $\sum_{a=1}^r k_a T_a = c_1(X)$
 and where $\nabla^{\mathit{res},z^{-1}}$ is the connection on $T W$ defined by the affine structure on $H^{2*}(X,\dC)$.
Notice that by its very definition, the residue connection of $\nabla^{Giv}$ along $z^{-1}=0$ is $\nabla^{\mathit{res},z^{-1}}$, whence its name. We have that $(\nabla^{\mathit{Giv}})^2=0$, and
this integrability condition encodes many of the properties of the quantum product (most
notably its associativity, which is expressed by a system of partial differential equations, known as Witten-Dijkgraaf-Verlinde-Verlinde equations).
We sometimes use the dual Givental connection, which is defined by $\hat{\nabla}^{\mathit{Giv}} := \iota^* \nabla^{\mathit{Giv}}$,
recall that $\iota(z,\underline{t})=(-z,\underline{t})$.
\item
Define the pairing
\begin{equation}\label{eq:PairingQCohom}
\begin{array}{rcl}
P:\cF^{\mathit{big}}\otimes \iota^*\cF^{\mathit{big}} & \longrightarrow & \cO_{\dP^1_z\times W} (-n,n) \\ \\
(a,b) & \longmapsto & z^n(a(z), b(-z))
\end{array}
\end{equation}
\item
The tuple $(\cF^{\mathit{big}},\nabla^{\mathit{Giv}},P)$ is a $\trTLEP$-structure on $W$ in the sense of \cite[definition 4.1]{HM} (i.e.,
the non-logarithmic version of definition-lemma \ref{deflem:logFrob-logtrTLEP}, 2.). We call it
the quantum $\cD$-module or Givental connection of $H^{2*}(X,\dC)$.
\item
Write $W':=\{\tau\in W\,|\, \tau'=0\}$ and let $\cF:=p^* (T H^{2*}(X,\dC)_{|W'})$. We equip $\cF$ with a connection and a pairing defined by formulas \eqref{eq:QDmod} and \eqref{eq:PairingQCohom}. Then
$(\cF,\nabla^{\mathit{Giv}},P)$ is a $\trTLEP$-structure on $W' \subset H^2(X,\dC)$, which we call the small quantum $\cD$-module. We have
$(\cF,\nabla^{\mathit{Giv}},P) = (\cF^{\mathit{big}},\nabla^{\mathit{Giv}},P)_{|\dP^1_z\times W'}$.
\end{enumerate}
\end{deflemma}
Next we show that the small quantum $\cD$-module can be considered
in a natural way as a bundle over the partial compactification of the
K\"ahler moduli space that we already encountered in the last section.
\begin{lemma}
\begin{enumerate}
\item
Consider the natural action of $ 2\pi i H^2(X,\dZ)$ on $H^{2*}(X,\dC)$ by translation. Then the set $W$ is
invariant under this action. Write $V_0$ for the quotient space, and $\pi:W\twoheadrightarrow V_0$
for the projection map. Then there is a $\trTLEP$-structure $(\cG^{\mathit{big}},\nabla^{\mathit{Giv}},P)$ on $V_0$ such that
$\pi^*(\cG^{\mathit{big}},\nabla^{\mathit{Giv}},P)=(\cF^{\mathit{big}},\nabla^{\mathit{Giv}},P)$. $(\cG^{\mathit{big}},\nabla^{\mathit{Giv}},P)$ is also called quantum $\cD$-module
of $X$.
\item
The algebraic quotient of $H^2(X,\dC)$ by $2 \pi i H^2(X,\dZ)$ is the torus $\Spec\dC[H^2(X,\dZ)]$,
which we call $S_2$ to be consistent with the notation of the previous section in case that
$X$ is toric weak Fano. Then the small quantum $\cD$-module descends to  $V'_0= S_2^{an} \cap V_0$, i.e,
there is a vector bundle $\cG$ on $\dP^1_z\times V'_0$, a connection $\nabla^{\mathit{Giv}}$ and a pairing $P$ such that $(\cG,\nabla^{\mathit{Giv}},P)$ is a $\trTLEP$-structure on $V'_0$ and such that $\pi^*(\cG,\nabla^{\mathit{Giv}},P)=(\cF,\nabla^{\mathit{Giv}},P)$, where $\pi: W'\twoheadrightarrow V'_0$ is again the projection map. We also call $(\cG,\nabla^{\mathit{Giv}},P)$ the small quantum $\cD$-module.
Obviously, we have again that
$(\cG,\nabla^{\mathit{Giv}},P) = (\cG^{\mathit{big}},\nabla^{\mathit{Giv}},P)_{|\dP^1_z\times V'_0}$.

If $X$ is Fano, then $(\cG,\nabla^{\mathit{Giv}},P)$ has an algebraic structure, i.e., it is defined as an algebraic
bundle over $\dP^1_z\times S_2$.
\end{enumerate}
\end{lemma}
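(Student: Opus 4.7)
The plan is to prove the two parts of the lemma in sequence, with the main work being the translation-invariance of the big quantum product and, for the Fano case, a Gromov--Witten degree count.

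First I would verify that $W$ is invariant under the $2\pi i H^2(X,\dZ)$-action. A translation by $2\pi i v$ with $v\in H^2(X,\dZ)$ shifts $\delta\in H^2(X,\dC)$ to $\delta+2\pi i v$ and leaves $\tau'\in H^0(X,\dC)\oplus H^{>2}(X,\dC)$ untouched. Since $v(\beta)\in\dZ$ for every $\beta\in\textup{Eff}_X$, we have $\Re((\delta+2\pi iv)(\beta))=\Re(\delta(\beta))$, and $\|\tau'\|$ is preserved, so the defining inequalities of $W$ are preserved. Restricting this to $W'$ (where $\tau'=0$) gives invariance of $W'$.

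Next, the essential point for descending $(\cF^{\mathit{big}},\nabla^{\mathit{Giv}},P)$ is the translation-invariance of the big quantum product. In the series
$$
\alpha\circ_\tau\gamma=\sum_{\beta,n,k}\frac{e^{\delta(\beta)}}{n!}\langle\alpha,\gamma,\underbrace{\tau',\ldots,\tau'}_{n\text{-times}},T_k\rangle_{0,n+3,\beta}T^k Q^\beta,
$$
the only $\delta$-dependence is through $e^{\delta(\beta)}$, which under $\delta\mapsto\delta+2\pi iv$ acquires a factor $e^{2\pi iv(\beta)}=1$. Hence $\circ_\tau=\circ_{\tau+2\pi iv}$. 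The grading operator $\mu$ is $\tau$-independent; the Euler field, when written in the coordinates $t_0,\ldots,t_s$, involves $t_i\partial_{t_i}$ only for $i\in\{0,r+1,\ldots,s\}$ (since $\deg(T_a)=2$ for $a\in\{1,\ldots,r\}$) plus the constant-coefficient vector field $\sum_{a=1}^r k_a\partial_{t_a}$, all of which are invariant under an $H^2(X,\dZ)$-translation; the affine connection $\nabla^{\mathit{res},z^{-1}}$ is translation-invariant by construction; and $P$ is built from the (constant) Poincar\'e pairing. Equivariance of all structures yields a $\trTLEP$-structure $(\cG^{\mathit{big}},\nabla^{\mathit{Giv}},P)$ on $V_0$ with $\pi^*(\cG^{\mathit{big}},\nabla^{\mathit{Giv}},P)=(\cF^{\mathit{big}},\nabla^{\mathit{Giv}},P)$. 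The small case then follows by restriction: $V'_0=W'/2\pi iH^2(X,\dZ)$ embeds analytically into $S_2=\Spec\dC[H^2(X,\dZ)]$ via $t_a\mapsto q_a=e^{t_a}$, and restricting to $\dP^1_z\times V'_0$ produces $(\cG,\nabla^{\mathit{Giv}},P)$.

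The hard part will be the algebraic extension in the Fano case, which I would obtain by a dimension count. The three-point invariant $\langle\alpha,\gamma,T_k\rangle_{0,3,\beta}$ vanishes unless
$$
\deg(\alpha)+\deg(\gamma)+\deg(T_k)=2n+2\,c_1(X)\cdot\beta.
$$
With $\alpha,\gamma,T_k$ of bounded cohomological degree ($\leq 2n$), this forces $0\leq c_1(X)\cdot\beta\leq n$. In the Fano case $c_1(X)$ is ample, so the set $\{\beta\in\textup{Eff}_X\mid c_1(X)\cdot\beta\leq n\}$ is finite. Consequently every matrix coefficient
$$
\sum_{\beta\in\textup{Eff}_X}e^{\delta(\beta)}\langle\alpha,\gamma,T_k\rangle_{0,3,\beta}Q^\beta=\sum_\beta\Big(\prod_{a=1}^r q_a^{T_a(\beta)}\Big)\langle\alpha,\gamma,T_k\rangle_{0,3,\beta}
$$
is a finite Laurent polynomial in $q_1,\ldots,q_r$, hence a regular function on $S_2$. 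Together with the algebraic nature of $\mu$, $E$, and $P$, this promotes $(\cG,\nabla^{\mathit{Giv}},P)$ to an algebraic $\trTLEP$-structure on $\dP^1_z\times S_2$. The main obstacle is precisely this finiteness step, which is exactly where the strict Fano (as opposed to weak Fano) hypothesis is needed.
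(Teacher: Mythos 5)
Your proof is correct and follows essentially the same line as the paper's (very terse) proof: the first part uses the divisor axiom — which the paper has already invoked to write the only $\delta$-dependence of the big quantum product as the factor $e^{\delta(\beta)}$, so periodicity under $\delta\mapsto\delta+2\pi iv$ is immediate — and the second part uses the degree constraint from the virtual dimension together with ampleness of $c_1(X)$ to bound the set of contributing $\beta$, giving polynomiality in $q$. The only small slip is the bound $c_1(X)\cdot\beta\leq n$: since each of the three classes has real degree at most $2n$, the constraint $\deg(\alpha)+\deg(\gamma)+\deg(T_k)=2n+2\,c_1(X)\cdot\beta$ actually gives $c_1(X)\cdot\beta\leq 2n$, not $\leq n$; this does not affect the conclusion, which only requires that the bound be finite.
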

\begin{proof}
The first statement and the first part of the second one are immediate consequences of the divisor axiom already mentioned above.
If $X$ is Fano, then as $\int_\beta c_1(X)>0$ for all $\beta\in\textup{Eff}_X$, for fixed $n$ only finitely many Gromov-Witten invariants
can be non-zero, this implies the algebraicity of $\cG$.
\end{proof}

\begin{corollary}\label{cor:logFrob-ASide}
Using the choice of the nef basis $T_1,\ldots,T_r$ of $H^2(X,\dZ)$ (consisting of the classes $p_1,\ldots,p_r \in \dL^\vee$
if $X=\XSigA$ is toric weak Fano), we obtain an embedding $H^2(X,\dC)/2 \pi i H^2(X,\dZ)\hookrightarrow \dC^r$, with complement
a normal crossing divisor $Z=\bigcup_{a=1}^r \{q_a=0\}$, if $q_a=e^{t_a}$ for $a=1,\ldots,r$. Denote by $V'$ the closure of the image of $V'_0$ under this embedding. Then
there is an extension $(\overline{\cG},\hat{\nabla}^{\mathit{Giv}},P)$ of $(\cG,\hat{\nabla}^{\mathit{Giv}},P)$ to a
$\trTLEPlog$-structure on $V'$. Moreover, consider the partial
compactification
$$
\begin{array}{rcl}
V& := &\left\{(t_0,q_1,\ldots,q_r,t_{r+1},\ldots,t_s\} \;|\; \|\underline{q}\|<e^{-M}, \|(t_0,t_{r+1},\ldots,t_s)\|<e^{-M}\right\} \\ \\
& \subset &
H^0(X,\dC)\oplus \dC^r \oplus \bigoplus_{k>1} H^{2k}(X,\dC)
\end{array}
$$
of $V_0$, then there is a structure of a logarithmic Frobenius manifold on $V$ restricting to the germ of a Frobenius manifold
defined by the quantum product at any point $(t_0,q_1,\ldots,q_r,t_{r+1},\ldots,t_s)\in H^0(X,\dC)\oplus H^2(X,\dC)/2 \pi i H^2(X,\dZ)\oplus \bigoplus_{k>1} H^{2k}(X,\dC)$.

\end{corollary}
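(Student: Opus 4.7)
The plan is to proceed in two stages: first, extend the small quantum $\cD$-module $(\cG, \hat{\nabla}^{Giv}, P)$ from $V'_0$ to a $\trTLEPlog$-structure $(\overline{\cG}, \hat{\nabla}^{Giv}, P)$ on $V'$ across the boundary divisor $Z = \{q_1 \cdots q_r = 0\}$; then, apply the logarithmic unfolding Theorem \ref{theo:logUnfoldingTheorem} with the canonical section $\xi := 1 \in H^*(X, \dC)$ to obtain the logarithmic Frobenius manifold structure on $V$.

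The key input in the first stage is that the small quantum product with a nef divisor class extends holomorphically across $Z$. Writing $\delta = \sum_a t_a T_a$ and $q_a = e^{t_a}$, we have $e^{\delta(\beta)} = \prod_a q_a^{T_a(\beta)}$, and since each $T_a$ is nef while $\beta \in \textup{Eff}_X$, the exponents $T_a(\beta)$ are nonnegative. Combined with Iritani's convergence theorem recalled above, this shows that each endomorphism $T_a \star_\delta$ of the trivial $H^*(X,\dC)$-bundle extends holomorphically to a neighborhood of $\underline{0} \in V'$, specializing at $\underline{q} = 0$ to the classical cup product $T_a \cup$. Since $\partial_{t_a} = q_a \partial_{q_a}$, the formula
\[
\hat{\nabla}^{Giv}_{q_a\partial_{q_a}} \;=\; q_a\partial_{q_a} \,+\, \tfrac{1}{z}\, T_a \star_\delta
\]
exhibits a logarithmic pole along $\dP^1_z \times Z$. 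Moreover the Euler field restricts to $E|_{V'} = \sum_a k_a\, q_a\partial_{q_a}$, which is logarithmic along $Z$, so the pole of $\hat{\nabla}^{Giv}$ along $\{z=0\}$ and $\{z=\infty\}$ is preserved; the pairing $P$ extends trivially. Setting $\overline{\cG} := p^*(\cO_{V'} \otimes H^*(X,\dC))$ and checking the compatibility with \ref{deflem:logFrob-logtrTLEP} yields the desired $\trTLEPlog$-structure on $V'$.

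For the second stage, take $\xi := 1$, globally defined on $\dP^1_z \times V'$. Its flatness for the residue connection $\nabla^{\mathit{res},\tau}$ at $z = \infty$ is immediate, as the only contribution is $\mu(1) = 0$. Condition (EC) holds because $\cV|_{(0,\underline{0})}$ acts on $H^*(X,\dC)$ through the grading operator $\mu$ (up to a constant shift) and $1$ is an eigenvector. Condition (IC) holds since the Higgs field $\Theta(\log Z)_{|0} \to p_*\overline{\cG}_{|0}$ sends $q_a\partial_{q_a} \mapsto T_a \cup 1 = T_a$ and the classes $T_1,\ldots,T_r$ are linearly independent in $H^2(X,\dC) \subset H^*(X,\dC)$. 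Condition (GC)---that $H^*(X,\dC)$ be generated by $1$ under iterated application of $\cU$ and the operators $[z\hat{\nabla}_{q_a\partial_{q_a}}]_{|(0,\underline{0})} = T_a\cup$---follows from the $H^2$-generation of the cohomology ring visible in the presentation \eqref{eq:CohomToric}. Theorem \ref{theo:logUnfoldingTheorem} then produces a canonical germ of logarithmic Frobenius manifold on $(V' \times \dC^{\mu-r}, 0)$ with $\mu = \dim H^*(X,\dC)$; identifying $\dC^{\mu - r}$ with $H^0(X,\dC) \oplus \bigoplus_{k>1} H^{2k}(X,\dC)$ yields the logarithmic Frobenius structure on (a germ representative of) $V$, with logarithmic pole along $Z \times \dC^{\mu-r}$. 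The restriction to any point of $V \setminus Z$ then agrees with the germ of the big quantum cohomology Frobenius manifold at that point, because both are (non-logarithmic) unfoldings of $(\cG, \hat{\nabla}^{Giv}, P)^{an}$ with distinguished section $\xi = 1$, and such an unfolding is unique by the classical Hertling--Manin theorem.

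The main obstacle I anticipate is the analytic extension of the small quantum product across $Z$ in the first stage---essentially the reason a nef basis of $H^2(X,\dZ)$ is required. This hinges on the divisor axiom to rewrite $\sum_\beta e^{\delta(\beta)} \langle T_a, \gamma, T_k\rangle_{0,3,\beta}$ as a power series in the $q_a$ with nonnegative exponents, combined with Iritani's convergence result. Once this extension is secured, the verification of (IC), (GC) and (EC) is structural, and the appeal to Theorem \ref{theo:logUnfoldingTheorem} is routine.
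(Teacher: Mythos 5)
Your proof is correct and supplies a complete argument where the paper gives essentially none: the paper simply cites Reichelt's earlier work (\cite[section 2.2, propositions 1.7 and 1.10]{Reich1}) for both assertions. The substance of your argument -- that the nef basis and the divisor axiom force the exponents $T_a(\beta)$ in $e^{\delta(\beta)}=\prod_a q_a^{T_a(\beta)}$ to be nonnegative for $\beta\in\textup{Eff}_X$, so that combined with Iritani's convergence theorem the small quantum product extends holomorphically across $Z$ with specialization the cup product, and that (IC), (GC), (EC) then hold for $\xi=1$ by $H^2$-generation and the grading operator -- is exactly the right set of ingredients, and is almost certainly what the cited reference does.

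The one genuine difference is strategic. The paper's citation of propositions 1.7 and 1.10 of \cite{Reich1} (the dictionary between logarithmic Frobenius manifolds and $\trTLEPlog$-structures) together with section 2.2 strongly suggests that the paper takes the logarithmic Frobenius manifold on $V$ as the primary object -- one verifies directly that the Dubrovin connection on the big quantum cohomology $TH^*(X,\dC)|_{V_0}$ has logarithmic poles along $Z$ and then reads off the $\trTLEPlog$-structure by the general correspondence. You go the other way: you first build the $\trTLEPlog$-structure on $V'$ from the small quantum $\cD$-module and then invoke the logarithmic unfolding theorem (Theorem \ref{theo:logUnfoldingTheorem}) with $\xi=1$, afterwards identifying the resulting germ with big quantum cohomology via Hertling--Manin uniqueness. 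Your route has the appealing feature of precisely mirroring the B-side argument in Theorem \ref{theo:logFrob-BSide}, which makes the ensuing uniqueness argument in Theorem \ref{theo:MirrorSymmetry} more symmetric; the price is that you first only get a \emph{germ} of a Frobenius manifold and must separately argue that it spreads out to the whole of $V$ and coincides there with the big quantum cohomology structure. You do address this with the uniqueness of (non-logarithmic) unfoldings, which is correct; still, the direct route avoids that extra gluing step, since the big quantum cohomology Frobenius manifold already lives on all of $V_0$ and one merely needs to check it satisfies the pole-order conditions of Definition-Lemma \ref{deflem:logFrob-logtrTLEP}, 1.

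Two small points worth tightening: (i) you should say explicitly that the Frobenius metric (Poincar\'e pairing) is trivially nondegenerate on $\Theta(\log Z)$ in the coordinates $(t_0,q_1,\dots,q_r,t_{r+1},\dots,t_s)$, since that is part of the definition of a logarithmic Frobenius manifold; (ii) your sentence "the pole of $\hat{\nabla}^{\mathit{Giv}}$ along $\{z=0\}$ and $\{z=\infty\}$ is preserved" deserves a word of justification via equation \eqref{eq:QDmod} and the logarithmicity of $E$, since the $z^2\partial_z$ component is what controls the log pole at $\infty$. Neither is a gap, just clarity.
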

\begin{proof}
Both statements follow from \cite[section 2.2, proposition 1.7 and proposition 1.10]{Reich1}.
\end{proof}

\subsection{$J$-function, Givental's theorem and mirror correspondence}
\label{subsec:MirrorCorr}

In order to compare the quantum $\cD$-module $\cG$ to the hypergeometric system
$\qMBLhat$ from the last section, we will use a particular
multivalued section of $\cG$, called the $J$-function.
Givental has shown in \cite{Giv7} that $I=J$ for Fano varieties
and that equality holds after a change of coordinates in the weak Fano case.
We use this equality to identify the two $\trTLEPlog$-structures
and deduce an isomorphism of Frobenius manifolds with logarithmic poles.

Actually, Givental's theorem is broader as it also treats the case of
nef complete intersections in toric varieties, however, the
B-model has a different shape for those varieties, the most prominent
example being the quintic hypersurface in $\dP^4$. In this case (this is
true whenever the complete intersection is Calabi-Yau) the
mirror is an ordinary variation of pure polarized Hodge structures, whereas in our
situation the Landau-Ginzburg model gives rise to a \emph{non-commutative Hodge structure}
as discussed in section \ref{sec:ncHodge}. We plan to discuss the
relation between the B-model of a (weak) Fano variety and that of its subvarieties in a subsequent paper.

We start with the definition of the $J$-function. It is convenient to introduce at the same time
an endomorphism valued series which is closely related $J$. We suppose from now
on that $X=\XSigA$ is a smooth toric weak Fano variety.
\begin{definition}
\label{def:JFunction}
\begin{enumerate}
\item
Define a $End(H^*(\XSigA,\dC))$-valued power series in $z^{-1},t_1, \ldots ,t_r$ by
$$
L(\delta,z^{-1})(T_a):=e^{-\delta/z}T_a
-
\sum_{
\begin{array}{c}
\SC \beta\in\textup{Eff}_{\XSigA}\backslash\{0\}\\
\SC j=0,\ldots,s\end{array}
}
e^{\delta(\beta)}
\left\langle
\frac{e^{-\delta/z}T_a}{z+\psi_1},T_j
\right\rangle_{0,2,\beta} T^j,
$$
here the gravitational descendent GW-invariant $\langle
\frac{T_j}{z+\psi_1},1\rangle_{0,2,\beta}$ has to be understood as the formal sum
$-\sum_{k\geq0} (-z)^{-k-1}\langle\psi_1^k T_j,1\rangle_{0,2,\beta}$.
\item
Define the $H^*(\XSigA,\dC)$-valued power series $J$ by
$$
J(\delta,z^{-1}):=e^{\frac{\delta}{z}}\cdot\left(1+\sum\limits_{
\begin{array}{c}
\SC \beta\in\textup{Eff}_{\XSigA}\backslash\{0\}\\
\SC j=0,\ldots,s\end{array}
}
e^{\delta(\beta)}
\left\langle
\frac{T_j}{z-\psi_1},1
\right\rangle_{0,2,\beta}
T^j
\right).
$$
\end{enumerate}
Notice that any product of cohomology classes appearing in the definition of $L$ and $J$ is the cup product.

\end{definition}

Observe that $L$ has the factorization $L = S \circ (e^{-\delta/z})$ where $S$ is the following $End(H^*(\XSigA,\dC))$-valued power series
\[
S(\delta,z^{-1})(T_a):= T_a
-
\sum_{
\begin{array}{c}
\SC \beta\in\textup{Eff}_{\XSigA}\backslash\{0\}\\
\SC j=0,\ldots,s\end{array}
}
e^{\delta(\beta)}
\left\langle
\frac{T_a}{z+\psi_1},T_j
\right\rangle_{0,2,\beta} T^j,
\]
The main tool we are going to use to identify the quantum $\cD$-module with
a hypergeometric system from the last chapter
is the following famous result of Givental.
\begin{theorem}[{\cite[theorem 0.1]{Giv7}}]
\label{theo:IgleichJ}
The coordinate change $\kappa$ from
\ref{prop:IFunction}, 3., transforms the $I$-function into the $J$-function, i.e., we have
$I=(\id_{\dC_\tau} \times\kappa)^*J$. In particular, it follows from proposition \ref{prop:IFunction}, 3. that $J$ defines a (multivalued)
holomorphic mapping from $\dC_\tau\times S^*$ to $H^*(\XSigA,\dC)$. If $\XSigA$ is Fano, then
$I=J$.
\end{theorem}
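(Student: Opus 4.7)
This is Givental's famous $I=J$ theorem, so the plan is to follow his original argument via his Lagrangian cone formalism in the symplectic loop space $H^*(\XSigA,\dC)\otimes\dC((z^{-1}))$. Recall that $\cL_{\XSigA}\subset H^*(\XSigA,\dC)\otimes\dC((z^{-1}))$ denotes the overruled Lagrangian cone swept out by $J$, and that $J$ is characterized among families of points on $\cL_{\XSigA}$ by the shape of its asymptotic expansion near $z=\infty$: any section of $\cL_{\XSigA}$ of the form $-z+\tau+O(z^{-1})$ coincides with $J(\tau,z^{-1})$. Thus to prove $I=(\id_{\dC_\tau}\times\kappa)^*J$ it suffices to show that $I$ takes values on $\cL_{\XSigA}$ and then to read off the corresponding value of $\tau$ from the $z^{-1}$-coefficient of $I$.

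The substantive step is showing $I\in\cL_{\XSigA}$. My plan is to follow Givental's equivariant route: introduce the torus $T=(\dC^*)^m$ acting on $\XSigA$ and construct a $T$-equivariant version $I^T$ of the $I$-function, obtained as a hypergeometric modification of the equivariant $J^T$ by the factors $\prod_{\nu}(D_i^T+\nu z)$. One then applies virtual localization on the graph spaces $\overline{\cM}_{0,2}(\XSigA\times\dP^1,(\beta,1))$: the hypergeometric factors are recognized as equivariant Euler classes of the virtual normal bundles at the $\dC^*$-fixed loci coming from maps concentrated at $0\in\dP^1$, so that $I^T$ is exhibited as a combination of values of $S^T$-flows applied to such localized contributions and hence lies on the equivariant Lagrangian cone $\cL^T_{\XSigA}$. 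Taking the non-equivariant limit gives $I\in\cL_{\XSigA}$.

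Once $I\in\cL_{\XSigA}$ is known, matching asymptotics is essentially formal. Proposition \ref{prop:IFunction}, 2., expands $I=1+\gamma(q)z^{-1}+O(z^{-2})$ with $\gamma=\delta+\gamma'(q)$ and $\gamma'\in H^{\leq 2}(\XSigA,\dC)[[q_1,\ldots,q_r]]$; the uniqueness of points on $\cL_{\XSigA}$ with prescribed $(1,z^{-1})$-coefficients then forces $I(\delta,z^{-1})=J(\kappa(\delta),z^{-1})$ for the shifted coordinate $\kappa(\delta)=\delta+\gamma'(q)$, which is precisely the mirror map of proposition \ref{prop:IFunction}, 3. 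In the Fano case proposition \ref{prop:IFunction}, 2., gives $\gamma'=0$, hence $\kappa=\id$ and $I=J$ outright. The main obstacle is the localization calculation in the middle step: it requires a careful enumeration of the $\dC^*$-fixed loci in the graph spaces, identification of their virtual normal contributions, and use of the divisor equation to absorb the exponential prefactor $e^{\delta/z}$. The remaining uniqueness argument on the cone is routine once the cone statement is in hand.
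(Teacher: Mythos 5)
You are proposing to actually prove Givental's mirror theorem, whereas the paper does not prove this statement at all: Theorem~\ref{theo:IgleichJ} is stated with the citation \cite[theorem 0.1]{Giv7} and is simply imported into the paper as an external input. The authors' contribution in this part of the paper is to \emph{use} the $I=J$ theorem (via Proposition~\ref{prop:IFunction} and Proposition~\ref{prop:IsologtrTLEP}) to identify the quantum $\cD$-module with the hypergeometric lattice, not to reprove Givental's result. So the most important observation is a mismatch of scope: your proposal, if carried out, would constitute a substantial independent paper, and it addresses a step the authors deliberately black-box.

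As to the correctness of the sketch itself: the global architecture you describe --- characterize $J$ as the big $J$-function parameterizing points $-z+\tau+O(z^{-1})$ on the overruled Lagrangian cone $\cL_{\XSigA}$, prove $I\in\cL_{\XSigA}$ by equivariant localization on graph spaces, take the non-equivariant limit, and then read off the mirror map $\kappa$ by comparing $z^{-1}$-coefficients --- is a legitimate and by now standard route to the toric mirror theorem, and the final asymptotic matching is exactly consistent with Proposition~\ref{prop:IFunction}, 2.~and~3.~(in particular $\gamma=\delta+\gamma'(q)$, and $\gamma'=0$ in the Fano case giving $\kappa=\id$). However, two caveats are worth flagging. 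First, a historical/anachronistic one: the Lagrangian cone formalism postdates the cited reference \cite{Giv7}; Givental's 1998 argument is phrased instead in terms of recursion relations for the fixed-point components of $I$ and $J$ and a uniqueness lemma characterizing solutions of those recursions (together with a polynomiality/regularity condition), so if you wish to reproduce the proof in \cite{Giv7} rather than its modern repackaging, the middle step should be reformulated accordingly. Second, your localization step is stated at the level of a plan rather than an argument: the enumeration of $\dC^*$-fixed loci on $\overline{\cM}_{0,2}(\XSigA\times\dP^1,(\beta,1))$, the identification of the hypergeometric factors $\prod_\nu(D_i+\nu z)$ with the relevant equivariant Euler classes, and the role of the string/divisor equations in absorbing $e^{\delta/z}$ are all nontrivial computations that you acknowledge but do not supply. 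As a high-level outline the proposal is sound; as a proof it is incomplete precisely where the difficulty lies, which is why the paper chooses to cite Givental rather than reargue the theorem.
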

Denote by $\mathbf{S}$ the matrix-valued function which represents the endomorphism function $S$ with respect to the basis $T_0, \ldots ,T_s$. Similarly, $\mathbf{K}_i$ is the constant matrix representing the cup product with $T_i$, $\mathbf{\Omega}_i$ is the connection matrix of $\hat{\nabla}^{\mathit{Giv}}_{z q_i \partial q_i}$ and $\mathbf{V}$ the matrix $\diag(\deg(T_0), \ldots, \deg(T_s))$. We have the following
\begin{lemma}[{\cite[lemma 2.1,2.2]{IrFloer}}]
\label{lem:diffeqS}\begin{enumerate}
\item The matrix-valued function $\mathbf{S}$ satisfies
the following differential equations:
\begin{align}
&z q_i \frac{\partial \mathbf{S}}{\partial q_i}  - \mathbf{S}
\cdot \mathbf{K}_i  + \mathbf {\Omega}_i \cdot \mathbf{S}  = 0 \; , \notag \\
&\left(2 z \frac{\partial}{\partial z} +\sum_{i=1}^r (\deg q_i)q_i \frac{\partial}{\partial q_i}\right)\mathbf{S} + [\mathbf{V},\mathbf{S}] = 0 \; . \notag
\end{align}
\item
The $End(H^*(\XSigA,\dC))$-valued power series $S$ satisfies $S^*(\delta,z^{-1}) \cdot S(\delta,-z^{-1})=\id$, where $(-)^*$ denotes the adjoint with respect to the Poincar\'{e} pairing. In particular $S$ is invertible.
\end{enumerate}
\end{lemma}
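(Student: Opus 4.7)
The plan is to read both equations in (1) as flatness conditions for $L(\delta, z^{-1}) = \mathbf{S}(\delta, z^{-1}) \cdot e^{-\delta/z}$ with respect to the dual Givental connection $\hat{\nabla}^{\mathit{Giv}}$, transferred to $\mathbf{S}$ by conjugating out the exponential factor. Part (2) will follow from the flatness of the Poincar\'e pairing with respect to the Givental connection.

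First I would record the two flatness identities satisfied by $L$. The identity $\hat{\nabla}^{\mathit{Giv}}_{q_a \partial_{q_a}} L = 0$, i.e.\ $zq_a \partial_{q_a} L + \mathbf{\Omega}_a L = 0$, is the classical assertion that the gravitational descendent series $L$ is a fundamental solution of the dual Givental connection near $z = \infty$; it is proved by expanding $1/(z+\psi_1)$ in the definition of $L$ and applying the topological recursion relation together with the divisor and string equations. The second identity is the Euler-type equation for $L$ reflecting the conformality of $\hat{\nabla}^{\mathit{Giv}}$ under the combined grading of $z$, the $q$-weights $\deg q_a = 2\rho(p_a^\vee)$, and the cohomological grading $V = 2\mu$; it is a direct consequence of the virtual-dimension selection rule for the Gromov--Witten invariants appearing in $L$. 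Both inputs are standard and spelled out in the reference cited just before the lemma.

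Granting these, I substitute $L = \mathbf{S}\cdot e^{-\delta/z}$ and use
\[
\partial_{t_a}(e^{-\delta/z}) = -\tfrac{\mathbf{K}_a}{z}\, e^{-\delta/z}, \quad \partial_z(e^{-\delta/z}) = \tfrac{\delta}{z^2}\, e^{-\delta/z}, \quad [\mathbf{K}_a, e^{-\delta/z}] = 0, \quad [V, e^{-\delta/z}] = -\tfrac{2\delta}{z}\, e^{-\delta/z}.
\]
The first commutator uses that $p_a$ and $\delta$ both lie in $H^2$ and the cup product among even-degree classes is commutative; the second follows from $[V, \mathbf{K}_\delta] = 2\mathbf{K}_\delta$, since $\delta$ has cohomological degree $2$. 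With these, the exponential factor cancels cleanly in both flatness identities, leaving precisely the two stated equations for $\mathbf{S}$.

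For part (2), the Poincar\'e pairing induces via (\ref{eq:PairingQCohom}) a pairing $P$ on the quantum $\cD$-module that is flat for the Givental connection. Combined with the $\hat{\nabla}^{\mathit{Giv}}$-flatness of $L$, this forces $(L(z^{-1})a,\, L(-z^{-1})b)$ to be independent of $(\delta, z^{-1})$; evaluating at $\delta = 0$ and $z = \infty$, where $L \to \id$, pins the constant value to $(a,b)$, i.e.\ $L^{*}(z^{-1})\, L(-z^{-1}) = \id$ in matrix form. Because $(\delta\cup b, c) = (b, \delta \cup c)$ for the Poincar\'e pairing we have $(e^{-\delta/z})^{*} = e^{-\delta/z}$; substituting $L = \mathbf{S}\cdot e^{-\delta/z}$ and $L(-z^{-1}) = \mathbf{S}(-z^{-1})\cdot e^{\delta/z}$ then yields $\mathbf{S}^{*}(z^{-1})\,\mathbf{S}(-z^{-1}) = \id$, and in particular the invertibility of $\mathbf{S}$. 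The substantive input throughout is the GW-theoretic fact that $L$ is a fundamental solution of $\hat{\nabla}^{\mathit{Giv}}$ together with the flatness of the Poincar\'e pairing; the remaining steps are direct computations, made clean by the graded-commutativity of even-degree cohomology.
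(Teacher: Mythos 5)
The paper offers no proof of this lemma---it is quoted verbatim from \cite[lemma 2.1, 2.2]{IrFloer}---so you are reconstructing Iritani's argument rather than comparing against anything the paper itself proves. Your reconstruction is correct and is essentially the standard one; it matches what one finds in the cited source. The algebraic substitution $L=\mathbf{S}\cdot e^{-\delta/z}$, the commutator identities (correct because $\delta\in H^{2}$, so $[\mathbf{V},\mathbf{K}_\delta]=2\mathbf{K}_\delta$ and the cup-product operators by $p_a$ and $\delta$ commute), and the resulting cancellation of the exponential factor all check out.

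One logical point deserves emphasis, because in this paper the dependency runs the other way: the proof of Proposition~\ref{prop:ConvergenceLIJ}(1) cites the $q$-flatness of $L$ from \cite{Pan} and then \emph{derives} the $z^2\partial_z$-formula for $L$ from the present lemma. You are going in reverse, deducing the lemma's second equation from an Euler/grading identity for $L$, so you must establish that identity without appealing to Proposition~\ref{prop:ConvergenceLIJ}(1). You do point to the virtual-dimension selection rule for the Gromov--Witten invariants in $L$, which is indeed the correct independent input, but this dependency should be kept explicit; invoking the proposition here would make the argument circular.

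On part (2), your mechanism (flat Poincar\'e pairing evaluated on $\hat{\nabla}^{\mathit{Giv}}$-flat sections) is right, but the phrase ``evaluating at $\delta=0$ and $z=\infty$, where $L\to\id$'' needs a gloss: $L$ has a regular singularity at $z=\infty$ with nontrivial formal monodromy, and it is the twisted solution $\widetilde{L}=L\circ z^{-\mu}\circ z^{\rho}$, as introduced just before equation~\eqref{eq:Section1A-Side}, which literally tends to the identity along a chosen branch. Pinning down the constant therefore requires unwinding the $z^{\pm\mu}z^{\mp\rho}$ factors in the pairing, using the Poincar\'e adjoints $\mu^{*}=n\,\id-\mu$ and $\rho^{*}=\rho$, before taking the limit. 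This does not change the conclusion $S^{*}(\delta,z^{-1})\,S(\delta,-z^{-1})=\id$, but it is where the actual bookkeeping in the cited reference lives.
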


The main properties of the $J$-function and of the endomorphism function $L$
are summarized in the following proposition.
\begin{proposition}\label{prop:ConvergenceLIJ}
\begin{enumerate}
\item
For any $\alpha\in H^*(\XSigA,\dC)$, we have
$$
\begin{array}{rcl}
\hat{\nabla}^{\mathit{Giv}}_{\partial_{t_k}} L\cdot \alpha & = & \hat{\nabla}^{\mathit{Giv}}_{q_k \partial_{q_k}} L\cdot \alpha =0 \\ \\
\hat{\nabla}^{\mathit{Giv}}_{z^2\partial_z} L\cdot \alpha& = &  L \cdot\left(z\mu- c_1(\XSigA)\cup \right)\cdot\alpha\\ \\
\end{array}
$$
\item The endomorphism-valued function $L$ is invertible.
\item
We have $J =L^{-1}(T_0) = \sum_{t=0}^s (s_t, T_0) T_t$, with $s_t = L(T_t)$:
\item
Both $L$ and $J$ are convergent on $\dP^1_z \setminus \{ 0\} \times (S^* \cap V'_0)$.
\end{enumerate}
\end{proposition}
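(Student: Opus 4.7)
The plan is to exploit throughout the factorization $L = S \circ e^{-\delta/z}$ (noted right after the definition), which reduces every statement to the corresponding statement for $S$ together with a computation of how $e^{-\delta/z}$ interacts with the various derivations. For part (1), I would substitute $L\alpha = S(e^{-\delta/z}\alpha)$ into $\hat{\nabla}^{\mathit{Giv}}_{q_k\partial_{q_k}}$ and expand using the Leibniz rule. The term $(q_k\partial_{q_k}S)(e^{-\delta/z}\alpha)$ rewrites via the first equation of lemma \ref{lem:diffeqS} as $\frac1z S(T_k\cup(e^{-\delta/z}\alpha)) - \frac1z T_k\star_\delta S(e^{-\delta/z}\alpha)$; the derivative of $e^{-\delta/z}$ contributes $-\frac1z S(T_k\cup(e^{-\delta/z}\alpha))$; and the connection term contributes $\frac1z T_k\star_\delta S(e^{-\delta/z}\alpha)$. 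All three cancel. The equation for $\hat{\nabla}^{\mathit{Giv}}_{z^2\partial_z}(L\alpha)$ is handled analogously using the second equation of lemma \ref{lem:diffeqS}, with the extra contribution from $z^2\partial_z(e^{-\delta/z}) = \delta\cup\, e^{-\delta/z}$ combining with the grading operator $\mathbf{V}$ to produce exactly $L(z\mu - c_1(\XSigA)\cup)\alpha$.

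Part (2) is then immediate: $e^{-\delta/z}$ is invertible as a polynomial endomorphism in $\delta/z$ (positive-degree cup products are nilpotent), so invertibility of $L$ reduces to invertibility of $S$, which is lemma \ref{lem:diffeqS}, 2. For part (3), I would use the adjoint identity $S^*(\delta, z^{-1})S(\delta, -z^{-1}) = \id$ to compute $L^{-1}(T_0) = e^{\delta/z}S^{-1}(T_0)$: for each basis vector $T_t$, the Poincaré pairing gives $(L^{-1}(T_0), T_t) = (T_0, S(\delta,-z^{-1})e^{-\delta/z}T_t)$, i.e.\ $(L(T_t)(\delta,-z^{-1}), T_0)$, yielding the expansion $L^{-1}(T_0) = \sum_t (s_t, T_0)T_t$. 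Matching this with the defining series of $J$ then amounts to a term-by-term comparison of $\frac1{z+\psi_1}$ and $\frac1{z-\psi_1}$ expansions combined with the symmetry of $2$-point gravitational descendants under permutation of marked points.

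For part (4), the convergence is a two-step argument. The factor $e^{-\delta/z}$ is a polynomial in $z^{-1}$ with polynomial dependence on $\delta$, hence entire in $z\in\dP^1_z\setminus\{0\}$. For $S$, observe that by part (1) (or directly from lemma \ref{lem:diffeqS}) it is the unique solution of a linear holonomic system with meromorphic coefficients on $\dC^*_z\times V_0'$ normalized by $S\to\id$ as $z\to\infty$; the point $z=\infty$ is a regular singularity, and by Iritani's convergence theorem for small quantum cohomology of toric varieties the coefficients of this system are holomorphic on $V_0'$. Standard ODE theory then lifts the unique formal solution to a holomorphic solution on $\dP^1_z\setminus\{0\}\times V_0'$, hence $L$ and $J = L^{-1}(T_0)$ (using part (2)) are holomorphic there; the further restriction to $S^*\cap V_0'$ is only needed to place us in the overlap with the domain where the mirror map $\kappa$ of proposition \ref{prop:IFunction}, 3., is defined.

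The main obstacle will be part (3): the adjoint computation produces an expansion whose coefficients are bilinear pairings of $S$-entries, while the definition of $J$ is in terms of $2$-point gravitational descendant invariants with the opposite sign convention $(z-\psi_1)^{-1}$ versus $(z+\psi_1)^{-1}$. Reconciling these requires the non-trivial identity $\langle\psi_1^k T_a, T_j\rangle_{0,2,\beta} = \langle T_a, \psi_1^k T_j\rangle_{0,2,\beta}$ arising from the $\mathfrak{S}_2$-symmetry of the moduli space and careful bookkeeping with the Poincaré dual basis $T^k$, which is the only place in the proof where one genuinely uses the intersection-theoretic content rather than formal manipulations.
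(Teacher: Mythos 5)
Your treatment of parts (1)--(3) follows the same track as the paper, merely making explicit the "straightforward calculations" that the paper delegates to \cite[eq.\ (25)]{Pan} (for the $q$-flatness of $L$) and \cite[lemma 10.3.3]{CK} (for the relation $J = L^{-1}(T_0)$); the cancellation you describe via $L = S\circ e^{-\delta/z}$ and lemma \ref{lem:diffeqS} is precisely what those references contain.

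Part (4) is a genuinely different route from the paper's, and as written it has a gap. The paper does \emph{not} invoke regular-singularity theory at $z=\infty$. Instead it proceeds in two short steps: first, the scalar functions $(s_t,T_0)$ are holomorphic on $\dC^*_\tau\times S^*$ because they are the components of $J$, which by theorem \ref{theo:IgleichJ} is the $\kappa$-pullback of $I$, and the convergence of $I$ (with an explicit domain $S^*$) was already established in proposition \ref{prop:IFunction}, 3. Second, the identity $\hat{\nabla}^{\mathit{Giv}}_{q_a\partial_{q_a}}(s_t,T_l) = (s_t,T_a\circ T_l)$ together with the $H^2$-generation of the small quantum ring of $\XSigA$ and Iritani's convergence theorem (which makes the quantum product, hence this recursion, holomorphic on $V_0'$) lets one pass from the single column $(s_t,T_0)$ to all of $L$. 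The restriction to $S^*\cap V_0'$ is therefore intrinsic to the argument, not merely a cosmetic overlap condition as you suggest.

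Your argument tries to avoid $I=J$ altogether by viewing $S$ as the unique formal solution of a regular-singular holonomic system and then appealing to "standard ODE theory" for convergence. The gap is in the identification of the \emph{formal} power series $S = \sum_{\beta\in\textup{Eff}_{\XSigA}} S_\beta(z^{-1})\,q^\beta$ with a holomorphic flat section. Regular-singularity theory in the variable $z^{-1}$ gives, for each \emph{fixed} $\underline{q}\in V_0'$, convergence of the $z^{-1}$-expansion; it does not by itself produce the convergence of the $\underline{q}$-series, i.e.\ joint holomorphy near the normal-crossing point $(z=\infty,\underline{q}=\underline{0})$. To get that, one would have to extend the Givental connection with a logarithmic pole along $\underline{q}=\underline{0}$ and invoke Deligne-type asymptotics at the crossing, none of which is set up in your sketch. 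A secondary issue is that the residue of $\hat{\nabla}^{\mathit{Giv}}_{z\partial_z}$ at $z=\infty$ is essentially $-\mu$, whose eigenvalues $0,-1,\ldots,-n$ are resonant, so the normalization "$S\to\id$ as $z\to\infty$" does not pick out a unique solution unless one also specifies the absence of $\log z$ terms; this is not fatal but is a further point your argument glosses over. The paper's detour through $I=J$ is precisely what replaces the unproved $\underline{q}$-convergence of $S$ by the explicit hypergeometric estimate of proposition \ref{prop:IFunction}, 3.
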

\begin{proof}
\begin{enumerate}
\item
The first formula can be found in \cite[equation (25)]{Pan} and the second follows from lemma \ref{lem:diffeqS} by a straightforward calculation.

\item This follows from the second point of \ref{lem:diffeqS}.
\item See, e.g. \cite[lemma 10.3.3]{CK}.

\item
The multivalued functions $(s_t,T_0)$ are holomorphic in $\dC_\tau\times S^*$ as this is true for $J$ by theorem \ref{theo:IgleichJ} and proposition \ref{prop:IFunction}, 3. Using the formula $\hat{\nabla}^{\mathit{Giv}}_{q_a \partial_{q_a}}(s_t, T_l) = (s_t, T_a \circ T_l)$ we conclude that $s_t$ is a multivalued section of $\cG$ which is holomorphic in $\dC_\tau \times (S^* \cap V'_0)$, because monomials of the form $T_1^{n_1} \circ \ldots \circ T_r^{n_r}$ are a basis of $\cG$ in this domain.
\end{enumerate}
\end{proof}
Next we will define a twist of the endomorphism-valued function $L$ to produce truly flat sections of the Givental connection.
Define $\widetilde{L} = L \circ z^{-\mu} \circ z^\rho= S \circ e^{-\delta/z} \circ z^{-\mu} \circ z^\rho$. If we set $\widetilde{s}_t = \widetilde{L}(T_t)$,
where as before $\rho=c_1(\XSigA)\in H^2(\XSigA,\dC)=\dL^\vee$,
then it is a straightforward computation to see that $\hat{\nabla}^{\mathit{Giv}} \widetilde{s}_t =0$ for $t=0, \ldots,s$. As $L$ resp. $\widetilde{L}$ is invertible, we obtain that $\widetilde{s}_t$ is a basis of multivalued flat sections.

We also need to define a twisted $J$-function, namely $\widetilde{J}:= \sum_{t=0}^t \widetilde{J}_t T_t := \sum_{t=0}^{s}(\widetilde{s}_t,T_0) T_t = \widetilde{L}^{-1}(T_0)$. This yields, similarly to equation \eqref{eq:RepresentationOfOne-BSide}, the following formula
\begin{equation}\label{eq:Section1A-Side}
1=T_0 \stackrel{!}{=} \sum_{t=0}^s \widetilde{J}_t \widetilde{s}_t \in H^0(\dC^*_\tau\times V'_0,\overline{\cG})
\end{equation}
The following proposition uses all the previous results to identify the differential systems defined on both sides
of the mirror correspondence.
\begin{proposition}
\label{prop:IsologtrTLEP}
Let $W_0$ be a sufficiently small open neighborhood of $0\in \dC^{r,an}$ which is contained
in $S \cap V' \cap U^{0,an}$ and such that $\kappa$ induces an automorphism of $W_0$.
There is an isomorphism
$$
\phi:(\qMBLhat)^{an}_{|\dP^1_z\times W_0} \longrightarrow (\id_{\dP^1_z}\times\kappa)^*\overline{\cG}_{|\dP^1_z\times W_0}
$$
of $\trTLEPlog$-structures on $W^0$.
\end{proposition}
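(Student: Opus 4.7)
The strategy is to construct $\phi$ in three stages: first as an isomorphism of flat meromorphic bundles on the smooth locus $\dC^*_\tau\times(W_0\cap S^0_2)$, then extend it to the $\cR$-lattice $\qMBLlog$ on $\dC_z\times W_0$, and finally to the $\dP^1$-bundle $\qMBLhat$ with its logarithmic structure at $z=\infty$. The key tool is Givental's theorem \ref{theo:IgleichJ}, which yields $(\id_{\dP^1_z}\times\kappa)^*\widetilde{J} = \widetilde{I}$ after the mirror map $\kappa$. Combined with the two representations of the cyclic generator, namely equation \eqref{eq:RepresentationOfOne-BSide} on the B-side and equation \eqref{eq:Section1A-Side} on the A-side, this forces a unique candidate for $\phi$.

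More precisely, the multivalued flat sections $(f_0,\ldots,f_s)$ of $\qM$ and $(\widetilde{s}_0,\ldots,\widetilde{s}_s)$ of $\overline{\cG}$ are both bases of the corresponding local systems on $\dC^*_\tau\times(W_0\cap S^0_2)$. I would first define $\phi$ on the level of flat sections by $\phi(f_t):=(\id_{\dP^1_z}\times\kappa)^*\widetilde{s}_t$, which by the $I=J$-theorem sends the cyclic generator $1\in\qM$ (expanded via $\widetilde{I}$) to $T_0=1\in(\id\times\kappa)^*\overline{\cG}$ (expanded via $\widetilde{J}$). Well-definedness as a map of $\cD$-modules reduces to checking that the operators $\widetilde{\Box}_{\underline{l}}$ and $z\partial_z+\sum_a\rho(p^\vee_a)q_a\partial_{q_a}$ defining $\qM$ annihilate $T_0$ under the Givental connection after the coordinate change; this is an immediate consequence of $\widetilde{I}$ being a fundamental solution and of the analogous differential identities satisfied by $\widetilde{s}_t$ (lemma \ref{lem:diffeqS} and proposition \ref{prop:ConvergenceLIJ}, 1).

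To extend across $z=0$ and the divisor $Z$, both $\qMBLlog$ and $(\id\times\kappa)^*\cG$ are locally free of rank $\mu=\dim H^*(\XSigA,\dC)$ on a neighborhood of $(0,\underline{0})$, with canonical identifications of the fibre at $(0,\underline{0})$ with the algebra $(H^*(\XSigA,\dC),\cup)$: for $\qMBLlog$ this is lemma \ref{lem:ModuleE}, and for $\cG$ this is the initial condition of the quantum product. Since $\phi$ sends the generator $1$ to $1$ and intertwines the multiplication by $zq_a\partial_{q_a}$ with multiplication by $T_a$ at $(0,\underline{0})$, the induced map on fibres is the identity on $H^*(\XSigA,\dC)$; hence by Nakayama $\phi$ is an isomorphism in a neighborhood of $(0,\underline{0})$, and the pole-order and connection compatibility propagate it to all of $\dC_z\times W_0$. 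For the extension to $z=\infty$, the subspace $F\subset H^0(\dP^1_z,\widehat{E})$ from proposition \ref{prop:ExtensionInftyAtZero} corresponds under $\phi$ to the $\nabla^{\mathit{res},z^{-1}}$-flat sections $T_0,\ldots,T_s$ on the A-side, since both logarithmic extensions at $z=\infty$ are characterized by having residue eigenvalues equal to the (halved) cohomological degrees of $H^*(\XSigA,\dC)$, and the monodromy filtration used to construct $\qMBLhat$ in proposition \ref{prop:LogTrTLEP} is the unique one with this property. Hence $\phi$ extends to an isomorphism $\qMBLhat\cong(\id\times\kappa)^*\overline{\cG}$ of $\dP^1$-bundles with logarithmic connection along $\widehat{\cZ}$.

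Finally, compatibility with the pairings follows because both $P$ take values in $z^n\cO_{\dP^1_z\times W_0}(-n,n)$, both are flat on the smooth locus, and both restrict at $(0,\underline{0})$ to the Poincar\'e pairing on $H^*(\XSigA,\dC)$ (by lemma \ref{lem:ModuleE}, 5 and the normalization in the remark following it, matched with the corresponding property of the A-model pairing \eqref{eq:PairingQCohom}). Flatness and the fact that a $(-1)^n$-symmetric flat pairing is determined by its value at a single point therefore force the two pairings to coincide. The main obstacle in carrying this out is the matching of the logarithmic extensions at $z=\infty$: on the B-side this was constructed by a rather indirect gluing via Deligne extensions of the local system, and one must verify that the resulting filtration on $\psi_\tau j_{\tau,!}(\qM)^\nabla$ agrees, under the identification of flat sections induced by $\widetilde{I}=\kappa^*\widetilde{J}$, with the grading filtration on $H^*(\XSigA,\dC)$ that defines the A-model extension. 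Once this is established, the universality statement of theorem \ref{theo:logUnfoldingTheorem} then yields, as a corollary, an isomorphism of logarithmic Frobenius manifolds.
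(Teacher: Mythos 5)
Your proposal follows essentially the same route as the paper: define $\phi$ via $1 \mapsto T_0$ (equivalently on flat sections via $f_t \mapsto \kappa^*\widetilde s_t$), verify well-definedness by reducing to the annihilation of $\widetilde I$ under the pulled-back hypergeometric operators, establish surjectivity by Nakayama and the canonical fibre identifications with $H^*(\XSigA,\dC)$, extend to $z=\infty$ by matching the punctual $\trTLEP$-structures at $\underline q = \underline 0$, and deduce pairing compatibility from flatness plus agreement at the origin.

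The one place where your sketch is too loose is the extension to $z=\infty$. You write that both logarithmic extensions are ``characterized by having residue eigenvalues equal to the (halved) cohomological degrees'' and that ``the monodromy filtration used to construct $\qMBLhat$ in proposition \ref{prop:LogTrTLEP} is the unique one with this property.'' That uniqueness is not automatic: the monodromy of $\nabla^{\mathit{res},\underline q}$ at $z=\infty$ has integer residue eigenvalues with a nontrivial nilpotent part $-c_1(\XSigA)\cup$, so the lattice at $\infty$ is not determined by the eigenvalue data alone but by the actual filtration $F_\bullet$ on the nearby cycles. The paper closes this gap by an explicit computation: it observes that the basis $w_1,\ldots,w_\mu$ of $(\qMBLlog)_{|\dC_z\times\{\underline 0\}}$ extending $T_0,\ldots,T_s$ maps under $\varphi_{|\dC_z\times\{\underline 0\}}$ to the basis $T_0,\ldots,T_s$ of $\overline{\cG}_{|\dC_z\times\{\underline 0\}}$, and then checks directly that the connection matrices of $\nabla$ (formula \eqref{eq:ConnMatrixQ0B-Side}) and of $\hat\nabla^{\mathit{Giv}}$ (formula \eqref{eq:QDmod}) in these bases coincide. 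This identity of matrices, rather than abstract uniqueness, is what forces $\varphi$ to extend as an isomorphism of $\dP^1$-bundles at $\underline q = \underline 0$; extension over the rest of $\dP^1_z\times W_0$ then follows by the gluing argument of proposition \ref{prop:LogTrTLEP}. Your instinct that this is the main technical point is right, but you should replace the uniqueness claim with the explicit matching of connection matrices.
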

\begin{proof}
Define a morphism of vector bundles with connection
$$
\begin{array}{rcl}
\varphi:\left((\qMBLlog)^{an}_{|\dC_z\times W^0},\nabla\right) & \longrightarrow & (\id_{\dC_z}\times\kappa)^* \left(\overline{\cG}_{|\dC_z\times W^0},\hat{\nabla}^{\mathit{Giv}}\right) \\ \\
1 & \longmapsto & 1=T_0,
\end{array}
$$
where the connection operator $\nabla$ on the left hand side is the one from theorem \ref{theo:LogExtQDMod}.
The first task is to show that $\varphi$ is well-defined, i.e., that the following holds:
Put $\widetilde{\Box}'_{\underline{l}}:=(\id_{\dC_z}\times \kappa)_* \widetilde{\Box}_{\underline{l}}$
and $\widetilde{E}':=(\id_{\dC_z}\times \kappa)_*(z^2\partial_z+\sum_{a=1}^r \rho(p^\vee_a) zq_a\partial_{q_a})$, then
we have to show that
$$
\begin{array}{rcl}
\widetilde{\Box}'_{\underline{l}}(q_1,\ldots,q_r,z,\hat{\nabla}^{\mathit{Giv}}_{zq_1\partial_{q_1}},\ldots,\hat{\nabla}^{\mathit{Giv}}_{zq_r\partial_{q_r}}) (1)
& = & 0 \quad \forall \underline{l}\in\dL \\ \\
\widetilde{E}'\left(q_1,\ldots,q_r,z,\hat{\nabla}^{\mathit{Giv}}_{z^2\partial_z},\hat{\nabla}^{\mathit{Giv}}_{zq_1\partial_{q_1}},\ldots,\hat{\nabla}^{\mathit{Giv}}_{zq_r\partial_{q_r}}\right)(1)
& = & 0.
\end{array}
$$
Obviously, the objects on the left hand side of these equations are sections of
$(\id_{\dC_z}\times\kappa)^*\overline{\cG}_{|\dC_z\times W^0}$, i.e., they cannot have support on a proper subvariety, hence,
it suffices to show that they are zero on $\dC^*_\tau\times (W_0\cap S_2^0)$.
On that subspace we can use the presentation
$1 = \sum_{t=0}^s \widetilde{J}_t\cdot \widetilde{s}_t$
from equation \eqref{eq:Section1A-Side}. As the multivalued sections $\widetilde{s}_t$ are flat for $\hat{\nabla}^{\mathit{Giv}}$ it follows
that we have to show that
$$
\begin{array}{rllcl}
\widetilde{\Box}'_{\underline{l}}(\widetilde{J}_t) & =&
\widetilde{\Box}_{\underline{l}}((\id_{\dC^*_\tau}\times\kappa)^*\widetilde{J}_t) & = & 0 \\ \\
\widetilde{E}'(\widetilde{J}_t) & = &
\left(z^2\partial_z+\sum_{a=1}^r \rho(p^\vee_a) zq_a\partial_{q_a}\right)((\id_{\dC^*_\tau}\times\kappa)^*\widetilde{J}_t)&=&0.
\end{array}
$$
This is obvious by theorem
\ref{theo:IgleichJ} and by the equations \eqref{eq:OperatorsAnnihilateIFunc} in the proof of proposition \ref{prop:IFunction}.
Hence we obtain that $\varphi$ is a well-defined morphism of locally free sheaves compatible with the
connection operators on both sides.

Next we show the the surjectivity of $\varphi$: As we are allowed to replace $W_0$ by
a smaller open neighborhood of $\underline{0}\in \dC^r$, one easily sees that it suffices to show that
$\varphi$ is surjectiv on the germs at $(0,\underline{0})$ of both modules. Namely, we have flat structures
on $\dC^*_\tau\times (W_0\cap S_2^0)$ and on $\dC^*_\tau\times Z_a$ for all $a=1,\ldots,r$, so that if $\varphi$ is surjective
at some point in $\dC^*_\tau\times (W_0\cap S_2^0)$ resp. at some point in $\dC^*_\tau\times Z_a$, it will be surjective
on all of $\dC^*_\tau\times (W_0\cap S_2^0)$ resp.  $\dC^*_\tau\times Z_a$. By Nakayama's lemma, surjectivity on the germs at $(0,\underline{0})$ is guaranteed once we have surjectivity
at the fibre at $(0,\underline{0})$, which is evident as both fibres are canonically isomorphic
to $H^*(\XSigA,\dC)$ (for $\overline{\cG}_{|(0,\underline{0})}$, this isomorphism holds by definition, and for
$(\qMBLlog)^{an}_{|(0,\underline{0})}$, this is lemma \ref{lem:ModuleE}, 1.). Now by comparison of ranks,
we obtain that $\varphi$ is an isomorphism.

It remains to show that $\varphi$ can be extended to an isomorphism of $\trTLEPlog$-structures on $W_0$. First notice that
$\varphi$ yields an identification of the local systems $(\qM)^{\nabla}_{|\dC^*_\tau\times (W_0\cap S_2^0)}$
and $\cG^{\hat{\nabla}^{\mathit{Giv}}}_{|\dC^*_\tau\times (W_0\cap S_2^0)}$. In particular, it follows then from
lemma \ref{lem:ModuleE}, 3. that the monodromy
of $\cG^{\hat{\nabla}^{\mathit{Giv}}}_{|\dC^*_\tau\times (W_0\cap S_2^0)}$ around $Z_a=\{q_a=0\}$ is unipotent (this can also be shown by a direct calculation). Hence
by using the the same arguments as in proposition \ref{prop:LogTrTLEP} it suffices to identify
the punctual $\trTLEP$-structures $(\qMhat)_{|\dP^1\times\{\underline{0}\}}$ and $\overline{\cG}_{|\dP^1\times\{\underline{0}\}}$.
We already have such an identification on $\dC_z\times\{\underline{0}\}$ by restricting the above isomorphism
$\varphi$ to $\dC_z\times\{0\}$. Moreover, consider a basis $w_1,\ldots,w_\mu$ of $(\qMBLlog)_{|\dC_z\times\{\underline{0}\}}$ as in
the proof of proposition \ref{prop:ExtensionInftyAtZero}, 1., which extends the basis $T_0,T_1,\ldots,T_r,T_{r+1},\ldots,T_s$ of $H^*(\XSigA,\dC)=(\qMBLlog)_{|(0,\underline{0})}$.
Then by the definition of the Givental connection and of the morphism $\varphi$, the restriction
$\varphi_{|\dC_z\times\{\underline{0}\}}$ maps this basis is to $T_0,\ldots, T_s\in\overline{\cG}_{|\dC_z\times\{\underline{0}\}}\cong\oplus_{t=0}^s\cO_{\dC_z}T_t$. Remark also that the connection matrices in these bases of
$\nabla$ on $(\qMBLlog)_{|\dC_z\times\{\underline{0}\}}$ resp. $\hat{\nabla}^{\mathit{Giv}}$ on $\overline{\cG}_{|(0,\underline{0})}$
are equal, this follows from formula \eqref{eq:ConnMatrixQ0B-Side} resp. formula \eqref{eq:QDmod}. Hence
$\varphi$ extends to an isomorphism of $\dP^1_z$-bundles $\widetilde{\phi}:(\qMhat)_{|\dP^1_z\times\{\underline{0}\}} \rightarrow \overline{\cG}_{|\dP^1_z\times\{\underline{0}\}} =
\left((\id_{\dP^1_z}\times\kappa)^*\overline{\cG}\right)_{|\dP^1_z\times\{\underline{0}\}}$, compatible with the connections. By the same argument, this isomorphism
also respects the pairings $P$ on both sides, as it restricts to the identity at $z=0$.

As discussed above, we obtain from $\varphi$ and $\widetilde{\phi}$ an isomorphism
$$
\phi:(\qMBLhat)^{an}_{|\dP^1_z\times W_0} \longrightarrow (\id_{\dP^1_z}\times\kappa)^*\overline{\cG}_{|\dP^1_z\times W_0}
$$
of $\trTLEPlog$-structures on $W^0$, as required.
\end{proof}
As a consequence, we can now deduce an isomorphism of logarithmic Frobenius structures defined by the quantum
product resp. by the Landau-Ginzburg model (through the construction from subsection \ref{subsec:LogFrob}) of $\XSigA$.
\begin{theorem}\label{theo:MirrorSymmetry}
There is a unique isomorphism germ $Mir:(W_0\times \dC^{\mu-r},0) \rightarrow (V,0)$ which maps
the logarithmic Frobenius manifold from corollary \ref{cor:logFrob-ASide} (A-side) to that of theorem \ref{theo:logFrob-BSide} (B-side)
and whose restriction to $W_0$ corresponds to the isomorphism $\phi$ of $\trTLEPlog$-structures from above.
In particular, it induces the identity on the tangent spaces
at the origin, i.e., on $(H^*(X_{\Sigma_A},\dC),\cup)$.
\end{theorem}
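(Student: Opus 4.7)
The plan is to reduce the theorem to the universality statement of Theorem \ref{theo:logUnfoldingTheorem}, applied on both sides. Both the A-side and the B-side logarithmic Frobenius manifolds arise from this unfolding construction starting from a $\trTLEPlog$-structure together with a primitive/homogeneous section, and the isomorphism $\phi$ of Proposition \ref{prop:IsologtrTLEP} matches these primitive sections. Uniqueness of the unfolding then produces $Mir$.

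First I would verify that the A-side logarithmic Frobenius structure from Corollary \ref{cor:logFrob-ASide} coincides with the one obtained from Theorem \ref{theo:logUnfoldingTheorem} applied to the $\trTLEPlog$-structure $(\overline{\cG},\hat{\nabla}^{\mathit{Giv}},P)$ on (a germ at the origin of) $V'$, with primitive section $\xi_A := T_0 = 1 \in H^0(\dP^1_z \times V',\overline{\cG})$. The three conditions of that theorem all hold at the origin:
\begin{itemize}
\item (EC): $T_0$ is the unit, in particular a $\cV$-eigenvector with eigenvalue $-n/2$.
\item (IC): the Higgs field sends $q_a\partial_{q_a}\mapsto -T_a\cup 1=-T_a$, identifying $\Theta_{V'}(\log Z)_{|0}$ with $H^2(\XSigA,\dC)$ inside $H^*(\XSigA,\dC)$.
\item (GC): by the standard toric presentation \eqref{eq:CohomToric}, $H^*(\XSigA,\dC)$ is generated as a ring by $H^2(\XSigA,\dC)$, so iterated action of the Higgs field alone already exhausts the whole fibre.
\end{itemize}
Universality of the unfolding then shows that the logarithmic Frobenius manifold $V$ of Corollary \ref{cor:logFrob-ASide} agrees with the one produced by Theorem \ref{theo:logUnfoldingTheorem} for this datum. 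Symmetrically, the B-side Frobenius manifold on $(U^{0,an}\times\dC^{\mu-r},0)$ is by construction (proof of Theorem \ref{theo:logFrob-BSide}) the unfolding of $(\qMBLhat,\nabla,P)$ with primitive section $\xi_B := 1 \in F$, the corresponding conditions (IC), (GC), (EC) having been checked there using lemma \ref{lem:ModuleE}.

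Next I would transport the A-side datum through the mirror map $\kappa$ and compare. Proposition \ref{prop:IsologtrTLEP} provides an isomorphism
\[
\phi:(\qMBLhat)^{an}_{|\dP^1_z\times W_0}\stackrel{\cong}{\longrightarrow}(\id_{\dP^1_z}\times\kappa)^*\overline{\cG}_{|\dP^1_z\times W_0}
\]
of $\trTLEPlog$-structures, and by its very construction it sends the generator $1$ of $\qMBLhat$ to the generator $1=T_0$ of $\overline{\cG}$, i.e.\ $\phi(\xi_B)=(\id_{\dP^1_z}\times\kappa)^*\xi_A$. In particular the pullback of the A-side datum via $\kappa$ yields an unfolding input isomorphic, through $\phi$, to the B-side input. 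Now the uniqueness-up-to-canonical-isomorphism clause of Theorem \ref{theo:logUnfoldingTheorem} produces a unique isomorphism of germs of logarithmic Frobenius manifolds
\[
Mir : (W_0\times\dC^{\mu-r},0)\longrightarrow (V,0)
\]
extending $\kappa$ on the ``distinguished'' directions $W_0$ and inducing $\phi$ on the $\trTLEPlog$-structures.

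Finally, the identification of tangent spaces at the origin: on the B-side $T_0(W_0\times\dC^{\mu-r})$ is canonically $(\qMBLhat)_{|(0,\underline{0})}\cong H^*(\XSigA,\dC)$ via lemma \ref{lem:ModuleE}, 1., and on the A-side $T_0 V$ is canonically $\overline{\cG}_{|(0,\underline{0})}=H^*(\XSigA,\dC)$ by construction; the map $\phi_{|(0,\underline{0})}$ is, by its definition, the identity $1\mapsto 1$ together with the comparison of Higgs fields $\hat{\nabla}^{\mathit{Giv}}_{zq_a\partial_{q_a}}\leftrightarrow z q_a\partial_{q_a}$, which by the multiplication-invariance of $P$ and $H^2$-generation is forced to be the identity of $H^*(\XSigA,\dC)$ with its cup-product ring structure. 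Hence $dMir_0=\id_{H^*(\XSigA,\dC)}$.

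The only real content beyond invoking Proposition \ref{prop:IsologtrTLEP} and Theorem \ref{theo:logUnfoldingTheorem} is the verification of (IC), (GC), (EC) for the A-side primitive section $T_0$; everything else is formal bookkeeping with the mirror map $\kappa$. The main conceptual obstacle — namely checking that the B-side actually produces a logarithmic Frobenius structure at all, and that $\phi$ respects both the connection, pole orders, and pairing in families — has already been resolved in Theorem \ref{theo:logFrob-BSide} and Proposition \ref{prop:IsologtrTLEP}, so the present theorem is essentially a direct consequence.
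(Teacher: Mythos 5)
Your proposal is correct and follows essentially the same route as the paper, which proves the theorem in one line as a direct consequence of the uniqueness clause of Theorem \ref{theo:logUnfoldingTheorem} combined with Proposition \ref{prop:IsologtrTLEP}. Your extra verification that the A-side Frobenius manifold arises from the unfolding theorem with primitive section $T_0$ (checking (IC), (GC), (EC)) is the expansion of what the paper leaves implicit in Corollary \ref{cor:logFrob-ASide}, and the sign in your Higgs-field identification $q_a\partial_{q_a}\mapsto \pm T_a$ is a notational matter ($\nabla^{\mathit{Giv}}$ versus $\hat{\nabla}^{\mathit{Giv}}$) that does not affect the argument.
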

\begin{proof}
This is a direct consequence of the uniqueness statement in theorem \ref{theo:logUnfoldingTheorem}, using the last proposition.
\end{proof}

\section{Non-commutative Hodge structures}
\label{sec:ncHodge}

In this section we will use the results from the previous parts of the
paper to show, via the fundamental theorem \cite[theorem 4.10]{Sa8}, that the quantum
$\cD$-module on the K\"ahler moduli space underlies a variation of pure polarized non-commutative Hodge structures.
Moreover, we study the asymptotic behavior near the large radius limit point and show
that the associated harmonic bundle is tame in the sense of Mochizuki and Simpson (see, e.g., \cite[definition 4.4]{Mo1}) along the boundary divisor.
We start by recalling briefly the necessary definitions.
\begin{definition}[{\cite[definition 2.12]{He4},\cite[definition 2.1]{HS4},\cite[definition 2.7]{KKP}}]
\label{def:ncHodge}
Let $M$ be a complex manifold and $n\in\dZ$ be an integer. A variation
of TERP-structures on $M$ of weight $n$ consists of the following set
of data.
\begin{enumerate}
\item
A holomorphic vector bundle $\cH$ on $\dC_z\times M$ with an algebraic structure in $z$-direction,
i.e., a locally free $\cO_M[z]$-module.
\item
A $\dR$-local system $\cL$ on $\dC_z^*\times M$, together with an isomorphism
$$
\iso:\cL\otimes_\dR \cO^{an}_{\dC_z^*\times M} \rightarrow \cH^{an}_{|\dC_z^*\times M}
$$
such that the connection $\nabla$ induced by $\iso$ has a pole of type $1$ along $\{0\}\times M$
and a regular singularity along $\{\infty\}\times M$.
\item
A polarizing form $P:\cL\otimes \iota^*\cL\rightarrow i^n\underline{\dR}_{\dC_z^*\times M}$,
which is $(-1)^n$-symmetric and which induces a non-degenerate pairing
$$
P:\cH\otimes_{\cO_{\dC_z\times M}}\iota^*\cH\rightarrow z^n \cO_{\dC_z\times M},
$$
here non-degenerate means that we obtain a non-degenerate symmetric pairing
$[z^{-n}P]:\cH/z\cH \times \cH/z\cH \rightarrow \cO_M$.
\end{enumerate}
\end{definition}
We also recall the notions of pure and pure polarized TERP-structures.
\begin{definition}\label{def:ppncHodge}
Let $(\cH,\cL,P,n)$ be a variation of TERP-structures on $M$. Write $\gamma:\dP^1\times M\rightarrow \dP^1\times M$ for the
involution $(z,x)\mapsto (\overline{z}^{-1},x)$ and consider $\overline{\gamma^*\cH}$, which is a holomorphic vector bundle
over $(\dP^1\backslash\{0\})\times \overline{M}$. Define a locally free $\cO_{\dP^1}\cC^{an}_M$-module $\widehat{\cH}$, where
$\cO_{\dP^1}\cC^{an}_M$ is the subsheaf of $\cC^{an}_{\dP^1\times M}$ consisting of functions annihilated by $\partial_{\overline{z}}$
by gluing $\cH$ and $\overline{\gamma^*\cH}$ via the following identification on $\dC_z^*\times M$. Let $x\in M$ and $z\in\dC_z^*$ and define
$$
\begin{array}{rcl}
c: \cH_{|(z,x)} & \longrightarrow & (\overline{\gamma^*\cH})_{|(z,x)} \\ \\
a &\longmapsto & \nabla\textup{-parallel transport of }\overline{z^{-n} \cdot a}.
\end{array}
$$
Then $c$ is an anti-linear involution and identifies $\cH_{|\dC_z^*\times M}$ with $\overline{\gamma^*\cH}_{|\dC_z^*\times \overline{M}}$.
Notice that $c$ restricts to the complex conjugation (with respect to $\cL$) in the fibres over $S^1\times M$.
\begin{enumerate}
\item
$(\cH,\cL,P,n)$ is called pure iff $\widehat{\cH} = p^* p_* \widehat{\cH}$, where $p:\dP^1\times M \twoheadrightarrow M$.
A variation of pure TERP-structures is also called variation of (pure) non-commutative Hodge structures (\textbf{ncHodge structure} for short).
\item
Let $(\cH,\cL,P,n)$ be pure, then by putting
$$
\begin{array}{rcl}
h: p_*\widehat{\cH} \otimes_{\cC^{an}_M}  p_*\widehat{\cH}  & \longrightarrow & \cC^{an}_M \\ \\
(s,t) & \longmapsto & z^{-n}P(s,c(t))
\end{array}
$$
we obtain a hermitian form on $p_*\widehat{\cH}$. We call $(\cH,\cL,P,n)$ a pure polarized TERP resp. ncHodge structure if this form
is positive definite (at each point $x\in M$).
\end{enumerate}
\end{definition}
\textbf{Remarks:} We comment on the differences between this definition and those
in \cite{HS4} resp. \cite{KKP}.
\begin{enumerate}
\item
One may want, depending on the actual geometric situation to be considered, the local system $\cL$ to be defined
over $\dQ$ (as in \cite{KKP}) or even over $\dZ$. This corresponds to the notion of real resp. rational Hodge structures
and to the choice of a lattice for them in ordinary Hodge theory.
\item
The reason for considering TERP-structures, and not only ncHodge structures, which are pure by definition
(this condition is called \emph{opposedness} condition in \cite{KKP})
is that there are natural examples of TERP-structures which are not pure (see, e.g., \cite[section 9]{HS4}).
\item
A ncHodge structure in the sense of \cite{KKP} does not contain any polarization data. However, the structures
we are considering, i.e., those defined by (families of) algebraic functions are polarizable in a natural way,
so that it seems reasonable to include these data in the definition.
\item
We did not put the \emph{$\dQ$-structure axiom} from \cite{KKP} in the definition of an ncHodge structure. This property,
roughly stating that the Stokes structure defined by the pole of $\nabla$ along $z=0$ (in case it is irregular) is already
defined on the local system $\cL$, and not only on its complexification $\cL\otimes_{\underline{\dR}}\underline{\dC}$
was part of the definition of a mixed TERP-structure in \cite{HS1}. It turns out that in some situations (see, e.g., \cite[section 8]{Mo6}),
this property is actually something to be proved, which is why we exclude this condition from the definition of a ncHodge structure. Notice
however that in the geometric situations we are studying, this condition will always be satisfied.
\end{enumerate}
The following theorem is the first result of this section.
\begin{theorem}
\label{theo:GKZunderliesNCHodge}
The restriction to $\dC_z\times (W_0 \cap S^0_2)$ of the quantum $\cD$-module $\cG$ underlies a variation of (pure) polarized
ncHodge structures of weight $n$ on $W_0 \cap S^0_2$.
\end{theorem}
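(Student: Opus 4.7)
The plan is to transport the question from the A-side to the B-side via the mirror isomorphism, and then invoke Sabbah's theorem for tame functions.

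First, by proposition \ref{prop:IsologtrTLEP}, the quantum $\cD$-module $\cG$ is isomorphic, after pullback by the mirror map $\kappa$, to the hypergeometric $\trTLEPlog$-structure $\qMBLhat$. Restricting to $\dC_z\times(W_0\cap S_2^0)$ and combining with corollary \ref{cor:GM-GKZDown} and the lattice identification in corollary \ref{cor:IdentBrieskorn}, we see that it suffices to show that the Brieskorn lattice $\qMBL_{|\dC_z\times(W_0\cap S_2^0)}$, equipped with the pairing $P$ from corollary \ref{cor:BrieskornDualityDown} and the real structure coming from the topological (moderate) Fourier--Laplace transform of the local system of vanishing cycles for the family $\widetilde{W}:S_0\times S_2^0\to\dC_t$, underlies a variation of pure polarized ncHodge structures of weight $n$.

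Next, I would verify that the family $\widetilde{W}(-,\underline{q}):S_0\to\dC_t$ is cohomologically tame (in the sense of Sabbah) at every point $\underline{q}\in S_2^0$. This follows from the Newton non-degeneracy assumption defining $S_2^0$, via a standard argument using the partial compactification inside the toric variety $X_B$ already employed in the proof of proposition \ref{prop:DiscFinite}: transversality of the universal hyperplane section with the natural stratification of $X_B\times\dC_t$ prevents critical points at infinity from contributing to the vanishing cohomology. With tameness in hand, the fundamental theorem \cite[theorem~4.10]{Sa8} of Sabbah asserts that the TERP-structure attached to $(\widetilde{W},\pi)$ on $S_2^0$, consisting of the Brieskorn lattice, the natural real local system on its restriction to $\dC_z^*\times S_2^0$, and the topological polarization $P$, is pure and polarized.

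The only remaining point is to check that the triple $(\qMBL,\cL,P)$ considered above really coincides with the one produced by Sabbah. The identification of the underlying $\cR$-modules is corollary \ref{cor:IdentBrieskorn}; the identification of the pairings up to a scalar is the content of corollary \ref{cor:PairingPUp} together with lemma \ref{lem:ModuleE}, 5., and the convention fixed in the remark following that lemma normalizes this scalar so that $[z^{-n}P]$ restricts to the Poincaré pairing on $H^*(\XSigA,\dC)$, which is the same normalization used on the Sabbah side. The real structure is canonical, being pulled back from the topological direct image of the constant sheaf $\underline{\dR}_{S_0}$ under the family $\widetilde{W}$, so no further choice is involved.

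The main obstacle I anticipate is the bookkeeping around the real structure and polarization: Sabbah's construction uses the Fourier transform of the vanishing cycle local system, whereas on our side both objects have been described purely algebraically via GKZ-systems and Euler--Koszul duality. The algebraic self-duality isomorphism of theorem \ref{theo:Duality}, 2., and its compatibility with filtrations (proposition \ref{prop:FiltrationDuality}) must be shown to intertwine with the topological Poincaré duality for the affine fibres of $\widetilde{W}$, so that the resulting $P$ on both sides agree up to the single scalar already fixed. Granting this compatibility, which can be checked fibrewise on the smooth locus $S_2^0$ where both pairings are flat and non-degenerate and hence determined by their value at one point, Sabbah's theorem gives the purity and polarization on all of $W_0\cap S_2^0$, completing the proof.
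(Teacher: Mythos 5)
Your proposal follows essentially the same route as the paper: reduce to the B-side via proposition \ref{prop:IsologtrTLEP}, equip the Brieskorn lattice $\qMBL$ with a real structure coming from the topological Gau\ss-Manin system via Fourier--Laplace, and invoke Sabbah's theorem \cite[theorem 4.10]{Sa8} fibrewise over $S_2^0$ for purity and polarization. The only cosmetic difference is that where you propose to verify by hand that the algebraically defined pairing $P$ (from the Euler--Koszul self-duality) intertwines with Sabbah's topological Poincar\'e-duality pairing via a fibrewise argument, the paper short-circuits this by citing the comparison lemma \cite[lemma 5.9]{Sa11}, together with the normalization fixed in the remark after lemma \ref{lem:ModuleE}.
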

\begin{proof}
We will show that
$\qMBL$ is a polarized ncHodge structure on $S^0_2$, then the statement follows from
proposition \ref{prop:IsologtrTLEP}.
We first show that $\qMBL$ is equipped with structures as in definition
\ref{def:ncHodge}, that is, that it underlies a variation of TERP-structures. Then we deduce from
\cite{Sa8} that this structure is pure and polarized.

It follows from corollary \ref{cor:BrieskornDualityDown} that $\qMBL$ is a locally free $\cO_{\dC_z\times S_2^0}$-module, equipped with a connection operator with a pole of type $1$ along $\{0\}\times S_2^0$ and that moreover we have a non-degenerate pairing $P:\qMBL \otimes \iota^* \qMBL \rightarrow z^n\cO_{\dC_z\times S_2^0}$. Recall also from the proof of theorem \ref{theo:GM-GKZUp} and of corollary
\ref{cor:GM-GKZDown} that the $\cD_{\dP^1_z\times S_2^0}$-module $\qM \otimes_{\cO_{\dP^1_z\times S_2}}\cO_{\dP^1_z\times S^0_2}$
equals $\FL^\tau_t(\cH^0(W,q)_+\cO_{S_1^0})$. Now the Riemann-Hilbert correspondence
gives $\DR^\bullet(\cH^0(W,q)_+\cO_{S_1^0}) = {^p}\cH^0 R^\bullet(W,q)_*\underline{\dC}_{S_1^0}$, where
${^p}\cH^\bullet$ is the perverse cohomology functor (see, e.g., \cite{Di}). Hence
$\DR^\bullet(\cH^0(W,q)_+\cO_{S_1^0})$ carries a real (resp. rational) structure, namely,
${^p}\cH^0 R^\bullet(W,q)_*\underline{\dR}_{S_1^0}$ (resp. ${^p}\cH^0 R^\bullet(W,q)_*\underline{\dQ}_{S_1^0}$).
We then deduce from \cite[theorem 2.2]{Sa1} that the the local system of flat sections of $((\qM)^{an},\nabla)$
is equipped with a real or even rational structure.
One could also invoke the recent preprint \cite{Mo7} and show that $\cH^0(W,q)_+\cO_{S_1^0}$ is a $\dR$-(or $\dQ$-)holonomic
$\cD$-module in the sense of \cite[definition 7.6]{Mo7}, which holds due to the regularity of $\cH^0(W,q)_+\cO_{S_1^0}$.
It then follows from loc.cit., section 9, that this real or rational structure is preserved
under the standard functors (direct image, inverse image, tensor product) in particular, under (partial) Fourier-Laplace transformation
(the elementary irregular rank one module has an obvious real/rational structure).
Hence $\FL^\tau_t(\cH^0(W,q)_+\cO_{S_1^0})$ has a real (resp. rational) structure,
which shows that $\qMBL$ underlies a variation of TERP-structures on $S_2^0$.

It remains to show that this structure is pure and polarized in the sense of definition \ref{def:ppncHodge}. It is sufficient to
do this for the restriction $(\qMBL)_{|\dC_z\times\{\underline{q}\}}$ for all $\underline{q} \in S_2^0$.
Write $W_{\underline{q}}$ for the restriction $W_{|\pr^{-1}(\underline{q})}:q^{-1}(\underline{q})\rightarrow \dC_t$,
then the restriction of the tuple $(\qM,\qMBL,P)$ to $\dC_z\times \{\underline{q}\}$ is exactly the tuple $(G,G_0,\widehat{P})$ associated to $W_{q}$
which was considered in \cite[theorem 4.10]{Sa8}, where one has to use the comparison result \cite[lemma 5.9]{Sa11} to identify (possibly
up to a non-zero constant, see the remark after the proof of lemma \ref{lem:ModuleE}) the pairing $P$ defined on $\qMBL$ with the pairing $\widehat{P}$ from \cite[theorem 4.10]{Sa8}.
Then it is shown in loc.cit. that one can associated to  $(G,G_0,\widehat{P})$ an integrable polarized
twistor structure, which means exactly that the variation of TERP-structures $(\qMBL)_{|\dC_z\times\{\underline{q}\}}$
is pure polarized, i.e., that it is a variation of (pure) polarized ncHodge structures.
\end{proof}

In order to state the second result of this section, recall the following fact (see, e.g., \cite[lemma 3.12]{HS1}).
\begin{proposition}
Let $(\cH,\cL,P)$ be a variation of polarized ncHodge structures of weight $n$ on $M$. Put
$E:=p_*\widehat{\cH}$, which is a real-analytic bundle
equipped with a holomorphic structure defined by the isomorphism
$E \cong \cH/z\cH \otimes_{\cO_M} \cC^{an}_M$, a Higgs field $\theta:=[z\nabla_z]
\in{\cE\!}nd_{\cO_M}(\cH/z\cH)\otimes\Omega^1_M$ and the hermitian metric $h$ from above.
Then the tuple $(E,\overline{\partial},\theta, h)$ (where $\overline{\partial}$
is the operator defining the holomorphic structure on $E$) is a harmonic bundle in the sense of \cite{Si1}.
\end{proposition}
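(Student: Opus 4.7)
The plan is to recognize this as the well-established correspondence between polarized pure twistor structures and harmonic bundles, going back to work of Simpson and Sabbah, and reformulated in our language by Hertling--Sevenheck. Accordingly, the proof should be a verification that the three pieces of structure on $E$ glue correctly and that the Hitchin equation follows from the purity and polarization axioms of Definition \ref{def:ppncHodge}.

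First I would make the bundle structure on $E = p_*\widehat{\cH}$ explicit. Purity gives $\widehat{\cH} = p^*p_*\widehat{\cH}$, so $E$ is a locally free $\cC^{an}_M$-module whose fibre at $x \in M$ is the finite-dimensional vector space $H^0(\dP^1,\widehat{\cH}_{|\dP^1\times\{x\}})$. Restricting global sections of $\widehat{\cH}$ to $\{0\}\times M$ gives the canonical isomorphism $E \cong (\cH/z\cH)\otimes_{\cO_M}\cC^{an}_M$, which in turn endows $E$ with the holomorphic structure $\overline{\partial}$; restricting to $\{\infty\}\times M$ and using the gluing map $c$ produces a second, anti-holomorphic identification with $\overline{\cH/z\cH}$, and this is what makes $h$ into a hermitian metric on the holomorphic bundle $E$ (non-degeneracy of $[z^{-n}P]$ gives non-degeneracy of $h$, and polarization upgrades this to positive definiteness).

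Next I would unpack the Higgs field. Since $\nabla$ has a pole of type $1$ along $\{0\}\times M$, the operator $z\nabla_X$ for $X\in\Theta_M$ preserves $\cH$, and modulo $z\cH$ it is $\cO_M$-linear; hence it defines $\theta \in {\cE\!}nd_{\cO_M}(\cH/z\cH)\otimes\Omega^1_M$, holomorphic by construction. Flatness of $\nabla$ in the $M$-directions forces $[z\nabla_X,z\nabla_Y] = z\nabla_{[X,Y]}\cdot z$ modulo $z^2$, which gives $\theta\wedge\theta = 0$, so $(E,\overline{\partial},\theta)$ is a Higgs bundle.

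The heart of the matter is the Hitchin equation. Let $D = D' + \overline{\partial}$ be the Chern connection of $(E,h)$ and let $\theta^*$ be the $h$-adjoint of $\theta$. One must show
\[
(D + \theta + \theta^*)^2 = 0.
\]
The cleanest way to extract this is to use the twistor viewpoint: the $\cO_{\dP^1}\cC^{an}_M$-module $\widehat{\cH}$ carries a natural meromorphic connection whose restriction to $\dC_z\times M$ is $\nabla$ and whose restriction to $(\dP^1\setminus\{0\})\times \overline{M}$ is $\overline{\gamma^*\nabla}$. Because the two restrictions glue along $\dC^*_z\times M$ via parallel transport by $c$, this produces a family of flat connections on $E$ parametrized by $z\in\dP^1$; writing out the pole-type-$1$ structure along $\{0\}\times M$ and $\{\infty\}\times M$ and comparing $z$-coefficients yields exactly $D'(\theta) = 0$, $\overline{\partial}(\theta^*)=0$ together with $F_D + [\theta,\theta^*] = 0$ --- i.e. the Hitchin equation. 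The hermitian pairing $h$ is compatible with $P$ by construction, so $\theta^*$ so defined is the correct adjoint.

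The step that requires the most care is precisely this last one, namely checking that the $C^\infty$-trivialization of $\widehat{\cH}$ in the $z$-direction translates the purity/polarization axioms into the Hitchin equation. In practice this is done by computing the connection forms in a $\nabla$-flat frame on $\dC^*_z\times M$ and comparing the pole-type-$1$ behaviours at $0$ and $\infty$; the flatness of $\nabla$ and the $P$-compatibility of $c$ are exactly what is needed. Once this is in place, the tuple $(E,\overline{\partial},\theta,h)$ satisfies Simpson's definition of a harmonic bundle, as claimed; since the entire computation is local on $M$ and pointwise in $z$, it suffices to do it in a $\nabla$-flat local frame and invoke \cite[lemma 3.12]{HS1} for the remaining book-keeping.
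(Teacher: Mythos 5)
Your proposal is correct and takes essentially the same route as the paper: both identify this as the standard correspondence between pure polarized (integrable) twistor structures and harmonic bundles and ultimately defer the detailed verification to \cite[lemma 3.12]{HS1}. The paper in fact offers no proof at all, merely citing that reference as a ``recalled fact,'' so your additional unpacking of how purity gives the $C^\infty$-bundle $E$, how the pole of type $1$ yields the Higgs field, and how the gluing by $c$ together with $P$ produces the metric and the Hitchin equation is a faithful (and more explicit) account of the argument behind that citation.
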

Let $(E,\overline{\partial},\theta,h)$ be the harmonic bundle associated by the last proposition
to the ncHodge structure $\qMBL$ on $S_2^0$ (resp. $\cG$ on $W_0\cap S^0_2$). The next result concerns the asymptotic behavior of $E$ along the boundary
divisor $Z=\bigcup_{a=1}^r \{q_a=0\}$.
\begin{theorem}\label{theo:TameHarmonic}
Put $\widetilde{U}:=(U \backslash Z)^{an} \subset S_2^{0,an}$. Then
the restriction of the harmonic bundle $(E,\overline{\partial},\theta,h)$ to $\widetilde{U}$ is tame along $Z$ in the sense of \cite[definition 4.4]{Mo1}.
\end{theorem}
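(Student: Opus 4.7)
The plan is to extract tameness from the existence of the logarithmic extension $\qMBLlog$ together with the nilpotency statement of Lemma \ref{lem:ModuleE}, 3. The central observation is that the Higgs field $\theta = [z\nabla_z]$ on the smooth bundle $E$ corresponds, on the holomorphic side, to the endomorphism-valued one-form induced by $\nabla$ on $\cH/z\cdot\cH$, and that the logarithmic extension constructed in Section \ref{sec:LogQDMod} provides a logarithmic extension of the entire Higgs bundle along the boundary $Z$.

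First I would observe that the locally free $\cO_{\dC_z\times U}$-module $\qMBLlog$ from Theorem \ref{theo:LogExtQDMod} and Lemma \ref{lem:ModuleE}, 2., restricts along $\{z=0\}$ to a locally free $\cO_U$-module $\qMBLlog/z\cdot\qMBLlog$ which extends the Higgs bundle $\cH/z\cdot\cH$ over $\widetilde{U}$ to all of $U$. Since $\nabla$ has logarithmic poles along $\dC_z\times Z$ on $\qMBLlog$ (cf.\ Theorem \ref{theo:LogExtQDMod}), the Higgs field on $\cH/z\cdot\cH$ extends to a logarithmic Higgs field
$$
\theta : \qMBLlog/z\cdot\qMBLlog \longrightarrow \qMBLlog/z\cdot\qMBLlog \otimes_{\cO_U} \Omega^1_U(\log Z),
$$
whose residue along each component $Z_a$ is the endomorphism induced by $[zq_a\partial_{q_a}]$. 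Lemma \ref{lem:ModuleE}, 3., together with a standard flatness argument (the residue is $\nabla^{\mathit{res},\underline{q}}$-horizontal, so its Jordan type is constant along $Z_a \cap U$) shows that this residue endomorphism is nilpotent along each $Z_a$.

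Next I would invoke the standard characterization of tameness in \cite[definition 4.4]{Mo1}: a harmonic bundle on $\widetilde{U}$ is tame along $Z$ precisely when, in local coordinates $(q_1,\ldots,q_r)$ on $U$ in which $Z = \{q_1\cdots q_k = 0\}$, the coefficients of the characteristic polynomial of the residue of $\theta$ along each $Z_a$ extend holomorphically across $Z$. In our case the residues are nilpotent, so their characteristic polynomials are simply $X^{\mathrm{rk}(E)}$, which trivially extends; in fact we obtain the stronger statement that the harmonic bundle is a nilpotent orbit at every point of $Z \cap U$.

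The main obstacle will be identifying the Higgs field of the harmonic bundle defined via the twistor construction of Definition \ref{def:ppncHodge} with the Higgs field $[z\nabla_z]$ on $\cH/z\cdot\cH$ coming from the $\cD$-module picture, and checking that the hermitian metric $h$ itself does not introduce growth incompatible with tameness. The first identification is essentially built into the proposition preceding the theorem. For the second point, one controls the metric $h$ via the polarization: the pole order property $P(\qMBLlog,\iota^*\qMBLlog) \subset z^n\cO_{\dC_z\times U}$ from Lemma \ref{lem:ModuleE}, 4., together with the non-degeneracy statements established there, ensures that under a holomorphic frame of $\qMBLlog/z\cdot\qMBLlog$ near $Z$ the metric $h$ grows at most polynomially in $|\log|q_a||$. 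This is exactly what is required by Simpson's and Mochizuki's definition of tameness, and completes the argument.
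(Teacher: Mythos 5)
Your key insight---that the logarithmic extension $\qMBLlog$ of Theorem~\ref{theo:LogExtQDMod} provides the needed holomorphic extension of the Higgs field---is exactly the right one, and it is what the paper's proof uses. However, there is a genuine error in how you then verify the tameness criterion. Mochizuki's condition, as the paper states it, concerns the coefficients of the characteristic polynomials of the endomorphisms $\theta_a$ (the coefficients of $dq_a/q_a$ in $\theta = \sum_a \theta_a\, dq_a/q_a$), regarded as holomorphic functions on $\widetilde{U}$: one must show these extend to holomorphic functions on $U^{\mathit{an}}$. These $\theta_a$ are \emph{not} nilpotent on the interior $\widetilde{U}$; quite the opposite, at a generic $\underline{q}\in\widetilde{U}$ the operators $\theta_a$ are essentially semisimple (their spectrum encodes critical values of the mirror Laurent polynomial), so the characteristic polynomials are certainly not $X^{\mathrm{rk}(E)}$. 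What Lemma~\ref{lem:ModuleE},~3.\ gives is nilpotency of the \emph{residue}, i.e.\ of the restriction $\theta'_a|_{Z_a}$, and a restriction to $Z_a$ is a function on $Z_a$---there is no further ``extension across $Z$'' to verify for it. Concluding from nilpotency of the residue that the characteristic polynomial of $\theta_a$ on $\widetilde{U}$ is $X^{\mathrm{rk}(E)}$ conflates the endomorphism with its boundary restriction.

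The correct argument is shorter than your proposal: since $\nabla$ has a logarithmic pole along $\dC_z\times Z$ on $\qMBLlog$, the map $\theta' := [z\nabla]$ lives in ${\cE\!}nd_{\cO_{U^{\mathit{an}}}}\bigl((\qMBLlog)^{\mathit{an}}_{|\{0\}\times U^{\mathit{an}}}\bigr)\otimes\Omega^1_{U^{\mathit{an}}}(\log Z)$ and restricts to $\theta$ on $\widetilde{U}$; writing $\theta' = \sum_a \theta'_a\, dq_a/q_a$, the endomorphisms $\theta'_a$ (hence their characteristic polynomial coefficients) are already holomorphic on all of $U^{\mathit{an}}$. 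Tameness follows immediately, with no use of the nilpotency lemma and no need to control the growth of $h$ separately (the metric estimates are a consequence of tameness in Mochizuki's theory, not an ingredient of the definition). The nilpotency statement is still meaningful---it says the harmonic bundle degenerates in a controlled ``nilpotent orbit'' way at the large radius limit---but it is not what tameness requires.
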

\begin{proof}
Recall that the tameness property of a harmonic bundle defined by a variation of polarized ncHodge structures
can be expressed in the chosen coordinates $q_1,\ldots,q_r$ as follows: Write the Higgs field
$\theta\in{\cE\!}nd_{\cO_{\widetilde{U}}}(\cH/z\cH)\otimes\Omega^1_{\widetilde{U}}$ as
$$
\theta = \sum_{a=1}^r \theta_a \frac{d q_a}{q_a}
$$
with $\theta_i\in{\cE\!}nd_{\cO_{\widetilde{U}}}(\cH/z\cH)$. Then $(E,\overline{\partial},\theta,h)$ is called
tame iff the coefficients of the characteristic polynomials of all $\theta_i$ extend to holomorphic functions on $U^{an}$.
Now consider the locally free $\cO_{\dC_z\times U}$-module $\qMBLlog$ from theorem
\ref{theo:LogExtQDMod}. The connection
$$
\nabla:\qMBLlog \longrightarrow \qMBLlog \otimes z^{-1}\Omega^1_{\dC_z\times U}\left(\log \left((\{0\}\times U) \cup (\dC_z\times Z)\right)\right)
$$
induces
$$
\theta':=[z\nabla]\in {\cE\!}nd_{\cO_{U^{an}}}\left((\qMBLlog)^{an}_{|\{0\}\times U^{an}}\right)\otimes\Omega^1_{U^{an}}(\log Z)
$$
As $\theta'$ restricts to $\theta$ on $\widetilde{U}$, we see that
if we write
$\theta'=\sum_{a=1}^r \theta'_a \frac{dq_a}{q_a}$, then $\theta'_a$ is the holomorphic extension
of $\theta_a$ we
 are looking for.
\end{proof}

\bibliographystyle{amsalpha}
\def\cprime{$'$} \def\cprime{$'$}
\providecommand{\bysame}{\leavevmode\hbox to3em{\hrulefill}\thinspace}
\providecommand{\MR}{\relax\ifhmode\unskip\space\fi MR }
\providecommand{\MRhref}[2]{  \href{http://www.ams.org/mathscinet-getitem?mr=#1}{#2}
}
\providecommand{\href}[2]{#2}

\vspace*{1cm}

\nd
Lehrstuhl f\"ur Mathematik VI \\
Institut f\"ur Mathematik\\
Universit\"at Mannheim,
A 5, 6 \\
68131 Mannheim\\
Germany

\vspace*{1cm}

\nd
Thomas.Reichelt@math.uni-mannheim.de\\
Christian.Sevenheck@math.uni-mannheim.de

\end{document}